\documentclass[11pt, reqno]{article}
\title{Characterizations of amenability through stochastic domination and finitary codings}
\date{\today}
\author{Gourab Ray \thanks{University of Victoria. Research supported in part by NSERC 50311-57400. Email:gourabray@uvic.ca } \and Yinon Spinka\thanks{Tel Aviv University} \thanks{University of British Columbia. Research supported in part by NSERC. Email: yinon@math.ubc.ca}}

\usepackage{amsmath,amsfonts,amssymb,amsthm,graphics,enumerate,mathtools}
\usepackage{hyperref}
\hypersetup{
    colorlinks=true,
    linkcolor=blue,
    citecolor=red,
    urlcolor=blue,
    pdfborder={0 0 0}
}    

\usepackage{graphicx,xcolor,bbm,fullpage}

\usepackage{cleveref}
  \crefname{theorem}{Theorem}{Theorems}
  \crefname{thm}{Theorem}{Theorems}
  \crefname{lemma}{Lemma}{Lemmas}
  \crefname{lem}{Lemma}{Lemmas}
  \crefname{remark}{Remark}{Remarks}
  \crefname{prop}{Proposition}{Propositions}
  \crefname{proposition}{Proposition}{Propositions}
  \crefname{notation}{Notation}{Notations}
  \crefname{claim}{Claim}{Claims}
  \crefname{observation}{Observation}{Observations}
  \crefname{defn}{Definition}{Definitions}
  \crefname{corollary}{Corollary}{Corollaries}
  \crefname{section}{Section}{Sections}
  \crefname{figure}{Figure}{Figures}
  \crefname{exercise}{Exercise}{Exercises}
    \crefname{assumption}{Assumption}{Assumptions}

\newtheorem{thm}{Theorem}[section]

\newtheorem{claim}[thm]{Claim}
\newtheorem{lemma}[thm]{Lemma}

\newtheorem{prop}[thm]{Proposition}
\newtheorem{observation}[thm]{Observation}

\newtheorem{question}[thm]{Question}

\numberwithin{equation}{section}

\theoremstyle{definition}
\newtheorem{remark}[thm]{Remark}

\def\cX{\mathcal{X}}

\def\cU{\mathcal{U}}
\def\cT{\mathcal{T}}
\def\cS{\mathcal{S}}

\def\cG{\mathcal{G}}

\def\cE{\mathcal{E}}

\def\cC{\mathcal{C}}

\def\cA{\mathcal{A}}

\def\P{\mathbb{P}}
\def\E{\mathbb{E}}

\def\R{\mathbb{R}}
\def\Z{\mathbb{Z}}
\def\N{\mathbb{N}}

\def\R{\mathbb{R}}

\def\T{\mathbb{T}}
\newcommand{\1}{\mathbf{1}}
\def\eps{\varepsilon}

\DeclareMathOperator{\dist}{dist}

\DeclareMathOperator{\essinf}{essinf}

\newcommand{\iid}{i.i.d.}
\newcommand{\inv}{\mathsf{inv}}
\newcommand{\updategood}{\bullet}
\newcommand{\updatebad}{\triangledown}

\begin{document}
\maketitle

\begin{abstract}
We establish new characterizations of amenability of graphs through two probabilistic notions: stochastic domination and finitary codings (also called finitary factors).

On the stochastic domination side, we show that the plus state of the Ising model at very low temperature stochastically dominates a high density Bernoulli percolation if and only if the underlying graph is nonamenable. This answers a question of Liggett and Steif~\cite{liggett2006stochastic}.
We prove a similar result for the ``infinite cluster process'' of Bernoulli percolation, where a site is open if it belongs to an infinite open cluster of the underlying Bernoulli percolation. This is of particular interest as this process is not monotone and does not possess any nice form of the domain Markov property.
We also prove that the plus states of the Ising model at very low temperatures are stochastically ordered if and only if the graph is nonamenable. This answers a second question of Liggett and Steif.
We further show that these stochastic domination results can be witnessed by invariant monotone couplings.

On the finitary coding side, we show that the plus state of the Ising model at very low temperature is a finitary factor of an \iid\ process if and only if the underlying graph is nonamenable (assuming it supports a phase transition). We show a similar result for the infinite cluster process of a high-density Bernoulli percolation, with the assumption that $p_u<1$ when the graph is one-ended.

A main technique is to dilute the processes using independent Bernoulli percolation, which allows us to establish a version of the so-called Holley condition for the diluted processes. We also apply a more complicated dilution mechanism using lattice gas theory in order to stochastically compare two Ising models. Along the way we develop general tools to establish invariant domination in infinite graphs when Holley's condition is satisfied. 

The finitary factor results are based on the stochastic domination results and a dynamical construction involving bounding chains, along with a new technique to analyze coupling from the past in infinite-range processes via a novel disease spreading model, which we believe is of independent interest. 
\end{abstract}

\section{Introduction}
\label{sec:intro}
Capturing information about the geometry of a space through the lens of stochastic processes is of widespread interest in the probability community, see e.g.\ \cite{AL07,dichotomy15,benjamini1999group,haggstrom2000ising,HeSc,jonasson1999random,jonasson1999amenability,kesten1959full,kesten1959symmetric,lyons2000phase,lyons2017probability} for various results of this flavor. A common theme in such results is that there is a statistical mechanics model (e.g., Bernoulli percolation, the Ising model, uniform spanning trees) which behaves in distinct and complementary fashions on graphs which resemble Euclidean geometry (e.g., the integer lattice $\Z^d$) compared to graphs which resemble hyperbolic geometry (e.g., hyperbolic tessellations, regular trees). In this article, we add to the list of results of this flavor by establishing several characterizations of amenability through questions involving stochastic domination and finitary factors of i.i.d.\ processes.

Consider a process $X=(X_v)_{v \in V}$ taking values in $\{0,1\}$ on the vertex set $V$ of a countable graph~$G$.
The process $X$ \textbf{stochastically dominates} another such process $Y$ if $\E f(X) \ge \E f(Y)$ for every bounded increasing function $f$ on $\{0,1\}^V$, or equivalently, if the two processes can be coupled so that $X \ge Y$ almost surely. Such a coupling is called a \textbf{monotone coupling} of $X$ and $Y$. Let $\nu_p$ denote the \iid\ measure on $\{0,1\}^V$ of density $p$, so that $Y \sim \nu_p$ is Bernoulli site percolation on $G$ with parameter $p$.
Let us define
\[ p(X) := \sup \big\{ p \in [0,1] : X\text{ stochastically dominates }Y \sim \nu_p \big\} ,\]
which will be a key quantity throughout this article.
We say that $X$ is \textbf{invariant} if its distribution is invariant to all automorphisms of $G$ (if any exist).
When $X$ and $Y$ are invariant processes, we say that $X$ \textbf{invariantly dominates} $Y$ if the two processes can be coupled so that the joint process $(X,Y)$ is invariant and $X \ge Y$ almost surely. Such a coupling is called an \textbf{invariant monotone coupling} of $X$ and $Y$. Let us also define
\[ p_\inv(X) := \sup \big\{ p \in [0,1] : X\text{ invariantly dominates }Y \sim \nu_p \big\} ,\]
which clearly satisfies that $p_\inv(X) \le p(X)$.

The \textbf{vertex Cheeger constant} and \textbf{edge Cheeger constant} of $G$ are defined as
\[ h(G) := \inf_S \frac{|\partial S|}{|S|} \qquad\text{and}\qquad h_e(G) := \inf_S \frac{|\partial_e S|}{|S|} ,\]
where $\partial S$ is the external vertex boundary of $S$, i.e., the set of vertices in $G$ which are not in $S$ but have a neighbor in $S$, $\partial_eS$ is the edge boundary of $S$, i.e., the set of edges with one endpoint in $S$ and the other outside $S$, $|S|$ is the cardinality of $S$, and both infimums are over non-empty finite subsets of $V$. For bounded-degree graphs, $h(G)>0$ if and only if $h_e(G)>0$. We say that a bounded-degree graph $G$ is \textbf{amenable} if $h(G)=0$, and \textbf{nonamenable} otherwise.

Our first result gives a characterization of amenability in terms of the stochastic domination properties of the infinite clusters of Bernoulli percolation.

\begin{thm}\label{thm:stochastic-dom-perc}
Let $G$ be a bounded-degree graph.
Let $\omega$ be Bernoulli (site or bond) percolation with parameter $p$ on $G$.
Let $\omega^\infty$ consist of those sites which are in infinite clusters in $\omega$.
\begin{itemize}
 \item If $G$ is amenable, then $p(\omega^\infty)=0$ for all $0 \le p < 1$.
 \item If $G$ is nonamenable, then $p_\inv(\omega^\infty) \to 1$ as $p\to1$.
\end{itemize}
\end{thm}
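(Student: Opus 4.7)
\emph{Part (a): amenable case.} I would first prove a lower bound on the probability that $\omega^\infty$ misses any given finite set. For a nonempty finite $S \subseteq V$, on the independent event that every vertex (resp.\ edge) of $\partial S$ (resp.\ $\partial_e S$) is closed --- an event of probability $(1-p)^{|B(S)|}$ with $B(S) \in \{\partial S, \partial_e S\}$ --- every cluster of $\omega$ meeting $S$ is contained in $S$ and hence finite, so $\omega^\infty \cap S = \emptyset$. Thus $\P[\omega^\infty \cap S = \emptyset] \ge (1-p)^{|B(S)|}$. On the other hand, if $\omega^\infty$ stochastically dominates $\nu_q$, then because $\{\omega^\infty \cap S = \emptyset\}$ is a decreasing cylinder event one has $\P[\omega^\infty \cap S = \emptyset] \le (1-q)^{|S|}$. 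Combining these bounds along a F{\o}lner sequence $S_n$ for $G$ (using $h(G)=0$, equivalently $h_e(G)=0$ in bounded degree) forces $q = 0$, giving $p(\omega^\infty) = 0$ for every $p < 1$.

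\emph{Part (b): nonamenable case.} I would follow the ``dilute and apply Holley'' template advertised in the abstract. Fix $r \in (0,1)$, let $\xi \sim \nu_r$ be independent of $\omega$, and define the diluted process $D := \omega^\infty \cap \xi$. The plan has three steps: (i) verify a uniform Holley-type lower bound
\[
\P\bigl[D_v = 1 \,\big|\, D|_{V \setminus \{v\}} = \eta\bigr] \ge \rho(p,r)
\]
for every $v$ and every $\eta$, with $\rho(p,r) \to 1$ as $p,r \to 1$; (ii) invoke the invariant form of Holley's theorem developed elsewhere in the paper to produce an invariant monotone coupling $D \ge Y \sim \nu_{\rho(p,r)}$; (iii) use $D \le \omega^\infty$ to conclude $p_\inv(\omega^\infty) \ge \rho(p,r)$, then send $r,p \to 1$ with $1-r$ chosen much larger than $1-\theta(p)$ (where $\theta(p) := \P[v \in \omega^\infty]$) to drive $\rho(p,r) \to 1$.

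The main obstacle is step (i). The difficulty is that $\omega^\infty$ is a global tail functional of $\omega$, so a priori conditioning on $D|_{V\setminus\{v\}}$ could carry long-range information about $\omega$. Two ingredients should conspire to defeat this. First, nonamenability through a Peierls bound: since $h_e(G) > 0$, any finite cluster of $\omega$ of size $n$ has at least $h_e(G)\,n$ closed boundary edges, so for $p$ close to $1$ the probability that $v$ lies in a finite cluster of size $\ge n$ decays geometrically in $n$ and in particular $\theta(p) \to 1$. Second, the dilution makes the conditioning soft: the event $\{D_u = 0\}$ is consistent with $\{\xi_u = 0\}$ irrespective of $\omega^\infty_u$, and when $1-r$ is much larger than $1-\theta(p)$ this explanation dominates the likelihood, so observing $D|_{V\setminus\{v\}}$ reveals very little about $\omega$ in a bounded neighborhood of $v$. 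A Bayes computation combining these two ingredients should yield $\rho(p,r) \to 1$ in the stated regime. A smaller, separate issue is upgrading the resulting coupling, a priori obtained on finite exhaustions via classical Holley, to a genuinely \emph{invariant} coupling on the infinite graph $G$; this is handled by the general invariant-monotone-coupling machinery the paper develops from Holley-type conditions.
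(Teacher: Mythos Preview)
Your approach matches the paper's. Part (a) is identical to the paper's argument. For part (b) the paper follows exactly your three-step template: dilute $X=\omega^\infty$ by an independent $Y\sim\nu_q$, establish $p_*(XY)\to q$ as $p\to 1$ with $q$ fixed, and then invoke the invariant-domination machinery (which requires verifying that $XY$ can be \emph{invariantly decoupled}; this is done by taking $A=\{v:Y_v=0\}$ and partitioning it via the finite components of $V\setminus(XY)^\infty$, all of which are finite once $p_*(XY)$ is large).

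The real content is step (i), and your heuristic --- ``$D_u=0$ is likely due to $\xi_u=0$, so the conditioning is soft'' --- does not capture the mechanism precisely enough to constitute a proof. The paper's argument is a Peierls bound over cutsets $\Pi$ separating $v$ from infinity. After conditioning on $Z=XY$ and further on $\omega_{\Pi^c}$, the conditional law of $\omega_\Pi$ is Bernoulli($p$) tilted by a factor $(1-q)^{|V(\eta)|}$, where $V(\eta)$ is the set of vertices not in an infinite cluster of $\omega_{\Pi^c}$ that would become so if $\omega_\Pi=\eta$. The key estimate is $|V(\eta)|\le|\Pi|+|\mathrm{Int}(\Pi)|$, and nonamenability enters precisely here through $|\Pi|\ge h\,|\mathrm{Int}(\Pi)|$: the penalty $(1-q)^{-|\Pi|-|\mathrm{Int}(\Pi)|}$ is then beaten by the gain $(1-p)^{|\Pi|/2}$ once $p$ is close enough to $1$, and summing over cutsets gives $\P(X_v=0\mid Z)\to 0$ uniformly in $v$. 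Note also the order of limits: $q$ is held fixed while $p\to 1$ (yielding $p_*(XY)\to q$), and only afterwards is $q$ sent to $1$; your ``$1-r$ much larger than $1-\theta(p)$'' is not the relevant relation.
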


Our second results gives a similar characterization in terms of the plus state of the Ising model. We refer the reader to \Cref{sec:ising-dom} for relevant definitions.

\begin{thm}\label{thm:stochastic-dom-Ising}
Let $G$ be a bounded-degree graph {with no finite connected components}. Let $\mu^+_\beta$ be the plus state of the Ising model on $G$ at inverse temperature $\beta \ge 0$.
\begin{itemize}
 \item If $G$ is amenable, then $p(\mu^+_\beta) \to 0$ as $\beta\to\infty$.
 \item If $G$ is nonamenable, then $p_\inv(\mu^+_\beta) \to 1$ as $\beta \to \infty$.
\end{itemize}
\end{thm}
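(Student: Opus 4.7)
The plan is to handle the two directions of the dichotomy separately, using complementary techniques.

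\emph{Amenable case.} Let $(S_n)$ be a Følner sequence, so $|\partial_e S_n|/|S_n| \to 0$. Under the identification $+1 \leftrightarrow 1$, $-1 \leftrightarrow 0$, the event $A_n := \{\sigma \equiv -1 \text{ on } S_n\}$ is decreasing. A finite-volume boundary-flip comparison (mapping each configuration to the one obtained by flipping all spins in $S_n$, at energetic cost at most $2\beta|\partial_e S_n|$), followed by a weak limit, gives
\[
\mu^+_\beta(A_n) \ge e^{-2\beta |\partial_e S_n|}\,\mu^+_\beta(\sigma \equiv +1 \text{ on } S_n),
\]
and FKG yields $\mu^+_\beta(\sigma \equiv +1 \text{ on } S_n) \ge (1-\rho_\beta)^{|S_n|}$, where $\rho_\beta := \mu^+_\beta(\sigma_v = -1)$. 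On any bounded-degree graph with no finite components, a standard Peierls estimate gives $\rho_\beta \to 0$ as $\beta \to \infty$. If $\mu^+_\beta$ stochastically dominates $\nu_p$ then the decreasing-event bound forces $\mu^+_\beta(A_n) \le (1-p)^{|S_n|}$. Combining these estimates and taking $|S_n|$-th roots along the Følner sequence yields $p \le \rho_\beta$, so $p(\mu^+_\beta) \le \rho_\beta \to 0$.

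\emph{Nonamenable case.} Here I follow the dilution strategy foreshadowed in the abstract. Let $\sigma \sim \mu^+_\beta$ and, independently, $\xi \sim \nu_q$ for some $q \in (0,1)$ close to $1$. Define the diluted process $X_q(v) := \1[\sigma_v = +1]\cdot \xi_v$. Pointwise $X_q \le \1[\sigma_v = +1]$, so any invariant stochastic domination $X_q$ dominates $\nu_{p'}$ transfers to $\mu^+_\beta$ (the coupling to $\xi$ being product, hence invariant). A direct Bayes computation shows that, conditional on $X_q|_{V \setminus v} = \omega$, the law of $\sigma|_{V\setminus v}$ is a modified Ising model with $+$ boundary on $\{\omega = 1\}$ and an external field of strength $\tfrac12\log\tfrac{1}{1-q}$ favoring $-$ on $\{\omega = 0\}$. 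Holley's criterion for $X_q$ dominates $\nu_{p'}$ therefore reduces to a uniform (in $\omega$) lower bound on $\P(\sigma_v = +1)$ in this modified Ising model, with the worst case being $\omega \equiv 0$. On a nonamenable graph, $h_e(G) > 0$, and a Peierls count over minus contours shows that the energetic cost $e^{-2\beta|\partial W|}$ of flipping a volume $W$ beats the field gain $e^{|W|\log\frac{1}{1-q}}$ whenever $2\beta h_e(G) > \log \tfrac{1}{1-q}$. Tuning $q = q(\beta) \uparrow 1$ slowly enough (say $q = 1 - e^{-\alpha\beta}$ with $\alpha < 2 h_e(G)$) yields $\P(\sigma_v = -1) \to 0$ uniformly in $\omega$, and hence $X_q$ dominates $\nu_{p'}$ with $p' \to 1$.

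The main obstacle I anticipate is twofold. First, Holley's criterion is classical in finite volume, so it must be verified on finite sub-volumes and passed to the limit; this should be routine given the FKG and DLR structure of the (diluted) Ising model. Second, and more seriously, the coupling produced must be invariant. For this I would invoke the general framework developed elsewhere in the paper for producing invariant monotone couplings from Holley-type verifications on infinite invariant graphs (e.g., via a mass-transport-based selection of couplings or an invariant Glauber-type construction from stationary initial data). Sending $\beta \to \infty$ with $q$ tuned as above then delivers $p_\inv(\mu^+_\beta) \to 1$.
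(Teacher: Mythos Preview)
Your nonamenable argument is essentially the paper's: dilute by an independent $\nu_q$, observe via Bayes that conditioning on the diluted field amounts to an Ising model with a negative magnetic field $\tfrac12\log(1-q)$, and run a Peierls argument exploiting $|\partial_e W|\ge h_e|W|$ to show the plus magnetization stays near $1$ uniformly in the conditioning. The paper packages the Holley step as the inequality $p_*(XY)\ge q\cdot\alpha_{\beta,b'}$ with $b'=\tfrac12\log(1-q)$, and then invokes its invariant-coupling machinery (specifically, that $XY$ is a monotone $\bar1$-limit, so \cref{thm:invariant-monotone-coupling-montone} applies) to upgrade to $p_\inv$; your deferral to ``the general framework'' is acceptable but you should name which of the paper's theorems you mean.

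Your amenable argument has a genuine gap. The chain of inequalities up to $p(\mu^+_\beta)\le\rho_\beta$ (where $\rho_\beta=\sup_v\mu^+_\beta(\sigma_v=-)$) is correct and matches the paper's first upper bound $p(\mu^+_\beta)\le 1-\alpha_\beta e^{-2\beta h_e}$ specialized to $h_e=0$. But the assertion that ``a standard Peierls estimate gives $\rho_\beta\to 0$'' on every bounded-degree graph with no finite components is false: on $\mathbb{Z}$ there is no phase transition, $\mu^+_\beta$ is the symmetric Gibbs measure, and $\rho_\beta=\tfrac12$ for every $\beta$. Peierls fails there because there are infinitely many minus-intervals containing $v$ all with edge boundary of size $2$. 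Your bound $p(\mu^+_\beta)\le\tfrac12$ is then useless.

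The paper closes this gap with a second, complementary upper bound. For any connected $S$ of size $n$,
\[
\P(\sigma_S\equiv -)\ \ge\ \P(\sigma_S\text{ is constant})-\P(\sigma_v=+)\ \ge\ (\tanh\beta)^{n-1}-\alpha_\beta,
\]
the last inequality coming from the FK random-cluster representation (any spanning tree of $S$ has $n-1$ edges, each open with conditional probability at least $\tanh\beta$). This yields $p(\mu^+_\beta)\le 1-\sup_{n\ge1}\bigl((\tanh\beta)^{n-1}-\alpha_\beta\bigr)^{1/n}$. Now one argues by cases: if $\alpha_\beta\to1$ along a subsequence, your bound (equivalently the paper's first bound) gives $p\to0$; if $\alpha_\beta$ stays bounded away from $1$, then for each fixed $n$ the quantity $(\tanh\beta)^{n-1}-\alpha_\beta$ is bounded below by a positive constant for large $\beta$, and taking $n$ large makes its $n$-th root close to $1$, forcing $p\to0$. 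You need this second bound (or an equivalent argument handling the case of no phase transition) to complete the amenable direction.
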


The assumption that $G$ has no finite connected components is not essential: If $G$ is finite and connected, then it is not hard to see that $p(\mu^+_\beta)=p_\inv(\mu^+_\beta) \to 1-2^{-1/|V(G)|}$. It then follows from the theorem that if $G$ has finite connected components,
then $p(\mu^+_\beta),p_\inv(\mu^+_\beta) \to 1-2^{-1/m}$, where $m$ is the supremum of the sizes of the amenable connected components ($m$ is infinite if the finite components are unbounded or if there is an infinite amenable component).

For nonamenable transitive graphs, the non-invariant domination results  in both theorems 
are already new and of interest.
In particular,
the conclusion that $p(\mu^+_\beta)\to1$ as $\beta\to\infty$ for nonamenable transitive graphs answers a question of Liggett and Steif~\cite[Question~6]{liggett2006stochastic}, where this fact (and more) was established in the special case of regular trees.

We also investigate the question of whether the plus state Ising measures at different temperatures are stochastically ordered. We show that on amenable graphs, they are not, whereas on nonamenable graphs they are, at least for sufficiently low temperatures. This answers a second question of Liggett and Steif~\cite[Question~7]{liggett2006stochastic}. In the nonamenable case, we further establish invariant domination.

\begin{thm}\label{thm:stochastic-order-Ising}
Let $G$ be a bounded-degree graph.
\begin{itemize}
 \item If $G$ is amenable, then $\mu_{\beta_1}^+$ and $\mu_{\beta_2}^+$ are not stochastically comparable for any $\beta_1 \neq \beta_2$.
 \item If $G$ is nonamenable, then $\mu_{\beta_1}^+$ invariantly dominates $\mu_{\beta_2}^+$ for all $\beta_1 >\beta_2$ sufficiently large.
\end{itemize}
\end{thm}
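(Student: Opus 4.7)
The plan is to treat the two directions by very different techniques. For the nonamenable half I would use the paper's Holley-based machinery for invariant domination together with the lattice-gas dilution announced in the abstract. For the amenable half I would split by whether each $\beta_i$ is below or above the critical inverse temperature $\beta_c$ and rule out domination case by case.

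For the nonamenable case, at very low temperatures both $\mu^+_{\beta_1}$ and $\mu^+_{\beta_2}$ concentrate on configurations close to the all-$+$ configuration, with minus-spin density of order $e^{-c\beta}$. As in \Cref{thm:stochastic-dom-Ising}, the strategy is to verify a Holley-type condition for $\mu^+_{\beta_1}$ dominating $\mu^+_{\beta_2}$ after diluting the latter. Unlike the Ising-versus-Bernoulli comparison, however, here both sides are Ising measures, so a naive independent Bernoulli dilution is insufficient; instead, following the abstract, I would introduce a lattice-gas-type auxiliary field that, after integration, reproduces $\mu^+_{\beta_2}$ but whose conditional minus-spin density at any site can be bounded pointwise by the corresponding conditional minus-spin density of $\mu^+_{\beta_1}$. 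This should yield pointwise Holley domination of the diluted measure by $\mu^+_{\beta_1}$, and the paper's general framework would then upgrade this to an invariant monotone coupling in the nonamenable setting. The main obstacle on this side is designing the lattice-gas auxiliary field so that Holley's inequality holds pointwise on every finite region while respecting the graph automorphisms, so that invariance survives the coupling construction.

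For the amenable case, assume without loss of generality $\beta_1>\beta_2$. If both $\beta_i<\beta_c$, the plus state is the unique Gibbs measure and is invariant under the global flip $\sigma\mapsto-\sigma$. Any stochastic domination between two spin-flip-symmetric measures is reversed by the flip, forcing $\mu^+_{\beta_1}=\mu^+_{\beta_2}$, contradicting $\beta_1\neq\beta_2$. If $\beta_1>\beta_c>\beta_2$, I would pick a F\o lner sequence $\Lambda_n$ and test the decreasing event $A_n=\{\sigma\equiv-1\text{ on }\Lambda_n\}$. A Peierls lower bound gives $\mu^+_{\beta_1}(A_n)\ge e^{-c_1|\partial\Lambda_n|}$, while exponential decay of correlations in the disordered phase gives $\mu^+_{\beta_2}(A_n)\le e^{-c_2|\Lambda_n|}$ with $c_2>0$. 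Since $|\partial\Lambda_n|/|\Lambda_n|\to 0$ by amenability, eventually $\mu^+_{\beta_1}(A_n)>\mu^+_{\beta_2}(A_n)$, contradicting stochastic domination on the decreasing event $A_n$. The remaining subcase $\beta_1>\beta_2>\beta_c$ is the main obstacle: both all-minus probabilities now decay only at surface rate, so one must identify a more discriminating monotone event. I would look for such an event via sharp Peierls/Pirogov--Sinai contour asymptotics, exploiting the strict monotonicity of the surface tension $\tau(\beta)$ together with amenability to pick a domain on which the difference $\tau(\beta_1)-\tau(\beta_2)$ accumulates faster than the surface-volume ratio can absorb, possibly supplemented by a spin-flip argument applied to the conditional measure on a large finite box with mixed boundary conditions.
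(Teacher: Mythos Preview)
Your nonamenable sketch points in the right direction but has the dilution backwards and is too vague to count as a proof. In the paper the \emph{larger} measure $\mu^+_{\beta_1}$ is diluted: one couples $\sigma\sim\mu^+_{\beta_1}$ with a dependent minus-set $Y$ built from the lattice-gas measure of \cref{thm:dilution_measure}, sets $Z=\sigma\cup Y$ (identifying configurations with their minus sets), and then verifies the Holley inequality~\eqref{eq:Z-cond-prob-ratio-lower-bound} for $Z$ against $\mu^+_{\beta_2}$. The whole point of the lattice-gas weights $\alpha_S$ is that they are tuned so that the computation~\eqref{eq:simplification} collapses to a clean exponential in $|\partial_e\sigma|$ and $|\sigma|$; representing $\mu^+_{\beta_2}$ as a mixture, as you suggest, does not produce this cancellation and it is unclear how you would then compare each mixture component to $\mu^+_{\beta_1}$ pointwise. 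Invariant domination is obtained afterwards (in \cref{thm:stochastic-order-Ising2}) via a separate independent dilution of both measures and \cref{thm:invariant-monotone-coupling-montone}, not directly from the lattice-gas construction.

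The amenable half has a genuine gap. Your case split on $\beta_c$ leaves the main case $\beta_1>\beta_2>\beta_c$ essentially open, and the tools you propose for it (sharp surface-tension asymptotics, Pirogov--Sinai) are not available on a general bounded-degree amenable graph, which is the stated hypothesis. Even your mixed case $\beta_1>\beta_c>\beta_2$ is shakier than it looks: the lower bound $\mu^+_{\beta_1}(\sigma_{\Lambda_n}\equiv -)\ge e^{-c_1|\partial\Lambda_n|}$ is not a ``Peierls lower bound'' --- Peierls gives upper bounds --- and to get it you need to control $\mu^+_{\beta_1}(\sigma_{\Lambda_n}\equiv +)$ from below, which is not immediate. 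The paper avoids all of this with a single uniform argument (\cref{thm:stochastic-order-Ising1}): for any $\beta_1\neq\beta_2$, finite energy and FKG sandwich the ratio $\P(\sigma_U\equiv+\mid\sigma_{\partial U}\equiv+)/\P(\tau_U\equiv+\mid\tau_{\partial U}\equiv+)$ between $e^{-c|\partial_e U|}$ and $e^{c|\partial_e U|}$; amenability lets you choose $U$ with $|\partial_e U|\le\eps|U|$; writing each conditional probability as a product $p_1\cdots p_n$ of one-site conditionals along an ordering adjacent to the growing boundary, a random-cluster comparison shows each ratio $p_i/q_i$ is bounded uniformly away from $1$ (in the direction determined by $\text{sign}(\beta_1-\beta_2)$), contradicting the sandwich. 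No knowledge of $\beta_c$, phase coexistence, or surface tension is used.
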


In the special case of $\Z^d$, the fact that the plus states at different temperatures are not stochastically comparable was already known~\cite{liggett2006stochastic}. Though that proof can be extended to quasi-transitive amenable graphs, we give a different proof. Let us mention that we show the stronger statement that the plus \emph{and minus} states at different temperatures are not stochastically comparable on amenable graphs. In the nonamenable case, the lower bound required on the inverse temperature depends on $G$ only through its Cheegar constant and maximum degree. For regular trees, Liggett and Steif~\cite{liggett2006stochastic} showed that the plus states are stochastically ordered throughout the entire low-temperature regime (i.e., for all $\beta_1 > \beta_2 > \beta_c$). Perhaps surprisingly, this fails to hold in general, even when restricting to quasi-transitive connected graphs (see \cref{rem:ising-nonamen-dom}). For regular trees, we do not know whether invariant domination holds throughout the entire low-temperature regime. In the nonamenable case, we further show that stochastic domination holds already in finite volume. We also extend the domination results to the case where an external magnetic field is present, even allowing the dominated measure to have a small \emph{positive} magnetic field, while the dominating measure has a small \emph{negative} magnetic field. This shows that there is extra ``wiggle room'' in the domination $\mu_{\beta_1}^+ \ge_{st} \mu_{\beta_2}^+$ (for example, the pair $(\mu_{\beta_2}^+,\mu_{\beta_1}^+)$ is upwards and downwards movable in the sense of~\cite{broman2006refinements}; see \cref{rem:up-down-movable}).

We now restrict attention to quasi-transitive graphs. This is a more usual setting for talking about invariant processes and invariant domination.
Suppose that $X$ and $Y$ are invariant processes.
When $G$ is amenable, a fairly standard averaging argument shows that if $X$ stochastically dominates $Y$, then $X$ also invariantly dominates $Y$.
When $G$ is nonamenable, this is not necessarily the case; see Mester~\cite{mester2013invariant} for a counterexample.
We emphasize that the invariant domination results discussed thus far, nevertheless, hold in the generality of bounded-degree nonamenable graphs.

Let us turn to our results on finitary factors.
A \textbf{factor of an \iid\ process} is any process of the form $X=\varphi(Y)$, where $Y=(Y_v)_{v \in V}$ is an \iid\ process and $\varphi$ is a measurable function which commutes with automorphisms of $G$.
Such a factor is \textbf{finitary} if in order to compute the value at any given vertex $v$, one only needs to observe a finite (but random) portion of the \iid\ process. More precisely, letting $B_n(v)$ denote the ball of radius $n$ around $v$, if $(Y_u)_{u \in B_R(v)}$ determines $X_v$, for some almost surely finite stopping time $R$ with respect to the filtration generated by $((Y_u)_{u \in B_n(v)})_{n \ge 0}$. In this case we say that $X$ is a \textbf{finitary factor of an \iid\ process}.

On a quasi-transitive amenable graph, it is known \cite{van1999existence} that the plus state of the Ising model at inverse temperature $\beta$ is a finitary factor of an \iid\ process if and only if it coincides with the minus state (i.e., $\mu^+_\beta=\mu^-_\beta$, which is known to occur if and only if $\beta \le \beta_c$). On a quasi-transitive nonamenable graph, it is also known that the plus state is a finitary factor of \iid\ when it coincides with the minus state (in particular, whenever $\beta<\beta_c$), but the converse direction is open (in particular, for any $\beta>\beta_c$). Our next result shows that in contrast to the situation for amenable graphs, the plus state on a nonamenable graph is in fact a finitary factor of \iid\ for large $\beta$. This yields a characterization of amenability in terms of the finitary codability of the Ising model
(putting aside amenable graphs for which no phase transition occurs).

\begin{thm}\label{thm:ising-ffiid}
Let $G$ be a nonamenable quasi-transitive graph.
Then for all $\beta$ sufficiently large, $\mu^+_\beta$ is a finitary factor of an \iid\ process.
\end{thm}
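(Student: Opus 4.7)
The plan is to combine the stochastic domination of \cref{thm:stochastic-dom-Ising} with a coupling-from-the-past (CFTP) construction via a bounding chain, and to analyze when coalescence occurs in almost-surely finite time. Fix $\beta$ large enough so that $\mu^+_\beta$ invariantly dominates a Bernoulli site percolation $\nu_p$ with $p > 1 - p_c^{\mathrm{site}}(G)$; this is possible because $G$ is bounded-degree nonamenable, hence $p_c^{\mathrm{site}}(G)>0$. Then an \iid\ field $Y\sim\nu_p$ has its zero-set $\{v:Y_v=0\}$ decomposing a.s.\ into finite clusters by subcriticality.

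Using \iid\ input randomness, produce independent Poisson clocks $\{\tau_{v,k}\}$ and update uniforms $\{U_{v,k}\}$ at every vertex, together with an \iid\ Bernoulli-$p$ field $Y$. Run heat-bath Glauber dynamics for the Ising model, where each clock ring at $v$ resamples the spin via the standard monotone heat-bath update based on $U_{v,k}$ and the current neighbor configuration. I would set up a bounding chain $(X^-(t),X^+(t))$ by $X^+(-T)\equiv +$ and $X^-_v(-T)=+$ if $Y_v=1$ and $-$ otherwise. Since $\nu_p$ is stochastically dominated by $\mu^+_\beta$, there exists a coupling of some $\xi\sim\mu^+_\beta$ with $X^-(-T)\le \xi\le X^+(-T)$; monotonicity of heat-bath then propagates the sandwich $X^-(0)\le \xi(0)\le X^+(0)$, and $\xi(0)\sim\mu^+_\beta$ as well. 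Hence if the bounding chain coalesces at time $0$, the common value samples $\mu^+_\beta$ and is a function of an a.s.\ finite portion of the input.

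The main step is to establish a.s.\ coalescence at every fixed vertex for $T$ sufficiently large. I would model the evolution of the disagreement set $D(t)=\{v:X^-_v(t)\neq X^+_v(t)\}$ as a \emph{disease process}: at a clock ring, vertex $v$ is cured (removed from $D$) when the heat-bath rule produces the same output under both envelope assumptions on its neighbors, and may be infected (added to $D$) otherwise. Initially $D(-T)=Y^{-1}(0)$ is a.s.\ a union of finite clusters. The infection rate at a vertex is exponentially small in $\beta$ (because the heat-bath rule is highly concentrated at large $\beta$), while nonamenability of $G$ provides an isoperimetric contraction forcing the boundary of $D$ to retract. Combining these yields finite cure times a.s., and locality of the Glauber dynamics certifies the finitary-factor property.

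The principal obstacle is the subcriticality analysis of this disease process. At very low temperature the bounding chain is extremely sensitive — a single undecided neighbor can leave the heat-bath nearly uniform at $v$, spawning many infections in a short window — so naive branching-process estimates fail. The argument must carefully balance infection rates against cure rates using both the Cheeger expansion $h(G)>0$ and the large-$\beta$ concentration of the heat-bath rule, presumably iterating envelope estimates against the graph's exponential isoperimetry in the spirit of the novel disease-spreading model mentioned in the introduction. Nonamenability is essential here: on amenable graphs with $\mu^+_\beta\neq\mu^-_\beta$ the plus state is known not to be a finitary factor of \iid, so the analysis must break down in that setting.
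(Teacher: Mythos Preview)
Your proposal has a genuine gap: you run the bounding chain directly on the Ising heat-bath dynamics, but the quantity controlling the disease process is $p_*(\mu^+_\beta)=(e^{2\Delta\beta}+1)^{-1}\to 0$ as $\beta\to\infty$, not something close to $1$. Your claim that ``the infection rate at a vertex is exponentially small in $\beta$'' is false for the raw Ising bounding chain: if all neighbors of $v$ lie in the disagreement set, the upper chain sees $m_v=\Delta$ and the lower chain sees $m_v=-\Delta$, so the heat-bath outputs are driven to \emph{opposite} values with overwhelming probability, and the disagreement spreads rather than heals. You correctly identify this as the principal obstacle, but you do not resolve it; invoking the Cheeger constant does not help here because the disease-spreading bound (\cref{prop:coding-radius2}) needs $(3\Delta-1)(1-p)<1$ with $p=p_*(X)$, and nonamenability enters the paper's argument elsewhere.

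The paper's actual proof sidesteps this by \emph{not} running dynamics on $X$ at all. Instead it dilutes $X$ by an independent $Y\sim\nu_q$ and runs the bounding chain on $Z:=XY$. The point (established in the proof of \cref{thm:stochastic-dom-Ising2}) is that $p_*(Z)\to q$ as $\beta\to\infty$: conditioning $Z$ to be $0$ on a set $B$ only imposes a bounded negative field on $X$, which nonamenability absorbs via a Peierls argument. With $p_*(Z)>1-\tfrac{1}{3\Delta-1}$, $Z$ is decoupled by ones and \cref{thm:ffiid-general} yields $Z$ as a finitary factor. Finally, since $(X,Y)$ is decoupled by ones, one recovers $X$ from $Z$ by resampling independently on each finite $0$-cluster of $Z$, which is again finitary. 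Your use of the domination result only to seed the lower chain misses this; the domination is needed at the level of single-site conditionals of the \emph{diluted} process, not merely as a global stochastic comparison.
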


We also show that on ``most'' nonamenable quasi-transitive graphs, the infinite cluster(s) of Bernoulli percolation is a finitary factor of an i.i.d.\ process for $p$ close to 1. This is again in contrast to the situation on quasi-transitive amenable graphs, where it is not a finitary factor of an \iid\ process, except in the degenerate situation when percolation does not occur.
We focus on site percolation here for concreteness, but also since the result for bond percolation follows by applying the result for site percolation to the line graph.
Let $\omega$ be Bernoulli site percolation of parameter~$p$.
Recall the standard notation $p_c$, which is the infimum over those $p$ for which $\omega$ has an infinite cluster almost surely. Also recall that $p_u$ denotes the infimum over those $p$ for which $\omega$ has a unique infinite cluster.
A connected nonamenable quasi-transitive graph has either infinitely many ends or a single end (see, e.g., \cite[Section 6]{Moh91}).
In the former case, $\omega$ almost surely has infinitely many infinite clusters whenever $p>p_c$, so that $p_u=1$. 
In the latter case, it is a long-standing conjecture that Bernoulli percolation with $p$ close to 1 has a unique infinite cluster, so that $p_u<1$~\cite{lyons2000phase}.

\begin{thm}\label{thm:perc-ffiid}
Let $G$ be a nonamenable quasi-transitive connected graph with either infinitely many ends or with $p_u<1$. Let $\omega$ be Bernoulli site percolation of parameter $p$ and let $\omega^\infty$ consist of those sites which are in infinite clusters in $\omega$. 
Then $\omega^\infty$ is a finitary factor of an \iid\ process for all $p$ close to 1.
\end{thm}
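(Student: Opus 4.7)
The plan is to bootstrap from the stochastic domination result already established in Theorem~\ref{thm:stochastic-dom-perc} and combine it with a coupling-from-the-past (CFTP) scheme to extract the finitary factor. By Theorem~\ref{thm:stochastic-dom-perc}, for $p$ close to $1$ the process $\omega^\infty$ invariantly dominates a high-density \iid\ Bernoulli process $\nu_q$ with $q$ arbitrarily close to $1$. This \iid\ cushion will serve as ``free randomness'' that can be inserted into a dynamical construction on top of~$\omega^\infty$.

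First I would set up a Markov dynamics whose stationary distribution is Bernoulli site percolation $\nu_p$ (easy, since $\nu_p$ is \iid) and track $\omega^\infty$ as a deterministic functional of $\omega$. The problem is that $\omega^\infty$ is not a local functional, so naive Glauber dynamics does not propagate in a finitary way. The idea is therefore to run a \emph{bounding chain}: at every vertex $v$ maintain an upper and a lower estimate for the eventual value of $\omega^\infty_v$, and declare $v$ ``decided'' as soon as a finite witness pattern in a bounded neighborhood of $v$ certifies whether $v$ lies on an infinite cluster, regardless of the states of sites outside the neighborhood. The role of the hypothesis on $G$ (infinitely many ends, or $p_u<1$) is precisely to guarantee that such local certificates exist with high probability for $p$ close to $1$: in the one-ended $p_u<1$ case, at high density any sufficiently dense ball around $v$ already lies in the unique infinite cluster with overwhelming probability, while in the infinitely-ended case one uses a tree-like geometric structure together with high density to find disjoint infinite paths out of $v$.

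Next I would invoke CFTP: run the bounding-chain dynamics from time $-T$ to time $0$ using \iid\ updates and ask for the smallest $T$ after which $v$ is decided. To prove that this $T$ is almost surely finite, I would use the disease-spreading model alluded to in the abstract. Call a vertex ``sick'' if its bounding-chain coordinates have not yet coalesced. Sick vertices infect neighbors through the dynamics, but the invariant stochastic domination from Theorem~\ref{thm:stochastic-dom-perc} allows sick vertices to be recovered at a rate controlled by the high-density Bernoulli cushion. Combined with the exponential isoperimetric decay coming from nonamenability ($h(G)>0$), one can compare the disease process with a subcritical branching/contact process and conclude that the set of sick vertices at the origin dies out almost surely in finite time, yielding an \iid\ coding with finite radius.

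The main obstacle, as flagged in the abstract, is that $\omega^\infty$ does \emph{not} possess any form of the domain Markov property and is not monotone, so the standard disagreement-percolation or Holley-type CFTP arguments do not apply directly. The disease-spreading device is the novelty needed to circumvent this: it transfers the analysis from disagreement between two runs of a monotone Markovian dynamics (which one does not have here) to a recovery-versus-infection dynamic on the set of undecided sites, whose subcriticality can be quantified purely via the invariant stochastic domination estimate and the Cheeger constant of~$G$. Making this trade-off sharp enough to guarantee an almost surely finite coding radius, uniformly across the two geometric regimes, is where the technical heart of the argument will lie.
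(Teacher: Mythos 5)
There is a genuine gap at the heart of your construction. You want each vertex $v$ to become ``decided'' once ``a finite witness pattern in a bounded neighborhood of $v$ certifies whether $v$ lies on an infinite cluster, regardless of the states of sites outside the neighborhood.'' No such certificate exists for the positive direction: a bounded window of $\omega$ can certify $\omega^\infty_v=0$ (a closed cutset around $v$), but it can never certify $\omega^\infty_v=1$, since any finite open connected piece can be severed from infinity by closed sites outside the window. Your substitutes --- ``a sufficiently dense ball lies in the unique infinite cluster with overwhelming probability'' in the one-ended case, or ``disjoint infinite paths out of $v$'' in the infinitely-ended case --- are probabilistic or non-local statements, not witnesses; a bounding chain/CFTP scheme needs the decided value to be \emph{correct given the observed randomness}, and ``overwhelming probability'' does not produce an exact finitary coding of $\omega^\infty$. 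Relatedly, running dynamics stationary for $\nu_p$ and ``tracking $\omega^\infty$ as a functional of $\omega$'' cannot work as stated, because $\omega^\infty$ is not a finitary functional of $\omega$; sampling $\omega$ finitarily is trivial and gains nothing.

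What the paper does instead, and what your outline is missing, is a structural replacement for locality: dilute by an independent Bernoulli field $Y\sim\nu_q$ and run the bounding chain for the diluted process $Z=\omega^\infty Y$ (resp.\ $\tilde XY$), whose single-site conditional probabilities satisfy $p_*(Z)\to q$ (this Holley-type estimate from the proof of \cref{thm:stochastic-dom-perc} is the real input, not the invariant-domination conclusion you invoke), together with a conditional-independence property that substitutes for the Markov property. In the one-ended case with $p_u<1$, $\omega^\infty$ is \emph{decoupled by connected ones}, and a variant of \cref{thm:ffiid-general} (namely \cref{thm:ffiid-general-conn}) is proved by modifying the disease-spreading argument, using uniqueness of the infinite cluster and finiteness of the holes. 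In the infinitely-ended case this fails, and one instead works with the auxiliary process $\tilde X$ of \cref{thm:stochastic-dom-perc-inf-ends} on a power graph $G'$, chosen so that $(\omega,\tilde X)$ \emph{is} decoupled by ones on $G'$; proving that claim is a substantial geometric argument using that $V\setminus B_r(v)$ has at least two infinite components. Finally, $\omega^\infty$ is recovered from the finitary sample of $Z$ (resp.\ $\tilde X$) by resampling independently inside the finite 0-clusters/holes using the decoupling, with extra i.i.d.\ randomness. Your proposal contains neither the dilution, nor the decoupling notions, nor the two genuinely different case treatments, so the disease-spreading comparison you appeal to has nothing rigorous to act on.
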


\subsection{Outline of proofs and perspectives}

Given two $\{0,1\}$-valued stochastic processes $X$ and $Y$, it is often extremely useful to know whether $X$ stochastically dominates $Y$, especially when one of the processes is simple and well understood. Of particular interest is the case when $Y$ is an \iid\ process (i.e., Bernoulli percolation), with one possible application being that it allows to deduce that the dominating process $X$ percolates when the density of $Y$ is above the critical probability $p_c$ for Bernoulli percolation.
A classical result with many applications of this type is that of Liggett--Schonmann--Stacey~\cite{liggett1997domination}.

A standard tool to prove that $X$ stochastically dominates an \iid\ process of density $p$ is to show that its single-site conditional probabilities are always at least $p$. In \cref{sec:st-dom} we introduce the notation $p_*(X)$ which is the optimal (largest) such $p$. It is then evident that $p(X) \ge p_*(X)$. This idea has a extension to the case when $Y$ is not an \iid\ process. This is commonly known as \emph{Holley's criterion} and is essentially the only general available tool to prove stochastic domination. For fully supported processes on a finite graph, this criterion roughly says that if the conditional distribution of $X$ at a vertex $v$ given a configuration $\xi$ outside $v$ dominates that of $Y$ given $\xi'$ outside $v$ whenever $\xi \ge \xi'$, then $X$ stochastically dominates $Y$.
One way to prove such a statement is to run joint Glauber dynamics for $X$ and $Y$, maintaining the domination throughout the dynamics, in order to obtain a monotone coupling of $X$ and $Y$ in the limit (see, e.g., \cite[Theorems 2.1 and 2.3]{grimmett2006random} or \cite[Theorem~4.8]{georgii2001random}).

For brevity, let us say that $X$ \emph{Holley dominates} $Y$ if Holley's condition is satisfied. In \cref{sec:inv-dom} we give a similar condition for processes on an infinite graph, and we introduce the notation $X \succeq_* Y$ for this. In particular, $X \succeq_* Y$ implies that $X \ge_{st} Y$. In the case that $Y$ is an \iid\ process of density $p$, the condition $X \succeq_* Y$ amounts to saying that the single-site conditional probabilities of $X$ are always at least $p$. Though the condition $X \succeq_* Y$ is not precisely the same as $X$ Holley dominating $Y$, they are very similar and for the sake of the discussion here we use the terminology of Holley domination to describe the former too.
One big drawback of Holley's criterion is that sometimes it is too rigid in the sense that the single-site distributional comparison must hold for all $\xi \ge \xi'$. For example, the plus state of the Ising model at very high inverse temperature $\beta$ does not Holley dominate a high-density \iid\ process. Indeed, by considering a vertex of degree $\Delta$ which is surrounded by minuses, we see that
\[ p_*(\mu^+_\beta) = \frac1{e^{2\Delta\beta}+1} .\]
In particular, this gives a lower bound on $p(\mu^+_\beta)$ which tends to 0 as $\beta\to\infty$.

\Cref{thm:stochastic-dom-Ising} shows that, nevertheless, on nonamenable graphs, the plus state of the Ising model at very high inverse temperature does stochastically dominate a high-density \iid\ process.
The key idea behind the proof is to slightly `dilute' the pluses of the Ising model by independently flipping each plus to a minus with a fixed probability. This gets rid of the aforementioned rigidity in Holley's criterion (at the expense, however, of making the model non-Markovian). In fact, given the diluted process, the original process is conditionally distributed as the plus state of an Ising model at the same temperature, but with a negative magnetic field (whose precise value depends on the dilution probability). The diluted process itself has a more complicated law, but it turns out that on nonamenable graphs, it actually Holley dominates a high-density \iid\ process. The fundamental reason for this is that on a nonamenable graph, a strong boundary effect can overtake a small volume effect, and so there is a non-trivial phase transition in $\beta$ even in the presence of a small magnetic field~\cite{jonasson1999amenability}.
This translates back to the original Ising measure to yield the desired stochastic domination.
Summarizing this symbolically, if $\sigma$ is a sample from $\mu^+_\beta$ and $\tau$ is an independent dilution of $\sigma$ (where the dilution probability is appropriately tuned), then
\[ p(\sigma) \ge p(\tau) \ge p_*(\tau) \to 1 \qquad\text{as }\beta \to \infty .\]

This technique can potentially be seen as an extension of Holley's criterion for stochastic domination. 
A similar dilution idea can be found in~\cite{liggett1997domination}.
We show that this approach also works for models which are not Markov random fields nor have FKG, e.g., the infinite clusters of Bernoulli percolation, though the proof that the diluted process Holley dominates a high-density \iid\ process is more involved.

We also showcase the potential for further applications of this idea in the proof of the nonamenable case of \Cref{thm:stochastic-order-Ising}, where a simple independent dilution does not work, and we implement a more sophisticated dilution mechanism which is based on the theory of lattice gases (see \Cref{sec:ising-stochastic-ordering}).
In this case, neither of the processes we are stochastically comparing is an \iid\ process, but a similar logic applies. Symbolically, if $\sigma$ is a sample from $\mu^+_{\beta_1}$ and $\sigma'$ is a sample from $\mu^+_{\beta_2}$, we show that there is a way to dilute $\sigma$ to obtain a process $\tau \le \sigma$, so that
\[ \sigma \ge_{st} \tau \succeq_* \sigma' .\]

Let us now move on to discuss the issue of invariant domination, which can be quite elusive. Schramm and Lyons asked (in an unpublished work in 1997) whether for invariant $\{0,1\}$-valued processes on a Cayley graph, stochastic domination implies invariant domination (a more general version of this was asked in~\cite[Question~2.4]{AL07}).  Mester~\cite{mester2013invariant} gave a counterexample to this on a graph which is a Cartesian product of a regular tree with a finite graph.
The elusiveness of invariant domination is further evidence by the example of the free and wired uniform spanning forests, where it is known that the free forest stochastically dominates the wired forest~\cite{benjamini2001special}, but invariant domination has only been established for certain classes of graphs~\cite{bowen2004couplings,lyons2016invariant}.

On a finite graph, it is easy to convert stochastic domination to invariant domination by an abstract averaging procedure. While this is also possible on quasi-transitive amenable graphs, for general graphs, it is useful to have a more constructive approach to invariant domination.
On a finite graph, once Holley domination is established between two invariant processes, there is a natural invariant monotone coupling: simply run the Glauber dynamics mentioned above (recall that in Glauber dynamics, one uniformly picks a vertex and updates).
On an infinite graph, however, it is not obvious that the updating procedure is well defined in general. For Markov random fields, this can easily be made to work as one can simultaneously update all vertices in some invariant independent set. In our applications, however, the processes in question are not Markovian. For example, as mentioned, the Ising model, once diluted, is no longer a Markov random field. In \Cref{sec:inv-dom}, we develop some techniques to establish invariant domination from Holley domination for general processes. In particular, we establish this for the following classes of processes. Precise definitions can be found in \Cref{sec:inv-dom}.

\begin{itemize}
\item Processes which are \textbf{decoupled by ones} in the sense that if the boundary $\partial A$ of a finite set $A$ is all ones, then given this event, the conditional law inside $A$ is independent of that outside.
For this to be useful, we also require that every finite set is almost surely `surrounded' by ones.
The Ising model with large $\beta$ and small independent dilution falls into this category. The infinite clusters $\omega^\infty$ of Bernoulli percolation (even without a dilution), however, does not fall into this category. 

\item Processes which \textbf{can be invariantly decoupled}. This is a generalization of decoupling by ones. Roughly, a process $X$ can be invariantly decoupled if there is an invariant random set $A$ independent of $X$ such the following holds. Conditioned on $A$ and on the values of $X$ outside $A$, one can partition $A$ into finite sets $\{A_i\}$ such that the restrictions of $X$ to $A_i$'s are independent.

As an example consider the infinite clusters $\omega^\infty$ of a Bernoulli percolation with parameter $p$ close to 1, but diluted by another independent Bernoulli percolation $\eta$ with a small parameter. One can check that this process is not decoupled by ones. However, if a set $A$ is surrounded by ones and we are on the event that these vertices are connected to $\infty$ in the \emph{complement} of $A$, then the law inside $A$ is independent of the outside. 
Thus, letting $A$ to be the set of all zeros of $\eta$, one can onsider the `holes' left by the ones of the diluted process which are connected to infinity. One can check that if $p$ is close to 1 and the dilution is small enough, these holes are all finite almost surely, and $A$ restricted to these holes gives the required partition, and the process is invariantly decoupled.

 \item Processes which are \textbf{monotone limits}. By monotone we mean that the single-site conditional probabilities are monotonic in the conditioning (this is closely related to the process Holley dominating itself). By a monotone limit we mean that the process can be obtained as the limit of either all 0 or all 1 boundary conditions. Examples include the plus (or minus) state of the Ising model, as well as the independently diluted plus state.
\end{itemize}

In \cref{sec:inv-dom} we show that Holley domination implies invariant domination in each of the above three settings. Specifically, in \Cref{lem:invariant-monotone-coupling2} we prove that if $X$ can be invariantly decoupled and $Y$ is a Markov random field that is also a monotone limit with finite energy then $X \succeq_* Y$ implies that $X$ invariantly dominates $Y$.
In \cref{lem:invariant-monotone-coupling-decoupledbyones} we prove that if $X$ and $Y$ are decoupled by ones and both Holley dominate high-density \iid\ processes then $X \succeq_* Y$ implies that $X$ invariantly dominates $Y$. Finally, in \cref{thm:invariant-monotone-coupling-montone} we prove that if $X$ and $Y$ are monotone limits then $X \succeq_* Y$ implies that $X$ invariantly dominates $Y$.
As an application of either of the three particular theorems, we may deduce that the plus state of the Ising model at low temperature invariantly dominates a high-density \iid\ process on a nonamenable graph. As an application of \Cref{lem:invariant-monotone-coupling2}, we deduce that the infinite cluster(s) process of a high-density Bernoulli percolation invariantly dominates a high-density \iid\ process on a nonamenable graph.
As an application of either \cref{lem:invariant-monotone-coupling-decoupledbyones} or \Cref{thm:invariant-monotone-coupling-montone}, we can deduce the plus states of the Ising model at different temperatures on a nonamenable graph are invariantly stochastically ordered.

Now we come to finitary factors of i.i.d.\ processes.

The study of finitary factors has a long history originating in ergodic theory. The notion of a finitary factor of \iid\ has also gained attention in the probability community. One reason for this is that it gives a way (at least in principle) to construct/simulate the given process via \iid\ random variables. From the perspective of the current paper, it is of particular interest to investigate and understand the connections between this notion and classical statistical mechanics models. Van den Berg and Steif~\cite{van1999existence} established the first result in this direction, showing that on $\Z^d$, the Ising model has a phase transition (i.e., the plus and minus states differ) if and only if the plus state is a finitary factor of an \iid\ process. While this relation extends to all quasi-transitive amenable graphs, we show in \cref{thm:ising-ffiid} that it breaks down in the nonamenable setting.
Finitary factors have also been shown for classes of Markov random fields~\cite{haggstrom2000propp,spinka2018finitarymrf} and monotone processes~\cite{harel2018finitary}, and in some cases also for infinite-range processes lacking monotonicity~\cite{galves2008perfect}.
See also~\cite{de2012developments,ferrari2002perfect,galves2010perfect,haggstrom1999exact} and references therein for related results in the closely related area of exact sampling (also known as perfect simulation).

In \cref{sec:finitary} we obtain some general sufficient results for a process to be a finitary factor of an i.i.d.\ process. One such result, \Cref{thm:ffiid-general}, states that if a process is decoupled by ones and Holley dominates a high-density \iid\ process, and then it is a finitary factor of an i.i.d.\ process.
In fact, we prove something more: if $X$ and $Y$ are two processes with the above properties and $X$ Holley dominates $Y$, then $X$ and $Y$ can be jointly realized as a finitary factor of iid so that $X \ge Y$ almost surely. In particular, this gives an invariant monotone coupling so that we obtain as a corollary that $X$ invariantly dominates $Y$. The proof of \Cref{thm:ffiid-general} is based on Glauber dynamics and coupling from the past and employs bounding chains in order to `detect' when coupling has occurred in coupling from the past and a novel disease spreading problem (see \Cref{sec:disease}) which is used to show that such detection eventually occurs.

As applications of \Cref{thm:ffiid-general} and our stochastic domination results, we first obtain \Cref{thm:ising-ffiid} simply because the independently diluted Ising model is decoupled by ones and Holley dominates a high-density \iid\ process (recall that this was one of the steps in the proof of \cref{thm:stochastic-dom-Ising} described above). For \Cref{thm:perc-ffiid}, we note that the infinite cluster process $\omega^\infty$ of Bernoulli percolation (whether further diluted by another independent \iid\ process or not) is not in general decoupled by ones, but does satisfy a variation of it which is reminiscent of the decoupling property of the random-cluster model. We refer the reader to \Cref{sec:perc-ffiid} for details on this, and simply point out here that certain twists in the proof of \Cref{thm:ffiid-general} are required to make this work. We also mention that the proof of \Cref{thm:perc-ffiid} requires different arguments in the infinitely ended and one-ended cases.

\bigskip\noindent\textbf{Organization.}
In \cref{sec:st-dom} we discuss the approach we use to obtain the stochastic domination results, we prove some general results about the existence of invariant monotone couplings, and we prove \cref{thm:stochastic-dom-perc,thm:stochastic-dom-Ising}. The proof of \cref{thm:stochastic-order-Ising} is based on the same general approach, but is more involved, and we dedicate \cref{sec:ising-stochastic-ordering} to this. In \cref{sec:finitary} we prove a general result about finitary factors (\cref{thm:ffiid-general}) and use it and a variant of it (\cref{thm:ffiid-general-conn}) together with the earlier stochastic domination results to prove \cref{thm:ising-ffiid,thm:perc-ffiid} in \cref{sec:ising-ffiid,sec:perc-ffiid}. We end with open problems in \cref{sec:open}.

\bigskip\noindent\textbf{Notation.}
Throughout the paper, $G$ denotes a locally finite graph on a countable vertex set~$V$.
When $G$ has bounded degree, we denote its maximum degree by $\Delta=\Delta(G)$.
The vertex and edge Cheeger constants of $G$ are denoted by $h=h(G)$ and $h_e=h_e(G)$, respectively.
For $u,v \in V$, we write $\dist(u,v)$ for the graph distance between $u$ and $v$ in $G$. We write $B_r(v) := \{ u \in V : \dist(u,v) \le r \}$ for the ball of radius $r$ around $v$, and $B^*_r(v) := B_r(v) \setminus \{v\}$ for the punctured ball. We write $N(v) := B^*_1(v)$ for the neighborhood of $v$.

\bigskip\noindent\textbf{Acknowledgements.}
We are grateful to Matan Harel for discussions in the initial stage of the project. We also thank Omer Angel and Alexandre Stauffer for helpful discussions.

\section{Stochastic domination}\label{sec:st-dom}

We start with a general discussion around stochastic domination in $\{0,1\}^V$.
A basic method for showing that a random element $X$ in $\{0,1\}^V$ stochastically dominates the product measure $\nu_p$ is to show that its single-site conditional probabilities are at least $p$. To be precise, define
\[ p_*(X) := \inf_{v \in V} \essinf \E[X_v \mid (X_u)_{u \neq v}] .\]
It is standard that $X$ stochastically dominates $\nu_{p_*(X)}$, and hence that
\[ p(X) \ge p_*(X) .\]

It is often the case that $p(X)$ is strictly larger than $p_*(X)$, and in some situations it may even occur that $p_*(X)=0$ while $p(X)$ is positive or even close to 1. For example, in the case when $|V|=2$ and $\mu$ assigns probability $p^2$ to $(0,0)$ and $1-p^2$ to $(1,1)$, it is easy to see that $p(\mu)=1-p$ and $p_*(\mu)=0$.
More natural examples of this type are given by \cref{thm:stochastic-dom-perc,thm:stochastic-dom-Ising}:
the infinite clusters $\omega^\infty$ of Bernoulli percolation is easily seen to satisfy that $p_*(\omega^\infty)=0$ (if all neighbors $u$ of some vertex $v$ have $\omega^\infty_u=0$, then it must be the case that $\omega^\infty_v=0$), while the former theorem says that it can have $p(\omega_\infty)$ arbitrarily close to 1 on a nonamenable graph.
In the Ising model one easily sees that $p_*(\mu^+_\beta) = (e^{2\Delta\beta}+1)^{-1}$, which tends to 0 as $\beta$ tends to infinity, whereas the latter theorem says that $p(\mu^+_\beta)$ can be arbitrarily close to 1 on a nonamenable graph.
In particular, the naive approach of lower bounding $p(X)$ by $p_*(X)$ cannot be used to show that $p(\mu^+_\beta) \to 1$ as $\beta \to \infty$ when $G$ is nonamenable. 

Instead of this naive approach, the approach we use in order to lower bound $p(X)$ is to dilute the ones in $X$ and then lower bound $p_*(X')$ for the diluted process $X'$. More specifically, \cref{thm:stochastic-dom-perc,thm:stochastic-dom-Ising} will be obtained by using independent dilution, while \cref{thm:stochastic-order-Ising} will use a more sophisticated dilution mechanism. Such a dilution technique is also at the heart of a well-known domination result of Liggett, Schonmann and Stacey~\cite{liggett1997domination} for finitely dependent processes and a class of processes with weak independence conditions.

Given two elements $x,y \in \{0,1\}^V$, we write $xy$ for their pointwise product, i.e., $xy$ is the element of $\{0,1\}^V$ defined by $(xy)_v := x_vy_v$ for all $v\in V$. Clearly, if $X$ and $Y$ are any two random elements in $\{0,1\}^V$, then
\[ p(X) \ge p(XY) \ge p_*(XY) .\]
In the particular case that $Y \sim \nu_p$ is independent of $X$, we may think of $XY$ as an independent dilution of the ones in $X$, and furthermore we have that
\[ p_*(XY) \ge p \cdot p_*(X \mid XY) ,\]
where for a random element $Z \in \{0,1\}^V$ (coupled with $X$), we define
\[ p_*(X \mid Z) := \inf_{v \in V} \essinf \E[X_v \mid Z] .\]
We emphasize that the definition of $p_*(X \mid Z)$ involves conditioning on all of $Z$, rather than only on $(Z_u)_{u\neq v}$, which the reader may have expected (we could have instead defined and worked with the quantity $p'_*(X \mid Z) := \inf_{v \in V} \essinf \E[X_v \mid (Z_u)_{u \neq v}]$, which satisfies $p'_*(X \mid Z) \ge p_*(X \mid Z)$ and $p'_*(X \mid X) = p_*(X)$, but $p_*(X \mid Z)$ appears more naturally in the proofs).

We now give another interpretation of $p_*(X \mid XY)$ in the case when $Y \sim \nu_p$ is independent of~$X$.
We say that $X$ \textbf{supports $\bar 1$} if $\P(X_A=1)>0$ for all finite $A \subset V$. This is clearly necessary in order for $p(X)$ to be positive.
Suppose that $X$ supports $\bar1$.
For finite sets $A,B \subset V$, let us denote by $X^{(A,B,p)}$ a random variable whose law is that of $X$ when conditioned to equal 1 on $A$ and tilted by a factor of $1-p$ for each vertex in $B$ which is 1. That is, the law of $X^{(A,B,p)}$ is determined by the condition that for every bounded measurable function $f$ on $\{0,1\}^V$,
\[ \E\left[f(X^{(A,B,p)})\right] = \frac{\E\Big[f(X) \prod_{v \in A} X_v \prod_{v \in B} (1-p)^{X_v} \Big]}{\E\Big[\prod_{v \in A} X_v \prod_{v \in B} (1-p)^{X_v} \Big]} = \frac{\E\left[f(X) \1_{\{X_A \equiv 1\}} (1-p)^{\sum_{v \in B} X_v} \right]}{\E\left[\1_{\{X_A \equiv 1\}} (1-p)^{\sum_{v \in B} X_v} \right]} .\]
When $A$ and/or $B$ are infinite, and the weak limit of $X^{(A \cap F,B \cap F,p)}$ exists as $F$ increases to $V$, we write $X^{(A,B,p)}$ for a random variable with this limiting law.
In the case when $B=A^c$, we shorten $X^{(A,A^c,p)}$ to $X^{(A,p)}$.

\begin{lemma}\label{lem:domination*}
Let $X$ be a random element in $\{0,1\}^V$ that supports $\bar 1$. Let $p \in (0,1)$ and let $Y \sim \nu_p$ be independent of $X$. Then, almost surely,
\[ X\text{ given } XY\text{ has the distribution of }X^{(\{ u \in V : (XY)_u=1 \},p)} .\]
In particular,
\[ p_*(X \mid XY) = \inf_{v \in V} \inf_{A,B \Subset V} \E X^{(A,B,p)}_v  .\]
\end{lemma}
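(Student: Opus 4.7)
The plan is to compute the conditional distribution of $X$ given $Z := XY$ by a direct Bayes calculation on finite cylinders, and then pass to the infinite-volume limit via backward martingale convergence. The guiding observation, which makes the identification with $X^{(A,A^c,p)}$ inevitable, is site-by-site: conditioning on $Z$ forces $X_u = 1$ wherever $Z_u = 1$ (since $X_u Y_u = 1$ requires both factors to equal $1$), while at each site $u$ with $Z_u = 0$, integrating $Y_u$ out of the event $X_u Y_u = 0$ contributes the multiplicative weight $(1-p)^{X_u}$. These factorize into precisely the tilt defining $X^{(A, A^c, p)}$ with $A := \{u : Z_u = 1\}$.

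For the first step, I fix a finite window $F \Subset V$ and a configuration $z_F \in \{0,1\}^F$, and set $A_0 := \{u \in F : z_u = 1\}$ and $B_0 := F \setminus A_0$. From $\P(Z_F = z_F \mid X) = \1_{X_{A_0} \equiv 1}\, p^{|A_0|}(1-p)^{\sum_{u \in B_0} X_u}$ and Bayes' rule I obtain, for any bounded cylinder function $f$,
\[ \E[f(X) \mid Z_F = z_F] = \frac{\E\bigl[f(X)\, \1_{X_{A_0} \equiv 1}\, (1-p)^{\sum_{u \in B_0} X_u}\bigr]}{\E\bigl[\1_{X_{A_0} \equiv 1}\, (1-p)^{\sum_{u \in B_0} X_u}\bigr]} = \E f\bigl(X^{(A_0, B_0, p)}\bigr), \]
with the denominator positive because $X$ supports $\bar 1$ and $p \in (0,1)$. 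This identifies the conditional law of $X$ given $Z_F = z_F$ with that of $X^{(A_0, B_0, p)}$.

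For the second step, take $F_n \uparrow V$. By backward martingale convergence, $\E[f(X) \mid Z_{F_n}] \to \E[f(X) \mid Z]$ almost surely for every cylinder $f$. The previous display identifies the left-hand side with $\E f\bigl(X^{(A(Z) \cap F_n, A(Z)^c \cap F_n, p)}\bigr)$, where $A(Z) := \{u : Z_u = 1\}$. Since convergence holds simultaneously for all cylinder $f$, the weak limit defining $X^{(A(Z), A(Z)^c, p)} = X^{(A(Z), p)}$ automatically exists, and it coincides with the regular conditional distribution of $X$ given $Z$. This proves the first assertion.

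For the ``in particular'' identity, specializing $f$ to the coordinate $X_v$ gives $\E[X_v \mid Z_F = z_F] = \E X_v^{(A_0, B_0, p)}$ whenever $v \in F$. Since every partition $F = A_0 \sqcup B_0$ is realized with positive $Z_F$-probability (by the support hypothesis on $X$ and $p<1$), this yields $\essinf \E[X_v \mid Z_F] = \inf_{A_0 \sqcup B_0 = F} \E X_v^{(A_0, B_0, p)}$. The tower property makes $\essinf \E[X_v \mid Z_{F_n}]$ non-increasing in $n$ and bounded below by $\essinf \E[X_v \mid Z]$; in the other direction, the a.s.\ limit $\E[X_v \mid Z_{F_n}] \to \E[X_v \mid Z]$ provides a matching upper bound by exhibiting, for any $c > \essinf \E[X_v \mid Z]$, a positive-probability set on which the $n$-th conditional expectation is eventually below $c$. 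Combining these shows $\essinf \E[X_v \mid Z] = \lim_n \essinf \E[X_v \mid Z_{F_n}] = \inf_{A_0, B_0 \Subset V} \E X_v^{(A_0, B_0, p)}$, and taking the infimum over $v$ gives the stated formula. The main obstacle is the coordination of the two limiting procedures -- the weak limit defining $X^{(A,p)}$ and the martingale limit of cylinder conditional laws -- but both fall out simultaneously from backward martingale convergence applied to the finite-window Bayes identity, so no separate tightness or compactness argument is needed.
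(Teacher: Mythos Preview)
Your proof is correct and follows essentially the same approach as the paper: a Bayes computation on finite windows identifies the conditional law of $X$ given $(XY)_F$ with $X^{(A_0,B_0,p)}$, and then L\'evy's zero-one law passes to the limit. One terminological slip: the filtration $\sigma(Z_{F_n})$ is increasing, so this is L\'evy's \emph{upward} theorem (forward martingale convergence), not backward; and for the ``in particular'' clause the paper's argument is slightly more direct (it bounds $\essinf \E[X_v\mid Z]$ from above by averaging $\E[X_v\mid Z]$ over the positive-probability event $\{Z_A\equiv1,\,Z_{B\setminus A}\equiv0\}$ via the tower property), but your monotone-essinf argument reaches the same conclusion.
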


The last expression can be written more explicitly as
\[ \inf_{\substack{v_0,\dots,v_k \in V\\u_1,\dots,u_m \in V}} \frac{\E\left[X_{v_0}\cdots X_{v_k}(1-p)^{X_{u_1}+\cdots+X_{u_m}}\right]}{\E\left[X_{v_1}\cdots X_{v_k}(1-p)^{X_{u_1}+\cdots+X_{u_m}}\right]} = \inf_{\substack{v_0,\dots,v_k \in V\\u_1,\dots,u_m \in V}} \frac{\E\left[\1_{\{X_{v_0}= \cdots =X_{v_k}=1\}}(1-p)^{X_{u_1}+\cdots+X_{u_m}}\right]}{\E\left[\1_{\{X_{v_1}= \cdots =X_{v_k}=1\}}(1-p)^{X_{u_1}+\cdots+X_{u_m}}\right]} .\]

\begin{proof}
By Levy's zero-one law, the conditional law of $X$ given $XY$ is almost surely the limit of its conditional law given $(XY)_F$ as $F$ increases to $V$ (along a fixed sequence). Thus, it suffices to show that for any finite $F \subset V$, almost surely,
\begin{equation}\label{eq:cond-distrib-give-XY}
X\text{ given } (XY)_F\text{ has the distribution of }X^{(\{ u \in F : (XY)_u=1 \},\{ u \in F : (XY)_u=0 \},p)} .
\end{equation}
Fix finite sets $F \subset S \subset V$ and consider the conditional law of $X_S$ given $(XY)_F$. By Bayes' formula, for $x \in \{0,1\}^S$ and $z \in \{0,1\}^F$ such that $\P((XY)_F=z)>0$,
\[ \P(X_S=x \mid (XY)_F=z) = \frac{\P(X_S=x) \cdot p^{\#\{u \in F : z_u=1\}} (1-p)^{\#\{u \in F : x_u=1, z_u=0\}} \1_{\{x_F \ge z\}}}{\P((XY)_F=z)} .\]
Thus, for fixed $z$, we see that $\P(X_S=x \mid (XY)_F=z)$ is proportional to
\[ \P(X_S=x) \cdot (1-p)^{\#\{u \in F : x_u=1, z_u=0\}} \1_{\{x_F \ge z\}} .\]
This shows that the conditional law of $X_S$ given $(XY)_F$ is $X^{(\{ u \in F : (XY)_u=1 \},\{ u \in F : (XY)_u=0 \},p)}_S$.

For the `in particular' part, note first that $\E[X_v \mid XY] \ge \inf_{A,B} \E X_v^{(A,B,p)}$ almost surely by~\eqref{eq:cond-distrib-give-XY}, so that $p_*(X \mid XY)$ is at least the infimum. Conversely, $\essinf \E[X_v \mid XY] \le \inf_{A,B} \E X_v^{(A,B,p)}$. Indeed, $E := \{(XY)_A \equiv 1\} \cap \{(XY)_{B \setminus A} \equiv 0\}$ has positive probability since $X$ supports $\bar1$ and $Y$ is independent of $X$, and
\[\E X_v^{(A,B,p)} = \E[X_v \mid E] = \E[ \E[X_v \mid XY] \mid E] \]
by~\eqref{eq:cond-distrib-give-XY}, so that $\E[X_v \mid XY] \le \E X_v^{(A,B,p)}$ with positive probability.
 \end{proof}

\subsection{Invariant domination}\label{sec:inv-dom}
We now address the existence of invariant monotone couplings.
We shall show that under a certain decoupling condition, an invariant process $X$ on a locally finite connected graph satisfies
\[ p_\inv(X) \ge p_*(X) .\]
This allows to employ the same dilution approach as before to lower bound $p_\inv(X)$. Namely, for an invariant process $Y$ which is invariantly coupled with $X$, and for which $XY$ satisfies the decoupling condition, we have that
\[ p_\inv(X) \ge p_\inv(XY) \ge p_*(XY) .\]

We say that a random set $A \subset V$ is invariant if its indicator $\1_A \in \{0,1\}^V$ is an invariant process.
We say that \textbf{$X$ can be invariantly decoupled} if there exists a random set $A \subset V$ such that the following holds:
\begin{itemize}
\item $A$ is independent of $X$,
\item $A$ is invariant and $\P(v\in A)>0$ for all $v \in V$,
\item There exists an $(A,X_{A^c})$-measurable partition of $A$ into finite sets $\{A_i\}_i$ such that, given $(A,X_{A^c})$, almost surely, $\{X_{A_i}\}$ are conditionally independent.
\end{itemize}
For our applications, the indicator of the random invariant set $A$ will be an i.i.d.\ process.

A simple class of processes which can be invariantly decoupled are invariant Markov random fields (all that is needed of $A$ is that it has no infinite clusters almost surely).
For our applications, we will need to allow for non-Markov random fields. 
A slighter larger class of processes which can be invariantly decoupled is given by the following notion.
We say that a $\{0,1\}$-valued process $X$ is \textbf{decoupled by ones} if for any finite set $A \subset V$ such that $\P(X_{\partial A} \equiv 1)>0$, we have that $X_A$ and $X_{A^c}$ are conditionally independent given that $X_{\partial A} \equiv 1$. 

\begin{lemma}\label{lem:decoupled-by-ones-implies-decoupling}
Let $X$ be an invariant $\{0,1\}$-valued process which is decoupled by ones. Suppose that there exists $q \in [0,1)$ such that $\{ v\in V: (XY)_v=0\}$ almost surely has no infinite clusters when $Y \sim \nu_q$ is independent of $X$. Then $X$ can be invariantly decoupled.
\end{lemma}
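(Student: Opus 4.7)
The plan is to take $A$ to be the set of zeros of an auxiliary i.i.d.\ Bernoulli process. Let $Y \sim \nu_q$ be independent of $X$, with $q \in [0,1)$ as in the hypothesis, and set
\[ A := \{v \in V : Y_v = 0\} . \]
Then $A$ is a function of $Y$ alone, hence independent of $X$, clearly invariant under automorphisms of $G$, and $\P(v \in A) = 1-q > 0$. This is the candidate random set.

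Next I would argue that knowing $(A, X_{A^c})$ is equivalent to knowing $(Y, X_{A^c})$ (since $Y = \1_{A^c}$), which in turn determines the product process $XY$ entirely: $(XY)_v = 0$ for $v \in A$ because $Y_v = 0$, and $(XY)_v = X_v$ for $v \in A^c$ because $Y_v = 1$. Consequently, the connected components $\{B_j\}_j$ of the zero set $\{v : (XY)_v = 0\}$ are $(A, X_{A^c})$-measurable, and by hypothesis each $B_j$ is almost surely finite. Since $A \subseteq \{v : (XY)_v = 0\}$, the intersections $A_j := B_j \cap A$ (over those $j$ for which this is nonempty) yield an $(A, X_{A^c})$-measurable partition of $A$ into finite sets, giving the first two requirements.

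The remaining, and main, task is the conditional-independence of $\{X_{A_j}\}_j$ given $(A, X_{A^c})$. The key geometric observation is that for every $j$, the vertex boundary $\partial B_j$ lies in $\{v : (XY)_v = 1\} = \{v : X_v = Y_v = 1\} \subseteq A^c$, so $X \equiv 1$ on $\partial B_j$ and this value is part of the conditioning data $X_{A^c}$. I would then invoke decoupled-by-ones: first applied to each $B_j$ individually, and then extended to a finite subcollection $B_{j_1}, \dots, B_{j_k}$ by applying it to the union $\bigcup_l B_{j_l}$ (whose boundary is exactly $\bigsqcup_l \partial B_{j_l}$ because distinct components are graph-isolated) and iterating the decoupling inside this union. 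This yields joint conditional independence of $(X_{B_{j_1}}, \dots, X_{B_{j_k}})$ given $\{X_{\partial B_{j_l}} \equiv 1 : l=1,\dots,k\}$, together with independence from $X$ outside $\bigcup_l B_{j_l}$. Passing to all countably many $B_j$'s through independence of finite subfamilies, and finally noting that restricting from $X_{B_j}$ to $X_{A_j} = X_{B_j \cap A}$ amounts to using the already-known values $X_{B_j \cap A^c} = 0$ as additional within-component conditioning, preserves the factorization across components.

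The step requiring the most care, and which I would write out most explicitly, is the passage from the single-set formulation of decoupled-by-ones to the joint statement across the countable family $\{B_j\}$. Here one has to verify that the relevant conditioning events have positive probability almost surely, and that conditioning additionally on $X_{A^c}$ (beyond the ``ones'' on the boundaries) affects the conditional law of each $X_{B_j}$ only through the values $X_{B_j \cap A^c}$ lying inside $B_j$, so that independence across components is retained. Everything else in the argument is bookkeeping around the measurability of the partition and the almost-sure finiteness of the $B_j$'s coming from the hypothesis on $q$.
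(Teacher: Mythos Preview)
Your proposal is correct and follows essentially the same route as the paper: take $A=\{v:Y_v=0\}$, let the partition classes be the intersections of $A$ with the finite zero-clusters of $XY$, and use decoupled-by-ones together with $X_{\partial B_j}\equiv 1$ to get the conditional independence. The paper's proof is a terse one-paragraph version of exactly this argument, so the only difference is that you spell out the conditional-independence step (including the observation $X_{B_j\cap A^c}\equiv 0$) in more detail than the paper does.
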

\begin{proof}
Let $\{C_i\}$ be the clusters of $\{ v \in V : (XY)_v=0\}$.
Let $A := \{ v \in V : Y_v = 0 \}$ and $A_i := A \cap C_i$.
Then $\{A_i\}$ is a $(A,X_{A^c})$-measurable partition of $A$ into finite sets, and since $X_{\partial C_i} \equiv 1$ and $X$ is decoupled by ones, $\{X_{A_i}\}$ are conditionally independent given $(A,X_{A^c})$. This shows that $X$ can be invariantly decoupled.
\end{proof}

For our application to the plus state of the Ising model (\cref{thm:stochastic-dom-Ising}), the notion of decoupled by ones would suffice. However, for our application to the infinite clusters of Bernoulli percolation (\cref{thm:stochastic-dom-perc}), we need a more relaxed notion and it is for this reason that we introduced the general decoupling notion above.

\begin{thm}\label{lem:invariant-monotone-coupling}
Let $G$ be a locally finite connected graph. Let $X$ be an invariant $\{0,1\}$-valued process which can be invariantly decoupled.
Then $X$ invariantly dominates $Y \sim \nu_{p_*(X)}$.
In particular, $p_\inv(X) \ge p_*(X)$.
\end{thm}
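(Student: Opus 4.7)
The plan is to construct the invariant monotone coupling by running a forward-time Markov chain that at each step resamples $X$ on a fresh independent copy of the decoupling set $A$, simultaneously building $Y$ via a finite Holley coupling on each of the (finite) components of $A$, and then passing to a weak limit. Fix any $p < p_*(X)$; we will produce an invariant monotone coupling of $X$ with $\nu_p$, and then let $p \nearrow p_*(X)$ via weak compactness to conclude.

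Let $A^{(1)}, A^{(2)}, \ldots$ be \iid\ copies of $A$, let $(U^{(t)}_v)_{t \ge 1,\, v \in V}$ be independent $\mathrm{Unif}[0,1]$ variables used in the couplings, and let $(\xi_v)_{v \in V}$ be independent $\mathrm{Unif}[0,1]$ vertex labels used to pick canonical equivariant orderings of finite vertex sets---all mutually independent and independent of an initial $X^{(0)}$ drawn from the law of $X$. Set $Y^{(0)} \equiv 0$, so that $Y^{(0)} \le X^{(0)}$ and the initial pair is invariant in law. At step $t \ge 1$, apply the invariant-decoupling hypothesis to the freshly drawn $A^{(t)}$ (which is independent of $X^{(t-1)}$, whose law is still that of $X$) to reveal the $(A^{(t)},\, X^{(t-1)}|_{(A^{(t)})^c})$-measurable partition $\{A^{(t)}_i\}$ of $A^{(t)}$ into finite sets. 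By hypothesis the conditional law of $X|_{A^{(t)}}$ given $(A^{(t)},\, X^{(t-1)}|_{(A^{(t)})^c})$ factors as $\prod_i \mu_i$, and each $\mu_i$ satisfies, for every $v \in A^{(t)}_i$,
\[ \P_{\mu_i}\bigl(X_v=1 \,\big|\, X_{A^{(t)}_i \setminus \{v\}}\bigr) = \E\bigl[\P(X_v=1 \mid X_{-v}) \,\big|\, X_{A^{(t)}_i \setminus \{v\}},\, X^{(t-1)}|_{(A^{(t)})^c}\bigr] \ge p_*(X) \ge p . \]
On each $A^{(t)}_i$ independently, perform a sequential quantile coupling in the $\xi$-order using the uniforms $U^{(t)}$: this produces $(X^{(t)}|_{A^{(t)}_i},\, Y^{(t)}|_{A^{(t)}_i})$ with $X^{(t)}|_{A^{(t)}_i} \sim \mu_i$, with $Y^{(t)}|_{A^{(t)}_i}$ \iid\ Bernoulli($p$), and with $Y^{(t)} \le X^{(t)}$ on $A^{(t)}_i$. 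Outside $A^{(t)}$, copy the state. Then $Y^{(t)} \le X^{(t)}$ everywhere, the marginal of $X^{(t)}$ remains the law of $X$ (resampling from the true conditional preserves the joint), and $(X^{(t)}, Y^{(t)})$ is invariant in law (all inputs are invariant or equivariant).

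For each $v$, $\P(v \notin A^{(t)} \text{ for all } 1 \le t \le T) = (1 - \P(v \in A))^T \to 0$, so with probability tending to $1$ the vertex $v$ is eventually touched, and once touched $Y^{(T)}_v$ equals a fresh Bernoulli($p$) sample produced at its latest touching time. For any finite set $v_1, \ldots, v_k$, distinct vertices either share a touching time---in which case the common finite Holley update assigns them \iid\ Bernoulli($p$) values---or have disjoint touching times---in which case they use independent uniforms---so $(Y^{(T)}_{v_1}, \ldots, Y^{(T)}_{v_k})$ converges in distribution to \iid\ Bernoulli($p$), and hence $Y^{(T)} \to \nu_p$ in distribution. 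By compactness of $\{0,1\}^V \times \{0,1\}^V$ in the product topology, extract a weak subsequential limit $(X^\infty, Y^\infty)$ of $(X^{(T)}, Y^{(T)})$; it has marginals equal to the law of $X$ and to $\nu_p$, satisfies the closed condition $Y^\infty \le X^\infty$ a.s., and is invariant in law (invariance is preserved under weak convergence), giving the desired invariant monotone coupling. Taking a further weak limit as $p \nearrow p_*(X)$ yields an invariant monotone coupling of $X$ with $\nu_{p_*(X)}$, so $p_\inv(X) \ge p_*(X)$.

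The main technical obstacle is to simultaneously meet three constraints at each update---preserving the marginal law of $X$, producing fresh Bernoulli($p$) bits for $Y$, and keeping $Y \le X$ pointwise, all invariantly. The invariant-decoupling hypothesis is precisely what makes this possible by reducing an infinite-volume step to independent finite-volume Holley couplings that can be executed equivariantly using the \iid\ labels $\xi_v$; the bound $p_*(X) \ge p$ then turns each finite coupling into a routine sequential quantile construction.
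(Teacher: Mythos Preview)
Your argument is correct and follows the same overall strategy as the paper: run a joint dynamics that preserves the law of $X$, drives $Y$ toward $\nu_p$, maintains $Y\le X$ at every step, and then pass to a subsequential weak limit. The paper deduces this result as the special case $Y\sim\nu_{p_*(X)}$ of \cref{lem:invariant-monotone-coupling2}; in that proof the decoupling set $A$ is first thinned to an invariant set $B$ of isolated vertices and one performs single-site Glauber updates using the full one-point conditionals $p_v(X)=\E[X_v\mid X_{-v}]$, with convergence of the $Y$-chain established via the monotone-limit property of $Y$. Your implementation instead resamples $X$ on each finite piece $A_i$ in one sweep by a sequential quantile coupling. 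This is more direct for the case $Y\sim\nu_p$: there is no need to reduce to isolated vertices, and the convergence $Y^{(T)}\Rightarrow\nu_p$ is immediate because each vertex eventually receives a fresh Bernoulli coin drawn from an independent uniform. The paper's route, on the other hand, is what generalizes to arbitrary monotone-limit Markov random fields $Y$, where one genuinely needs the single-site Glauber structure and the monotone-limit comparison.

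One point you pass over deserves a line of justification. Your invariance claim (``all inputs are invariant or equivariant'') implicitly requires either that the partition map $(A,X_{A^c})\mapsto\{A_i\}$ is equivariant, or that your coupling does not depend on which valid partition is used. The latter is in fact true: by the conditional-independence hypothesis, the sequential conditional you use at $v\in A_i$ satisfies
\[
\P\bigl(X_v=1 \,\big|\, (X_w)_{w\in A_i,\;\xi_w<\xi_v},\,X_{A^c}\bigr)
=\P\bigl(X_v=1 \,\big|\, (X_w)_{w\in A,\;\xi_w<\xi_v},\,X_{A^c}\bigr),
\]
and the right-hand side is partition-free; hence the resulting $(X^{(t)},Y^{(t)})$ is the same for any admissible partition and is therefore automorphism-invariant. (The paper is similarly terse on this point.) Incidentally, the detour through $p<p_*(X)$ followed by $p\nearrow p_*(X)$ is unnecessary: your quantile coupling already gives $Y_v=\1_{\{U_v\le p_*(X)\}}\le \1_{\{U_v\le \text{cond.\ prob.}\}}=X_v$ directly at $p=p_*(X)$.
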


The result given in \cref{lem:invariant-monotone-coupling} applies more generally than to a process $X$ and the associated process $Y \sim \nu_{p_*(X)}$.
Given two random elements $X$ and $Y$ in $\{0,1\}^V$, write $X \succeq_* Y$ if for all $v\in V$, finite $F \subset V\setminus \{v\}$, $x,y \in \{0,1\}^F $ such that $x \ge y$ and $\P(X_F=x),\P(Y_F=y)>0$,
\begin{equation}\label{eq:XY-*dom-def}
\E[X_v \mid X_F=x] \ge \E[Y_v \mid Y_F=y] .
\end{equation}

It is not hard to see that $X \succeq_* Y$ implies that $X \ge_{st} Y$. Indeed, one can sequentially couple the processes as follows. Enumerate the vertices in $V$ in any arbitrary order $v_1,v_2,\dots$. Suppose we have monotonically coupled $(X_{v_i} )_{1 \le i \le k-1}$ and $(Y_{v_i} )_{1 \le i \le k-1}$. Now apply the above inequality to monotonically couple $X_{v_k}$ and $Y_{v_k}$ conditioned on the values of $X$ and $Y$ on $F=\{v_1,\dots,v_{k-1}\}$.

We note the similarity of the above condition to Holley's criterion (see, e.g., \cite[Theorem 2.3]{grimmett2006random} or \cite[Theorem~4.8]{georgii2001random}). We point out however that our base set $V$ can be countably infinite and our processes are not assumed to be fully supported or that their support is connected in any sense. On the other hand, our condition requires comparison for all $F \subset V \setminus \{v\}$, not just for $F=V\setminus \{v\}$, as in Holley's criterion. 

Say that $Y$ is \textbf{monotone} if $Y \succeq_* Y$.
Say that $Y$ is a \textbf{$\bar1$-limit} if it supports $\bar1$ and its distribution is the limit of $\P(Y \in \cdot \mid Y_{A_n}=1)$ whenever $A_n$ are finite and receding to infinity in the sense that $A_n$ is eventually disjoint from any fixed finite set. The notion of a $\bar0$-limit is defined similarly.
Say that $Y$ is a \textbf{monotone limit} if it is monotone and either a $\bar1$-limit or a $\bar0$-limit.
Say that $Y$ has \textbf{finite energy} if $\E[Y_v \mid (Y_u)_{u \neq v}] \in (0,1)$ almost surely for every $v \in V$.

Note that $p_*(X) \ge p$ if and only if $X \succeq_* Y$ when $Y \sim \nu_p$. Note also, trivially, that $Y \sim \nu_p$ can be invariantly decoupled and is a monotone limit Markov random field having finite energy. \cref{lem:invariant-monotone-coupling} is therefore a special case of the following.

\begin{thm}\label{lem:invariant-monotone-coupling2}
Let $G$ be a locally finite connected graph and let $X$ and $Y$ be invariant $\{0,1\}$-valued processes. Suppose that $X$ can be invariantly decoupled, $Y$ is a monotone limit Markov random field having finite energy and $X \succeq_* Y$.
Then $X$ invariantly dominates $Y$.
\end{thm}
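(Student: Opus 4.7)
The plan is to construct the invariant monotone coupling via an invariant block-Glauber-type dynamics on pairs $(X,Y)$ indexed by fresh copies of the decoupling set, combined with a block-level Holley comparison. First, I would upgrade the hypothesis $X \succeq_* Y$ to a block Holley condition: for any finite $B \subset V$ and any boundary pair $\xi^X \ge \xi^Y$ on $V \setminus B$ of positive probability, the conditional law of $X_B$ given $X_{V \setminus B} = \xi^X$ Holley-dominates the conditional law of $Y_B$ given $Y_{\partial B} = \xi^Y|_{\partial B}$ (the Markov property of $Y$ truncates the $Y$-side conditioning to $\partial B$). This follows by iterating $X \succeq_* Y$ vertex by vertex over finite $F \subset V \setminus \{v\}$ and passing to the limit via Levy's zero-one law, with finite energy of $Y$ ensuring the relevant conditional probabilities are well-defined. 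The finite-volume Holley theorem then produces an explicit monotone coupling of the two conditional laws on $B$.

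Next I would set up the block dynamics using the decoupling. Let $A$ be the invariant random set with finite-block partition $\{A_i\}$ such that $\{X_{A_i}\}$ are conditionally independent given $(A, X_{A^c})$; without loss of generality (by suitable refinement or choice of $A$), assume each $A_i$ is a connected component of the induced subgraph on $A$, so $\partial A_i \subset A^c$, and hence the Markov property of $Y$ makes $\{Y_{A_i}\}$ conditionally independent given $(A, Y_{A^c})$ as well. The dynamics runs as follows: at each time step $t$, sample a fresh invariant $A^{(t)}$ independent of the past, and on each block $A_i^{(t)}$ resample $(X^{(t)}_{A_i}, Y^{(t)}_{A_i})$ from the block Holley coupling given the current boundary values $(X^{(t-1)}_{A^{(t),c}}, Y^{(t-1)}_{A^{(t),c}})$. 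Because block Holley preserves each marginal, with initialization $X^{(0)} \sim X$ the decoupling of $X$ keeps $X^{(t)} \sim X$ for all $t$, while $Y^{(t)}$ evolves by a block Glauber dynamics for $Y$'s law.

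In the $\bar 0$-limit case, initialize $Y^{(0)} \equiv \bar 0$ so that $X^{(0)} \ge Y^{(0)}$ trivially; the block Holley coupling then maintains $X^{(t)} \ge Y^{(t)}$ at every step, and the $\bar 0$-limit, monotonicity, and finite-energy properties yield monotone non-decreasing weak convergence $Y^{(t)} \Rightarrow Y$. Compactness of $\{0,1\}^V \times \{0,1\}^V$ together with the $G$-equivariance of the construction then gives a weak subsequential invariant limit $(X^{(\infty)}, Y^{(\infty)})$ with the correct marginals and $X^{(\infty)} \ge Y^{(\infty)}$ almost surely. The $\bar 1$-limit case is treated by a symmetric initialization at $\bar 1$. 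The main obstacle will be verifying that the block Glauber for $Y$, with a random partition freshly drawn at each step, genuinely converges to $Y$'s law in infinite volume---this is where the monotone-limit, finite-energy, and positive-density $\P(v \in A)>0$ assumptions work in concert---and, in the $\bar 1$-limit case, additionally ensuring that the $X$-marginal is stabilized despite the alternative initialization, which relies on the conditional-independence structure of the decoupling.
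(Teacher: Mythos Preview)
Your broad strategy---run a joint Glauber-type dynamics in which the $X$-marginal is stationary by the decoupling hypothesis, the $Y$-marginal converges to $Y$ by monotonicity and the limit assumption, and then extract an invariant monotone coupling as a subsequential weak limit---matches the paper's. However, two of your steps have genuine gaps.

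First, the ``without loss of generality'' that the blocks $\{A_i\}$ can be taken to be the connected components of $A$ is unjustified. The decoupling hypothesis only supplies an $(A,X_{A^c})$-measurable partition $\{A_i\}$ of $A$ into finite sets with the stated conditional independence for $X$; the connected components of $A$ need not be finite, and neither refining nor coarsening $\{A_i\}$ to connected components preserves both finiteness and the $X$-conditional-independence in general. Without $\partial A_i \subset A^c$ the Markov property of $Y$ does \emph{not} make $\{Y_{A_i}\}$ conditionally independent given $Y_{A^c}$, so your block $Y$-dynamics is not well defined as written. The paper's remedy is to thin $A$ to an invariant subset $B \subset A$ consisting solely of \emph{isolated} vertices (e.g.\ local minima of i.i.d.\ labels) and then run single-site updates on $B$: for the Markov random field $Y$ the update at $v \in B$ depends only on $Y_{N(v)} \subset Y_{B^c}$, while on the $X$-side the conditional independence of $\{X_{B \cap A_i}\}$ given $(A,B,X_{B^c})$ persists and allows the infinitely many single-site updates to be performed in a well-defined order within each finite $B \cap A_i$.

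Second, the $\bar 1$-limit case is not handled by a ``symmetric initialization at $\bar 1$'': you cannot simultaneously have $X^{(0)} \sim X$ (required for the $X$-marginal to be stationary under the dynamics) and $X^{(0)} \ge Y^{(0)} = \bar 1$. The paper's fix is to initialize $Y^{(0)} := X$ (so that $X^{(0)} \ge Y^{(0)}$ trivially and the $X$-marginal is correct from the start), and then argue that $Y^{(n)} \Rightarrow Y$: since $X \succeq_* Y$ implies $X \ge_{st} Y$, one has $Y^{(n)} \ge_{st} Y$ for all $n$ by monotonicity of the $Y$-update, while a comparison with the finite-volume chain frozen at $\bar 1$ outside a large set $F$ (which converges to $\P(Y \in \cdot \mid Y_{V \setminus F} = \bar 1)$) together with the $\bar 1$-limit hypothesis gives the matching upper bound.
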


\begin{proof}
The cases where $Y$ is a $\bar0$-limit or a $\bar1$-limit are handled slightly differently.
In both cases the desired coupling is obtained as a stationary distribution of a joint Glauber dynamics for $X$ and $Y$. We begin by proving the latter case and then explain the required changes for the former case. Thus, we assume for now that $Y$ is a $\bar1$-limit.

Let $A$ be as guaranteed by the fact that $X$ can be invariantly decoupled. We now show that $A$ can be replaced by a set $B$ consisting only of isolated vertices.
Let $B:= \{ v\in A: J_v < J_u\text{ for all }u \sim v\}$ where $\{J_v\}_v$ are independent uniform random variables in $[0,1]$, independent also of $(A,X)$. Then, almost surely, $B$ is a random invariant set consisting solely of isolated vertices, and $\P(v \in B)>0$ for all $v$.
Furthermore, letting $\{A_i\}_i$ be the guaranteed $(A,X_{A^c})$-measurable partition of $A$ we have that $\{B_i:= A_i \cap B\}_i$ is a $(A,B,X_{B^c})$-measurable partition of $B$ such that given $(A,B,X_{B^c})$, $\{X_{B_i}\}_i$ are conditionally independent.

Before describing the joint dynamics, let us describe the individual dynamics separately. We begin with the dynamics for $Y$, which is simpler to define.
For $y \in \{0,1\}^V$, define
\[ q_v(y) := \E\big[Y_v \mid Y_{B^*_1(v)} = y_{B^*_1(v)}\big] ,\]
where the conditioning has positive probability since $Y$ has finite energy.
We make the following observations.
Since $Y$ is a Markov random field, $\E[Y_v \mid (Y_u)_{u \neq v}]=q_v(Y)$ almost surely.
Since $Y$ has finite energy, $q_v(y) \in (0,1)$ for all $y$.
Since $Y$ is monotone, $q_v(y)$ is an increasing function of $y$.

The state space for the dynamics of $Y$ is $\{0,1\}^V$.
Let us first define a single-site update operation:
Given a current state $y$, an update at $v \in V$ yields a new state $y'$ as follows: $y'_u=y_u$ for all $u\in V$ other than $v$, and $y'_v$ is set to equal 0 or 1 with probabilities $1-q_v(y)$ and $q_v(y)$, respectively.
Since $\E[Y_v \mid (Y_u)_{u \neq v}]=q_v(Y)$ almost surely, this single-site update operation preserves the distribution of $Y$.
A single step of the dynamics of $Y$ is then defined as follows:
Given a current state $y$, a new state $y'$ is obtained by taking an independent copy of $(A,B)$, letting $U=\{U_v\}_{v \in V}$ be uniform $[0,1]$ random variables, independent of each other and of everything else, and conditionally on $(A,B,U)$, applying single-site updates to the vertices in $B$, in the order induced by $U$. In fact, since $B$ consists of isolated vertices and $q_v(y)$ depends on $y$ only through $y_{N(v)}$, the order in which the updates are done is irrelevant (though this will be relevant for the joint dynamics), and we might as well update all vertices in $B$ simultaneously.
This defines the single-step transitions for a Markov chain on $\{0,1\}^V$, and completes the definition of the dynamics for $Y$.

Before continuing to the dynamics for $X$, let us prove the following convergence result for the dynamics for $Y$.
Let $(Y^n)_{n=0}^\infty$ be a Markov chain as above, started from an initial state $Y^0$ which stochastically dominates $Y$ (we will later take $Y^0$ to be $X$, which stochastically dominates $Y$ by the assumption that $X \succeq_* Y$). We claim that $Y^n$ converges in distribution to $Y$ as $n\to\infty$.
Since $q_v$ is increasing, ones sees by induction that $Y^n \ge_{st} Y$ for all $n$.
Similarly, $Y^n \le_{st} \tilde Y^n$ for all $n$, where $(\tilde Y^n)_{n=0}^\infty$ is the same chain but started from $\tilde Y^0 = \bar1$.
In fact, the same reasoning shows that $Y^n \le_{st} \tilde Y^n \le_{st} \tilde Y^{F,n}$, where $(\tilde Y^{F,n})_{n=0}^\infty$ is the finite-state Markov chain one gets by starting from $\bar1$ and suppressing all updates outside $F$, thereby freezing the configuration outside $F$ to remain all ones at all times. Note that this finite-state Markov chain is ergodic and that $\P(Y \in \cdot \mid Y_{V\setminus F}=1)$ is its (unique) stationary distribution. Indeed, $\P(Y \in \cdot \mid Y_{V\setminus F}=1)$ is stationary with respect to this dynamics since for every fixed $v$ this measure is stationary with respect to the single site update. Furthermore, $0<q_v<1$ along with $\P(v \in B)>0$ ensures that the chain is irreducible and aperiodic, i.e., ergodic.
Thus, $\tilde Y^{F,n}$ converges to $\P(Y \in \cdot \mid Y_{V\setminus F}=1)$ in distribution as $n \to \infty$, so that any subsequential limit of $Y^n$ is stochastically dominated by $\P(Y \in \cdot \mid Y_{V\setminus F}=1)$. Since $Y$ is a $\bar1$-limit, the latter converges in distribution to $Y$ as $F$ increases to $V$. Thus, any subsequential limit of $Y^n$ is stochastically dominated by $Y$. Since $Y^n \ge_{st} Y$ for all $n$, the reverse domination also holds, and we conclude that $Y^n$ converges in distribution to $Y$.

Let us now turn to the dynamics for $X$, which is slightly more technical.
Let $v \in V$ and recall that $B^*_n(v)$ is the punctured ball of radius $n$ around $v$.
Let $\cX_v \subset \{0,1\}^V$ denote the set of $x \in \{0,1\}^V$ for which
\[ p_v(x) := \lim_{n \to \infty} \E\left[X_v \mid X_{B^*_n(v)} = x_{B^*_n(v)}\right] \]
exists. By L\'evy's zero-one law, we have that $X \in \cX_v$ and $\E[X_v \mid (X_u)_{u \neq v}] = p_v(X)$ almost surely.
The state space for the dynamics of $X$ is $\cX := \bigcap_{v \in V} \cX_v$ (note that $X \in \cX$ almost surely).
As before, we first define a single-site update operation:
given a current state $x$, an update at $v \in V$ yields a new state $x'$ as follows: $x'_u=x_u$ for all $u\in V$ other than $v$, and $x'_v$ is set to equal 0 or 1 with probabilities $1-p_v(x)$ and $p_v(x)$, respectively.
Since $\E[X_v \mid (X_u)_{u \neq v}]=p_v(X)$ almost surely, this single-site update operation preserves the distribution of $X$.
A single step of the dynamics of $X$ is then defined as follows:
given a current state $x$, a new state $x'$ is obtained by taking a copy of $(A,B,U)$ (as before, $(A,B)$ and $U$ are independent of each other and everything else), and conditionally on $(A,B,U)$, applying single-site updates to the vertices in $B$, in the order induced by $U$. The fact that this is well defined is not immediate and requires justification.

We now show that $x'$ is well defined for $X$-almost every $x$.
To this end, let us replace $x$ with the random state $X$ (with $(A,B,U)$ independent of this).
Let $\{B_i\}_i$ be a $(A,B,X_{A^c})$-measurable partition of $B$ into finite sets so that, given $(A,B,X_{A^c})$, $\{X_{B_i}\}_i$ are conditionally independent.
In particular, given $(A,B)$, for any $v\in B$, $p_v(X)$ is measurable with respect to $X_{B^c \cup B_v}$, where $B_v$ is the set $B_i$ containing $v$.
Thus, given $(A,B,U)$, we can apply the single-site updates separately in each $B_i$, according to the order induced by $U$, to obtain the state $X'$.
Specifically, given $(A,B,U,X_{A^c})$, we can partition $B$ into $\{L_k\}_{k=1}^\infty$, where $L_k := \{ v \in B : |\{ u \in B_v : U_u \le U_v \}|=k\}$ consists of the $k$-th largest vertex of each partition class. Then $(p_v(X))_{v \in L_1}$ is measurable with respect to $(A,B,U,X_{L_1^c})$. This means that we can update all vertices in $L_1$ simultaneously (conditionally independently) to obtain a state $\bar X^1$.
Clearly, $\bar X^1$ and $X$ agree outside of $L_1$. In particular, they agree on $B^c$, and hence, $\{B_i\}_i$ and $\{L_k\}_{k=1}^\infty$ are also measurable with respect to $(A,B,U,\bar X^1_{B^c})$.
Continuing by induction, for $k \ge 1$, we similarly have that $(p_v(\bar X^k))_{v \in L_{k+1}}$ is measurable with respect to $(A,B,U,\bar X^k_{L_{k+1}^c})$, so that we can update all vertices in $L_{k+1}$ simultaneously to obtain a state $\bar X^{k+1}$.
Finally, define $X' := \lim_{k \to \infty} \bar X^k$, which clearly exists.
Note that $X'$ has the same law as $X$.
This defines the single-step transitions for a Markov chain on $\cX$ (defined almost everywhere with respect to the law of $X$), and completes the definition of the dynamics for $X$.

We are now ready to define the joint dynamics for $X$ and $Y$.
The state space for the dynamics is $\Omega := \{ (x,y) \in \cX \times \{0,1\}^V : x \ge y \}$, which may also be seen as a subset of $\{(0,0),(1,0),(1,1)\}^V$.
The transitions $(x,y) \mapsto (x',y')$ will be such that the probability to go from $(x,y)$ to an element of $\{x'\} \times \{0,1\}^V$ will be given by the dynamics for $X$ and in particular will not depend on $y$, and similarly, the probability to go from $(x,y)$ to an element of $\cX \times \{y'\}$ will be given by the dynamics for $Y$ and in particular will not depend on $x$.
As before, we first define a single-site joint update operation:
Given a current state $z=(x,y) \in \Omega$, an update at $v \in V$ yields a new state $z'=(x',y')$ as follows: $z'_u=z_u$ for all $u\in V$ other than $v$, and $z'_v$ is set to equal $(0,0)$, $(1,0)$, $(1,1)$ with probabilities $1-p_v(x)$, $p_v(x)-q_v(y)$, $q_v(y)$, respectively. Note that this definition makes sense, since $X \succeq_* Y$ implies that $p_v(x) \ge q_v(y)$ for all $x \in \cX_v$ and $y \le x$. This single-site joint update is the unique monotone coupling of the individual single-site updates for $X$ and $Y$.
A single step of the joint dynamics is then defined as follows:
Given a current state $z$, a new state $z'$ is obtained by taking a copy of $(A,B,U)$ (independent as before), and conditionally on $(A,B,U)$, applying single-site joint updates to the vertices in $B$, in the order induced by $U$. The fact that this is well defined is shown in a similar way as for the dynamics for $X$, recalling that the order is irrelevant for the dynamics for $Y$. 
This defines the single-step transitions for a Markov chain on $\Omega$, and completes the definition of the joint dynamics for $X$ and $Y$.
Note that if $({\sf X},{\sf Y})$ is invariant (as a $\{0,1\}^2$-valued process) and ${\sf X} \sim X$, then the next state $({\sf X}',{\sf Y}')$ is also invariant.

Let $((X^n,Y^n))_{n=0}^\infty$ be a Markov chain as above (for the joint dynamics), started from the initial state $(X^0,Y^0):=(X,X)$.
Note that $(X^n)_{n=0}^\infty$ is a stationary Markov chain and that $(Y^n)_{n=0}^\infty$ is a Markov chain that converges in distribution to $Y$. Let $\pi$ be a subsequential weak limit of the law of $(X^n,Y^n)$.
Then $\pi$ is an invariant monotone coupling between $X$ and $Y$. 
This completes the proof in the case when $Y$ is a $\bar1$-limit.

In the case when $Y$ is a $\bar0$-limit, we make the following changes:
When defining the joint chain, we start from the initial state $(X^0,Y^0) := (X,\bar0)$ instead of $(X,X)$. The proof that $Y^n$ converges in distribution to $Y$ is similar, and an invariant monotone coupling is then obtained as before as a subsequential weak limit of the law of $(X^n,Y^n)$.
\end{proof}

\begin{remark}
The assumptions of \cref{lem:invariant-monotone-coupling2} are clearly not optimal.
We only use \cref{lem:invariant-monotone-coupling} in this paper, but the proof of \cref{lem:invariant-monotone-coupling2} is not much more difficult and we hope it will find use in later applications. Furthermore, we don't know if $(X^n,Y^n)$ in the proof above converges in law, but this was not needed for our purposes.
\end{remark}

We will also need the following result concerning invariant domination.

\begin{thm}\label{thm:invariant-monotone-coupling-montone}
Let $G$ be a locally finite connected graph and let $X$ and $Y$ be invariant $\{0,1\}$-valued processes. Suppose that $X$ and $Y$ are monotone $\bar1$-limits and $X \succeq_* Y$.
Then $X$ invariantly dominates $Y$.
\end{thm}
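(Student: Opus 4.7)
The plan is to mimic the proof of \Cref{lem:invariant-monotone-coupling2}, constructing the invariant monotone coupling as a subsequential weak limit of a joint Glauber-type dynamics. Since $X$ is no longer assumed to admit an invariant decoupling, the simultaneous updates on an invariant random set used there are unavailable; I would replace them by single-vertex updates driven by independent rate-$1$ Poisson clocks, and use the $\bar{1}$-limit property of both $X$ and $Y$, in place of the decoupling of $X$ and the Markov random field hypothesis on $Y$, to push the chain to its stationary distribution.

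Set $p_v(x) := \lim_{n\to\infty} \E[X_v \mid X_{B^*_n(v)} = x_{B^*_n(v)}]$ and define $q_v(y)$ analogously for $Y$. By L\'evy's zero-one law these exist on sets of full measure and coincide with the corresponding single-site conditional probabilities $\E[X_v\mid (X_u)_{u\neq v}]$ and $\E[Y_v\mid (Y_u)_{u\neq v}]$. Monotonicity of $X$ and $Y$ makes $p_v$ and $q_v$ increasing in their arguments, while applying $X \succeq_* Y$ with $F = B^*_n(v)$ and letting $n\to\infty$ yields $p_v(x) \ge q_v(y)$ whenever $x \ge y$. The joint single-site update at $v$ is then the unique monotone coupling of the two marginal updates: on $\Omega := \{(x,y) : x \ge y\}$, given $(x,y)$, sample $(x'_v, y'_v) \in \{(0,0), (1,0), (1,1)\}$ with probabilities $1-p_v(x)$, $p_v(x)-q_v(y)$, $q_v(y)$, leaving all other coordinates unchanged. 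Each such update preserves the marginal laws of $X$ and $Y$.

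I would then place an independent rate-$1$ Poisson clock at every vertex and apply the joint update at $v$ whenever its clock rings, starting from the pair $(\bar 1,\bar 1)$. The main technical issue is that, without a Markov or decoupling structure, a single-site update depends on the entire configuration, so the dynamics is not immediately well-defined in infinite volume. I would handle this by a finite-volume approximation: for a growing sequence of finite balls $F_n = B_n(v_0)$, run the dynamics only at vertices of $F_n$ with the spins outside $F_n$ frozen to $\bar 1$. The monotone $\bar 1$-limit hypothesis, realized through finite-boundary conditionings $\P(X \in \cdot \mid X_{A_k} = \bar 1)$ with $A_k \subset V \setminus F_n$ receding to infinity, yields a well-defined finite-state ergodic chain with a unique stationary monotone coupling $\pi_n$. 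Any weak subsequential limit $\pi$ of $\pi_n$ as $n\to\infty$ is then a monotone coupling of $X$ and $Y$ (the $\bar 1$-limit property identifies the marginals), and invariance of $\pi$ is inherited from the symmetry of the Poisson-clock mechanism, exactly as in the proof of \Cref{lem:invariant-monotone-coupling2}. The main obstacle is precisely this infinite-volume control in the absence of Markov or decoupling structure; the monotone $\bar 1$-limit property takes over the role that decoupling played in the previous theorem, allowing both the finite-volume chain to be defined rigorously and its stationary distribution to converge to the desired coupling.
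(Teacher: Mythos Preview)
Your approach is close to the paper's, and the finite-volume Glauber dynamics with $\bar 1$ boundary conditions is exactly the right tool. However, there is a genuine gap in your argument for invariance.

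You write that invariance of the limit $\pi$ is ``inherited from the symmetry of the Poisson-clock mechanism, exactly as in the proof of \Cref{lem:invariant-monotone-coupling2}''. This does not go through. In that proof, invariance was maintained at every step because the infinite-volume dynamics was well defined and manifestly invariant (the update set $B$ was an invariant random set). Here you have abandoned the infinite-volume dynamics in favour of finite-volume chains on $F_n = B_n(v_0)$, and these chains and their stationary couplings $\pi_n$ are \emph{not} invariant --- they depend on the choice of $v_0$. A subsequential weak limit of non-invariant couplings has no reason to be invariant; the Poisson clocks are symmetric, but the frozen boundary is not.

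The paper closes this gap differently. Rather than taking a subsequential limit, it shows that the finite-volume coupling $\pi_F$ is stochastically \emph{decreasing} in $F$ (in the natural order on $\{(0,0),(1,0),(1,1)\}^V$). This follows from the monotonicity of $p_v$ and $q_v$: enlarging $F$ only lowers the boundary data seen by the Glauber chain started from $\bar 1$. Hence $\pi_F$ converges outright as $F \uparrow V$, and the limit $\pi$ is independent of the choice of exhaustion. Invariance is then immediate: for any automorphism $\gamma$, the sets $\gamma(F_n)$ form another exhaustion with $\pi_{\gamma(F_n)} = \gamma_* \pi_{F_n}$, and both sides converge to the same $\pi$, so $\gamma_* \pi = \pi$.

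This monotonicity of $\pi_F$ in $F$ is the missing ingredient in your proposal. Once you have it, no subsequential extraction is needed, and the Poisson clocks play no role --- the argument is purely order-theoretic.
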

\begin{proof}[Proof sketch]
Fix a finite $F \subset V$. Using Glauber dynamics we obtain a monotone coupling $\pi_F$ of $X$ and $Y$ when conditioned to be all ones outside $F$. We claim that $\pi_F$ decreases as $F$ increases, and in particular, it converges to a monotone coupling $\pi$ of $X$ and $Y$ as $F$ increases to $V$. It follows from the fact that the limit is independent of how $F$ increases to $V$ that $\pi$ is an invariant coupling.
\end{proof}

For possible future use, we record one additional result, which follows from \cref{thm:ffiid-general0} proved in \cref{sec:finitary}. 

\begin{thm}\label{lem:invariant-monotone-coupling-decoupledbyones}
Let $G$ be a bounded-degree connected graph and let $X$ and $Y$ be invariant $\{0,1\}$-valued processes. Suppose that $X$ and $Y$ are decoupled by ones, $p_*(X),p_*(Y)>1-\frac1{3\Delta-1}$ and $X \succeq_* Y$.
Then $X$ invariantly dominates $Y$.
\end{thm}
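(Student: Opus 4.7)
The plan is to reduce this statement to the general finitary factor result \cref{thm:ffiid-general0} developed in \cref{sec:finitary}. That theorem (which is the joint two-process version of \cref{thm:ffiid-general}) says, roughly, that whenever two invariant $\{0,1\}$-valued processes $X$ and $Y$ on a bounded-degree graph are each decoupled by ones, each individually Holley dominate a $\nu_p$ of high enough density (namely $p > 1 - \tfrac{1}{3\Delta-1}$), and jointly satisfy $X \succeq_* Y$, then the pair $(X,Y)$ can be jointly realized as a finitary factor of an \iid\ process with $X \ge Y$ almost surely.

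To apply it, I just need to verify that the three hypotheses of \cref{thm:ffiid-general0} are met. Decoupling by ones is given in both coordinates. The single-site lower bounds $p_*(X),p_*(Y) > 1 - \tfrac{1}{3\Delta-1}$ translate, via the remark just after the definition of $\succeq_*$, into $X \succeq_* Z$ and $Y \succeq_* Z$ for some Bernoulli product measure $Z \sim \nu_p$ with $p$ above the required threshold, i.e.\ each of $X,Y$ Holley dominates a sufficiently high-density \iid\ process. The third hypothesis, $X \succeq_* Y$, is part of the assumption.

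With the hypotheses confirmed, \cref{thm:ffiid-general0} provides a joint realization $(X,Y) = \varphi(W)$ for some \iid\ process $W$ and a measurable factor map $\varphi$, with $X \ge Y$ pointwise almost surely. A factor map, by definition, commutes with every automorphism of $G$, so the joint law of $(X,Y)$ produced this way is automatically invariant under $\operatorname{Aut}(G)$. Combined with the almost sure monotonicity $X \ge Y$, this is precisely an invariant monotone coupling of $X$ and $Y$, which is the conclusion.

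The only ``work'' here is therefore hidden inside \cref{thm:ffiid-general0}, which I would expect to be the main obstacle — the coupling-from-the-past construction with bounding chains, together with the disease-spreading argument, is precisely what forces the numerical threshold $1 - \tfrac{1}{3\Delta-1}$ (so that the spreading process is subcritical and the backward Glauber dynamics couples in finite time). Once that machinery is in place, the deduction of \cref{lem:invariant-monotone-coupling-decoupledbyones} is essentially a one-line corollary, and no further graph-theoretic or probabilistic input is needed here.
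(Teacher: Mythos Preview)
Your proposal is correct and matches the paper's own approach: the paper states explicitly (just before the theorem and in the ``moreover'' clause of \cref{thm:ffiid-general0}) that \cref{lem:invariant-monotone-coupling-decoupledbyones} is an immediate corollary of \cref{thm:ffiid-general0}, with the invariant monotone coupling coming from the joint finitary-factor realization. The only minor remark is that the hypothesis $p_*(X),p_*(Y)>1-\tfrac1{3\Delta-1}$ appears verbatim in \cref{thm:ffiid-general0}, so no intermediate translation via $\succeq_*$ of a Bernoulli process is needed.
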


\subsection{Plus state of Ising model -- Proof of \cref{thm:stochastic-dom-Ising}}\label{sec:ising-dom}

Recall that $G$ is a bounded-degree graph on vertex set $V$.
The Ising model with inverse temperature $\beta \ge 0$ and magnetic fields ${\bf b} \in \R^V$ in finite volume $\Lambda \subset V$ with plus boundary conditions is the probability measure $\mu_{\Lambda,\beta,{\bf b}}^+$ on $\{+1,-1\}^V$ given by 
\begin{equation}
    \mu_{\Lambda,\beta,{\bf b}}^+(\sigma) \,\propto\, \exp\left(\sum_{\substack{\{u,v\} \in E(G)\\\{u,v\} \cap \Lambda \neq \emptyset}}\beta \sigma_u\sigma_v  + \sum_{v \in \Lambda} b_v\sigma_v\right) \cdot \1_{\{\sigma_{V \setminus \Lambda} \equiv + \}},\qquad \sigma \in \{+1,-1\}^V.
\end{equation}
It is well known that the limit of $\mu_{\Lambda,\beta,{\bf b}}^+$ exists as $\Lambda$ increases to all of $V$. This infinite-volume limit, wihch we denote by $\mu^+_{\beta, \bf b}$, is sometimes called the \emph{plus state} of the Ising model at inverse temperature $\beta$ with magnetic fields $\bf b$.
If ${\bf b} \equiv b$ is constant, we denote this by $\mu^+_{\beta, b}$, and in the special case that ${\bf b} \equiv 0$, we denote it by $\mu^+_\beta$.

Define
 \[ \alpha_{\beta,b} := \inf_{v \in V} \mu^+_{\beta,b}(\sigma_v=+) .\]

\begin{thm}\label{thm:stochastic-dom-Ising2}
Let $G$ be a bounded-degree infinite connected graph.
\begin{itemize}
 \item If $G$ is amenable, then as $\beta \to \infty$,
\[ p(\mu^+_{\beta,b})
\begin{cases}
    \text{tends to 0} &\text{if $b$ tends to 0},\\
    \text{tends to 1} &\text{if $b$ tends to $\infty$},\\
    \text{is bounded away from 0 and 1} &\text{if $b$ is bounded away from 0 and $\infty$}.
\end{cases} \]
 \item If $G$ is nonamenable, then $p(\mu^+_{\beta,b}) \to 1$ whenever $\beta h_e + b \to \infty$.
\item For any $\beta \ge 0$ and $b\in\R$,
 \[ \sup_{b' \le b} (1-e^{-2(b-b')}) \alpha_{\beta,b'} \le p(\mu^+_{\beta,b}) \le 1 - \max\left\{e^{-2b-2\beta\cdot h_e} \alpha_{\beta,b} ,~ \sup_{n \ge 1} \left((\tanh \beta)^{n-1} - \alpha_{\beta,b}\right)^{\frac1n} \right\} .\]
 \end{itemize}
Furthermore, the same also holds for $p_\inv(\mu^+_{\beta,b})$.
\end{thm}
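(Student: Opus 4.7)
The plan is to prove the quantitative bounds of the third bullet first, then derive the first two bullets from them, and finally upgrade to invariant domination. Throughout I identify $+ \leftrightarrow 1$ and view $\sigma\sim\mu^+_{\beta,b}$ as $X\in\{0,1\}^V$. For the lower bound I use the dilution strategy outlined in the introduction: fix $b'\le b$, set $q:=1-e^{-2(b-b')}\in(0,1)$, and let $Y\sim\nu_q$ be independent of $X$. Since $Y_v\perp (XY)_{V\setminus\{v\}}$ with $\P(Y_v=1)=q$, we have $p(\mu^+_{\beta,b})\ge p(XY)\ge p_*(XY)\ge q\cdot p_*(X\mid XY)$. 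By \cref{lem:domination*}, $p_*(X\mid XY)=\inf_{v,A,B}\E X_v^{(A,B,q)}$. The crux is that the reweighting $\prod_{v\in B}(1-q)^{X_v}$ is exactly the effect of shifting the magnetic field from $b$ to $b+\tfrac12\log(1-q)=b'$ on $B$, so that $X^{(A,B,q)}$ is the plus state of an Ising model with field $b'$ on $B\setminus A$, field $b$ on $V\setminus(A\cup B)$, and $X_A\equiv 1$. By FKG/monotonicity in the external field the infimum is approached as $A=\emptyset$ and $B\nearrow V$, giving $\E X_v^{(A,B,q)}\ge\alpha_{\beta,b'}$ and hence $p(\mu^+_{\beta,b})\ge(1-e^{-2(b-b')})\alpha_{\beta,b'}$.

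For the upper bounds I use flip arguments. Stochastic domination $\mu^+_{\beta,b}\ge_{st}\nu_p$ gives $\mu^+_{\beta,b}(\sigma_S\equiv -)\le(1-p)^{|S|}$ for every finite $S$, so it suffices to lower bound $\mu^+_{\beta,b}(\sigma_S\equiv -)$. Comparing Ising weights of $(\sigma_S\equiv -,\tau\text{ on }S^c)$ to $(\sigma_S\equiv +,\tau\text{ on }S^c)$ yields a ratio of at least $e^{-2b|S|-2\beta|\partial_e S|}$ for every $\tau$ (attained when $\tau_{\partial S}\equiv +$), and summing over $\tau$ together with the FKG estimate $\mu^+_{\beta,b}(\sigma_S\equiv +)\ge \alpha_{\beta,b}^{|S|}$ produces $\mu^+_{\beta,b}(\sigma_S\equiv -)\ge e^{-2b|S|-2\beta|\partial_e S|}\alpha_{\beta,b}^{|S|}$. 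Taking $|S|$-th roots along a Cheeger-optimal sequence gives the first upper bound $p\le 1-e^{-2b-2\beta h_e}\alpha_{\beta,b}$. The second upper bound follows from the same flip recipe applied to $S$ a simple path of $n$ vertices, where a 1D/random-current comparison along the path supplies a lower bound of the form $(\tanh\beta)^{n-1}-\alpha_{\beta,b}$ on $\mu^+_{\beta,b}(\sigma_S\equiv -)$.

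The first two bullets now follow by inspection of these bounds. In the amenable case $h_e=0$: if $b\to 0$ with $\beta\to\infty$, the first upper bound $1-e^{-2b}\alpha_{\beta,b}\to 0$ when $\alpha_{\beta,b}\to 1$ (phase-transition regime), and otherwise the second upper bound takes over since $\tanh\beta\to 1$ forces $\sup_n((\tanh\beta)^{n-1}-\alpha_{\beta,b})^{1/n}\to 1$; if $b\to\infty$, the lower bound with $b'=b/2$ gives $(1-e^{-b})\alpha_{\beta,b/2}\to 1$; if $b$ is bounded away from $0$ and $\infty$, then $\alpha_{\beta,b}\in[\tfrac12,1)$ and both bounds remain bounded away from $0$ and $1$. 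In the nonamenable case I verify $\alpha_{\beta,b'}\to 1$ whenever $\beta h_e+b'\to\infty$ by a standard low-temperature Peierls argument tailored to nonamenability: the bounded-degree contour count $(\Delta-1)^\ell$ is dominated by the contour-energy weight $e^{-2\beta\ell}$ and the field contribution, which via the isoperimetric inequality can be bounded by a factor exponential in $b'/h_e$ and $\ell$; the resulting geometric sum decays rapidly. Choosing $b':=b-\tfrac12(\beta h_e+b)$ then makes both $1-e^{-2(b-b')}$ and $\alpha_{\beta,b'}$ tend to $1$, proving $p(\mu^+_{\beta,b})\to 1$.

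For the $p_\inv$ assertion I upgrade the lower bound to invariant domination. The plus state $\mu^+_{\beta,b}$ is a Markov random field, hence decoupled by ones, and so is the diluted process $XY$. Once $\beta h_e+b$ is large enough, the minus clusters of $\mu^+_{\beta,b}$ have exponentially decaying tails by Peierls and therefore remain finite after an additional independent Bernoulli thinning of parameter $q'$ close to $1$; \cref{lem:decoupled-by-ones-implies-decoupling} then shows that $XY$ can be invariantly decoupled, and \cref{lem:invariant-monotone-coupling} yields $p_\inv(XY)\ge p_*(XY)$. Since $(X,XY)$ is itself an invariant monotone coupling of $X$ with $XY$, we conclude $p_\inv(\mu^+_{\beta,b})\ge p_*(XY)\ge(1-e^{-2(b-b')})\alpha_{\beta,b'}$ for every admissible $b'$, and all three bullets transfer verbatim to $p_\inv$. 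The principal technical point is the FKG/monotonicity step reducing $\inf_{v,A,B}\E X_v^{(A,B,q)}$ to $\alpha_{\beta,b'}$: one must work first in finite volume, verify that enlarging $B$ (adding negative field) strictly decreases the expectation of the increasing function $\sigma_v$, and then identify the limit as $B\nearrow V\setminus\{v\}$ with $\mu^+_{\beta,b'}$ via a genuine weak-limit argument.
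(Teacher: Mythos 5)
Your treatment of the non-invariant bounds is essentially the paper's own argument: the lower bound via independent dilution, \cref{lem:domination*}, \cref{obs:X-tilted-for-ising} and FKG (reducing $\inf_{A,B}\E X_v^{(A,B,q)}$ to $\alpha_{\beta,b'}$), the first upper bound via the all-minus flip estimate, the second via constancy of a connected set compared against the FK/random-cluster bound $(\tanh\beta)^{n-1}$, and the deduction of the first two bullets (including the Peierls estimate showing $\alpha_{\beta,b'}\to1$ when $\beta h_e+b'\to\infty$, with your choice $b'=b-\tfrac12(\beta h_e+b)$) all match the paper, modulo sketchiness in the contour count and in the "1D/random-current comparison" step.

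The genuine gap is in the ``furthermore'' clause for $p_\inv$. You route the invariant domination through \cref{lem:decoupled-by-ones-implies-decoupling} and \cref{lem:invariant-monotone-coupling}, which requires that the zero set of the diluted process (after a further independent thinning of parameter $q'<1$) almost surely has no infinite clusters. That condition holds only when the diluted process is sufficiently dense — i.e.\ when $q\,\alpha_{\beta,b'}$ is close to $1$ — which is exactly the regime $\beta h_e+b$ large (and the amenable $b\to\infty$ case). But the theorem asserts the lower bound $p_\inv(\mu^+_{\beta,b})\ge\sup_{b'\le b}(1-e^{-2(b-b')})\alpha_{\beta,b'}$ for \emph{all} $\beta\ge0$ and $b\in\R$, and it is also needed for the amenable first bullet when $b$ is bounded away from $0$ and $\infty$ (there $p_\inv$ must be bounded away from $0$ at densities around $\tfrac12$, where on, say, $\Z^2$ the zero set of $XY$ does percolate and no amount of extra thinning helps, since thinning only enlarges the zero set). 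So your argument, as written, proves the $p_\inv$ statement only in the high-density regime and does not cover the full claim; moreover your parenthetical that the minus clusters ``remain finite after an additional thinning'' already presupposes $\alpha$ close to $1$, since the thinning can merge minus clusters of $X$ through sites where $X=1$ but the thinning is $0$.

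The paper avoids this by a different invariant-coupling tool: it verifies that $Z=XY$ is a \emph{monotone $\bar1$-limit} (monotonicity of the single-site conditional probabilities follows from \cref{obs:X-tilted-for-ising} and FKG, since conditioning $Z$ on larger configurations corresponds to larger magnetic fields) and then applies \cref{thm:invariant-monotone-coupling-montone} to get $p_\inv(X)\ge p_\inv(Z)\ge p_*(Z)$ with no density threshold whatsoever, which is what makes the lower bound for $p_\inv$ valid for every $\beta\ge0$, $b\in\R$ and $b'<b$. To repair your proof you should replace the decoupling step by this monotone-limit argument (or prove an analogous invariant-coupling statement valid without the no-infinite-zero-cluster hypothesis); the rest of your outline, including the transitivity step gluing the coupling $(X,XY)$ with the invariant monotone coupling of $XY$ and $\nu_{p_*(XY)}$, then goes through as in the paper.
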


We denote by $\sigma \in \{-1,+1\}^V$ a sample from $\mu^+_{\beta,b}$.
In order to be compatible with the earlier definitions of the section, we also consider the $\{0,1\}$-valued Ising model, namely, $X \in \{0,1\}^V$ defined by $X_v := \frac12(\sigma_v+1) \in \{0,1\}$ for all $v \in V$.
Recall the definition of $X^{(A,B,p)}$ from earlier in the section.

\begin{observation}\label{obs:X-tilted-for-ising}
$X^{(A,B,p)}$ has the law of the plus state of the $\{0,1\}$-valued Ising model on $G$ at inverse temperature $\beta$, with magnetic field $b$ on $V \setminus (A\cup B)$, $\infty$ on $A$, and $b+\frac12\log(1-p)$ on $B\setminus A$.
\end{observation}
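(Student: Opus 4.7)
The statement is a direct computation once the definition of $X^{(A,B,p)}$ is unpacked in $\pm 1$ variables via $\sigma_v = 2X_v - 1$. My plan is to verify the identity in finite volume and then pass to the infinite-volume limit, after which the three contributions (the base Ising density, the $\1_{\{X_v = 1\}}$ factor from the conditioning, and the $(1-p)^{X_v}$ tilt) combine site-by-site into a single plus-state Ising density with a modified magnetic field.

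Fix a finite $\Lambda \supset A \cup B$ and let $X_\Lambda$ denote the $\{0,1\}$-valued plus-state Ising model in $\Lambda$ at inverse temperature $\beta$ with uniform magnetic field $b$. By the definition of $X^{(A,B,p)}$,
\[
\P\!\left(X^{(A,B,p)}_\Lambda = x\right) \;\propto\; \P(X_\Lambda = x) \cdot \prod_{v \in A} \1_{\{x_v = 1\}} \cdot \prod_{v \in B} (1-p)^{x_v}.
\]

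The next step is to rewrite each factor in $\pm 1$ variables. Using $\sigma_v = 2x_v-1$: the factor $\P(X_\Lambda = x)$ is, by definition, proportional to the plus-state Ising density $\exp\!\bigl(\beta \sum_{\{u,v\}} \sigma_u \sigma_v + b \sum_v \sigma_v\bigr) \cdot \1_{\{\sigma_{V\setminus\Lambda}\equiv +\}}$; the indicator $\1_{\{x_v=1\}}$ equals $\1_{\{\sigma_v = +\}}$, which is exactly the $b_v \to +\infty$ limit of the single-site weight $e^{b_v \sigma_v}$; and the tilt satisfies $(1-p)^{x_v} = \exp\!\bigl(\tfrac12(\sigma_v+1)\log(1-p)\bigr) \propto \exp\!\bigl(\tfrac12 \log(1-p)\, \sigma_v\bigr)$, since the factor $\exp(\tfrac12\log(1-p))$ is independent of $x$ and is absorbed into the normalization. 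Multiplying these site by site (and noting that the conditioning to $+$ on $A$ overrides any tilt on $A \cap B$) yields that the finite-volume density of $X^{(A,B,p)}_\Lambda$ is proportional to the plus-state Ising density in $\Lambda$ at inverse temperature $\beta$ with magnetic field equal to $b$ on $\Lambda \setminus (A\cup B)$, to $b + \tfrac12\log(1-p)$ on $(B\setminus A)\cap\Lambda$, and to $+\infty$ (i.e.\ the constraint $\sigma_v = +$) on $A\cap\Lambda$.

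Finally, I pass to the limit $\Lambda \uparrow V$. Existence of the plus-state Ising infinite-volume limit with the indicated (possibly $+\infty$) magnetic-field profile is standard via the monotonicity of plus boundary conditions in $\Lambda$, and this simultaneously gives the existence of $X^{(A,B,p)}$ and the claimed identification of its law. There is no real obstacle in this argument: the only points of care are the treatment of the overlap $A\cap B$, where the $+\infty$ field absorbs the finite tilt, and the verification that the $x$-independent constants generated by rewriting $(1-p)^{x_v}$ and $\sigma_v \sigma_u$ in terms of $X$ cancel against the normalization.
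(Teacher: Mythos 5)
Your computation is the right one, and since the paper states this as an unproved observation, the site-by-site bookkeeping you carry out (indicator on $A$ as an infinite field, $(1-p)^{x_v}=e^{\frac12\log(1-p)}e^{\frac12\log(1-p)\sigma_v}$ giving the shift $\frac12\log(1-p)$ on $B\setminus A$, the $\infty$ field absorbing the tilt on $A\cap B$) is exactly the argument the authors leave implicit. One step in your write-up should be made explicit, because as phrased it is not quite what the definition says: $X^{(A,B,p)}$ is defined by tilting the \emph{infinite-volume} plus state $\mu^+_{\beta,b}$, whereas your first display, introduced with ``by the definition of $X^{(A,B,p)}$'', applies the tilt to the finite-volume measure $\mu^+_{\Lambda,\beta,b}$; the marginal of $\mu^+_{\beta,b}$ on $\Lambda$ is not given by the finite-volume Gibbs density, so the two readings must be reconciled. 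The reconciliation is easy and should be stated: the tilting function $g(x)=\prod_{v\in A}x_v\prod_{v\in B}(1-p)^{x_v}$ is a bounded local (hence continuous) function with $\E_{\mu^+_{\beta,b}}[g]>0$ (by FKG/finite energy), so tilting by $g$ commutes with the weak limit $\mu^+_{\Lambda,\beta,b}\to\mu^+_{\beta,b}$; combined with your finite-volume identity, which says that the $g$-tilt of $\mu^+_{\Lambda,\beta,b}$ is $\mu^+_{\Lambda,\beta,{\bf b}'}$ for the modified field ${\bf b}'$, and with the monotone convergence $\mu^+_{\Lambda,\beta,{\bf b}'}\to\mu^+_{\beta,{\bf b}'}$ that you do invoke, this yields the claim. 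Note also that your argument as written covers finite $A,B$ (you take $\Lambda\supset A\cup B$ finite); the paper also uses the observation with $B=A^c$ infinite (e.g.\ $X^{(\emptyset,p)}$), where by the paper's convention one must additionally pass to the limit over finite truncations $A\cap F, B\cap F$, which is a routine further limit but worth a sentence.
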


\begin{proof}[Proof of \cref{thm:stochastic-dom-Ising2}]
We first show how the third item implies the first two items in the theorem. Suppose first that $G$ is amenable.
If $b\to\infty$, then taking $b'=b/2$ in the lower bound gives that $p(\mu^+_{\beta,b}) \ge (1-e^{-b})\alpha_{\beta,b/2} \ge (1-e^{-b})\tanh(b/2) \to 1$, where the inequality $\alpha_{\beta,b/2} \ge \tanh(b/2)$ can easily be seen using the random-cluster representation. If $b$ is bounded away from 0 and $\infty$, then using that $\alpha_{\beta,b} \ge \frac12$ for $b \ge 0$ we get that $\frac12 - \frac12 e^{-2b} \le p(\mu^+_{\beta,b}) \le 1-\frac12 e^{-2b}$, so that $p(\mu^+_{\beta,b})$ is bounded away from 0 and 1. If $b \to 0$, then either $\alpha_{\beta,b}\to 1$ so that $p(\mu^+_{\beta,b}) \le 1-e^{-2b}\alpha_{\beta,b} \to 0$, or $\alpha_{\beta,b}$ is bounded away from 1 so that $p(\mu^+_{\beta,b}) \le 1 - \sup_{n \ge 1} \left((\tanh \beta)^{n-1} - \alpha_{\beta,b}\right)^{1/n} \to 0$ (if $\alpha_{\beta,b}$ fluctuates then the two cases can be applied to appropriate subsequences). The same holds for $p_\inv(\mu^+_{\beta,b})$ using the corresponding inequalities in the third item for $p_\inv(\mu^+_{\beta,b})$.

Suppose now that $G$ is nonamenable.
We show that
\begin{equation}\label{eq:ising-non-amen-p-inv-lb}
p_\inv(\mu^+_{\beta,b}) \ge 1 - 2\sqrt{e(\Delta-1)} \cdot e^{-\beta h_e-b} ,
\end{equation}
which immediately yields that $p_\inv(\mu^+_{\beta,b}) \to 1$ whenever $\beta h_e + b \to \infty$.
For this, it suffices to show that 
\begin{equation}\label{eq:ising-alpha-lb}
\sup_{b' \le b} (1-e^{-2(b-b')}) \alpha_{\beta,b'} \ge 1 - 2\sqrt{e(\Delta-1)} \cdot e^{-\beta h_e-b} .
\end{equation}
Fix $b' \le b$ (which we later optimize over).
Our goal is to lower bound $\alpha_{\beta,b'}$.
Let $\cC_v$ denote the 0-cluster of a sample from $\mu^+_{\beta,b'}$ containing a vertex $v$. Note that $\mu^+_{\beta,b'}(\sigma_v=+) = \P(\cC_v=\emptyset)$, so that our goal is to upper bound the probability that $\cC_v$ is non-empty.
A standard Peierls argument, together with the fact that there are at most $(e(\Delta-1))^n$ connected sets $U$ of size $n$ containing $v$, yields that
\[ \P(|\cC_v|=n) \le \sum_{\substack{U:~ v\in U,~ |U|=n\\U\text{ connected}}} e^{-2\beta|\partial_e U|-2b'|U|} \le \left(e(\Delta-1) e^{-2\beta h_e-2b'}\right)^n .\]
To rule out the possibility that $\cC_v$ is infinite, we simply note that the same bound holds also in any finite volume $\Lambda \subset V$, when $\cC_v$ is defined with respect to a sample from $\mu^+_{\Lambda,\beta,b'}$.
Thus, if $e^{2\beta h_e+2b'} >e(\Delta-1)$, then
\[ \alpha_{\beta,b'} \ge 1- \frac 1{\frac{e^{2\beta h_e+2b'}}{e(\Delta-1)}-1} .\]
We wish to plug in $b':=\frac12 b- \frac12 \beta h_e + \frac14 \log (e(\Delta-1))$ (which is nearly the optimal choice). Note that $b' \ge b$ if and only if $\sqrt{e(\Delta-1)} e^{-\beta h_e-b} \ge 1$. Thus, if $b' \ge b$ then~\eqref{eq:ising-alpha-lb} holds trivially. Otherwise, $b'<b$ and $e^{2\beta h_e+2b'} >e(\Delta-1)$, so we may plug in this $b'$, which yields~\eqref{eq:ising-alpha-lb}.

We now turn to the main inequality of the theorem stated in the third item of the theorem.
We start with the upper bounds on $p(\sigma)$.
Note that if $\sigma$ stochastically dominates $\nu_p$, then since $\{\sigma_U = -\}$ is a decreasing event, we have that $\P(\sigma_U \equiv -) \le (1-p)^{|U|}$. Thus,
\[ p(\sigma) \le 1 - \sup_{\emptyset \neq U \Subset V} \P(\sigma_U \equiv -)^{1/|U|} .\]
Using finite energy and FKG, we see that
\begin{align*}
\P(\sigma_U\equiv-)
 \ge \P(\sigma_U\equiv +) \cdot e^{-2\beta|\partial_e U|-2b|U|}
 \ge (\alpha_{\beta,b} e^{-2b})^{|U|} \cdot e^{-2\beta|\partial_e U|} .
\end{align*}
Hence,
\[ p(\sigma) \le 1 - \alpha_{\beta,b} \cdot e^{-2\beta \cdot h_e-2b} .\]
This gives the first upper bound.

We now show the second upper bound.
For any connected set $S \subset V$ on size $n$ and any $v \in S$, we have
\[ \P(\sigma_S\equiv -) \ge \P(\sigma_S\text{ is constant}) - \P(\sigma_v=+) .\]
Let $\gamma_n$ be the infimum of $\P(\sigma_S\text{ is constant})$ over all connected sets $S \subset V$ of size $n$.
Since $G$ is infinite and connected, we deduce that
\[ p(\sigma) \le 1 - \sup_{v \in V,\,n \ge 1} \left(\gamma_n - \P(\sigma_v=+)\right)^{\frac1n} = 1 - \sup_{n \ge 1} \left(\gamma_n - \alpha_{\beta,b}\right)^{\frac1n} .\]
To get the claimed upper bound, it remains only to show that $\gamma_n \ge (\tanh \beta)^{n-1}$. This is not hard to show via the random-cluster representation, since $\P(\sigma_S\text{ is constant}) \ge \P(S\text{ is connected in }\omega)$, where $\omega$ is a sample from the wired FK-Ising random-cluster measure with parameter $p:=1-e^{-2\beta}$ for the edges of $G$ and parameter $p_b:=1-e^{-2b}$ for the edges leading to a ghost vertex.\footnote{This is a simple consequence of the Edwards--Sokal coupling, see \cite[Section 1.4]{grimmett2006random}. The magnetic field can be interpreted as an Ising model on the graph with a ghost vertex which is connected to all the vertices in the graph by an edge, with each such edge having inverse temperature $b$.} Since $S$ is connected, there is a spanning set of edges $E$ of size $|S|-1$, and it suffices to show that $\P(\omega_E \equiv 1) \ge (\tanh \beta)^{|E|}$. Indeed, every edge has conditional probability at least $\frac{p}{p+2(1-p)}=\tanh\beta$ to be open given the state of all other edges. This shows that $\gamma_n \ge (\tanh \beta)^{n-1}$. This completes the proof of the upper bounds.

Let us now turn to the lower bound.
Fix $b'<b$ and denote $p := 1-e^{-2(b-b')} \in (0,1)$.
Note that \cref{obs:X-tilted-for-ising} and FKG imply that $X^{(A,B,p)} \ge_{st} X^{(\emptyset,p)}$, where the latter is the law of the plus state of the $\{0,1\}$-valued Ising model with magnetic field $b'$.
\cref{lem:domination*} implies that
\[ p_*(X \mid XY) = \inf_{v \in V} \E X^{(\emptyset,p)}_v = \alpha_{\beta,b'} .\]
where $Y \sim \nu_p$ is independent of $X$. Thus,
\[ p(\sigma)=p(X) \ge p(XY) \ge p_*(XY) \ge p \cdot p_*(X \mid XY) = p \cdot \alpha_{\beta,b'} ,\]
which gives the stated lower bound on $p(\sigma)$.

To show that this lower bound holds also for $p_\inv(\sigma)$, we shall use that $p_\inv(X) \ge p_\inv(XY) \ge p_*(XY)$ holds by \cref{thm:invariant-monotone-coupling-montone} once we show that $XY$ is a monotone $\bar1$-limit.
Let us show that $XY$ is monotone.
To see this, fix $v\in V$, $F \subset V\setminus \{v\}$ finite and $z,z' \in \{0,1\}^F$ such that $z \ge z'$. We need to show that $$\E[(XY)_v \mid (XY)_F=z] \ge \E[(XY)_v \mid (XY)_F=z'].$$
Since $\E[(XY)_v \mid (XY)_F=z] = p \cdot \E[X_v \mid (XY)_F=z]$, we need to show that $$\E[X_v \mid (XY)_F=z] \ge \E[X_v \mid (XY)_F=z'].$$
Using \Cref{obs:X-tilted-for-ising}, the conditional law of $X$ given $(XY)_F=z$ is $\mu^+_{\beta,{\bf b}_z}$, where ${\bf b}_z$ equals $\infty$ on $\{ u \in F : z_u=1\}$, equals $b'$ on $\{ u \in F : z_u=0 \}$, and equals $b$ outside of $F$. The desired inequality is $\mu^+_{\beta,{\bf b}_z}(\sigma_v) \ge \mu^+_{\beta,{\bf b}_{z'}}(\sigma_v)$. This follows since $\mu^+_{\beta,{\bf b}_z} \ge_{st} \mu^+_{\beta,{\bf b}_{z'}}$ by FKG.
The fact that $XY$ is a $\bar1$-limit is now a straightforward matter.
\end{proof}

\begin{proof}[Proof of \cref{thm:stochastic-dom-Ising}]
For connected $G$, the theorem follows immediately from \cref{thm:stochastic-dom-Ising2}.
For disconnected $G$, we note that $p(\sigma)$ is equal to $\inf_i p(\sigma_{C_i})$, where $\{C_i\}$ are the connected components of $G$. Similarly, $p_\inv(\sigma) = \inf_i p_\inv(\sigma_{C_i})$ since if $p_\inv(\sigma_{C_i})>p$ for all $i$, then one obtains an invariant monotone coupling of $\sigma$ and $\nu_p$ by sampling each component independently from an invariant monotone coupling (note that $\{\sigma_{C_i}\}_i$ are independent), using the same coupling for isomorphic components.
\end{proof}

\subsection{Infinite clusters of Bernoulli percolation -- Proof of \cref{thm:stochastic-dom-perc}}\label{sec:perc-dom-proofs}

Recall that $G$ is a bounded-degree graph on vertex set $V$, $\omega$ is Bernoulli (site or bond) percolation with parameter $p$ on $G$, and $\omega^\infty$ consists of those sites which are in infinite clusters in $\omega$. As in the proof of \cref{thm:stochastic-dom-Ising}, we may assume that $G$ is connected. Observe also that it suffices to consider the case of site percolation. Indeed, if $G'$ is the line graph of $G$, then $G'$ has bounded degree, $G'$ is amenable if and only if $G$ is, and bond percolation on $G$ naturally corresponds to site percolation on $G'$ (with the same $p$). Thus, the result for site percolation on $G'$ yields the result for bond percolation on $G$. We henceforth assume that $\omega$ is site percolation. Denote $X:=\omega^\infty$. 

Let us begin with a simple observation.
For any finite $U\subset V$, we have that
\[ \P(X_U \equiv 0) \ge \P(\omega_{\partial U} \equiv 0) = (1-p)^{|\partial U|} .\]
If $X$ stochastically dominates $\nu_{p'}$, then $\P(X_U \equiv 0) \le (1-p')^{|U|}$. Hence,
\[ p(X) \le 1 - (1-p)^h .\]
In particular, if $G$ is amenable, then $p(X)=0$ for all $0 \le p < 1$.

Suppose now that $G$ is nonamenable.
Fix $q \in (0,1)$ and let $Y \sim \nu_q$ be independent of $\omega$. We shall show that $XY$ can be invariantly decoupled and
\begin{equation}\label{eq:perc-dom*}
p_*(XY) \to q \qquad\text{as }p \to 1 ,
\end{equation}
which yields the theorem since $q$ was arbitrary and $p_\inv(X) \ge p_\inv(XY) \ge p_*(XY)$ by \cref{lem:invariant-monotone-coupling}. Recalling that $q \ge p_*(XY) \ge q \cdot p_*(X \mid XY)$, we see that~\eqref{eq:perc-dom*} is equivalent to
\begin{equation}\label{eq:perc-dom**}
p_*(X \mid XY) \to 1 \qquad\text{as }p \to 1 .
\end{equation}

Denote $Z:=XY$.
Our goal is to lower bound the conditional probability that a fixed vertex $v$ is in an infinite cluster of $\omega$ given $Z$. We do so by a Peierls argument.
A cutset is a minimal finite set (with respect to inclusion) of vertices $\Pi$ which separates $v$ from infinity. The interior of a cutset $\Pi$, denoted $\text{Int}(\Pi)$, is the connected component of $v$ in $V \setminus \Pi$. The exterior of $\Pi$ is $\text{Ext}(\Pi) := (\Pi \cup \text{Int}(\Pi))^c$. {Note that $\Pi=\partial \text{Int}(\Pi)$ except in the trivial case when $\Pi=\{v\}$.} Observe that if $X_v=0$ then there must exist a cutset $\Pi$ which is closed in $\omega$ (i.e., $\omega_\Pi \equiv 0$).

Let us bound the probability that a given cutset $\Pi$ is closed (conditionally on $Z$). To this end, we further condition on $\omega_{\Pi^c}$. Note that a finite component of $\omega \cap \Pi^c$ could in fact belong to an infinite component in $\omega$, {but only if some vertices in $\Pi$ are open. In particular,
$$ \P(\omega_\Pi \equiv 0 \mid Z, \omega_{\Pi^c}) = 0  $$
on the event that some vertex $u \in \text{Int}(\Pi)$ has $Z_u=1$ or that some vertex $u \in \text{Ext}(\Pi)$ with $Z_u=1$ is in a finite component in $\omega_{\Pi^c}$.}

{
Assume henceforth that we are on the complementary event, that is, that every vertex $u$ with $Z_u=1$ is in an infinite component in $\omega_{\Pi^c}$.}
For every $\eps \in \{0,1\}^\Pi$, let $\omega_{\Pi^c} \cup \eps$ be the percolation configuration whose restriction to $\Pi^c$ and $\Pi$ are $\omega_{\Pi^c}$ and $\eps$ respectively (admitting an abuse of notation).
Let $V(\eps)=V(\eps,Z,\omega_{\Pi^c})$ be the set of vertices $u$ which do not belong to an infinite component in $\omega_{\Pi^c}$ {(equivalently, $Z_u=0$ under our current assumption)}, but are in an infinite component in $\omega_{\Pi^c} \cup \eps$. Let $o(\eps)$ be the number of $1$s in $\eps$ and $c(\eps)$ the number of $0$s in $\eps$. Then 
\begin{equation}
\P(\omega_\Pi=\eps \mid Z,\omega_{\Pi^c})  = \frac{p^{o(\eps)} (1-p)^{c(\eps)} (1-q)^{|V(\eps)|}}{\sum_{\eta \in \{0,1\}^{\Pi}} p^{o(\eta)} (1-p)^{c(\eta)} (1-q)^{|V(\eta)|}}.\label{eq:conditional_law}
\end{equation}
This formula is easy to compute by taking an exhaustion of $G$, and computing the corresponding formula in the finite graphs where the infinite clusters are counted as the ones which hit the boundary, and then taking a limit.

Let $\Pi^\infty$ be the set of vertices $v \in \Pi$ which are adjacent to an infinite cluster in $\omega \cap \text{Ext}(\Pi)$. Note that $\Pi^\infty$ is measurable with respect to $\omega_{\text{Ext}(\Pi)}$ and thus also with respect to $\omega_{\Pi^c}$.
Note that for every vertex $u \in V(\eps)$, the open vertices of $\eps$ in $\Pi^\infty$ separate $u$ from infinity in $\omega_{\Pi^c} \cup \eps$. Also, $u$ could belong to the exterior of $\Pi$ as well, but $|V(\eps)|$ is finite almost surely.

We proceed to upper bound the conditional probability that $\omega_\Pi \equiv 0$. We give two different bounds whose usefulness depend on the relative size of $\Pi^\infty$. Suppose first that $|\Pi^\infty| \le \frac12 |\Pi|$. Then
\[ \P(\omega_\Pi\equiv 0 \mid Z,\omega_{\Pi^c}) \le \P(\omega_{\Pi \setminus \Pi^\infty}\equiv0 \mid Z,\omega_{\Pi^c},\omega_{\Pi^\infty} \equiv 0) = (1-p)^{|\Pi \setminus \Pi^\infty|} \le (1-p)^{\frac12 |\Pi|} ,\]
since $\omega_{\Pi^\infty} \equiv 0$ implies that $V(\omega_\Pi)=\emptyset$ so that~\eqref{eq:conditional_law} implies that $\omega_{\Pi \setminus \Pi^\infty}$ follows independent Bernoulli percolation with parameter $p$ conditionally on $Z,\omega_{\Pi^c}$ and $\{\omega_{\Pi^\infty} \equiv 0\}$.
Suppose now that $|\Pi^\infty|>\frac12 |\Pi|$. Then by~\eqref{eq:conditional_law},
\begin{align*}
\P(\omega_\Pi \equiv 0 \mid Z,\omega_{\Pi^c})
 &\le \P(\omega_{\Pi^\infty} \equiv 0 \mid Z,\omega_{\Pi^c},\omega_{\Pi \setminus \Pi^\infty} \equiv 0) \\
 & = \frac{(1-p)^{|\Pi^\infty|}}{\sum_{\eta \in \{0,1\}^{\Pi^\infty}} p^{o(\eta)} (1-p)^{c(\eta)} (1-q)^{|V(\eta')|}},
\end{align*}
where $\eta' \in \{0,1\}^\Pi$ is defined by $\eta'_{\Pi^\infty}\equiv \eta$ and $\eta'_{\Pi \setminus \Pi^\infty} \equiv 0$, and we used that $V(\eta') = \emptyset$ when $\eta \equiv 0$. Note that $V(\eta')$ is always disjoint from $\text{Ext}(\Pi)$, and hence $|V(\eta')| \le |\Pi|+|\text{Int}(\Pi)|$. Thus,
\begin{align*}
\P(\omega_\Pi \equiv 0 \mid Z,\omega_{\Pi^c})
 &\le  \frac{(1-p)^{|\Pi^\infty|} }{(1-q)^{|\Pi|+|\text{Int}(\Pi)|} + (1-(1-q)^{|\Pi|+|\text{Int}(\Pi)|}) \cdot (1-p)^{|\Pi^\infty|}} \\
 &\le (1-q)^{-|\Pi|-|\text{Int}(\Pi)|} \cdot (1-p)^{|\Pi^\infty|} \\
 &\le (1-q)^{-|\Pi|-|\text{Int}(\Pi)|} \cdot (1-p)^{\frac12 |\Pi|}.
\end{align*}

Therefore, almost surely,
\begin{align*}
\P(\omega_\Pi \equiv 0 \mid Z) &\le (1-q)^{-|\Pi|-|\text{Int}(\Pi)|} \cdot (1-p)^{\frac12 |\Pi|} \\&\le \exp\left[ \left( -(\Delta+1)\log(1-q) + \tfrac h2 \log(1-p) \right) |\text{Int}(\Pi)| \right] .
\end{align*}
Since there are at most $\Delta^{2n}$ cutsets $\Pi$ with interior of size $n$, by summing over all cutsets, we get that
\[ \P(X_v=0 \mid Z) \le \sum_{n=1}^\infty \exp\left[ \left(2\log \Delta -(\Delta+1)\log(1-q) + \tfrac h2 \log(1-p) \right) n \right] .\]
Thus,
\begin{equation}\label{eq:perc-dom-Z}
\P(X_v=1 \mid Z) \to 1 \qquad\text{as }p\to 1,\qquad\text{uniformly in }v .
\end{equation}
This establishes~\eqref{eq:perc-dom**}.

To complete the proof of \cref{thm:stochastic-dom-perc}, it remains to show that $XY$ can be invariantly decoupled.
We claim that this is the case when $p$ and $q$ are close to 1. Let $A := \{ v\in V : Y_v=0\}$. Let $(XY)^\infty$ denote the set of vertices which are infinite clusters of $\{ v \in V : (XY)_v=1 \} = \{v \in A^c : X_v=1\}$. Since $p_*(XY)$ is close to 1, a Peierls argument (see \cref{lem:inf-cluster-finite-holes} below) shows that the clusters $\{B_i\}_i$ of the complement of $(XY)^\infty$ are all finite almost surely. Thus, $\{ A \cap B_i \}_i$ is a partition of $A$ into finite sets, and it remains only to show that $\{X_{A \cap B_i}\}_i$ are conditionally independent given $(A,X_{A^c})$. For this, it suffices to show that for any fixed finite set $F \subset V$, we have that $X_F$ and $X_{F^c}$ are conditionally independent given that $X_{\partial F} \equiv 1$ and each vertex in $F$ is in an infinite cluster of $\{ v \notin F : X_v=1 \}$. Note that the latter event is measurable with respect to $\omega_{F^c}$, and that, on this event, $X_{F^c}$ is measurable with respect to $\omega_{F^c}$. Thus, $\omega_F$ is conditionally independent of $X_{F^c}$. Finally, on this event, $X_F$ is measurable with respect to $\omega_F$, which establishes the desired independence.
\qed

\begin{lemma}\label{lem:inf-cluster-finite-holes}
Let $G$ be a bounded-degree nonamenable graph. Let $\omega$ be Bernoulli site percolation with parameter $p$. Let $\omega^\infty$ consist of those sites which are in infinite clusters in $\omega$. Then, for all $p$ close to 1, the complement of $\omega^\infty$ has no infinite connected component almost surely.
\end{lemma}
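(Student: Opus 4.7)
The plan is to execute a Peierls-type argument. First note that if $v$ lies in an infinite connected component of $V \setminus \omega^\infty$, then by K\"onig's lemma there exists, for every $n \ge 1$, a connected set $S \ni v$ with $|S| = n$ and $S \subset V \setminus \omega^\infty$. Since the number of connected subsets of $V$ of size $n$ containing $v$ is at most $(e\Delta)^n$, it suffices to establish
\[
\P(S \subset V \setminus \omega^\infty) \le \lambda^{|S|}
\]
for every finite connected $S \subset V$, with some $\lambda = \lambda(p) < (e\Delta)^{-1}$ when $p$ is close enough to $1$. A union bound over $S$ of size $n$, letting $n \to \infty$, followed by a countable union over $v \in V$, then yields that $V\setminus\omega^\infty$ a.s.\ has no infinite connected component.

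To bound $\P(S \subset V \setminus \omega^\infty)$, consider the \emph{extended set}
\[
T := S \cup \bigcup_{u \in S,\ \omega_u = 1} C_\omega(u),
\]
where $C_\omega(u)$ is the open cluster of $u$ in $\omega$. On the event $\{S \subset V \setminus \omega^\infty\}$ the set $T$ is finite, connected, contains $S$, every vertex of $T \setminus S$ is open, and the ``open-adjacent'' portion of the exterior boundary
\[
\partial_o T := \{w \in \partial T : w \sim u \text{ for some open } u \in T\}
\]
is entirely closed in $\omega$, for otherwise an open cluster of $S$ would extend beyond $T$. Since the only closed vertices of $T$ lie in $S_{\mathrm{cl}} := S \cap \{\omega = 0\}$, the vertices of $\partial T \setminus \partial_o T$ are adjacent only to $S_{\mathrm{cl}}$, so using nonamenability we obtain
\[
|\partial_o T| \ge |\partial T| - \Delta|S_{\mathrm{cl}}| \ge h|T| - \Delta|S_{\mathrm{cl}}|.
\]

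Since the sets $S_{\mathrm{cl}}$, $S \setminus S_{\mathrm{cl}}$, $T \setminus S$, and $\partial_o T$ are pairwise disjoint, the joint event $\{S_{\mathrm{cl}} = A,\ T = t\}$ has probability at most $(1-p)^{|A| + |\partial_o t|}\, p^{|t|-|A|}$. Summing over connected $t \supset S$ (at most $(e\Delta)^m$ of each size $m$) and over $A \subseteq S$, the task is to show that the resulting double sum is exponentially small in $|S|$ for $p$ close to $1$. The main obstacle is that the estimate $|\partial_o T| \ge h|T| - \Delta|S_{\mathrm{cl}}|$ is only useful when $|S_{\mathrm{cl}}|$ is small compared to $|T|$; when $|S_{\mathrm{cl}}|$ is a large fraction of $|S|$ the right-hand side may even be negative. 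The resolution is to split the sum over $A$ into two regimes. In the regime $|A| \le \alpha^*|S|$ for a suitable small $\alpha^* = \alpha^*(h,\Delta) \in (0,1)$, the exponent satisfies $h|t| - \Delta|A| \ge c_1 |t|$ for some $c_1 > 0$, so the factor $(1-p)^{h|T|}$ beats the combinatorial weight $(e\Delta)^{|T|}$ as $p \to 1$ via the geometric series $\sum_m (e\Delta)^m [p(1-p)^h]^m$. In the complementary regime $|A| > \alpha^*|S|$, the probabilistic cost $(1-p)^{|A|}$ of closing many vertices of $S$ already gives exponential decay on its own, and dominates the entropy $\binom{|S|}{|A|}$ and the combinatorial weight of extensions once $p$ is sufficiently close to $1$. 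Combining the two regimes yields the required $\lambda(p) \to 0$ as $p \to 1$, completing the Peierls argument.
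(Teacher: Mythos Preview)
Your approach via the extended set $T$ is sound, but regime 2 needs more care than you indicate. As phrased, you appear to still be summing over all connected $t\supset S$ in that regime, relying only on the factor $(1-p)^{|A|}$ for decay; but with no contribution from $|\partial_o T|$ in the exponent, the sum over sizes $\sum_{m\ge |S|}(e\Delta)^m p^{m-|A|}$ diverges since $e\Delta p>1$, so $(1-p)^{|A|}$ cannot ``dominate the combinatorial weight of extensions''. The fix is not to sum over $t$ at all in regime 2: simply bound
\[
\P\big(S\subset V\setminus\omega^\infty,\ |S_{\mathrm{cl}}|>\alpha^*|S|\big)\le \P\big(|S_{\mathrm{cl}}|>\alpha^*|S|\big),
\]
a binomial tail which is at most $(C(1-p))^{\alpha^*|S|}$ and beats the entropy $(e\Delta)^{|S|}$ once $p$ is close to $1$. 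Only regime 1 then involves the sum over $t$, where your boundary estimate $|\partial_o T|\ge h|T|-\Delta|A|$ makes it converge.

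That said, the paper's route is considerably shorter and avoids the two-regime split entirely. Rather than bounding $\P(S\subset V\setminus\omega^\infty)$ for an arbitrary connected $S$, the paper works with $A_n$, the connected component of $v$ in $V\setminus I_n$, where $I_n$ is the set of sites lying in infinite clusters of the augmented configuration $\omega\cup(V\setminus B_n(v))$. The key observation is that if $A_n=S$ then the entire \emph{internal} boundary $\partial_{\mathrm{int}}S$ is closed in $\omega$: any open $u\in\partial_{\mathrm{int}}S$ would neighbor some $w\in\partial S\subset I_n$ and hence itself lie in $I_n$, contradicting $u\in S$. This gives $\P(A_n=S)\le(1-p)^{|\partial_{\mathrm{int}}S|}\le(1-p)^{h|S|/\Delta}$ in one stroke, with no auxiliary set $T$, no split on $|S_{\mathrm{cl}}|$, and a single sum over $S$; tightness of $\{A_n\}$ then yields the result. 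Your construction of $T$ and the fact that $\partial_o T$ is closed is the right kind of idea, but by passing to the actual component (in a finite approximation) rather than an arbitrary subset, the paper sidesteps the double sum over $(t,A)$ altogether.
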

\begin{proof}
Fix $v \in V$. For $n \in \N \cup \{\infty\}$, let $I_n$ be the set of sites which are in infinite clusters of $\omega \cup (V \setminus B_n(v))$. Let $A_n$ be the connected component of $v$ in $V \setminus I_n$. We need to show that $\P(|A_\infty|=\infty)=0$. Since $A_n$ increases to $A_\infty$, we need to show that $\{A_n\}_{n=1}^\infty$ is a tight collection of random variables
(the advantage of $A_n$ over $A_\infty$ is that the former is clearly almost surely finite). We do this by a Peierls argument.

Let $S \subset V$ be a finite connected set containing $v$.
Then $\P(A_n = S) \le (1-p)^{|\partial_{\text{int}} S|} \le (1-p)^{|S|h/\Delta}$, where $\partial_{\text{int}}S = \partial (S^c)$ is the internal vertex boundary of $S$. Since there are at most $\Delta^{2k}$ such sets $S$ with $|S|=k$, we get that $\P(|A_n| \ge k) \le \sum_{i=k}^\infty \Delta^{2i}(1-p)^{ih/\Delta} \le Ce^{-ck}$.
\end{proof}

We end this section with a strengthening of the second part of \cref{thm:stochastic-dom-perc}, which is most relevant in the case of nonamenable graphs with infinitely many ends. Recall that $B_r(v)$ is the ball of radius $r$ around $v$.

\begin{thm}\label{thm:stochastic-dom-perc-inf-ends}
Let $G$ be a bounded-degree nonamenable graph. Fix $r \ge 1$.
Let $\omega$ be Bernoulli site percolation with parameter $p$ on $G$.
Let $\tilde X$ consist of those vertices $v$ for which $B_r(v)$ is open in $\omega$ and every vertex $u \in \partial B_r(v)$ is either in a finite connected component of $V \setminus B_r(v)$ or is connected to infinity by an open path in $\omega$ disjoint from $B_r(v)$. Then $p_\inv(\tilde X) \to 1$ as $p\to1$.
\end{thm}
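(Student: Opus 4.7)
The plan is to adapt the proof of \cref{thm:stochastic-dom-perc} to $\tilde X$. Assume $G$ is connected (else argue componentwise). Fix $q \in (0,1)$ and let $Y \sim \nu_q$ be independent of $\omega$; set $Z := \tilde X Y$. Since $\tilde X \ge Z$ pointwise, $p_\inv(\tilde X) \ge p_\inv(Z)$. The plan is to show that $Z$ can be invariantly decoupled (so $p_\inv(Z) \ge p_*(Z)$ by \cref{lem:invariant-monotone-coupling}) and that $p_*(Z) \ge q \cdot p_*(\tilde X \mid Z) \to q$ as $p \to 1$, where the first inequality is the dilution bound behind \cref{lem:domination*}. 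Sending $q \to 1$ then gives $p_\inv(\tilde X) \to 1$.

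The main step is to bound $\P(\tilde X_v = 0 \mid Z)$ uniformly in $v$ and $Z$. There are two ways for $\tilde X_v$ to fail: (a) $B_r(v)$ contains a closed vertex of $\omega$, whose closed cluster then serves as a finite certificate; or (b) $B_r(v)$ is entirely open but some non-trapped $u \in \partial B_r(v)$ (one whose component $C_u$ in $G \setminus B_r(v)$ is infinite) has finite open cluster in $\omega|_{V \setminus B_r(v)}$, witnessed by a finite closed cutset $\Pi \subset C_u$ separating $u$ from $\infty$ in $G[V \setminus B_r(v)]$. For each candidate certificate $S$ (closed cluster for~(a)) or $\Pi$ (cutset for~(b)), the conditional probability $\P(\omega_S \equiv 0 \mid Z, \omega_{S^c})$ or $\P(\omega_\Pi \equiv 0 \mid Z, \omega_{\Pi^c})$ is bounded using the direct analog of~\eqref{eq:conditional_law} with $\tilde X$ in place of $\omega^\infty$; we split according to whether the auxiliary set $\Pi^\infty$ of vertices of $\Pi$ adjacent to an infinite cluster in $\omega|_{\text{Ext}(\Pi) \cap (V \setminus B_r(v))}$ has size small or large relative to $|\Pi|$, exactly as in \cref{thm:stochastic-dom-perc}. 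The summation over $S, \Pi$ converges exponentially in $1-p$ since the maximum-degree bound keeps the combinatorial count $\Delta^{2|\text{Int}(\Pi)|}$ unchanged, and any finite $S \subset V \setminus B_r(v)$ satisfies $|\partial_{G[V \setminus B_r(v)]} S| \ge h|S| - |B_r(v)|$, so the Cheeger constant of the subgraph $G[V \setminus B_r(v)]$ is comparable to $h(G)$ for $|S|$ exceeding $O(\Delta^r/h)$, with smaller certificates absorbed into a finite combinatorial prefactor.

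The invariant decoupling of $Z$ is set up exactly as in the last paragraph of the proof of \cref{thm:stochastic-dom-perc}: take $A := \{v : Y_v = 0\}$, and let $\{K_i\}$ be the connected components of the complement of the infinite clusters of $\{v : Z_v = 1\}$ in $G$. The Peierls bound above implies $Z$ stochastically dominates a high-density Bernoulli process, and a Peierls argument mirroring \cref{lem:inf-cluster-finite-holes} then shows that the $K_i$ are almost surely finite for $p$ and $q$ close to~$1$. Setting $A_i := A \cap K_i$ furnishes the partition, and conditional independence of $\{Z_{A_i}\}$ given $(A, Z_{A^c})$ follows from the observation that on the event that each $K_i$ is enveloped by a ring of $Z = 1$ vertices whose $r$-balls reach $\infty$ via open paths in $\omega$ outside an $r$-neighborhood of $K_i$, the restrictions of $\omega$ (hence of $\tilde X$) inside the $K_i$'s are conditionally independent across $i$. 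The main technical obstacle is the Peierls estimate for case~(b), where the cutset lives in a subgraph whose isoperimetry must be traced back to that of $G$; this is handled, as in \cref{thm:stochastic-dom-perc}, by the small-versus-large $|\Pi^\infty|$ dichotomy.
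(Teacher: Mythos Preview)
Your proposal follows essentially the same route as the paper: dilute $\tilde X$ by an independent $Y\sim\nu_q$, prove $p_*(\tilde X\mid Z)\to 1$ via a Peierls argument that exploits the uniformly positive Cheeger constant of the subgraphs $G[V\setminus B_r(v)]$, verify that $Z=\tilde X Y$ can be invariantly decoupled in the same way as in the proof of \cref{thm:stochastic-dom-perc}, and conclude via \cref{lem:invariant-monotone-coupling}. The subgraph isoperimetry bound you state ($|\partial_{G[V\setminus B_r(v)]} S|\ge h|S|-|B_r(v)|$) is exactly how the paper argues that $h(G^v_i)\ge ch$ uniformly.

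The one place your write-up is less clean than the paper's is your handling of case~(a). You propose to use the closed cluster $S$ of a closed vertex in $B_r(v)$ as a certificate and to bound $\P(\omega_S\equiv 0\mid Z,\omega_{S^c})$ via the analog of~\eqref{eq:conditional_law}. But $S$ is not a cutset, so the crucial bound $|V(\eta')|\le|\Pi|+|\mathrm{Int}(\Pi)|$ used in the large-$|\Pi^\infty|$ branch has no analog; the number of vertices whose $\tilde X$-value flips when you open parts of $S$ is not controlled by $|S|$ alone. The paper sidesteps this by instead bounding the stronger event $E_v^c=\{$some $w\in B_r(v)$ is not in an infinite open cluster$\}$: each such $w$ admits a closed cutset in $G$, and one is back to the original Peierls argument of \cref{thm:stochastic-dom-perc} (with $Z=\tilde X Y$ in place of $XY$, which the paper also treats as a minor modification). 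Since $E_v\subset O_v$, bounding $\P(E_v^c\mid Z)$ suffices for case~(a). This is an easy fix and does not affect the substance of your argument.
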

\begin{proof}
Let $G^v_1,\dots,G^v_{\ell_v}$ be the infinite connected components of $V \setminus B_r(v)$. Note that each $G^v_i$ is nonamenable.
In fact, we claim that the Cheegar constants of $G^v_i$ are bounded below uniformly over all $(v,i)$. To see this, note that if $S$ is a finite subset of $G^v_i$, then its boundary in $G^v_i$ has size at least $\max\{|\partial S| - |B_r(v)|,1\} \ge \frac1{|B_r(v)|+1}|\partial S| \ge c|\partial S|$, where $c>0$ is a constant (depending on the maximum degree in $G$). Thus, $h(G^v_i) \ge ch$.

Let $X=\omega^\infty$ consist of those sites which are in infinite open clusters in $\omega$. Let $Y \sim \nu_q$ be independent of $\omega$.
Recall from~\eqref{eq:perc-dom-Z} that $\P(v \leftrightarrow \infty \mid XY) \to 1$ as $p\to1$, uniformly in $v$. A minor modification of the argument shows that this also holds conditionally on $Z:=\tilde XY$, instead of $XY$. This argument further shows that $\P(u \leftrightarrow \infty\text{ in }G^v_i \mid Z) \to 1$ as $p \to 1$, uniformly in $(v,i)$ and $u \in G^v_i$. Thus, letting $E_v$ be the event that every vertex in $B_r(v)$ is in an infinite open cluster, and letting $E_v'$ be the event that every vertex $u \in \partial B_r(v)$ which belongs to an infinite connected component of $V \setminus B_r(v)$ is connected to infinity by an open path in $V \setminus B_r(v)$, we obtain that
\[ \P(E_v \cap E'_v \mid Z) \to 1 \qquad\text{as }p \to 1,\]
uniformly in $v$.
Let $O_v$ be the event that $B_r(v)$ is open. Noting that $\{\tilde X_v=1\}=E'_v \cap O_v$ and that $E_v \subset O_v$, we conclude that $\P(\tilde X_v=1 \mid Z) \to 1$ as $p\to 1$, uniformly in $v$. That is,
\begin{equation}\label{eq:perc-dom-Z-tilde}
 p_*(\tilde X \mid Z) \to 1 \qquad\text{as }p \to 1.
\end{equation}
As before, this shows that $p_*(Z) \to q$ as $p\to1$. Since $q$ was arbitrary and $p(\tilde X) \ge p(Z) \ge p_*(Z)$, we get that $p(\tilde X) \to 1$. To get that $p_\inv(\tilde X)\to 1$, we use that $Z$ can be invariantly decoupled (this is shown in a similar way as for $XY$ in the proof of \cref{thm:stochastic-dom-perc}) so that $p_\inv(\tilde X) \ge p_\inv(Z) \ge p_*(Z)$ by \cref{lem:invariant-monotone-coupling}.
\end{proof}

\section{Stochastic ordering of Ising measures}\label{sec:ising-stochastic-ordering}

In this section we prove \Cref{thm:stochastic-order-Ising}.
The proof is split into the amenable case and the nonamenable case, and in each case we prove slightly more than what was stated in \Cref{thm:stochastic-order-Ising} (see \cref{thm:stochastic-order-Ising1,thm:stochastic-order-Ising2}).

Recall that $\mu_\beta^\pm$ denotes the plus/minus state of the Ising model on $G$ at inverse temperature $\beta$.
Let $\mu^+_{\Lambda,\beta}$ denote the Ising model in finite volume $\Lambda \subset V$ with plus boundary conditions at inverse temperature $\beta$. It is well known that $\mu^+_{\Lambda,\beta}$ stochastically decreases to $\mu^+_\beta$ as $\Lambda$ increases to $V$.

\subsection{The amenable case}
It is a standard fact that $\mu_\beta^+ \ge_{st} \mu_\beta^-$ for any $\beta$. Therefore, the following result implies the first part of \cref{thm:stochastic-order-Ising}.

\begin{thm}\label{thm:stochastic-order-Ising1}
Let $G$ be a bounded-degree amenable graph. Then
$\mu_{\beta_1}^+ \ge_{st} \mu_{\beta_2}^-$ only if $\beta_1=\beta_2$.
\end{thm}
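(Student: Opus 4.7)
The plan is to argue by contradiction: assume $\mu^+_{\beta_1} \ge_{st} \mu^-_{\beta_2}$ with $\beta_1 \neq \beta_2$, WLOG $\beta_1 > \beta_2 \ge 0$. The contradiction will come from comparing the all-plus probabilities $\mu^+_\beta(\sigma_\Lambda \equiv +1)$ along a F\o lner sequence. First, the $\mathbb{Z}_2$-symmetry $\sigma \mapsto -\sigma$ preserves the zero-field Hamiltonian and sends $\mu^+_{\beta_2}$ to $\mu^-_{\beta_2}$, so $\mu^-_{\beta_2}(\sigma_\Lambda \equiv -1) = \mu^+_{\beta_2}(\sigma_\Lambda \equiv +1)$ for every finite $\Lambda$. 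Applying the assumed domination to the decreasing event $\{\sigma_\Lambda \equiv -1\}$ gives $\mu^+_{\beta_1}(\sigma_\Lambda \equiv -1) \le \mu^+_{\beta_2}(\sigma_\Lambda \equiv +1)$. A standard block-flip energy bound (flipping $\sigma|_\Lambda$ changes the Hamiltonian only on edges crossing $\partial_e\Lambda$) gives $\mu^+_{\beta_1}(\sigma_\Lambda \equiv -1) \ge e^{-2\beta_1|\partial_e\Lambda|}\mu^+_{\beta_1}(\sigma_\Lambda \equiv +1)$, so combining,
\[ \mu^+_{\beta_1}(\sigma_\Lambda \equiv +1) \le e^{2\beta_1|\partial_e\Lambda|}\,\mu^+_{\beta_2}(\sigma_\Lambda \equiv +1). \]
On a F\o lner sequence $\Lambda_n$ this forces $\limsup_n |\Lambda_n|^{-1}\log[\mu^+_{\beta_1}(\sigma_{\Lambda_n} \equiv +1)/\mu^+_{\beta_2}(\sigma_{\Lambda_n} \equiv +1)] \le 0$.

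The heart of the proof is to show the same log-ratio density is strictly positive, via a free-energy computation. For finite $\Lambda' \supset \Lambda$ with plus boundary, differentiating in $\beta$ yields
\[ \tfrac{d}{d\beta}\log\mu^+_{\Lambda',\beta}(\sigma_\Lambda\equiv+1) \;=\; \sum_{\{u,v\}\in E_{\Lambda'}}\bigl(\E^+_{\Lambda',\beta}[\sigma_u\sigma_v \mid \sigma_\Lambda\equiv+1] - \E^+_{\Lambda',\beta}[\sigma_u\sigma_v]\bigr). \]
Edges with both endpoints in $\Lambda$ contribute $1 - \E^+_{\Lambda',\beta}[\sigma_u\sigma_v]$, which by finite energy (any spin has conditional probability at least $(1+e^{2\beta\Delta})^{-1}$ of being $-1$ regardless of its neighbors) is bounded below uniformly by some $\delta(\beta,\Delta) > 0$. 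Edges with at least one endpoint outside $\Lambda$ are bounded by $2$ in absolute value and, by a finite-energy boundary-influence estimate, contribute $O(|\partial_e\Lambda|)$ in total, uniformly in $\Lambda'$. Integrating over $[\beta_2,\beta_1]$ and passing $\Lambda' \uparrow V$,
\[ \log\frac{\mu^+_{\beta_1}(\sigma_\Lambda\equiv+1)}{\mu^+_{\beta_2}(\sigma_\Lambda\equiv+1)} \ge c(\beta_1,\beta_2)\,|E_{\mathrm{int}}(\Lambda)| - O(|\partial_e\Lambda|), \]
with $c(\beta_1,\beta_2) := \int_{\beta_2}^{\beta_1}\delta(\beta,\Delta)\,d\beta > 0$ and $E_{\mathrm{int}}(\Lambda)$ the edges with both endpoints in $\Lambda$.

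To conclude, since $G$ is connected and bounded-degree, $2|E_{\mathrm{int}}(\Lambda_n)| = \sum_{v\in\Lambda_n}\deg(v) - |\partial_e\Lambda_n| \ge |\Lambda_n|(1-o(1))$ on a F\o lner sequence (every vertex has degree at least $1$). Dividing the above lower bound by $|\Lambda_n|$ yields $\liminf_n |\Lambda_n|^{-1}\log[\mu^+_{\beta_1}(\sigma_{\Lambda_n}\equiv+1)/\mu^+_{\beta_2}(\sigma_{\Lambda_n}\equiv+1)] \ge c(\beta_1,\beta_2)/2 > 0$, which contradicts the $\limsup\le 0$ from the first step and forces $\beta_1 = \beta_2$. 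The main obstacle I foresee is the uniform control of the boundary contributions in the $\beta$-derivative as $\Lambda'\uparrow V$, so that the free-energy lower bound persists in the infinite-volume Gibbs measure. On a quasi-transitive amenable graph this is immediate from the existence of a shift-invariant free-energy density; in the general bounded-degree amenable case it requires arguing via finite-energy decay of the influence of the conditioning $\{\sigma_\Lambda\equiv+1\}$ on distant edges, which is standard but not entirely free.
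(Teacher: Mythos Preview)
Your argument is essentially correct, and the obstacle you flag at the end is not actually an obstacle. Every single term in your $\beta$-derivative sum is non-negative: for any edge $\{u,v\}$, the second Griffiths inequality (GKS-II) applied to the ferromagnetic Ising model with non-negative fields (the $+$ boundary on $\partial\Lambda'$ is equivalent to $+\infty$ field there) gives
\[
\frac{\partial}{\partial h_w}\,\E^+_{\Lambda',\beta}[\sigma_u\sigma_v] \;=\; \E^+_{\Lambda',\beta}[\sigma_u\sigma_v\sigma_w] - \E^+_{\Lambda',\beta}[\sigma_u\sigma_v]\,\E^+_{\Lambda',\beta}[\sigma_w] \;\ge\; 0,
\]
and sending $h_w\to+\infty$ for all $w\in\Lambda$ gives exactly the conditioned measure. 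Hence $\E^+_{\Lambda',\beta}[\sigma_u\sigma_v\mid\sigma_\Lambda\equiv+1]\ge\E^+_{\Lambda',\beta}[\sigma_u\sigma_v]$ for \emph{every} edge, so the contribution from edges outside $E_{\mathrm{int}}(\Lambda)$ is $\ge0$ and can simply be dropped in the lower bound. No influence-decay estimate is needed, and the passage $\Lambda'\uparrow V$ is then immediate since the left-hand side converges. You may therefore delete the $-O(|\partial_e\Lambda|)$ term entirely.

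Your route is genuinely different from the paper's. The paper conditions on $\sigma_{\partial U}\equiv+$ from the outset, which makes the relevant quantity $\P(\sigma_U\equiv+\mid\sigma_{\partial U}\equiv+)$ a purely finite-volume object; it then writes this as a telescoping product $p_1\cdots p_n$ along an enumeration of $U$ in which each $u_i$ neighbors a previously-fixed plus, and uses the random-cluster representation to show each ratio $p_i/q_i$ is bounded away from $1$ uniformly. Your approach keeps the unconditional all-plus probability and differentiates in $\beta$, obtaining a volume-order lower bound directly from finite energy. Once the GKS observation above is made, your argument is shorter and avoids both the sequential enumeration and the FK machinery; the paper's approach, on the other hand, never needs to invoke Griffiths inequalities and keeps everything local from the start.
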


\begin{proof}
Suppose that $\beta_1 \neq \beta_2$.
Let $\sigma$ be sampled from $\mu^+_{\beta_1}$ and let $\tau$ be sampled from $\mu^-_{\beta_2}$.
Suppose towards a contradiction that $\sigma$ stochastically dominates $\tau$.
Let $U \subset V$ be finite.
Using finite energy and FKG, we see that
\[ \P(\sigma_U\equiv+ \mid \sigma_{\partial U}\equiv+) \ge \P(\sigma_U\equiv+) \ge \P(\tau_U\equiv+) \ge \P(\tau_U\equiv+ \mid \tau_{\partial U}\equiv+) \cdot e^{-2\Delta\beta_2|\partial_e U|} ,\]
\[ \P(\tau_U\equiv- \mid \tau_{\partial U}\equiv-) \ge \P(\tau_U\equiv-) \ge \P(\sigma_U\equiv-) \ge \P(\sigma_U\equiv- \mid \sigma_{\partial U}\equiv-) \cdot e^{-2\Delta\beta_1 |\partial_e U|} .\]
By plus-minus symmetry of the left-most and right-most probabilities, we get that
\[ e^{-2\Delta\beta_2 |\partial_e U|} \le \frac{\P(\sigma_U\equiv+ \mid \sigma_{\partial U}\equiv+)}{\P(\tau_U\equiv+ \mid \tau_{\partial U}\equiv+)} \le e^{2\Delta\beta_1 |\partial_e U|} .\]
Since $G$ is amenable, given $\eps>0$, we may choose $U$ so that $2\Delta\max\{\beta_1,\beta_2\}|\partial_e U| \le \eps|U|$, and hence,
\[ e^{-\eps|U|} \le \frac{\P(\sigma_U\equiv+ \mid \sigma_{\partial U}\equiv+)}{\P(\tau_U\equiv+ \mid \tau_{\partial U}\equiv+)} \le e^{\eps|U|} .\]
Let $U=\{u_1,\dots,u_n\}$ be an enumeration such that $u_i \sim \partial U \cup \{u_1,\dots,u_{i-1}\}$ for all $1 \le i \le n$. Note that $\P(\sigma_U\equiv+ \mid \sigma_{\partial U}\equiv+) = p_1\cdots p_n$,
where $p_i := \P(\sigma_{u_i}=+ \mid \sigma_{\partial U \cup \{u_1,\dots,u_{i-1}\}}\equiv+)$. Defining $q_i$ similarly using $\tau$, we have that
\[ e^{-\eps n} \le \frac{p_1 \cdots p_n}{q_1 \cdots q_n} \le e^{\eps n} .\]
The desired contradiction will arise once we show that if $\eps$ is chosen sufficiently small, then either $\frac{p_i}{q_i} > e^{\eps}$ for all $i$ (this occurs when $\beta_1>\beta_2$), or $\frac{q_i}{p_i} > e^{\eps}$ for all $i$ (this occurs when $\beta_2>\beta_1$). Indeed, we claim more generally that if $\beta_1>\beta_2$ then
\[ \inf_{\substack{v \in \Lambda \Subset V\\v \sim \partial\Lambda}} \frac{\P(\sigma_v=+ \mid \sigma_{\partial \Lambda}\equiv+)}{\P(\tau_v=+ \mid \tau_{\partial \Lambda}\equiv+)} > 1 .\]
Fix $\Lambda$ finite and $v\in\Lambda$ such that $v \sim \partial\Lambda$. For $i \in \{1,2\}$, let $\omega_i$ denote a sample from the FK-Ising random-cluster model on $\Lambda$, wired on $\partial \Lambda$, with parameter $p_i := 1-e^{-2\beta_i}$.
It is well known (see \cite[Theorem~1.16]{grimmett2006random}) that $\P(\sigma_v=+ \mid \sigma_{\partial \Lambda}=+)$ equals $\frac12 + \frac12\P(v \leftrightarrow \partial\Lambda\text{ in }\omega_1)$, and similarly for $\tau$. Thus, it suffices to show that $\P(v \leftrightarrow \partial\Lambda\text{ in }\omega_1) \ge \P(v \leftrightarrow \partial\Lambda\text{ in }\omega_2) + c$ for a constant $c>0$ which does not depend on $\Lambda$ or $v$.
Let $E$ be the edges incident to $v$, and let $N_i$ be the number of neighbors of $v$ which are connected to $\partial \Lambda$ in $\omega_i \setminus E$.
Conditioning on $N_i$, we may easily compute that
\[ \P(v \leftrightarrow \partial\Lambda\text{ in }\omega_i \mid N_i) = 1 - (1-p_i)^{N_i} .\]
Since $p_1>p_2$, we have that $\omega_1$ stochastically dominates $\omega_2$, and thus, $N_1$ stochastically dominates $N_2$. Noting that $N_1 \ge 1$ almost surely since $v \sim \partial\Lambda$, and that $(1-p_1)^{n_2} - (1-p_2)^{n_1} \ge c > 0$ whenever $1 \le n_1 \le \Delta$ and $n_2 \le n_1$, the desired inequality follows after taking expectations.
\end{proof}

\subsection{A dilution mechanism}\label{sec:dilution_lattice_gas}
The proof of the nonamenable case of \Cref{thm:stochastic-order-Ising} requires the use of a dilution mechanism more sophisticated than the independent dilution employed so far.
Indeed, if $\beta_1$ is only slightly large than $\beta_2$, then the magnetization under $\mu^+_{\beta_1}$ is in turn only slightly larger than under $\mu^+_{\beta_2}$. This means that the dilution that is applied to $\mu^+_{\beta_1}$ must necessarily be small (i.e., it must be unlikely to flip the spin at any particular site from plus to minus). Thus, if one was to apply an independent i.i.d.\ dilution, this would have the effect of creating a very strong negative magnetic field where the diluted field is minus (recall \cref{lem:domination*} and \cref{obs:X-tilted-for-ising}), which makes this approach fail.

To see this more explicitly, let us fix the graph to be the $\Delta$-regular tree for simplicity, and suppose that $\sigma \sim \mu^+_{\beta}$, $\sigma' \sim \mu^+_{\beta-\eps}$ and $\tau:=\min\{\sigma,\omega\}$ is a dilution of $\sigma$ by an independent $\{-1,+1\}$-valued \iid\ process $\omega$ with density $p$ of $+1$. Let us show that $\tau \succeq_* \sigma'$ does not hold for sufficiently small $\eps$ (regardless of the choice of $p$) when $\beta$ is large.
We first argue that we may assume that $p \ge 1-\eps$. For this, we shall use that $\P(\sigma'_v=+) \ge \P(\sigma_v=+) - \eps$, which can be seen from exact known formulas (see, e.g., \cite[Section~12.2]{georgii2011gibbs}). Thus, in order for $\tau$ to stochastically dominate $\sigma'$, we must have that $p \cdot \P(\sigma_v=+) = \P(\tau_v=+) \ge \P(\sigma'_v=+) \ge \P(\sigma_v=+) - \eps$, which yields the lower bound on $p$.
Let us now argue that
\[ \P(\tau_v=+ \mid (\tau_u)_{u \neq v} \equiv -) < \P(\sigma'_v=+ \mid (\sigma'_u)_{u \neq v} \equiv -) ,\]
so that $\tau \succeq_* \sigma'$ does not hold.
To see this, note that the right-hand side is
\[ \P(\sigma'_v=+ \mid (\sigma'_u)_{u \neq v} \equiv -) = \frac1{e^{2(\beta-\eps)\Delta}+1} = a+2a(1-a)\eps\Delta + O(\eps^2) ,\]
where $a:=(e^{2\beta\Delta}+1)^{-1}$.
For the left-hand side, recall from \cref{lem:domination*} and \cref{obs:X-tilted-for-ising} that the conditional law of $\sigma$ given $(\tau_u)_{u \neq v} \equiv -$ is $\mu^+_{\beta,{\bf b}}$, where ${\bf b}$ equals 0 at $v$ and equals $b:=\frac12\log(1-p)$ elsewhere. Thus, letting $\bar\sigma \sim \mu^+_{\beta,{\bf b}}$,
\[ \P(\tau_v=+ \mid (\tau_u)_{u \neq v} \equiv -) = p \cdot \P(\sigma_v=+ \mid (\tau_u)_{u \neq v} \equiv -) = p \cdot \P(\bar\sigma_v=+) \le \P(\bar\sigma_v=+) .\]
A Peierls argument gives that
\[ \P(\bar\sigma_v=+) \le a + \sum_{\substack{U:~ v\in U,~|U| \ge 2\\U\text{ connected}}} e^{-2\beta|\partial_e U|+2b(|U|-1)} \le a + \eps \Delta e^{-2\beta(2\Delta-2)} + O(\eps^2) ,\]
where we used that $e^{2b} = 1-p \le \eps$.
Since $e^{-4\beta(\Delta-1)} < 2a(1-a)$, we are done.

Instead of a simple independent \iid\ dilution, we apply a more delicate dependent dilution. The probability measure we use requires input from the theory of repulsive lattice gas which has intimate connections to the Lov\'asz local lemma, the cluster expansion, independent sets, the independent set polynomial (the hard-core model partition function) and its zeros. The core ideas appear already in a work of Shearer~\cite{shearer1985problem}, parts of which were made more explicit by Dobrushin~\cite{dobrushin1996estimates,dobrushin1996perturbation}, and the more recent \cite{scott2006dependency} (see also~\cite{scott2005repulsive}) elucidates and expands on these ideas.
One application of this set of ideas appears in~\cite{liggett1997domination} where it was used to construct finitely dependent processes which do not dominate any i.i.d.\ process (see~\cite{temmel2014shearer} for further results in this direction).
Another application is the transference of results from one of related domain to another; see~\cite{bissacot2011improvement,temmel2014shearer}. Since the Lov\'asz local lemma is a standard tool to obtain existence of combinatorial structures, this in turn leads to further applications (e.g., in~\cite{bissacot2011improvement}, to latin transversal matrices and satisfiability of $k$-SAT forms).
As far as we know, our application is rather distinct and new.
The following result is a consequence of the results in~\cite{scott2006dependency}, and will provide us with the dilution mechanism we need by applying it to a certain graph on the connected sets in $G$.

Let $\cG$ be a simple graph on a finite vertex set $\cX$.
A set $S \subseteq \cX$ is independent if it contains no two adjacent vertices. We write $S^+$ for the union of $S$ and its external vertex boundary $\partial S$. 

\begin{thm}\label{thm:dilution_measure}
Let $\{\alpha_x\}_{x \in \cX},\{r_x\}_{x \in \cX} \in [0,1)^\cX$ be such that, for any $x \in \cX$,
\begin{equation}\label{eq:sufficient}
\alpha_x \le r_x \prod_{y \sim x} (1-r_y).
\end{equation}
Then there exists a random variable ${\sf X} \subseteq \cX$ such that, for any $S \subseteq \cX$,
\begin{equation}\label{eq:dilution_measure_formula}
\P({\sf X} \supseteq S) = \prod_{x \in S} \alpha_x \cdot \1_{\{S\text{ is independent in }\cG\}} .
\end{equation}
Furthermore, for any independent sets $S_1,S_2 \subseteq \cX$ such that $S_1^+ \subseteq S_2^+$,
\begin{equation}\label{eq:dilution_measure_ratio}
\frac{\P({\sf X}=S_1 \mid {\sf X} \supseteq S_1)}{\P({\sf X}=S_2 \mid {\sf X} \supseteq S_2)} \ge \prod_{x \in S_2^+ \setminus S_1^+} (1-r_x) .
\end{equation}
\end{thm}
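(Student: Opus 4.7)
The plan is to define ${\sf X}$ via Möbius inversion, using the multivariate independence polynomial $Z_H(w) := \sum_{I \text{ indep in } H} \prod_{x \in I} w_x$ of induced subgraphs $H$ of $\cG$. Writing $-\alpha$ for the weight vector $\{-\alpha_x\}_{x \in \cX}$, I would set
$$\P({\sf X}=I) := \prod_{x \in I} \alpha_x \cdot Z_{\cG \setminus I^+}(-\alpha)$$
for each independent set $I \subseteq \cX$, and $\P({\sf X}=I)=0$ otherwise. Non-negativity of these numbers is precisely the Shearer--Scott--Sokal positivity theorem~\cite{scott2006dependency}: under the hypothesis~\eqref{eq:sufficient}, $Z_H(-\alpha) > 0$ for every induced subgraph $H$ of $\cG$. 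The hypothesis~\eqref{eq:sufficient} is automatically inherited by every induced subgraph (removing vertices only enlarges $\prod_{y \sim x}(1-r_y)$), so the positivity indeed applies to every $\cG \setminus I^+$.

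To verify~\eqref{eq:dilution_measure_formula} I would expand the definition and swap sums. For independent $S \subseteq \cX$,
$$\P({\sf X} \supseteq S) = \sum_{\substack{I \supseteq S\\I\text{ indep}}} \prod_{x \in I}\alpha_x \cdot Z_{\cG \setminus I^+}(-\alpha) = \sum_{\substack{J \supseteq S\\J\text{ indep}}} \prod_{x \in J}\alpha_x \sum_{I:\, S \subseteq I \subseteq J}(-1)^{|J \setminus I|},$$
after relabelling $J := I \cup K$ where $K$ is the summation variable inside $Z_{\cG \setminus I^+}(-\alpha)$ (noting that $I \cup K$ is independent in $\cG$ precisely when $K$ is independent and disjoint from $I^+$). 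The inner alternating sum vanishes unless $J=S$, yielding $\P({\sf X} \supseteq S) = \prod_{x \in S}\alpha_x$; for non-independent $S$ the formula holds trivially since no independent set contains $S$. Specialising to $S=\emptyset$ confirms $\sum_I \P({\sf X}=I)=1$, so ${\sf X}$ is a bona fide random subset.

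For the ratio bound~\eqref{eq:dilution_measure_ratio}, note directly from the construction that
$$\P({\sf X}=S_i \mid {\sf X} \supseteq S_i) = Z_{\cG \setminus S_i^+}(-\alpha),$$
so the quantity to estimate is $Z_{\cG \setminus S_1^+}(-\alpha)/Z_{\cG \setminus S_2^+}(-\alpha)$. Since $S_1^+ \subseteq S_2^+$, the graph $\cG \setminus S_2^+$ is obtained from $H_0 := \cG \setminus S_1^+$ by deleting the vertex set $T := S_2^+ \setminus S_1^+$. Enumerating $T = \{v_1,\dots,v_k\}$ arbitrarily and setting $H_i := H_{i-1} \setminus \{v_i\}$, I would telescope
$$\frac{Z_{H_0}(-\alpha)}{Z_{H_k}(-\alpha)} = \prod_{i=1}^k \frac{Z_{H_{i-1}}(-\alpha)}{Z_{H_i}(-\alpha)} \ge \prod_{i=1}^k (1 - r_{v_i}),$$
where each factor is bounded below by the Scott--Sokal one-vertex comparison $Z_H(-\alpha) \ge (1-r_x) Z_{H \setminus \{x\}}(-\alpha)$, itself a direct consequence of the deletion--contraction identity $Z_H(-\alpha) = Z_{H\setminus\{x\}}(-\alpha) - \alpha_x Z_{H\setminus\{x\}^+}(-\alpha)$ combined with the ratio bound $\alpha_x Z_{H\setminus\{x\}^+}(-\alpha) \le r_x Z_{H\setminus\{x\}}(-\alpha)$.

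The principal technical content --- both the positivity of $Z_H(-\alpha)$ and the one-vertex ratio bound --- is proved in~\cite{scott2006dependency} via a coupled induction on $|H|$, and this is essentially the only substantive obstacle; once it is available, the proof above is bookkeeping. Our role is thus to package the Shearer--Scott--Sokal ingredients into the form~\eqref{eq:dilution_measure_formula} and~\eqref{eq:dilution_measure_ratio} demanded by the dependent dilution construction of \cref{sec:ising-stochastic-ordering}.
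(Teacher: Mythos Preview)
Your proof is correct and follows essentially the same approach as the paper: both invoke the Scott--Sokal machinery~\cite{scott2006dependency} for positivity of $Z_H(-\alpha)$ and for the ratio bound, with the paper quoting the results as black boxes (the measure from \cref{thm:lattice_gas1} and the ratio inequality~\eqref{eq:ratio}) while you unpack the construction explicitly via M\"obius inversion and obtain~\eqref{eq:dilution_measure_ratio} by telescoping the one-vertex comparison. The substantive content is identical; the only difference is that the paper applies~\eqref{eq:ratio} in one step where you iterate the single-vertex deletion bound, which is in any case how~\eqref{eq:ratio} is proved in~\cite{scott2006dependency}.
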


Let us give some required background on the repulsive lattice gas.
A lattice gas is a statistical mechanics model in which particles are placed on sites (allowing multiple particles at a site), with each particle carrying a site-dependent fugacity and with an interaction involving pairs of particles. When the pairwise interaction is always penalizing (or neutral), the lattice gas is said to be repulsive. When this pairwise interaction is only forbidding or neutral, the lattice gas is said to have hard-core pair interactions.
When the pairwise interaction forbids multiple particles at a site, the lattice gas is said to have hard-core self-repulsion.
We focus on repulsive lattice gas with hard-core self repulsion and hard-core pair interaction, as this is sufficient for our application.

A repulsive lattice gas with hard-core self repulsion and hard-core pair interaction is defined by a finite (simple) graph $\cG$ on vertex set $\cX$ and a collection of weights ${\bf w} := \{w_x\}_{x \in \cX} \in \R^\cX$ (it is important for our purposes that negative weights are allowed here). The edges of the graph correspond to forbidden pair interactions and a lattice gas configuration corresponds to an independent set $X' \subset \cX$ in $\cG$ whose associated weight is $\prod_{x \in X'} w_x$.
The lattice gas partition function, or independent set polynomial, is defined as
\begin{equation}
Z_\cG ({\bf w}) := \sum_{\substack{X' \subseteq \cX\\X' \text{ independent in }\cG}} \prod_{x \in X'}w_x . \label{eq:lattice_gas}
\end{equation}
For any $S \subseteq \cX$, define
\begin{equation}
Z_\cG ({\bf w};S) := \sum_{\substack{S \subseteq X' \subseteq \cX\\X' \text{ independent in }\cG}} \prod_{x \in X'}w_x . \label{eq:lattice_gas_S}
\end{equation}

We now state two results we need.
The first result is an equivalence between the positivity of the independent set polynomial in a polydisc and the existence of certain probability measure on $2^\cX$ with nice properties. This result is taken from~\cite{scott2006dependency}, but the method of proof is already implicit in~\cite{shearer1985problem}.
The second result is a sufficient condition for the positivity of the independent set polynomial in a polydisc. Such a result is implicit in~\cite{shearer1985problem} and explicit in~\cite{dobrushin1996estimates,dobrushin1996perturbation}, and the version stated here is again taken from~\cite{scott2006dependency}.

\begin{thm}[{\cite[Theorem 2.2 (b),(g)]{scott2006dependency}}]\label{thm:lattice_gas1}
Let ${\bf R} = \{R_x\}_{x \in \cX} \in \R_+^\cX$. Then $Z_\cG({\bf w})>0$ for all ${\bf w}$ with $-{\bf R} \le {\bf w} \le 0$ if and only if there exists a probability measure $P$ on $2^\cX$ such that $P(\emptyset)>0$ and such that for any $S \subseteq \cX$,
\begin{equation}
\sum_{T \supseteq S}P(T) = \prod_{x \in S} R_x \cdot \1_{\{S \text{ is independent in }\cG\}}.\label{eq:dilution_measure}
\end{equation}
This probability measure is unique and satisfies that for any $S \subseteq \cX$,
\begin{equation}\label{eq:P-formula}
P(S) = (-1)^{|S|} Z_\cG (-{\bf R};S) .
\end{equation}
\end{thm}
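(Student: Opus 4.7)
\textbf{Proof plan for \cref{thm:lattice_gas1}.} The plan is to take the formula displayed in the statement as a \emph{definition} and then verify the measure identity, uniqueness, and the equivalence directly. Define the candidate signed measure
\[
P(S) := (-1)^{|S|} Z_\cG(-{\bf R};S), \qquad S \subseteq \cX.
\]
If $S$ is not independent in $\cG$ then $Z_\cG(-{\bf R};S)=0$ by \eqref{eq:lattice_gas_S}, so automatically $P(S)=0$. For independent $S$, factoring out the forced product gives
\[
P(S) = \prod_{x \in S} R_x \cdot Z_{\cG[\cX \setminus S^+]}\bigl(-{\bf R}|_{\cX \setminus S^+}\bigr),
\]
since once the vertices of $S$ are included, the remaining independent-set sum runs over subsets of $\cX \setminus S^+$. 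Uniqueness of any signed measure satisfying \eqref{eq:dilution_measure} follows by Möbius inversion on the Boolean lattice $2^\cX$: such a measure must equal $\sum_{T \supseteq S}(-1)^{|T \setminus S|} \prod_{x \in T}R_x \1_{\{T\text{ indep}\}}$, which after reorganizing signs is precisely $P(S)$.

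The heart of the argument is the following identity, which I would derive first: for every ${\bf t} \in [0,1]^\cX$,
\[
Z_\cG(-{\bf t}{\bf R}) = \sum_{T \subseteq \cX} P(T) \prod_{x \in T}(1-t_x).
\]
To prove it, expand the left-hand side as a sum over independent sets $X'$, rewrite $\prod_{x \in X'}(-t_x R_x) = (-1)^{|X'|}\prod_{x \in X'}t_x \cdot \prod_{x \in X'}R_x$, and then use the Möbius identity $\sum_{T \supseteq X'}P(T) = \prod_{x \in X'}R_x\,\1_{\{X'\text{ indep}\}}$ (which holds for $P$ as defined, independent of signs) followed by the elementary expansion $\sum_{X' \subseteq T}(-1)^{|X'|}\prod_{x \in X'}t_x = \prod_{x \in T}(1-t_x)$. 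Specializing to ${\bf t} \equiv 0$ gives $\sum_T P(T) = Z_\cG(\bar 0) = 1$, so $P$ always has total mass one.

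Both implications of the equivalence then fall out. Assume first that $Z_\cG({\bf w})>0$ throughout $[-{\bf R},0]$. For each independent $S$, apply this to the weight vector ${\bf w}^{(S)}$ equal to $-{\bf R}$ off $S^+$ and to $0$ on $S^+$ (which lies in $[-{\bf R},0]$); by the factorization above, $P(S)\ge 0$, and $P(\emptyset)=Z_\cG(-{\bf R})>0$. Hence $P$ is a probability measure with $P(\emptyset)>0$ satisfying \eqref{eq:dilution_measure}. Conversely, if such a $P$ exists, then $P \ge 0$ and $P(\emptyset)>0$, so the key identity yields $Z_\cG(-{\bf t}{\bf R}) \ge P(\emptyset)>0$ for every ${\bf t} \in [0,1]^\cX$, covering all ${\bf w} \in [-{\bf R},0]$.

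The one subtle point I would be careful about is the graph-theoretic bookkeeping, namely that $Z_{\cG[\cX \setminus S^+]}(-{\bf R}|_{\cX \setminus S^+})$ equals $Z_\cG$ evaluated at the weight vector obtained by zeroing out the entries on $S^+$: this is where setting a weight to $0$ is tacitly used to ``delete'' a vertex from the independent-set polynomial, and it is what lets one transfer positivity on the polyhedron $[-{\bf R},0]$ to positivity of all the reduced partition functions appearing in the formula for $P(S)$. Beyond this observation, the entire argument reduces to a single algebraic identity and a Möbius inversion.
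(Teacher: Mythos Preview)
The paper does not give its own proof of this statement: \cref{thm:lattice_gas1} is quoted as a black box from \cite[Theorem~2.2~(b),(g)]{scott2006dependency}, so there is nothing in the paper to compare your argument against.

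That said, your proof is correct and self-contained. The verification that the candidate $P$ satisfies \eqref{eq:dilution_measure} via the double sum swap is clean, and the key identity $Z_\cG(-{\bf t}{\bf R}) = \sum_T P(T)\prod_{x \in T}(1-t_x)$ does exactly what you claim: it simultaneously gives total mass one (at ${\bf t}=0$) and the backward implication (at general ${\bf t}$). For the forward implication, your observation that zeroing out the weights on $S^+$ reduces $Z_\cG$ to the induced partition function on $\cX \setminus S^+$ is the right mechanism for transferring positivity on the box $[-{\bf R},0]$ to nonnegativity of each $P(S)$. One minor clarification worth making explicit: in the backward direction you only use that the \emph{given} probability measure satisfies \eqref{eq:dilution_measure}, which is already enough to run the key identity; you do not need to invoke uniqueness there. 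Uniqueness is a separate consequence of M\"obius inversion, as you note.
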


\begin{thm}[{\cite[Corollary 4.5]{scott2006dependency}}]\label{thm:lattice_gas2}
Let ${\bf R} = \{R_x\}_{x \in \cX}, \{r_x\}_{x \in \cX} \in [0,1)^\cX$ be such that for any $x \in \cX$,
\begin{equation}
R_x \le r_x \prod_{y \sim x } (1-r_y) .\label{eq:sufficient2}
\end{equation}
Then $Z_\cG({\bf w})>0$ for all ${\bf w}$ with $-{\bf R} \le {\bf w} \le 0$, and moreover, for any $Y,Z \subseteq \cX$, 
\begin{equation}
\frac{Z_\cG({\bf w}1_{Y \cup Z})}{Z_\cG({\bf w}1_Z)} \ge \prod_{x \in Y}(1-r_x),\label{eq:ratio}
\end{equation}
where ${\bf w}1_A$ is defined by $({\bf w}1_A)_x := w_x \cdot \1_A(x)$.
\end{thm}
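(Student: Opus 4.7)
The plan is to derive \Cref{thm:dilution_measure} directly from the two cited lattice gas results, \Cref{thm:lattice_gas1,thm:lattice_gas2}, by setting $R_x := \alpha_x$. The hypothesis \eqref{eq:sufficient} is exactly condition \eqref{eq:sufficient2} with this choice, so \Cref{thm:lattice_gas2} applies and yields two pieces of information: first, that $Z_\cG({\bf w})>0$ for every ${\bf w}$ with $-\boldsymbol\alpha \le {\bf w} \le 0$, and second, the multiplicative comparison inequality \eqref{eq:ratio} for ratios of partition functions with truncated weights.

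With the positivity in hand, \Cref{thm:lattice_gas1} (again with ${\bf R}=\boldsymbol\alpha$) produces a unique probability measure $P$ on $2^\cX$ with $P(\emptyset)>0$ satisfying \eqref{eq:dilution_measure}. Letting ${\sf X}$ be distributed according to $P$, equation \eqref{eq:dilution_measure} becomes precisely the desired formula \eqref{eq:dilution_measure_formula}. This already settles the first part of the theorem.

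For the ``furthermore'' part, the key step is to identify the conditional probability $\P({\sf X}=S \mid {\sf X} \supseteq S)$ as a single-variable partition function. Using the explicit formula \eqref{eq:P-formula}, we have $\P({\sf X}=S) = (-1)^{|S|} Z_\cG(-\boldsymbol\alpha;S)$ for any independent $S$. Now every independent set $T \supseteq S$ decomposes uniquely as $T = S \sqcup T'$ with $T' \subseteq \cX \setminus S^+$ independent, since no element of $\partial S$ can lie in an independent set containing $S$. Factoring the weight product accordingly gives
\begin{equation*}
(-1)^{|S|} Z_\cG(-\boldsymbol\alpha;S) \;=\; (-1)^{|S|} \prod_{x \in S}(-\alpha_x) \cdot Z_\cG\bigl(-\boldsymbol\alpha \cdot \mathbf{1}_{\cX \setminus S^+}\bigr) \;=\; \prod_{x \in S}\alpha_x \cdot Z_\cG\bigl(-\boldsymbol\alpha \cdot \mathbf{1}_{\cX \setminus S^+}\bigr),
\end{equation*}
where I use the convention that weights set to zero on $S^+$ effectively restrict the lattice gas to $\cX \setminus S^+$. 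Dividing by $\P({\sf X}\supseteq S) = \prod_{x \in S}\alpha_x$ from \eqref{eq:dilution_measure_formula} yields the clean identity
\begin{equation*}
\P({\sf X}=S \mid {\sf X}\supseteq S) \;=\; Z_\cG\bigl(-\boldsymbol\alpha \cdot \mathbf{1}_{\cX \setminus S^+}\bigr).
\end{equation*}

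Applying this to $S_1$ and $S_2$ and using $S_1^+ \subseteq S_2^+$, the ratio in \eqref{eq:dilution_measure_ratio} becomes $Z_\cG(-\boldsymbol\alpha\cdot\mathbf{1}_{\cX \setminus S_1^+})/Z_\cG(-\boldsymbol\alpha\cdot\mathbf{1}_{\cX \setminus S_2^+})$. This is exactly the kind of ratio bounded in \eqref{eq:ratio} of \Cref{thm:lattice_gas2}, taking ${\bf w}=-\boldsymbol\alpha$, $Z = \cX \setminus S_2^+$, and $Y = S_2^+ \setminus S_1^+$ so that $Y \cup Z = \cX \setminus S_1^+$. The conclusion is the lower bound $\prod_{x \in S_2^+ \setminus S_1^+}(1-r_x)$, which is precisely \eqref{eq:dilution_measure_ratio}. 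There is no serious obstacle here; the only subtlety is the bookkeeping in the factorization of $Z_\cG(-\boldsymbol\alpha;S)$ and in matching the truncation patterns to the hypotheses of \eqref{eq:ratio}.
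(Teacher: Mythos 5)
Your proposal does not prove the statement in question. The statement is \cref{thm:lattice_gas2} itself, i.e.\ the Scott--Sokal bound: positivity of $Z_\cG({\bf w})$ on the polydisc $-{\bf R}\le{\bf w}\le 0$ and the ratio estimate \eqref{eq:ratio} under hypothesis \eqref{eq:sufficient2}. Your write-up invokes this result (together with \cref{thm:lattice_gas1}) as a black box and derives \cref{thm:dilution_measure} from it, which is circular as a proof of \cref{thm:lattice_gas2}: nothing in your argument establishes either the nonvanishing of $Z_\cG$ or the bound \eqref{eq:ratio}. A genuine proof would go by the Shearer--Dobrushin induction: write the left-hand side of \eqref{eq:ratio} as a telescoping product of single-vertex ratios $Z_\cG({\bf w}\1_{Z'\cup\{x\}})/Z_\cG({\bf w}\1_{Z'})$, use the deletion identity $Z_\cG({\bf w}\1_{Z'\cup\{x\}})=Z_\cG({\bf w}\1_{Z'})+w_x\,Z_\cG({\bf w}\1_{Z'\setminus N(x)})$ (with $N(x)$ the neighbourhood of $x$ in $\cG$), and show by induction on $|Z'|$ that each such ratio is at least $1-r_x$, hence positive; the inductive step is exactly where \eqref{eq:sufficient2} enters, via the chain $1+w_x\,Z_\cG({\bf w}\1_{Z'\setminus N(x)})/Z_\cG({\bf w}\1_{Z'})\ge 1-R_x\prod_{y\sim x}(1-r_y)^{-1}\ge 1-r_x$. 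None of this appears in your proposal; note that the paper itself also gives no proof of \cref{thm:lattice_gas2}, but quotes it from \cite{scott2006dependency}.

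Read instead as a proof of \cref{thm:dilution_measure}, your argument is correct and essentially identical to the paper's: the same choice ${\bf R}=\{\alpha_x\}_{x\in\cX}$, the same bijection $T\mapsto T\setminus S$ between independent sets containing $S$ and independent subsets of $\cX\setminus S^+$, yielding $\P({\sf X}=S\mid{\sf X}\supseteq S)=Z_\cG(-{\bf R}\1_{\cX\setminus S^+})$, and the same application of \eqref{eq:ratio} with $Z=\cX\setminus S_2^+$ and $Y=S_2^+\setminus S_1^+$. So the derivation you did carry out reproduces the paper's proof of \cref{thm:dilution_measure} verbatim; what is missing is any argument for the quoted lattice-gas input, which is precisely the statement you were asked to prove.
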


\begin{proof}[Proof of \cref{thm:dilution_measure}]
Set ${\bf R}=\{R_x\}_{x \in \cX}=\{\alpha_x\}_{x \in \cX}$.
\cref{thm:lattice_gas1,thm:lattice_gas2} yield a probability distribution $P$ as in \cref{thm:lattice_gas1}. Let ${\sf X}$ be a random variable whose law is $P$. Then \eqref{eq:dilution_measure_formula} holds. 

It remains to check that \eqref{eq:dilution_measure_ratio} holds. Note that $I \mapsto I \setminus S$ is a bijection between the independent sets containing $S$ and the independent sets disjoint from $S^+$. Since the former are the sets contributing to $Z_\cG(-{\bf R};S)$ and the latter are the sets contributing to $Z_\cG(-{\bf R}\1_{\cX \setminus S^+})$, we see that $Z_\cG(-{\bf R};S)= \prod_{x \in S} (-R_x) \cdot Z_\cG(-{\bf R}\1_{\cX \setminus S^+})$. Thus,
\[ \frac{\P({\sf X}=S_1 \mid {\sf X} \supseteq S_1)}{\P({\sf X}=S_2 \mid {\sf X} \supseteq S_2)} = \frac{(-1)^{S_1} Z_\cG(-{\bf R};S_1) / \prod_{x \in S_1} R_x}{(-1)^{S_2} Z_\cG(-{\bf R};S_2) / \prod_{x \in S_2} R_x} = \frac{Z_\cG(-{\bf R}\1_{\cX \setminus S_1^+})}{Z_\cG(-{\bf R}\1_{\cX \setminus S_2^+})} .\]
Set $Z:=\cX \setminus S_2^+$ and $Y:=(\cX \setminus S_1^+) \setminus Z = S_2^+ \setminus S_1^+$. Since $S_1^+ \subseteq S_2^+$ by assumption, we have that $Y \cup Z = \cX \setminus S_1^+$. Thus, \eqref{eq:ratio} yields that
\[ \frac{\P({\sf X}=S_1 \mid {\sf X} \supseteq S_1)}{\P({\sf X}=S_2 \mid {\sf X} \supseteq S_2)} = \frac{Z_\cG(-{\bf R}\1_{Y\cup Z})}{Z_\cG(-{\bf R}\1_Z)} \ge \prod_{x \in Y}(1-r_x) = \prod_{x \in S_2^+\setminus S_1^+}(1-r_x) . \qedhere \]
\end{proof}

\subsection{The nonamenable case}

The bulk of the proof goes toward establishing the following stochastic domination between finite-volume Ising measures with plus boundary conditions, which already implies stochastic domination between the plus states. Invariant domination is then established in \cref{thm:stochastic-order-Ising2} below.

\begin{thm}\label{thm:stochastic-order-Ising-finite-volume}
Let $G$ be a bounded-degree nonamenable graph. Then there exists $\beta_0$ such that $\mu_{\Lambda,\beta_1}^+$ stochastically dominates $\mu_{\Lambda,\beta_2}^+$ for all $\beta_1 > \beta_2 > \beta_0$ and all finite $\Lambda \subset V$.
\end{thm}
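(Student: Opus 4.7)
The plan is to realize the desired domination by constructing, for each finite $\Lambda$, a process $\tau$ on the joint probability space with $\sigma \sim \mu_{\Lambda,\beta_1}^+$ such that $\tau \le \sigma$ pointwise and $\tau \succeq_* \sigma'$ for $\sigma' \sim \mu_{\Lambda,\beta_2}^+$. Since $\tau \succeq_* \sigma'$ yields $\tau \ge_{st} \sigma'$, combining with $\sigma \ge \tau$ a.s.\ gives the conclusion. Following the symbolic template $\sigma \ge_{st} \tau \succeq_* \sigma'$ described in the introduction and Section~\ref{sec:dilution_lattice_gas}, the dilution $\tau$ is built from $\sigma$ and an independent random family ${\sf X}$ of finite connected subsets of $\Lambda$ sampled from the repulsive lattice gas measure of \Cref{thm:dilution_measure}: set
\[ \tau_v := + \quad \iff \quad \sigma_v = + \text{ and } v \notin \bigcup_{C \in {\sf X}} C, \]
so that $\tau \le \sigma$ is automatic. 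Both the marginals and the ratio inequality of \Cref{thm:dilution_measure} are needed in the analysis below.

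Concretely, take $\cX$ to be the set of finite connected subsets of $\Lambda$ of size at least two, with edges $C_1 \sim C_2$ in $\cG$ declared exactly when $C_1^+ \cap C_2^+ \neq \emptyset$. For parameters $a \in (0,1)$ to be chosen below, set $r_C := (a / (e(\Delta-1)))^{|C|}$. Using that the number of connected subsets of $\Lambda$ of size $n$ incident to a fixed vertex is at most $(e(\Delta-1))^n$, the sum $\sum_{D \sim C} r_D$ is bounded by a geometric series in $|C|$, so the Shearer-type condition \eqref{eq:sufficient} is met, and the resulting $\alpha_C := r_C \prod_{D \sim C}(1-r_D)$ still satisfies $\alpha_C \ge \eta^{|C|}$ for some $\eta = \eta(a,\Delta) > 0$. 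The existence of ${\sf X}$ follows, with both the marginal decay $\P({\sf X} \ni C) = \alpha_C$ and the ratio bound \eqref{eq:dilution_measure_ratio} at our disposal.

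The heart of the proof is checking $\tau \succeq_* \sigma'$. Fix $v$, a finite $F \subseteq \Lambda \setminus \{v\}$, and configurations $x \ge y$ on $F$. Since both $\sigma'$ (by FKG) and $\tau$ (verified using the lattice gas structure and independence of ${\sf X}$ and $\sigma$) satisfy the property that the conditional probability of $+$ at $v$ is increasing in the conditioning, it suffices to show
\[ \P(\tau_v = + \mid \tau_F = x) \ge \P(\sigma'_v = + \mid \sigma'_F = x) \qquad \text{for every } x. \]
The RHS is a standard Ising conditional probability at $\beta_2$. For the LHS, Bayes' rule writes the joint conditional law of $(\sigma, {\sf X})$ given $\tau_F = x$ by reweighing by $\mathbf{1}_{\{\tau_F = x\}}$. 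The event $\{\tau_v = -\}$ decomposes as $\{\sigma_v = -\} \cup \{v \in \cup {\sf X}\}$; the first part is controlled by a Peierls argument bounding the minus cluster of $v$ under $\mu_{\Lambda,\beta_1}^+$ (using nonamenability through the boundary weight $e^{-2\beta_1 h_e |C|}$), while the second part is bounded via the marginal decay of ${\sf X}$, together with \eqref{eq:dilution_measure_ratio} to swap in and out the contours that are forced by the conditioning on $\tau_F = x$.

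The main obstacle is calibrating the weights so that the resulting estimate on $\P(\tau_v = - \mid \tau_F = x)$ beats the Ising quantity $\P(\sigma'_v = - \mid \sigma'_F = x)$ uniformly in $x$, $F$ and $\Lambda$. The delicate case is when $x$ has many $-$s: the conditioning must be explainable by contours in ${\sf X}$ rather than by forcing large minus regions of $\sigma$. Nonamenability enters crucially here, since it provides the surface cost $e^{-2\beta_1 h_e |C|}$ that lets the Peierls sums for both $\sigma$ and ${\sf X}$ converge and be made as small as desired; in particular the required $\beta_0$ depends only on $\Delta$ and $h_e(G)$, giving uniformity in $\Lambda$ and the finite-volume conclusion of the theorem (with the passage to the plus state following from $\mu_{\Lambda,\beta}^+ \searrow \mu_\beta^+$).
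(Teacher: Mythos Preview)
Your proposal has a genuine gap at exactly the point you flag as ``the main obstacle'': you never actually calibrate the weights, and with the choices you make the calibration is impossible. There are two intertwined problems. First, you take the dilution ${\sf X}$ \emph{independent} of $\sigma$, whereas the paper's dilution $Y$ is carefully coupled to $X$: given $X=\sigma$ with minus clusters $S_1,\dots,S_n$, the paper samples $Y$ from the lattice gas \emph{conditioned on} $S_1,\dots,S_n$ being clusters of $\bar Y$ (and then removes them). The effect is that every minus cluster of $Z=X\cup Y$ is a minus cluster of $X$ or of $Y$ but never of both, so one can sum over the ``labeling'' $J\subseteq I$ and obtain the exact factorization $\P(Z=\sigma)=\P(X=\sigma)\cdot\P(Y^\sigma=\emptyset)\cdot e^{2\eps|\partial_e\sigma|-2\eps h|\sigma|}$. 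With your independent ${\sf X}$, minus clusters of $\sigma$ and of ${\sf X}$ can merge, and no such formula holds; the paper remarks explicitly that even independent Bernoulli selection of clusters does not make the computation tractable. Second, your weights $r_C=(a/(e(\Delta-1)))^{|C|}$ depend only on $|C|$, but the paper's $\alpha_S=e^{-2\beta_1|\partial_e S|}(e^{2\eps|\partial_e S|-2\eps h|S|}-1)$ is chosen so that $e^{2\beta_1|\partial_e S|}\alpha_S+1=e^{2\eps|\partial_e S|-2\eps h|S|}$ exactly; this algebraic identity is what produces the clean conditional odds $e^{2\beta_2 m_v(\sigma)+2\eps h}\cdot\tfrac{\P(Y^{\sigma^+}=\emptyset)}{\P(Y^{\sigma^-}=\emptyset)}$, after which a single application of \eqref{eq:dilution_measure_ratio} finishes Holley's criterion. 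Size-only weights cannot effect this change of inverse temperature from $\beta_1$ to $\beta_2$.

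The failure is not merely technical. The discussion at the start of Section~\ref{sec:dilution_lattice_gas} shows that for \iid\ independent dilution, $\tau\succeq_*\sigma'$ is \emph{false} whenever $\eps=\beta_1-\beta_2$ is small: the dilution must be light (so that marginals of $\tau$ stay close to those of $\sigma'$), but then conditioning $\tau$ to be all minus around $v$ forces $\sigma$ to be all minus around $v$, and the resulting $\P(\tau_v=+\mid\cdot)\approx e^{-2\beta_1\Delta}$ is smaller than the required $e^{-2\beta_2\Delta}$. Your Peierls sketch does nothing to escape this, since with ${\sf X}$ independent of $\sigma$ and light, the same mechanism applies. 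Separately, your reduction step assumes $\tau$ is monotone (``verified using the lattice gas structure''), but Shearer-type measures have negative correlations and you have not shown that the composite $\tau=\min(\sigma,\mathbf 1_{V\setminus\cup{\sf X}})$ inherits monotonicity; in the paper this issue never arises because the exact formula for $\P(Z=\sigma)$ gives the single-site conditional directly.
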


The proof of \cref{thm:stochastic-order-Ising-finite-volume} actually shows that such a finite-volume stochastic domination result holds also in the presence of an external magnetic field. In fact, even if one applies a small \emph{negative} magnetic field to $\mu_{\Lambda,\beta_1}^+$ and a small \emph{positive} magnetic field to $\mu_{\Lambda,\beta_2}^+$, the domination still holds.
This implies that the pair $(\mu^+_{\beta_2},\mu^+_{\beta_1})$ is upwards and downwards movable in the sense of~\cite{broman2006refinements} (see \cref{rem:up-down-movable}).
A quantified statement of this is given in the following theorem. We also allow for different magnetic fields at different sites.
As it turns out, this extension of \cref{thm:stochastic-order-Ising-finite-volume} is useful for proving the invariant domination result for the plus state Ising measures with no external magnetic field.

Recall that $\mu_{\Lambda,\beta,{\bf b}}^+$ is the Ising measure in finite volume $\Lambda$ with plus boundary conditions at inverse temperature $\beta$ and with magnetic fields~${\bf b}=(b_v)_{v \in \Lambda} \in \R^\Lambda$.

\begin{thm}\label{thm:stochastic-order-Ising-finite-volume2}
Let $G$ be a bounded-degree nonamenable graph. Then $\mu_{\Lambda,\beta_1,{\bf b}_1}^+ \ge_{st} \mu_{\Lambda,\beta_2,{\bf b}_2}^+$ holds for all finite $\Lambda \subset V$ and all
$\beta_1 > \beta_2 \ge \frac{100\Delta}{h_e}$ and ${\bf b}_1,{\bf b}_2 \in \R^\Lambda$ such that ${\bf b}_2-{\bf b}_1 \le 0.99 (\beta_1-\beta_2) h_e$ and ${\bf b}_2 \ge - \beta_2 h_e + \frac12 \log \Delta+1$.
\end{thm}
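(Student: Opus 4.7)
The plan is to build, for each finite $\Lambda$, an explicit coupling realizing $\sigma \ge_{st} \sigma'$ via the dilution paradigm of Section~\ref{sec:dilution_lattice_gas}. I sample $\sigma \sim \mu^+_{\Lambda,\beta_1,{\bf b}_1}$ and, independently, a random collection ${\sf X}$ of connected subsets of $\Lambda$, and define $\tau_v := +$ if $\sigma_v = +$ and $v \notin \bigcup_{U \in {\sf X}} U$, and $\tau_v := -$ otherwise. By construction $\tau \le \sigma$, and the goal becomes choosing the law of ${\sf X}$ so that $\tau \succeq_* \sigma'$ for $\sigma' \sim \mu^+_{\Lambda,\beta_2,{\bf b}_2}$. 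Since $\Lambda$ is finite and all measures are fully supported, this single-site Holley-type comparison upgrades to $\tau \ge_{st} \sigma'$, yielding $\sigma \ge_{st} \sigma'$ by transitivity.

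The distribution of ${\sf X}$ is supplied by Theorem~\ref{thm:dilution_measure} applied to the conflict graph $\cG$ on $\cX := \{U \subseteq \Lambda : U \text{ connected in } G\}$, with $U_1$ and $U_2$ adjacent iff $U_1 \cap U_2^+ \neq \emptyset$, so that distinct members of an independent set are separated by a plus barrier. The weights $\alpha_U$ are chosen of Peierls form, comparable to the ``excess minus-contour'' weight
\[ e^{-2\beta_2|\partial_e U|-2\sum_{u \in U}b_{2,u}} - e^{-2\beta_1|\partial_e U|-2\sum_{u \in U}b_{1,u}} ,\]
which quantifies how much more likely a minus cluster $U$ is under $\sigma'$ than under $\sigma$. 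The companion parameters $r_U = e^{-c|\partial_e U|}$ for an appropriate constant $c$ are chosen so that the sufficiency condition~\eqref{eq:sufficient} follows from nonamenability: the isoperimetric inequality $|\partial_e U| \ge h_e |U|$ combined with the bound $\#\{U \in \cX : v \in U,\, |U| = n\} \le (e\Delta)^n$ on the number of connected sets containing a given vertex makes the relevant Peierls sums converge precisely under the hypothesis $\beta_2 \ge 100\Delta/h_e$.

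The decisive step is verifying $\tau \succeq_* \sigma'$: for each $v$, $F \subseteq \Lambda \setminus \{v\}$, and $x \ge y$, I must show $\P(\tau_v=+ \mid \tau_F=x) \ge \P(\sigma'_v=+ \mid \sigma'_F=y)$. The right-hand side, minimized by the Markov property of $\sigma'$ at $y$ having all neighbors of $v$ minus, is bounded below by the explicit quantity $(1+e^{2\beta_2 \deg(v) - 2b_{2,v}})^{-1}$. For the left-hand side, I decompose $\{\tau_v=+\}=\{\sigma_v=+\}\cap\{v \notin \bigcup {\sf X}\}$ and estimate separately (i) a Peierls lower bound on $\P(\sigma_v=+\mid\sigma\text{-information consistent with }\tau_F=x)$ using $(\beta_1,{\bf b}_1)$, and (ii) the ratio bound~\eqref{eq:dilution_measure_ratio} for the conditional law of ${\sf X}$ given $\tau_F=x$. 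The three hypotheses---$\beta_2 \ge 100\Delta/h_e$, ${\bf b}_2 - {\bf b}_1 \le 0.99(\beta_1-\beta_2)h_e$, and ${\bf b}_2 \ge -\beta_2 h_e + \tfrac12 \log\Delta+1$---are calibrated so that the product of these two contributions beats the $\sigma'$ bound, with the $0.99<1$ slack providing exactly the constant of margin required and the lower bound on ${\bf b}_2$ controlling the worst-case $\sigma'$-Peierls sum.

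The main obstacle is this final bookkeeping. Because $\tau$ is not Markov, conditioning on $\tau_F=x$ yields a highly entangled posterior for $(\sigma,{\sf X})$ in which Peierls-type Ising estimates must be combined with lattice-gas ratio estimates. As the calculation opening Section~\ref{sec:dilution_lattice_gas} demonstrates, a naive i.i.d.\ dilution fails because it produces an effective negative magnetic field that is too strong at vertices surrounded by diluted minuses; the lattice-gas dilution succeeds because the conditioning cost scales as $\prod(1-r_U)$ over the ``freed'' connected sets rather than exponentially in the size of the local neighborhood, and this is exactly what allows the slack encoded in the hypothesis to be converted into a genuine single-site domination.
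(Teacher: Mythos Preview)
Your proposal captures the overall architecture—dilute $\sigma$ via a lattice-gas measure on connected subsets and then verify a Holley-type comparison with $\sigma'$—but it diverges from the paper at precisely the step that makes the argument go through, and the divergence is not cosmetic.

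You sample ${\sf X}$ \emph{independently} of $\sigma$. The paper does not: it first builds $\bar Y$ from \cref{thm:dilution_measure}, then sets $Y:=Y^X$, the conditional law of $\bar Y\setminus X$ \emph{given} that the minus clusters of $X$ are already clusters of $\bar Y$. This dependence guarantees that every minus cluster of $Z=X\cup Y$ is a minus cluster of exactly one of $X,Y$, which is what makes the sum over subsets $J\subseteq I$ of cluster indices collapse and yields the \emph{exact} identity
\[
\P(Z=\sigma)=\P(X=\sigma)\cdot \P(Y^\sigma=\emptyset)\cdot e^{2\eps|\partial_e\sigma|-2\eps h|\sigma|}.
\]
From this, the single-site Holley ratio $\P(Z=\sigma^+)/\P(Z=\sigma^-)$ is computed in closed form, and the entire inequality reduces to the clean lattice-gas ratio bound $\P(Y^{\sigma^+}=\emptyset)/\P(Y^{\sigma^-}=\emptyset)\ge e^{-2\eps h-2b_1(v)+2b_2(v)}$, which is exactly what~\eqref{eq:dilution_measure_ratio} delivers. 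No Peierls estimate on $\sigma$ is ever needed. Note also that the paper's weights $\alpha_S=e^{-2\beta_1|\partial_e S|-2b_1(S)}(e^{2\eps|\partial_e S|-2\eps h|S|}-1)$ carry the Cheeger constant $h$ explicitly; your ``excess minus-contour'' expression does not, and the presence of $h$ is what makes the telescoping in~\eqref{eq:simplification} produce exactly the factor $e^{2\eps|\partial_e\sigma|-2\eps h|\sigma|}$.

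By contrast, your route—independent ${\sf X}$ and then bounding $\P(\tau_v=+\mid\tau_F=x)$ by separate Peierls-type estimates on the entangled posterior of $(\sigma,{\sf X})$—is exactly the ``final bookkeeping'' you flag as the main obstacle, and you do not carry it out. Given that the discussion at the start of \cref{sec:dilution_lattice_gas} shows the margins here are genuinely thin (independent i.i.d.\ dilution already fails), it is not at all clear that a soft estimate can close the gap; the paper's point is that the dependent coupling plus the specific $\alpha_S$ converts the problem into an identity rather than an inequality. A smaller issue: for the Holley comparison you need an \emph{upper} bound on $\P(\sigma'_v=+\mid\sigma'_F=y)$ over $y\le x$ (attained at $y=x$ by monotonicity of $\sigma'$), not the lower bound at the all-minus configuration that you write down.
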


Let us note that the result holds also when ${\bf b}_1,{\bf b}_2 \in (\R \cup \{\infty\})^\Lambda$, in which case the upper bound on ${\bf b}_2-{\bf b}_1$ should be interpreted as implying that if ${\bf b}_2$ is $\infty$ at a site then so is ${\bf b}_1$. This can be seen by taking a suitable limit in the magnetic fields.

\begin{proof}
Fix $\Lambda \subset V$ finite and let $X \sim \mu^+_{\Lambda,\beta_1,{\bf b}_1}$ and $X' \sim \mu^+_{\Lambda,\beta_2,{\bf b}_2}$. We have
\[ \frac{\P[X_v=+ \mid (X_u)_{u \neq v}]}{\P[X_v=- \mid (X_u)_{u \neq v}]} = e^{2\beta_1 m_v(X) +2b_1(v)} \qquad\text{and}\qquad \frac{\P[X'_v=+ \mid (X'_u)_{u \neq v}]}{\P[X'_v=- \mid (X'_u)_{u \neq v}]} = e^{2\beta_2 m_v(X')+2b_2(v)} ,\]
where $m_v(\sigma) := \sum_{u \sim v} \sigma_u$.
We will dilute $X$ to obtain a process $Z \le X$ with the property that, for any $v$, almost surely,
\begin{equation}\label{eq:Z-cond-prob-ratio-lower-bound}
\frac{\P[Z_v=+ \mid (Z_u)_{u \neq v}]}{\P[Z_v=- \mid (Z_u)_{u \neq v}]} \ge e^{2\beta_2 m_v(Z) + 2b_2(v)} .
\end{equation}
By Holley's criteria, this will show that $X \ge_{st} Z \ge_{st} X'$.

In this proof, it will be convenient to identify configurations $\sigma \in \{+,-\}^V$ with their minus set $\{ v \in V : \sigma_v = - \}$, so that, for example, $\{\sigma_v=-\}=\{v \in \sigma\}$ and $\{\sigma_v=+\}=\{v \notin \sigma\}$, and if $\sigma'$ is another such configuration then $\sigma\sigma' = \sigma \cup \sigma'$.

Denote $\eps := \beta_1-\beta_2$.
For $S \subset \Lambda$, denote $b_i(S) := \sum_{v \in S} b_i(v)$.

We define the diluted process $Z$ to be $Z := XY = X \cup Y$, for a process $Y$ which is not independent of $X$, nor an i.i.d.\ process itself. For a finite connected set $S \subset V$, denote
\[ \alpha_S := e^{-2\beta_1|\partial_e S|-2b_1(S)} (e^{2\eps|\partial_e S|-2\eps h|S|}-1) .\]
(This choice of $\alpha_S$ may seem to lack motivation at the moment, but will become more apparent later in \eqref{eq:simplification}). Observe that $|\partial_e S| \ge h|S|$ by the definition of the edge Cheegar constant, and hence $\alpha_S \ge 0$. Also $\alpha_S<1$ holds since ${\bf b}_1 \ge -\beta_1 h$. We will show that there exists a process $\bar Y$ such that for any finite connected sets $S_1,\dots,S_n \subset \Lambda$ which are at pairwise distance at least 2 from each other,
\begin{equation}\label{eq:Y-def-prop}
\P(S_1,\dots,S_n\text{ are clusters of }\bar Y) = \alpha_{S_1}\cdots\alpha_{S_n} .
\end{equation}
The existence of this measure will be shown later using \Cref{thm:dilution_measure} (which relies on the theory of lattice gases), but we assume its existence for now and finish the proof.
For $\sigma=S_1\cup\cdots\cup S_n$, let $Y^\sigma$ denote a process whose law is the conditional law $\bar Y \setminus \sigma$ given that $S_1,\dots,S_n$ are clusters of $\bar Y$.
Observe that if $\tau=T_1\cup\cdots\cup T_m$ is at distance at least 2 from $\sigma$, where $T_1,\dots,T_m \subset \Lambda$ are the clusters of $\tau$, then
\[ \P(Y^\sigma = \tau) = \alpha_{T_1}\cdots\alpha_{T_m} \cdot \P(Y^{\sigma \cup \tau}=\emptyset) .\]
Finally, choose all $Y^\sigma$ to be independent of $X$, and set $Y:=Y^X$.
Observe that every minus cluster of $Z$ is either a minus cluster of $X$ or of $Y$, but not of both.

Let $\sigma \in \{+,-\}^\Lambda$ and let $\{K_i\}_{i \in I}$ be the connected components of $\sigma$. For $J \subseteq I$, let $K_J := \bigcup_{j \in J} K_j$ (so that $\sigma=K_I$).
Then
\begin{align}
\P(Z=\sigma)
 &= \sum_{J \subseteq I} \P(Z=\sigma,Y=K_J)\nonumber \\
 &= \sum_{J \subseteq I} \P(X=\sigma \setminus K_J) \cdot \P(Y=K_J \mid X=\sigma \setminus K_J) \nonumber \\
 &= \P(X=\sigma) \cdot \sum_{J \subseteq I} e^{2\beta_1|\partial_e K_J|+2b_1(K_J)} \cdot \P(Y^{\sigma \setminus K_J}=K_J) \nonumber \\
 &= \P(X=\sigma) \cdot \P(Y^\sigma = \emptyset) \cdot \sum_{J \subseteq I} \prod_{j \in J} e^{2\beta_1|\partial_e K_j|+2b_1(K_j)} \alpha_{K_j} \nonumber \\
& =  \P(X=\sigma) \cdot \P(Y^\sigma = \emptyset) \cdot \sum_{J \subseteq I} \prod_{j \in J} (e^{2\eps |\partial_e K_j|-2 \eps h |K_j|}-1)\nonumber\\
  &= \P(X=\sigma) \cdot \P(Y^\sigma = \emptyset) \cdot e^{2\eps|\partial_e\sigma|-2\eps h|\sigma|} \label{eq:simplification} .
\end{align}

Fix $v \in \Lambda$ and $\sigma \in \{+,-\}^{\Lambda \setminus \{v\}}$. Let $\sigma^\pm \in \{+,-\}^\Lambda$ equal $\pm$ at $v$ and equal $\sigma$ elsewhere. Then
\begin{align*}
\frac{\P[Z_v=+ \mid (Z_u)_{u \neq v}=\sigma]}{\P[Z_v=- \mid (Z_u)_{u \neq v}=\sigma]} &= \frac{\P(Z=\sigma^+)}{\P(Z=\sigma^-)} \\
 &= \frac{\P(X=\sigma^+)}{\P(X=\sigma^-)} \cdot \frac{\P(Y^{\sigma^+} = \emptyset)}{\P(Y^{\sigma^-} = \emptyset)} \cdot \frac{e^{2\eps|\partial_e\sigma^+|-2\eps h|\sigma^+|}}{e^{2\eps|\partial_e\sigma^-|-2\eps h|\sigma^-|}} \\
 &= e^{2(\beta_1-\eps)m_v(\sigma) + 2\eps h + 2b_1(v)} \cdot \frac{\P(Y^{\sigma^+} = \emptyset)}{\P(Y^{\sigma^-} = \emptyset)} .
\end{align*}
Therefore, \eqref{eq:Z-cond-prob-ratio-lower-bound} is equivalent to
\begin{equation}\label{eq:Y-prob-ratio-lower-bound}
\frac{\P(Y^{\sigma^+} = \emptyset)}{\P(Y^{\sigma^-} = \emptyset)} \ge e^{-2\eps h-2b_1(v)+2b_2(v)} .
\end{equation}

Thus, the proof will be complete once we show the existence of a process $\bar Y$ satisfying~\eqref{eq:Y-def-prop} and~\eqref{eq:Y-prob-ratio-lower-bound}. To this end, we aim to apply \cref{thm:dilution_measure} to an auxiliary graph $\cG$ which will be defined shortly. With condition~\eqref{eq:sufficient} in mind, let us define
\[ r_S := \alpha_S \cdot e^{2\eps h |S|+2b_1(S)-2b_2(S)} = e^{-2\beta_1|\partial_e S|+2\eps h |S|-2b_2(S)} (e^{2\eps|\partial_e S|-2\eps h|S|}-1) \]
and check that
\begin{equation}
\alpha_S \le r_S \prod_{S' \sim S} (1-r_{S'}), \label{eq:check}
\end{equation}
where $S \sim S'$ means that $S \neq S'$ and $\dist(S,S') \le 1$.
Using that $r_S \in [0,\frac12)$ (see below) and that $1-x \ge e^{-2x}$ for $x \in [0,\frac12)$, it suffices to show that
\[ \sum_{S' \sim S} r_{S'} \le \eps h |S| +b_1(S)-b_2(S) .\]
Since $\sum_{S' \sim S} r_{S'} \le \sum_{u \in S \cup N(S)} \sum_{S' \ni u} r_{S'}$, and since ${\bf b}_2-{\bf b}_1 \le 0.99 \eps h$, it suffices to show that for any $u$,
\begin{equation}
\sum_{S' \ni u} r_{S'} \le \frac{0.01\eps h}{\Delta+1}  \label{eq:sumr}
\end{equation}
Using that $e^x-1 \le xe^x$ for all $x \ge 0$, we get that
\[ r_{S'} \le 2\eps (|\partial_e S'|-h|S'|) e^{-2\beta_2|\partial_e S'|-2b_2(S')} \le \frac\eps{e\beta_2} \cdot e^{-2(\beta_2 h |S'| + b_2(S'))} ,\]
where in the second inequality we used that $xe^{-x} \le \frac1e$ for all $x \ge 0$.
Since $\beta_2 \ge \frac{100(\Delta+1)}{eh}$, it suffices to show that $\sum_{S' \ni u} e^{-2(\beta_2 h|S'|+b_2(S'))} \le 1$.
Since there are at most $(e(\Delta-1))^n$ connected sets $S' \ni u$ of size $n$, and since $\beta_2 h+{\bf b}_2 \ge \frac12 \log(2(e-1)\Delta)$, this is easily seen to hold. This establishes~\eqref{eq:sumr}.

Define a simple graph $\cG$ whose vertex set $\cX$ is the collection of all connected subsets of $\Lambda$, and where two distinct $x,y \in \cX$ are connected by an edge in $\cG$ if and only if $\dist(x,y) \le 1$ (so that $x \sim_\cG y$ if and only if $x \sim y$ in the sense defined above). 
Let ${\sf Y} \subseteq \cX$ be a random variable as in \cref{thm:dilution_measure}.
Translating ${\sf Y}$ to a subset of $\Lambda$ in the obvious manner, we obtain a random variable $\bar Y$ satisfying~\eqref{eq:Y-def-prop}.

Let us now argue that $\bar Y$ also satisfies \eqref{eq:Y-prob-ratio-lower-bound}. Let us write $\sigma^+ = S^+_1\cup\ldots S^+_k$ and $\sigma^- = S^-_1\cup\ldots S^-_\ell$. Note that the difference between $\sigma^+$ and $\sigma^-$ occurs only near $v$ where the $-$ at $v$ for $\sigma^-$ may merge several components together.
Let $x_1,\dots,x_k \in \cX$ and $y_1,\dots,y_\ell \in \cX$ denote the vertices of $\cG$ corresponding to the $\sigma^+$ and $\sigma^-$. Let $T_1:=\{y_1,\dots,y_\ell\}$ and $T_2:=\{x_1,\dots,x_k\}$. Observe that in the graph $\cG$ we have $T_1^+ \subseteq T_2^+$; in words, every connected set which is compatible with $\sigma^-$ (i.e., at distance at least 2 from its minus clusters) is also compatible with $\sigma^+$. Furthermore, $T_2^+ \setminus T_1^+ \subseteq \{\{v\}\} \cup N_\cG(\{v\})$; in words, if a connected set is compatible with $\sigma^+$ but not with $\sigma^-$, then it is necessarily at distance at most one from $v$ (this can be interpreted in $G$ or $\cG$). Therefore, \eqref{eq:dilution_measure_ratio} yields that
\[ \frac{\P(Y^{\sigma^+} = \emptyset)}{\P(Y^{\sigma^-} = \emptyset)} = \frac{\P({\sf Y} = T_1 \mid {\sf Y} \supseteq T_1)}{\P({\sf Y} = T_2 \mid {\sf Y} \supseteq T_2)} \ge \prod_{x \in T_2^+ \setminus T_1^+} (1-r_x) \ge \prod_{S \sim \{v\}} (1-r_S) \ge \frac{\alpha_{\{v\}}}{r_{\{v\}}} = e^{-2\eps h-2b_1(v)+2b_2(v)} , \]
so that \eqref{eq:Y-prob-ratio-lower-bound} holds.
\end{proof}

\begin{remark}
The reader might wonder why we need such a sophisticated dilution mechanism coming from lattice gas theory. For example, a natural choice could have been to simply consider a Bernoulli percolation on the graph $\cG$ with a cluster $S$ being open with probability $\alpha_S$, and define $Y$ using this. However, this would complicate the corresponding calculation in \eqref{eq:simplification} and we could not make this work.
\end{remark}

\begin{remark}\label{rem:ising-nonamen-dom}
The proof of \cref{thm:stochastic-order-Ising-finite-volume} is perturbative in the inverse temperature; e.g., it needs $\beta$ to be large enough so that $\sum \alpha_S$ converges (and to a sufficiently small value), where the sum is over all finite connected sets $S$ of $G$ containing a fixed vertex.
In particular, it does not yield the theorem with $\beta_0=\beta_c$, and one may wonder whether this is an artifact of the proof. In fact, it is not, as is demonstrated by the following example.

For $i \ge 1$, let $G^{(i)}$ be the graph obtained by taking the 3-regular tree and attaching to each vertex a dangling path of length $i$. Note that each $G^{(i)}$ is quasi-transitive and nonamenable, with maximum degree 4 and the same $\beta_c$ as the 3-regular tree, and with Cheegar constant $h(G^{(i)})$ tending to 0. The disjoint union of all the $G^{(i)}$ therefore yields a bounded-degree amenable graph with the same $\beta_c$.
Thus, by \cref{thm:stochastic-order-Ising}, $\mu^+_{\beta_1}$ and $\mu^+_{\beta_2}$ are not stochastically comparable for any $\beta_1 \neq \beta_2$. This means that for any given $\beta_1>\beta_2>\beta_c$, there is \emph{some} $i$ for which the corresponding Ising measures on $G^{(i)}$ are not stochastically comparable.
We have thus found a nonamenable quasi-transitive connected graph for which $\mu_{\beta_1}^+$ and $\mu_{\beta_2}^+$ are not stochastically comparable for some $\beta_1>\beta_2>\beta_c$.
\end{remark}

\begin{remark}\label{rem:up-down-movable}
    A pair $(\lambda,\mu)$ of measures on $\{0,1\}^V$ such that $\lambda \le_{st} \mu$ is said to be downwards movable if there exists $\eps>0$ such that $\lambda \le_{st} \mu^{(-,\eps)}$, where $\mu^{(-,\eps)}$ is the distribution of $XY$ when $X \sim \mu$ and $Y \sim \nu_{1-\eps}$ are independent. Upwards movablility is defined in a similar way.
    For the Ising model, \cref{thm:stochastic-order-Ising-finite-volume2} implies that the pair $(\mu^+_{\beta_2},\mu^+_{\beta_1})$ is both downwards and upwards movable for all $\beta_1>\beta_2 \ge \frac{100\Delta}{h_e}$.
\end{remark}

We now establish the invariant domination stated in the second item of \cref{thm:stochastic-order-Ising}, while also extending it to the Ising model with a magnetic field.

\begin{thm}\label{thm:stochastic-order-Ising2}
Let $G$ be a bounded-degree nonamenable graph. There exists $\beta_0$ such that for all $\beta_1 > \beta_2 > \beta_0$ there exists $b_0>0$ such that $\mu^+_{\beta_1,b_1}$ invariantly dominates $\mu^+_{\beta_2,b_2}$ when $-b_1,b_2 \le b_0$.
\end{thm}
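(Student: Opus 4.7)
The plan is to combine \cref{thm:stochastic-order-Ising-finite-volume2} with the invariant coupling machinery of \cref{sec:inv-dom}. By FKG monotonicity of the Ising plus state in the magnetic field (which, at the same $\beta$, gives invariant monotone couplings via standard Holley--Glauber dynamics), it suffices to handle the worst case $b_1=-b_0$, $b_2=b_0$. I would take $\beta_0 := 100\Delta/h_e + O(1)$ and, for $\beta_1>\beta_2>\beta_0$, set $b_0:=0.49(\beta_1-\beta_2)h_e$, so that the hypotheses of \cref{thm:stochastic-order-Ising-finite-volume2} are met; this yields $\mu^+_{\Lambda,\beta_1,-b_0}\ge_{st}\mu^+_{\Lambda,\beta_2,b_0}$ in every finite volume $\Lambda$. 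A direct application of \cref{thm:invariant-monotone-coupling-montone} to $X\sim \mu^+_{\beta_1,-b_0}$ and $X'\sim \mu^+_{\beta_2,b_0}$ would fail, since $X\succeq_* X'$ breaks already at $F=N(v)$ with $\sigma_{N(v)}\equiv -$ (the single-site ratio comparison has the wrong sign), so a dilution is necessary.

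Following the proof of \cref{thm:stochastic-order-Ising-finite-volume2}, I consider the diluted process $Z:=X\cup Y$, where $Y$ is sampled (independently given $X$) from the lattice gas of \cref{thm:dilution_measure} with the weights $\alpha_S$ defined there. The Shearer-type bound \eqref{eq:sumr} holds strictly for $\beta_2$ large, with summability to spare, which permits \cref{thm:dilution_measure} to be extended from finite to the infinite vertex set $\cX$ of finite connected subsets of $V$ by a weak-limit/compactness argument along finite exhaustions of $V$. This yields an invariant triple $(X,Y,Z)$ with $X\ge Z$ pointwise, and the single-site Holley bound \eqref{eq:Z-cond-prob-ratio-lower-bound} on $Z$ versus $X'$ persists in infinite volume.

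I would then apply \cref{thm:invariant-monotone-coupling-montone} to $Z$ and $X'$. Both are monotone $\bar 1$-limits: for $X'$ this is classical, and for $Z$ it will follow from FKG-type properties of the joint $(X,\bar Y)$ distribution combined with the exponential decay of the subcritical lattice gas. The key step is $Z\succeq_* X'$, for which the observation is that the Holley bound \eqref{eq:Z-cond-prob-ratio-lower-bound} is equivalent, in each finite volume, to the statement that $f(\sigma):=\mu_Z(\sigma)/\mu_{X'}(\sigma)$ is an increasing function of $\sigma$. Given this, applying the FKG inequality for $\mu_{X'}$ to the two increasing functions $\1_{\{\sigma_v=+\}}$ and $f$, conditional on $\sigma_F=z$, gives the pointwise comparison $\mu_Z(\sigma_v=+\mid\sigma_F=z)\ge \mu_{X'}(\sigma_v=+\mid\sigma_F=z)$; combining with FKG monotonicity of $X'$ in the conditioning produces $Z\succeq_* X'$, first in finite volume and then, by taking limits, in infinite volume. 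Thus \cref{thm:invariant-monotone-coupling-montone} delivers an invariant monotone coupling $(Z,X')$ with $Z\ge X'$, which composes with the automatic invariant coupling $(X,Z)$ (built into the construction) to yield invariant domination of $\mu^+_{\beta_1,-b_0}$ over $\mu^+_{\beta_2,b_0}$, and hence of $\mu^+_{\beta_1,b_1}$ over $\mu^+_{\beta_2,b_2}$ in general. The main obstacle will be carrying out the invariant infinite-volume extension of the lattice gas and verifying the monotone $\bar 1$-limit property of $Z$ despite its non-Markovian structure; both should hinge on the summability of $r_S$ already established in the proof of \cref{thm:stochastic-order-Ising-finite-volume2}.
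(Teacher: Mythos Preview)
Your strategy diverges from the paper's in a way that creates real difficulties. You try to push the lattice-gas dilution $Z=X\cup Y$ of \cref{thm:stochastic-order-Ising-finite-volume2} into infinite volume and then feed it into \cref{thm:invariant-monotone-coupling-montone}. This requires two things you do not establish: (i) an \emph{invariant} infinite-volume version of the Shearer--Dobrushin lattice gas coupled to $X$ (a subsequential weak limit along exhaustions need not be invariant on a nonamenable graph, and no averaging trick is available), and (ii) that this non-Markovian $Z$ is a monotone $\bar1$-limit. For (ii), monotonicity means $Z\succeq_* Z$; your FKG argument for $Z\succeq_* X'$ used that the \emph{base} measure $X'$ has FKG, but to get $Z\succeq_* Z$ you would need FKG for $Z$ itself, and from the formula $\P(Z=\sigma)=\P(X=\sigma)\cdot \P(Y^\sigma=\emptyset)\cdot e^{2\eps|\partial_e\sigma|-2\eps h|\sigma|}$ there is no evident reason the middle factor is log-supermodular. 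These are not technicalities that summability of $r_S$ will dissolve.

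The paper avoids both obstacles by a different and simpler dilution. It applies an \emph{i.i.d.}\ dilution $Y\sim\nu_p$ to \emph{both} sides, setting $Z:=XY$ and $Z':=X'Y$. By \cref{obs:X-tilted-for-ising}, the conditional law of $X$ given $(XY)_F=z$ is a plus-state Ising measure with site-dependent fields, and likewise for $X'$; comparing these two Ising measures is exactly what \cref{thm:stochastic-order-Ising-finite-volume2} does, and this yields $Z\succeq_* Z'$ directly. The monotone $\bar1$-limit property of $Z$ and $Z'$ was already verified for i.i.d.-diluted Ising in the proof of \cref{thm:stochastic-dom-Ising2}, so \cref{thm:invariant-monotone-coupling-montone} applies immediately to give an invariant monotone coupling of $(Z,Z')$. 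The last step---which replaces your composition argument---reconstructs $(X,X')$ by sampling, independently on each finite $0$-cluster $A'$ of $Z'$ (with $A:=\{v\in A':Z_v=0\}$), a finite-volume monotone coupling $\pi_{A,A'}$ of $\mu^+_{A,\beta_1,b'_1}$ and $\mu^+_{A',\beta_2,b'_2}$; the decoupling-by-ones of i.i.d.-diluted Ising ensures the marginals are correct, and invariance is built in by choosing the $\pi_{\Lambda,\Lambda'}$ equivariantly. In short, the lattice-gas dilution is used only as a black box (via \cref{thm:stochastic-order-Ising-finite-volume2}) to certify Holley domination between two \emph{i.i.d.-diluted} Ising measures, whose structural properties are already in hand.
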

\begin{proof}
Let $X \sim \mu^+_{\beta_1,b_1}$ and $X' \sim \mu^+_{\beta_2,b_2}$ (viewed as $\{0,1\}$-valued).
Fix $p \in (1-\frac1\Delta,1)$ and let $Y \sim \nu_p$ be independent of $X$ and $X'$.
Denote $Z:=XY$ and $Z':=X'Y$.

Let us show that $Z \succeq_* Z'$ (when $\beta_0$ and $b_0$ are chosen suitably).
To see this, fix $v\in V$, $F \subset V\setminus \{v\}$ finite and $z,z' \in \{0,1\}^F$ such that $z \ge z'$. We need to show that $$\E[Z_v \mid Z_F=z] \ge \E[Z'_v \mid Z'_F=z'].$$
Since $\E[Z_v \mid Z_F=z] = p \cdot \E[X_v \mid Z_F=z]$, and similarly for $Z'$, we need to show that $$\E[X_v \mid Z_F=z] \ge \E[X'_v \mid Z'_F=z'].$$
Using \Cref{obs:X-tilted-for-ising}, the conditional law of $X$ given $Z_F=z$ is $\mu^+_{\beta_1,{\bf b}_1}$, where ${\bf b}$ equals $\infty$ on $\{ u \in F : z_u=1\}$, equals $b'_1:=b_1+\frac12\log(1-p)$ on $\{ u \in F : z_u=0 \}$, and equals $b_1$ outside of $F$. Similarly, $X'$ given $Z'_F=z'$ has law $\mu^+_{\beta_2,{\bf b}_2}$, with ${\bf b}_2$ defined similarly via $z'$ and $b_2$. The desired inequality is $\mu^+_{\beta_1,{\bf b}_1}(\sigma_v) \ge \mu^+_{\beta_2,{\bf b}_2}(\sigma_v)$. This follows since $\mu^+_{\beta_1,{\bf b}_1} \ge_{st} \mu^+_{\beta_2,{\bf b}_2}$ by \cref{thm:stochastic-order-Ising-finite-volume2}.

We have seen in the proof of \cref{thm:stochastic-dom-Ising2} that $Z$ is a monotone $\bar1$-limit. The same holds for $Z'$.
Thus, $Z$ invariantly dominates $Z'$ by \cref{thm:invariant-monotone-coupling-montone}.
Suppose henceforth that $Z$ and $Z'$ are coupled via an invariant monotone coupling.
It is also shown in the proof of \cref{thm:stochastic-dom-Ising2} that $p_*(Z') \to p$ as $\beta_2 h_e+b_2 \to \infty$. In particular,
if $\beta_0$ is chosen large enough and $b_0$ is bounded, then $p_*(Z')>1-\frac1\Delta$ so that $Z'$ almost surely has no infinite 0-clusters.

To obtain an invariant monotone coupling between $X$ and $X'$ we proceed as follow.
For finite $\Lambda \subset \Lambda' \subset V$, let $\pi_{\Lambda,\Lambda'}$ be a monotone coupling of $\mu^+_{\Lambda,\beta_1,b'_1}$ and $\mu^+_{\Lambda',\beta_2,b'_2}$, which exists by \cref{thm:stochastic-order-Ising-finite-volume2}. Furthermore, we choose these so that $\pi_{\gamma(\Lambda),\gamma(\Lambda')} = \pi_{\Lambda,\Lambda'} \circ \gamma^{-1}$ for all automorphisms $\gamma$ of $G$.
Given $(Z,Z')$, independently for each 0-cluster $A'$ of $Z'$, letting $A := \{ v \in A' : Z_v=0 \}$, sample $(\bar X_A,\bar X'_{A'})$ from $\pi_{A,A'}$.
This yields an invariant monotone coupling between $\bar X$ and $\bar X'$.

It remains to show that $\bar X$ and $\bar X'$ have the laws of $X$ and $X'$, respectively.
Fix a finite $F \subset V$ and $x \in \{0,1\}^F$.
Let $\{A'_i\}$ be the 0-clusters of $Z'$ interesting $F$, and define $A_i := \{v \in A'_i : Z_v = 0 \}$. Then
\[ \P(\bar X_F=x \mid Z,Z') = \1_{\{x \ge Z\}} \prod_i \pi_{A_i,A'_i}(\sigma_{A_i \cap F} = x_{A_i \cap F}) = \1_{\{x \ge Z\}} \prod_i \mu^+_{A_i,\beta_1,b'_1}(\sigma_{A_i \cap F} = x_{A_i \cap F}) .\]
Let $\{B_j\}$ be the 0-clusters of $Z$ intersecting $F$. Note that $\mu^+_{\Lambda,\beta_1,b'_1}$ decomposes into a product of measures $\mu^+_{\Lambda_\ell,\beta_1,b'_1}$, where $\{\Lambda_\ell\}$ are the connected components of $\Lambda$. Since $B:= \bigcup_j B_j = \bigcup_i A_i$, we see that $\mu^+_{B,\beta_1,b'_1} = \prod_j \mu^+_{B_j,\beta_1,b'_1} = \prod_i \mu^+_{A_i,\beta_1,b'_1}$, and hence,
\[ \P(\bar X_F=x \mid Z,Z') = \1_{\{x \ge Z\}} \mu^+_{B,\beta_1,b'_1}(\sigma_F = x_F) .\]
The right-hand side is measurable with respect to $Z$, and hence, $\bar X$ and $Z'$ are conditionally independent given $Z$. Moreover, the conditional law of $\bar X$ given $Z$ above is precisely the same as the conditional law of $X$ given $XY$. Thus, $\bar X$ has the same law as $X$. One sees that $\bar X'$ has the same law as $X'$ in a similar (slightly easier) manner.
\end{proof}

\begin{remark}
    A closer attention to parameters in the proof of \cref{thm:stochastic-order-Ising2} shows that the conclusion that $\mu^+_{\beta_1,b_1}$ invariantly dominates $\mu^+_{\beta_2,b_2}$ holds whenever
    \[ \beta_1>\beta_2 \ge \frac{100\Delta}{h_e}, \qquad b_2-b_1 \le 0.99(\beta_1-\beta_2)h_e, \qquad b_2 \ge -\beta_2 h_e + 10\log\Delta .\]
    To see this, set $p:=1-\sqrt{e(\Delta-1)} e^{b_2-\beta_2h_e}$ so that $b'_2= \frac12 b_2- \frac12 \beta_2 h_e + \frac14 \log (e(\Delta-1))$.
    From the proof of \cref{thm:stochastic-dom-Ising2} we then see that $p_*(Z') \ge 1 - 2\sqrt{e(\Delta-1)} \cdot e^{-\beta_2 h_e-b_2} > 1-\frac1\Delta$.
\end{remark}

\section{Finitary codings}\label{sec:finitary}

Recall the definitions of $p_*$ of from \cref{sec:st-dom}.
Recall that a $\{0,1\}$-valued process $X$ is \textbf{decoupled by ones} if for any finite set $A \subset V$ such that $\P(X_{\partial A} \equiv 1)>0$, we have that $X_A$ and $X_{A^c}$ are conditionally independent given that $X_{\partial A} \equiv 1$.

\begin{thm}\label{thm:ffiid-general}
Let $G$ be a connected quasi-transitive graph and let $X$ be an invariant $\{0,1\}$-valued process. Suppose that $X$ is decoupled by ones and $p_*(X)>1-\frac1{3\Delta-1}$. Then $X$ is a finitary factor of an i.i.d.\ process, and this factor has a coding radius with exponential tails.
\end{thm}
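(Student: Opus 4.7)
The plan is to simulate $X$ via coupling from the past (CFTP) applied to a continuous-time Glauber-type dynamics. Attach to each vertex $v$ an independent Poisson process of update times, and to each update time an independent uniform $[0,1]$ random variable; this invariant i.i.d.\ data serves as the input of the finitary factor. Since $p_v(\xi) := \P(X_v = 1 \mid (X_u)_{u \ne v} = \xi) \ge p_*(X)$ for every $\xi$, any update with uniform value $U \le p_*(X)$ can be declared a \textbf{forcing} update and sets the site to $1$ irrespective of the surrounding configuration. Updates with $U > p_*(X)$ are \textbf{nonforcing}, and their outcome genuinely depends on the state at the time of the update. The forcing updates form the ``barrier creation'' mechanism, while nonforcing updates are where dependence on the past propagates.

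To decode $X_v(0)$ from the i.i.d.\ data, I would perform a backward exploration in space-time starting from $(v,0)$, in the spirit of bounding chains. At each active point $(u,t)$ in the exploration, locate the most recent update $(u,t')$ of $u$ prior to $t$. If it was forcing, that branch terminates (and $u$ carries value $1$ from $t'$ onward, until possibly later overwritten); if it was nonforcing, the value at $(u,t')$ depends on $(X_w(t'{-}))_{w \sim u}$, so the space-time points $(w,t')_{w \sim u}$ enter the active set. The ``decoupled by ones'' assumption supplies the termination certificate: if at any stage in the exploration we have identified a finite region $A$ containing all active sites such that $\partial A$ is entirely made of vertices with a sufficiently recent forcing update, then the conditional law of $X_A$ is independent of $X_{A^c}$, and $X_v(0)$ can be produced from the finite i.i.d.\ data localized inside $A$ (if needed, by resampling the interior dynamics using fresh independent randomness attached to the interior). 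Quasi-transitivity guarantees that the algorithm commutes with the automorphism group of $G$, so the resulting construction is a legitimate factor.

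The remaining step, and the main obstacle, is to prove that this backward exploration terminates with exponential tails on the radius it uses, which is where the hypothesis $p_*(X) > 1 - \frac{1}{3\Delta-1}$ is essential. A naive branching bound would give mean offspring $\Delta(1-p_*(X))$, so that the subcriticality threshold is $p_*(X) > 1 - 1/\Delta$; however, to certify that a \emph{forcing contour of ones} actually surrounds the active set (so that the decoupled-by-ones property can be invoked), one must track not just the active vertices but also their neighbors and a further layer of sites needed to close the contour, producing an effective branching rate of order $(3\Delta-1)(1-p_*(X))$. I would encode this coupled space-time branching as the disease spreading model developed in \cref{sec:disease} and invoke its subcriticality estimate; the exponential decay of the infection in that model translates directly into exponential tails on the coding radius and completes the proof.
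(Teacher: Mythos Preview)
Your approach is essentially the same as the paper's: coupling from the past via a bounding chain, with the disease spreading model of \cref{sec:disease} supplying the exponential tail bound. Two technical points deserve correction, however.

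First, the paper does \emph{not} use Poisson clocks. It works in discrete time, where in each round every vertex is updated exactly once (in a random order given by i.i.d.\ uniforms $T^i_v$). This choice is what makes the auxiliary space-time graph $H$ of update points have maximum degree $3\Delta$: for an update $(x,t)$ and a spatial neighbor $x'$, there is one adjacent update of $x'$ looking backward from $t$, and at most two looking forward (since consecutive updates of $x$ are at most $2$ apart, and $x'$ is updated once per unit interval). With Poisson clocks the forward degree is unbounded, so your continuous-time setup would not directly yield the threshold $1-\tfrac{1}{3\Delta-1}$; you would need to discretize or accept a worse constant.

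Second, your account of where the $3\Delta-1$ arises --- ``tracking not just the active vertices but also their neighbors and a further layer of sites needed to close the contour'' --- is not the mechanism. The factor $3$ is temporal, not spatial, coming from the degree bound on $H$ just described. Relatedly, the decoupled-by-ones hypothesis is not used to certify a surrounding contour of forcing ones; it is used pointwise (see \cref{cl:qv}) to show that whenever $v$ is not connected to a star through zeros in the current bounding state $y$, the conditional probability $q_v^-(y)=q_v^+(y)$ is pinned down, so the update at $v$ produces a definite value in $\{0,1\}$. This is what feeds into the comparison with the disease dynamics $\tilde\psi_v$ and hence into the Peierls argument over simple paths in $H$.
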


The theorem extends to processes taking values in finite sets other than $\{0,1\}$ (see \cref{sec:ffiid-finite-alphabet}).
H{\"a}ggstr{\"o}m and Steif~\cite{haggstrom2000propp} proved a similar result for invariant Markov random fields on $\Z^d$. While the extension of this to quasi-transitive graphs is immediate, the relaxation of the Markov property is less so.
For monotone processes, Harel and the second author~\cite{harel2018finitary} showed that the assumptions can be weakened to a certain uniqueness property.
Such a result is also available for infinite-range models with suitably summable interactions~\cite{galves2008perfect}. As far as we know, the above theorem is the first result for non-monotone infinite-range models which does not require such a condition.
Our result falls somewhere between those of~\cite{haggstrom2000propp,harel2018finitary} and our proof combines ideas from both, together with additional ingredients.

Our proof yields more than what is stated in \cref{thm:ffiid-general}.
One enhancement is that if $X \succeq_* Y$, where $Y$ is another process satisfying the same assumptions as $X$, then $Y$ can be expressed as a finitary factor of the same i.i.d.\ process in such a way that $X \ge Y$ almost surely. In particular, this gives an invariant monotone coupling of $X$ and $Y$. This yields \cref{lem:invariant-monotone-coupling-decoupledbyones} in the case when $G$ is quasi-transitive. To obtain the general case of the lemma, we extend the above theorem to general bounded-degree graphs; while the notion of a finitary factor is usually not discussed in such a setting, it can be made sense of (basically with the same definitions), and we proceed to do so now.

Let $G$ be a locally finite graph.
A \textbf{factor of an \iid\ process} is any process of the form $X=\varphi(Y)$, where $Y=(Y_v)_{v \in V}$ is an \iid\ process and $\varphi$ is a measurable function which commutes with automorphisms of $G$.
Such a factor is \textbf{finitary} if in order to compute the value at any given vertex $v$, one only needs to observe a finite (but random) portion of the \iid\ process, or more precisely, if $(Y_u)_{u \in B_{R_v}(v)}$ determines $X_v$, for some almost surely finite stopping time $R_v$ with respect to the filtration generated by $((Y_u)_{u \in B_n(v)})_{n \ge 0}$. In this case we say that $X$ is a \textbf{finitary factor of an \iid\ process}. We call $R_v$ a \textbf{coding radius} for $v$.
We say that a factor is \textbf{uniformly finitary} if there are coding radii $\{R_v\}_{v \in V}$ which form a tight collection of random variables.

In quasi-transitive graphs every finitary factor is uniformly finitary. In non quasi-transitive graphs, the notion of finitary factor can be rendered trivial, and the notion of uniformly finitary seems to be more natural.  For example, every process on the graph $G=\N$ is invariant and is a finitary factor of an \iid\ process (with a deterministic coding radius $R_n=n$ for each $n \in \N$), but not every such process is a uniformly finitary factor of an \iid\ process.

When extending \cref{thm:ffiid-general} to bounded-degree graphs, we obtain a uniformly finitary factor of an \iid\ process, with coding radii having uniform exponential tails, which furthermore depend only on the maximum degree of the graph and on $p_*(X)$.

\begin{thm}\label{thm:ffiid-general0}
Let $G$ be a connected bounded-degree graph. Let $X$ be an invariant $\{0,1\}$-valued process. Suppose that $X$ is decoupled by ones and $p_*(X)>1-\frac1{3\Delta-1}$. Then $X$ is a uniformly finitary factor of an i.i.d.\ process, and there is a coding radius $R_v$ for $v \in V$ satisfying
\begin{equation}\label{eq:coding-radius-bound}
\P(R_v > r) \le C_{\Delta,p_*(X)} ((3\Delta-1)(1-p_*(X)))^r \qquad\text{for all }r>0 .
\end{equation}

Moreover, if $Y$ is another process satisfying the assumptions above and $X \succeq_* Y$, then $Y$ can be expressed as a uniformly finitary factor of the same i.i.d.\ process in such a way that $X \ge Y$ almost surely. In particular, there is an invariant monotone coupling of $X$ and $Y$. In the special case when $Y \sim \nu_{p_*(X)}$, we can choose $Y$ as a 0-block factor of the i.i.d.\ process.
\end{thm}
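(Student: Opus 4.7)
The plan is to observe that the proof of \cref{thm:ffiid-general} goes through for general bounded-degree graphs with vertex-uniform bounds whose constants depend only on $\Delta$ and $p_*(X)$; the quasi-transitivity in that theorem was used only to package a single-vertex estimate as a tight family. I will then extend the construction to a joint coupling for $X$ and $Y$ via joint Glauber dynamics, yielding the monotone coupling claimed in the second part of the theorem.

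More concretely, the i.i.d.\ input consists, for each $(v,n)\in V\times\Z$, of independent uniform $[0,1]$ variables $(\tau_v^{(n)},U_v^{(n)})$ encoding a single-site Glauber update. Each update at $(v,n)$ uses $p_v(\xi):=\E[X_v\mid (X_u)_{u\neq v}=\xi]$, which is well-defined almost surely by L\'evy's zero-one law. Coupling from the past extracts $X_v^{(0)}$ by walking backwards in time and tracking a \emph{bounding chain} in the style of H\"aggstr\"om--Steif \cite{haggstrom2000propp}: the bounding-chain value $\Xi_v^{(n)}\subseteq\{0,1\}$ is forced to $\{1\}$ whenever $U_v^{(n)}<p_*(X)$, is resolved when the decoupled-by-ones property can be applied (a shell of $\{1\}$ values around $v$ has been detected by time $n-1$), and is otherwise labelled \emph{ambiguous}. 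The value $X_v^{(0)}$ is returned once the bounding chain at $(v,0)$ is a singleton, and the coding radius $R_v$ is the spatial diameter of the set of past ambiguous sites that influence $(v,0)$.

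The crux is the disease-spreading analysis of \cref{sec:disease}. Ambiguity at a single update arises with probability at most $1-p_*(X)$, and its backwards propagation---each ambiguous site at time $n$ depending on at most $\Delta$ spatial neighbours at time $n-1$, together with its own temporal history---gives a branching-type upper bound for the ambiguity cluster with combinatorial factor at most $3\Delta-1$. The hypothesis $(3\Delta-1)(1-p_*(X))<1$ is exactly subcritical, so a Peierls count over space-time animals containing $(v,0)$ yields the exponential tail bound \eqref{eq:coding-radius-bound}. This estimate is inherently vertex-uniform: nothing in the Peierls count refers to a basepoint, so the factor is uniformly finitary without appealing to any transitivity.

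For the monotone version, I would run joint Glauber dynamics on the ordered state space $\{(x,y):x\ge y\}$, using at each step the unique monotone single-site coupling of the two updates, whose existence is guaranteed by $X\succeq_* Y$. A joint bounding chain tracks pairwise ambiguity; since $p_*(Y)>1-\frac{1}{3\Delta-1}$ also, the disease-spreading estimate applies to the joint chain. This produces both $X$ and $Y$ as coordinates of a uniformly finitary factor of the common i.i.d.\ input, with $X\ge Y$ almost surely. In the special case $Y\sim\nu_{p_*(X)}$ one can bypass the bounding chain for $Y$ entirely: set $Y_v:=\mathbf{1}_{\{U_v^{(0)}<p_*(X)\}}$, which is manifestly a $0$-block factor and is compatible with the forced-$1$ rule of the $X$-bounding chain, so $X_v\ge Y_v$ holds automatically. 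The principal obstacle is checking that the disease-spreading bound is truly vertex-uniform in its constants; on inspection of the argument for \cref{thm:ffiid-general}, this is the case, as the combinatorial branching bound depends only on $\Delta$ and the per-update ambiguity probability.
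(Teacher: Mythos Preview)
Your proposal is correct and follows essentially the same route as the paper: a $\{0,1,*\}$-valued bounding chain run via coupling from the past, with the uniform exponential tail coming from a disease-spreading Peierls argument whose constants depend only on $\Delta$ and $p_*(X)$.

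One point worth flagging in the moreover part: rather than a ``joint bounding chain'', the paper runs two \emph{separate} bounding chains (one for $X$, one for $Y$) on the same i.i.d.\ input and proves $\bar X\ge\bar Y$ by showing $q^{\pm}_{v;X}(y)\ge q^{\pm}_{v;Y}(y')$ whenever $y\ge y'$ in the pointwise order induced by $1\ge *\ge 0$. Since $q^{\pm}_v(y)$ are infima/suprema over \emph{all} configurations in the support compatible with the bounding state $y$, this comparison is not immediate from the single-site condition $X\succeq_* Y$; the paper handles it by a short induction establishing that for every $x\in\Omega_X$ with $x\precsim y$ there exists $x'\in\Omega_Y$ with $x'\le x$ and $x'\precsim y'$. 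Your phrase ``guaranteed by $X\succeq_* Y$'' hides exactly this step.
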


\subsection{The construction of the finitary factor using bounding chains} \label{S:Xbar}

Informally, we wish to consider the Markov chain on $\{0,1\}^V$ in which from a current state $x \in \{0,1\}^V$ one moves to a new state $\tilde x$ by resampling each vertex in a random order (according to independent times chosen uniformly in $[0,1]$ for each vertex). While this can be made precise and shown to be well defined (at least for $X$-almost every starting state $x$; see \cref{rem:nostar-MC}), we will circumvent this by considering a ``bounding chain''. Such ideas appeared previously in the context of finitary factors for Markov random fields on $\Z^d$ in~\cite{haggstrom2000propp,spinka2018finitaryising,spinka2018finitarymrf} and in the context of perfect sampling algorithms for Markov random fields on finite graphs in~\cite{huber1998exact,haggstrom1999exact,huber2004perfect,bhandari2020improved} (see also~\cite{he2021perfect}).
Our application differs from these in that our processes are not Markov random fields. The paper~\cite{harel2018finitary} also deals with non-Markov random fields, but relies instead on a monotonicity property of the random fields, which we do not have here. The Markov chains we consider will be in discrete time although the whole procedure could be done in continuous time as well.

The bounding chain we use is defined as follows. The state space is $\{0,1,*\}^V$.
For $y,y' \in \{0,1,*\}^V$, we write
\[ y \precsim y' \qquad\text{if } y_v = y'_v \text{ whenever }y'_v\neq*.\]
This is the pointwise partial order induced by the partial order on $\{0,1,*\}$ in which $0,1 \precsim *$, but $0$ and $1$ are incomparable. The star symbol is thought of as an unknown value, and $y \precsim y'$ is thought of as meaning that $y$ is ``more specified'' than $y'$. In particular, the all star configuration $\bar*$ is the unique maximal element, while every element in $\{0,1\}^V$ is a minimal element.
Given a current state $y \in \{0,1,*\}^V$, informally, we define a new state $\tilde y$ by setting its value at a vertex $v$ to be 0 or 1 only if this value can be guaranteed to arise in the previous Markov chain when the ``tail'' of $y$ is unknown. We will also couple the transitions $y \mapsto \tilde y$ for all possible starting states $y$. Formally, we proceed as follows.

To accommodate processes $X$ which do not have finite energy, we consider the support of $X$ given by
\[ \Omega := \big\{ x \in \{0,1\}^V : \P(X_F=x_F)>0\text{ for all finite }F \subset V \big\} .\]
Define
\[ \Omega^* := \big\{ y \in \{0,1,*\}^V : \text{there exists $x\in \Omega$ such that }x \precsim y \big\} .\]

Let $u=(u_v)_{v \in V} \in [0,1]^V$ be arbitrary and let $t=(t_v)_{v \in V} \in [0,1]^V$ consist of distinct numbers.
We shall define $\tilde y=\varphi(y)=\varphi(y;u,t)$ for all $y \in \Omega^*$.
We start by defining, for any finite set $A \subset V$, a configuration $\psi_A(y)=\psi_A(y;u,t)$ which represents the state obtained after applying the updates to the vertices in $A$.
When $A$ is a singleton $\{v\}$, we define $\psi_{\{v\}}(y)=\psi_v(y)$ by 
\[ \psi_v(y)_w := \begin{cases}
 y_w &\text{if }w \neq v \\
 1 &\text{if }w = v\text{ and }u_v \le q^-_v(y) \\
 0 &\text{if }w = v\text{ and }u_v > q^+_v(y) \\
 * &\text{otherwise}
\end{cases} ,\]
where 
\[ q_v^-(y) := \inf_{\substack{x \in \Omega\\x \precsim y}} \liminf_{r\to\infty}\, \E[X_v \mid X_{B^*_r(v)} = x_{B^*_r(v)}] ,\]
and $q_v^+(y)$ is defined similarly with $\sup$ and $\limsup$ instead of $\inf$ and $\liminf$. 
When $A$ contains more than one element, we write its elements $v_1,\dots,v_{|A|}$ in the order induced by the times $t_v$, and set $\psi_A(y) := \psi_{v_{|A|}} \circ \cdots \circ \psi_{v_1}(y)$.
Let $S_A(y)$ be the configuration which equals $y$ on $A$ and is all stars outside of $A$. Define $\psi^*_A := \psi_A \circ S_A$.

\begin{figure}
\centering
\includegraphics{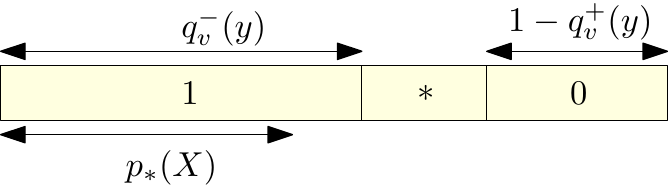}
\caption{An illustration of the dynamics $\psi_v$. From a current state $y$, an update at $v$ creates an output $*$ (which we think of as an unknown value given the current data represented by $y$) if the variable $u_v$ falls in the segment between $q^-_v(y)$ and $q^+_v(y)$.}\label{fig:psi}
\end{figure}

This construction enjoys some nice monotonicity properties with respect to $\precsim$. Note that
\[ q^-_v(y) \ge q_v^-(y') \quad\text{and}\quad q^+_v(y) \le q^+_v(y) \qquad\text{whenever }y \precsim y' .\]
Hence, $\psi_v(y) \precsim \psi_v(y')$  whenever $y \precsim y'$. Successive applications of this yield that $\psi_A(y) \precsim \psi_A(y')$ whenever $y \precsim y'$
and that
\begin{equation}\label{eq:psi*-monotone}
\psi^*_A(y) \precsim \psi^*_{A'}(y') \qquad\text{whenever }y \precsim y'\text{ and }A \supset A' .
\end{equation}
In particular,
\[ \varphi(y) := \lim_{A \uparrow V} \psi^*_A(y) \]
exists and is monotone in the sense that
\begin{equation}\label{eq:varphi-monotone-in-y}
\varphi(y) \precsim \varphi(y') \qquad\text{whenever }y \precsim y' .
\end{equation}
This gives a consistent and well-defined way to apply updates at all vertices of $V$, at the expense of perhaps creating stars in $\varphi(y)$ even when starting from a state $y$ with no stars. Let us point out that, by construction, if $u_v \le q_v^-(\bar*)$ then $\varphi(y)_v=1$ for all $y$. Similarly, if $u_v > q_v^+(\bar*)$ then $\varphi(y)_v=0$ for all $y$, though this will not be used.

Let $(U,T)$ be an \iid\ process where the value at each vertex $v \in V$ is a sequence $(U^i_v,T^i_v)_{i=1}^\infty$ of independent uniform random variables in $(0,1)$.
Denote $(U^i,T^i) := (U^i_v,T^i_v)_{v \in V}$ and
\[ \Phi^i := \varphi(\cdot ; U^i,T^i) .\]
These are i.i.d.\ random functions from $\Omega^*$ to itself.
Define a process $\bar X \in \Omega^*$ by 
\[ \bar X := \lim_{n \to \infty} \Phi^1 \circ \cdots \circ \Phi^n(\bar*) .\]
This is well defined almost surely, since $\{\Phi^1 \circ \cdots \circ \Phi^n(\bar*)\}_{n=1}^\infty$ is a $\precsim$-decreasing sequence by~\eqref{eq:varphi-monotone-in-y}. {To see this, note that \eqref{eq:varphi-monotone-in-y} implies that $\Phi^1 \circ \cdots \circ \Phi^n(y) \precsim \Phi^1 \circ \cdots \circ \Phi^n(y')$ whenever $y \precsim y'$, so that $\Phi^{n+1}(\bar *) \precsim \bar*$ implies that $\Phi^1 \circ \cdots \circ \Phi^{n+1}(\bar*) \precsim \Phi^1 \circ \cdots \circ \Phi^n(\bar*)$.} {In particular, if $\Phi^1 \circ \cdots \circ \Phi^n(\bar*)_v \in \{0,1\}$ for some $n$ and $v$, then $\Phi^1 \circ \cdots \circ \Phi^m(\bar*)_v = \bar X_v$ for all $m \ge n$.
We point out that $\{ \Phi^n \circ \cdots \circ \Phi^1(\bar*) \}_{n=1}^\infty$ is a Markov chain on $\{0,1,*\}^V$, but the order of compositions are reversed in the definition of $\bar X$; this is a form of coupling-from-the-past and is crucial for the stated monotonicity (and hence also for the almost sure limit) to hold.}

\subsection{Proof of \Cref{thm:ffiid-general0}}
Given the above construction, the main step toward establishing the theorem is to show that all stars vanish in the above limit, i.e., that $\bar X \in \{0,1\}^V$ almost surely. This is stated in the following proposition. In order to also obtain control on the coding radius, we define
\[ \Phi^i_{r,v} := \psi^*_{B_r(v)}(\cdot;U^i,T^i) .\]
An important difference between $\Phi^i$ and $\Phi^i_{r,v}$ is that in the latter, all updates are computed within the ball of radius $r$ centered around $v$. In particular, $\Phi^i$ is invariant, while $\Phi^i_{r,v}$ is not and actually always places stars outside of $B_r(v)$. Observe that $\Phi^1 \circ \cdots \circ \Phi^n(\bar*) \precsim \Phi^1_{r,v} \circ \cdots \circ \Phi^n_{r,v}(\bar*)$ by~\eqref{eq:varphi-monotone-in-y}.

\begin{prop}\label{prop:coding-radius}
There exist a constant $C=C(\Delta,p_*(X))$ such that for all $v\in V$ and $r>0$,
\[ \P\left(\Phi^1_{r,v} \circ \cdots \circ \Phi^{2r}_{r,v}(\bar*)_v = *\right) \le C((3\Delta-1)(1-p_*(X)))^r .\]
In particular, $\bar X \in \{0,1\}^V$ almost surely.
\end{prop}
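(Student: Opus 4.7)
The plan is to set up a disease-spreading analysis on the space-time grid $B_r(v) \times \{1,\dots,2r\}$. For each pair $(w,k)$ introduce the Bernoulli event $E_{w,k} := \{U^{i_k}_w > p_*(X)\}$ of probability $1 - p_*(X)$; these events are mutually independent across $(w,k)$ by construction of the i.i.d.\ auxiliary randomness $(U,T)$. The aim is to show that any star surviving at $(v,2r)$ can be explained by a chain of such bad events in space-time of length at least $r$, and then to bound the probability by a union bound over the admissible chains.

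Two structural facts about the single-site update $\psi_w$ drive the argument. First, I would verify $q^-_w(y) \ge p_*(X)$ for every $y \in \Omega^*$; this uses the definition of $p_*(X)$ as an essential infimum together with L\'evy's zero-one law applied to $X$-typical configurations $x \precsim y$ in $\Omega$. Consequently $U^{i_k}_w \le p_*(X)$ forces $\psi_w$ to output $1$ at $w$, so every star produced in the dynamics must be accompanied by its own $E$-event. Second, the decoupled-by-ones property forces $\psi_w(y)_w = *$ only when $y$ lacks a local $1$-barrier around $w$: in the cleanest case, $y_{N(w)} \equiv 1$ implies $q^+_w(y) = q^-_w(y)$, ruling out a star. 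Hence a star at $(w,k)$ always points to some neighbour $u \in N(w)$ whose value is non-$1$ at the moment $w$ is updated in iteration $k$.

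Running this backward starting at $(v,2r)$ produces a chain of bad $E$-events: each step proceeds from $(w,k)$ either to $(u,k)$ (if $u$ was updated before $w$ in iteration $k$, per the realised ordering $T^{i_k}$), to $(u,k-1)$ (if $u$ was updated after $w$), or to a vertex frozen at $*$ outside $B_r(v)$. The chain terminates only at iteration $k=0$ (where the input is $\bar *$), at the boundary of $B_r(v)$, or at an interior vertex where a $1$-barrier obtains locally in the state at that time (which itself requires a bad $E$-event at the terminal). Since the only external sources of stars in the dynamics are the initial all-$*$ state (at $2r$ time steps from $(v,2r)$) and the boundary of $B_r(v)$ (at spatial distance $r$ from $v$), a careful handling of interior terminations ensures that every admissible chain has length at least $r$.

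A union bound then yields
\[ \P\bigl(\Phi^1_{r,v} \circ \cdots \circ \Phi^{2r}_{r,v}(\bar*)_v = *\bigr) \le \sum_{\ell \ge r} N_\ell (1 - p_*(X))^\ell ,\]
where $N_\ell$ denotes the number of admissible chains of length $\ell$ emanating from $(v,2r)$. Bookkeeping of the neighbour choice and time direction per step yields $N_\ell \le (3\Delta-1)^\ell$, and the hypothesis $(3\Delta-1)(1-p_*(X)) < 1$ reduces the sum to a geometric series bounded by $C((3\Delta-1)(1-p_*(X)))^r$. The concluding assertion $\bar X \in \{0,1\}^V$ almost surely follows from the monotonicity $\Phi^1 \circ \cdots \circ \Phi^n(\bar*) \precsim \Phi^1_{r,v} \circ \cdots \circ \Phi^n_{r,v}(\bar*)$, a consequence of~\eqref{eq:psi*-monotone}, combined with Borel--Cantelli as $r \to \infty$. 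The main obstacle I anticipate is the precise formulation of the disease-spreading mechanism that produces the sharp branching bound $3\Delta-1$, together with the argument that interior terminations cannot truncate chains below length $r$; the delicate interplay between the random orderings $T^{i_k}$ and the non-Markov decoupled-by-ones structure---in particular, how local $1$-barriers form and force continuation of the chain---is the crux.
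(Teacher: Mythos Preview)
Your overall strategy---space-time chains of bad events, union bound over chains, geometric decay---is exactly the paper's approach. The gap lies precisely where you flag it: the ``careful handling of interior terminations'' is not a detail but the heart of the argument, and the structural fact you extract from decoupled-by-ones is too weak to close it.

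You use only that $y_{N(w)} \equiv 1$ forces $q^+_w(y)=q^-_w(y)$, hence a star at $w$ has some non-$1$ neighbour $u$. But if that neighbour has value $0$ (not $*$), your chain is stuck: a $0$ at $u$ came from an update with $U_u > q^+_u(y) \ge p_*(X)$, so it contributes a bad event, but there is no reason $u$ should itself have a non-$1$ neighbour at that moment. A $0$ can be entirely surrounded by $1$'s. Thus your chain may terminate after two or three bad events, and the union bound gives only a constant, not $((3\Delta-1)(1-p_*(X)))^r$.

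The paper extracts the sharper consequence of decoupled-by-ones (its Claim~\ref{cl:qv}(2)): if the connected component of $v$ in $\{u : y_u \neq 1\}\cup\{v\}$ is finite and contains no star, then $q^+_v(y)=q^-_v(y)$. Equivalently, a star at $v$ forces an entire path $(v,v_1,\dots,v_k)$ with $y_{v_1}=\cdots=y_{v_{k-1}}=0$ and $y_{v_k}=*$ (or an infinite such path of $0$'s). Each vertex on this path has a bad update, and the path ends at another star, from which one repeats. The chain therefore only terminates at the temporal or spatial boundary of $B_r(v)\times\{1,\dots,2r\}$, which forces length $\ge \min\{r,n/2\}$. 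The paper then encodes the chain as a simple path in an auxiliary graph $H$ on update points of maximum degree $3\Delta$, giving $N_\ell \le 3\Delta(3\Delta-1)^{\ell-1}$ and the stated bound. Once you replace your neighbour-level fact with this path-level fact, your outline becomes the paper's proof.
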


We postpone the proof of \cref{prop:coding-radius} to below.

The next step towards establishing the theorem is to show that $\bar X$ is equal in distribution to $X$. 

\begin{prop}\label{prop:X-bar-distrib}
$\bar X$ has the same distribution as $X$.
\end{prop}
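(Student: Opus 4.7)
My plan is to apply the coupling-from-the-past paradigm. First, \cref{prop:coding-radius} guarantees that $\bar X \in \{0,1\}^V$ almost surely; letting $E_n := \{\Phi^1 \circ \cdots \circ \Phi^n(\bar*) \in \{0,1\}^V\}$, we have $\P(E_n) \to 1$. The monotonicity \eqref{eq:varphi-monotone-in-y} then yields a \emph{coalescence} property: on $E_n$, the composite $\Phi^1 \circ \cdots \circ \Phi^n$ maps every $y \in \Omega^*$ to the common value $\bar X$, since $y \precsim \bar*$ forces $\Phi^1 \circ \cdots \circ \Phi^n(y) \precsim \bar X$, and $\bar X \in \{0,1\}^V$ is minimal in the $\precsim$-order.

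I would then couple $\bar X$ with a sample from $X$. Take $\hat X_0 \sim X$ independent of $(U^i,T^i)_{i \ge 1}$, equip each step with independent auxiliary uniform $[0,1]$-valued randomness $W^i=(W^i_v)_{v\in V}$, and define a ``true'' chain $\hat X_n := \tilde\Phi^1 \circ \cdots \circ \tilde\Phi^n(\hat X_0)$, where $\tilde\Phi^i$ is built from single-site updates that agree with $\Phi^i$ wherever the latter produces a deterministic output in $\{0,1\}$ and otherwise use $W^i_v$ to sample the value at $v$ from the correct conditional law of $X_v$ given the current state. For $\xi \sim X$, L\'evy's zero-one law yields $q_v^-(\xi) = q_v^+(\xi) = \P(X_v=1 \mid (X_u)_{u \ne v})$ almost surely (using that $\xi$ has no stars, so the $\inf$/$\sup$ in the definition of $q_v^\pm$ collapses to the single limit), so the star region of $\psi_v(\xi;\cdot)$ has $u_v$-measure zero and the single-site true update at $v$ simply resamples $X_v$ from its correct conditional distribution. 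In particular each $\tilde\Phi^i$ preserves the law of $X$, so $\hat X_n \sim X$ for every $n$. Furthermore, $\tilde\Phi^i(x) \precsim \Phi^i(x)$ coordinatewise by construction; combining this with the $\precsim$-monotonicity \eqref{eq:varphi-monotone-in-y} of each $\Phi^i$ gives by induction $\hat X_n \precsim \Phi^1 \circ \cdots \circ \Phi^n(\hat X_0)$, and together with the coalescence property we conclude $\hat X_n = \bar X$ on $E_n$. For any bounded continuous $f$ on $\{0,1\}^V$,
\[ |\E f(\bar X) - \E f(\hat X_n)| \le 2\|f\|_\infty \P(E_n^c) \to 0, \]
so $\E f(\bar X) = \E f(X)$, as desired.

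The main obstacle will be to make precise the single step $\tilde\Phi^i$, which nominally performs a single-site Glauber-type update at \emph{every} vertex of $V$ in the random total order given by $T^i$. Without a Markov random field structure one cannot simply batch the updates into independent parallel blocks, and the auxiliary randomness $W^i$ destroys the $\precsim$-monotonicity that made the analogous construction of $\Phi^i$ via $\psi^*_A$ automatic. I expect to resolve this by leveraging the hypotheses: decoupling by ones localizes the effect of each update, and $p_*(X) > 1 - \tfrac{1}{3\Delta-1}$ ensures, by a Peierls argument in the spirit of \cref{prop:coding-radius}, that the $0$-clusters of a sample from $X$ are finite almost surely, so that $\tilde\Phi^i$ can be constructed unambiguously cluster by cluster by updating the finite $0$-clusters internally (using that their boundary of ones decouples them from the outside) and then the $1$-vertices in between.
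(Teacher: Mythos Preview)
Your coupling-from-the-past strategy is sound in spirit, but you have made the problem significantly harder than necessary by insisting on constructing an exact infinite-volume Glauber step $\tilde\Phi^i$ that preserves the law of $X$. As you correctly identify, this is the crux of the difficulty: the updates at all vertices of $V$ occur in the random total order $T^i$, and neither decoupling by ones nor finiteness of $0$-clusters tells you directly how to sequence infinitely many interdependent Glauber updates in a way that is simultaneously well-defined and law-preserving. Your ``cluster by cluster'' sketch does not mesh cleanly with the order $T^i$ (updating one vertex can split or merge $0$-clusters mid-sweep), and turning it into a rigorous construction of $\tilde\Phi^i$ would require substantial additional work.

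The paper sidesteps this entirely by replacing exact law-preservation at the infinite-volume level with a one-sided stochastic bound. For \emph{finite} $A$, the composition $\psi_A(X;U^1,T^1)$ (updating only the vertices of $A$, in the order induced by $T^1$) is a finite sequence of single-site Glauber resamplings and hence preserves the law of $X$ exactly, for precisely the reason you give (L\'evy's zero-one law yields $q^-_v(X)=q^+_v(X)$ almost surely). Since $\psi_A(X) \precsim \psi^*_A(X)$ pointwise, this gives $X \precsim_{st} \psi^*_A(X)$; letting $A \uparrow V$ gives $X \precsim_{st} \Phi^1(X)$. Iterating, $X \precsim_{st} \Phi^1 \circ \cdots \circ \Phi^n(X) \precsim \Phi^1 \circ \cdots \circ \Phi^n(\bar*)$, and passing to the limit gives $X \precsim_{st} \bar X$. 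Now comes the key observation that makes your coalescence argument unnecessary: since $\bar X \in \{0,1\}^V$ almost surely by \cref{prop:coding-radius}, and every element of $\{0,1\}^V$ is $\precsim$-minimal, the domination $X \precsim_{st} \bar X$ forces equality in law. No auxiliary randomness $W^i$, no infinite-volume ``true'' chain, and no cluster-by-cluster construction is needed.

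In short, the idea you are missing is not that your approach is wrong, but that it is avoidable: carry the exact law only at the finite-volume level, pass to infinite volume via the one-sided $\precsim_{st}$ inequality, and let the $\precsim$-minimality of $\{0,1\}^V$ close the argument.
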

\begin{proof}
We write $X \precsim_{st} X'$ to indicate that $X'$ stochastically dominates $X$ with respect to $\precsim$ (recall that $\precsim$ is a partial order on $\{0,1,*\}^V$, even though it is not a total order on $\{0,1,*\}$).
Let us show that $X \precsim_{st} \Phi^1(X)$, where $X$ is taken to be independent of the \iid\ process $(U,T)$.
To see this, first note that $q_v^-(x) := \liminf_{r\to\infty}\, \E[X_v \mid X_{B^*_r(v)} = x_{B^*_r(v)}]$ and $q_v^+(x) := \limsup_{r\to\infty}\, \E[X_v \mid X_{B^*_r(v)} = x_{B^*_r(v)}]$ when $x \in \Omega$ and $v\in V$. Levy's zero-one law thus implies that $q^-_v(X)=q^+_v(X)=\E[X_v \mid (X_u)_{u \neq v}]$ almost surely. It is therefore easy to see that $\psi_v(X;U^1,T^1)$ has the same law as $X$. Applying this successively yields that the same is also true of $\psi_A(X;U^1,T^1)$ for any finite set $A \subset V$. Since $\psi_A(X;U^1,T^1) \precsim \psi^*_A(X;U^1,T^1)$ almost surely, we see that $X \precsim_{st} \psi^*_A(X;U^1,T^1)$.
Since $\Phi^1(X) = \lim_{A^\uparrow V} \psi^*_A(X;U^1,T^1)$ almost surely, we deduce that $X \precsim_{st} \Phi^1(X)$.

In the same manner, $X \precsim_{st} \Phi^1 \circ \cdots \circ \Phi^n(X)$. Since $\Phi^1 \circ \cdots \circ \Phi^n(X) \precsim \Phi^1 \circ \cdots \circ \Phi^n(\bar*)$, and since the latter converges to $\bar X$ almost surely, we see that $X \precsim_{st} \bar X$. Since $\bar X \in \{0,1\}^V$ almost surely, we conclude that $\bar X$ has the same law as $X$.
\end{proof}

\begin{remark}\label{rem:nostar-MC}
It follows that $\Phi^1(X)$ has the same law as $X$, and thus that $\{ \Phi^n \circ \cdots \circ \Phi^1(X) \}_{n=1}^\infty$ is a stationary Markov chain on $\{0,1\}^V$.
Indeed, letting $\Phi^0$ be a copy of $\Phi^1$ independent of $\{\Phi^i\}_{i=1}^\infty$, we have that $\Phi^0(\bar X) = \Phi^0 \circ (\lim_{n\to\infty} \Phi^1 \circ \cdots \circ \Phi^n(\bar*)) \precsim \lim_{n\to\infty} \Phi^0 \circ \cdots \circ \Phi^n(\bar*)$, so that $\Phi^0(\bar X) \precsim_{st} \bar X$. Since $\bar X$ has the same law as $X$, this shows that $\Phi^0(X) \precsim X$, from which we conclude that $\Phi^0(X)$ has the same law as $X$.
\end{remark}

We are now ready to prove the theorem.

\begin{proof}[Proof of \cref{thm:ffiid-general0}]
Since $\bar X$ is defined as a measurable equivariant function of the i.i.d.\ process $(U,T)$, this description immediately shows that $\bar X$ is a factor of an i.i.d.\ process. To see that this factor is finitary, note that
$$
\bar X_v = \lim_{r \to \infty} \Phi^1_{r,v} \circ \cdots \circ \Phi^r_{r,v}(\bar*)_v.
$$
Furthermore, almost surely, the sequence in the limit is $\precsim$-decreasing to an element in $\{0,1\}$ (by \cref{prop:coding-radius}), so that it consists of an initial run of stars (of some finite length) followed by all zeros or all ones.
This, together with the simple observation that each $\Phi^i_{v,r}$ is a block factor, implies that $\bar X$ is a finitary factor of the i.i.d.\ process $(U,T)$.
Since $X$ and $\bar X$ have the same distribution by \cref{prop:X-bar-distrib}, this shows that $X$ is a finitary factor of an i.i.d.\ process.

To see that the coding radius has exponential tails, we first note that $\P(\Phi^1_{r,v} \circ \cdots \circ \Phi^n_{r,v}(\bar*)_v = *)$ (for any $r$ and $n$) is an upper bound on the probability that the coding radius is greater than $r$. This is because $\bar X \precsim \Phi^1_{r,v} \circ \cdots \circ \Phi^n_{r,v}(\bar*)$ and because the latter only depends on $\{(U^i_u,T^i_u)\}_{u \in B_r(v), 1 \le i \le n}$.
\cref{prop:coding-radius} thus yields the claimed bound on the coding radius (which also shows that the factor is uniformly finitary). This establishes the first part of the theorem.

Before proving the moreover part, let us consider the special case when $Y \sim \nu_{p_*(X)}$. Define $\bar Y := (\1_{\{U^1_v \le p_*(X)\}})_{v \in V}$. Clearly, $\bar Y$ has law $\nu_{p_*(X)}$ and is a 0-block factor of $(U,T)$. Since $q^-_v(y) \ge p_*(X)$ for all $y$ and $v$, it is clear from the construction that $\bar X \ge \bar Y$ almost surely.

We now prove the moreover part. We thus suppose that $X'$ is an invariant $\{0,1\}$-valued process that is decoupled by ones and has $p_*(X') > 1-\frac1{3\Delta-1}$ (we use $X'$ instead of $Y$ for notation purposes).
We construct $\bar X'$ in the same manner as above, using the same i.i.d.\ process $(U,T)$ used to construct $X$, with the only difference being that $q^\pm_v$ are defined via $X'$. Let us write $q^\pm_{v;X}$ and $q^\pm_{v;X'}$ to distinguish between the two definitions (we similarly add a subscript $X$ or $X'$ to other definitions). By what we have shown above, $\bar X'$ has the same law as $X'$ and it is a uniformly finitary factor of $(U,T)$ with a similar exponential bound on the coding radius.
It remains to show that $\bar X \ge \bar X'$ almost surely.
Let us extend the usual pointwise partial order $\ge$ on $\{0,1\}^V$ to the pointwise partial order on $\{0,1,*\}^V$ induced by the total order in which $1 \ge * \ge 0$.
We claim that $X \succeq_* X'$ implies that $q^\pm_{v;X}(y) \ge q^\pm_{v;X'}(y')$ whenever $y \in \Omega^*_X$, $y' \in \Omega^*_{X'}$ and $y \ge y'$.
It readily follows from this that $\psi_{v;X}(y) \ge \psi_{v;X'}(y')$ whenever $y \ge y'$.
Hence, $\psi^*_{A;X}(y) \ge \psi^*_{A;X'}(y')$ whenever $y \ge y'$, and hence also $\varphi_X(y) \ge \varphi_{X'}(y)$ whenever $y \ge y'$.
It follows that, almost surely, $\Phi^1_X \circ \cdots \circ \Phi^n_X(\bar*) \ge \Phi^1_{X'} \circ \cdots \circ \Phi^n_{X'}(\bar*)$ for all $n$, and thus, $\bar X \ge \bar X'$ as required.

It remains to prove the claim that $q^\pm_{v;X}(y) \ge q^\pm_{v;X'}(y')$ whenever $y \in \Omega^*_X$, $y' \in \Omega^*_{X'}$ and $y \ge y'$. Fix such $y,y'$. Let us show that $q^-_{v;X}(y) \ge q^-_{v;X'}(y')$; the other case is similar. We need to show that
\[ \inf_{\substack{x \in \Omega_X\\x \precsim y}} \liminf_{r\to\infty}\, \E[X_v \mid X_{B^*_r(v)} = x_{B^*_r(v)}] \ge \inf_{\substack{x' \in \Omega_{X'}\\x' \precsim y'}} \liminf_{r\to\infty}\, \E[X'_v \mid X'_{B^*_r(v)} = x'_{B^*_r(v)}] .\]
Fix $x \in \Omega_X$ such that $x \precsim y$. We need to show that $\liminf \E[X_v \mid X_{B^*_r(v)} = x_{B^*_r(v)}]$ is at least the right-hand side.
If we can show that there exists $x' \in \Omega_{X'}$ such that $x \ge x'$ and $x' \precsim y'$, then we would be done as $X \succeq_* X'$ would imply via~\eqref{eq:XY-*dom-def} that
\[ \liminf \E[X_v \mid X_{B^*_r(v)} = x_{B^*_r(v)}] \ge \liminf \E[X'_v \mid X'_{B^*_r(v)} = x'_{B^*_r(v)}] .\]
To show the existence of $x'$, it suffices to show that $\Omega(A)\neq\emptyset$ for every finite $A \subset V$, where
\[ \Omega(A) := \big\{ x' \in \{0,1\}^A : x_A \ge x',\, x' \precsim y'_A,\,  \P(X'_A=x')>0 \big\} .\]
We prove this by induction on $|A|$. The base of the induction is trivial as $|\Omega(\emptyset)|=2^0=1$.
Let $A$ be non-empty and denote $B := \{ v \in A : y'_v = * \}$.
Suppose first that $B=\emptyset$. Since $y' \in \Omega^*_{X'}$, there exists $z \in \Omega_{X'}$ such that $z \precsim y'$. Since $B=\emptyset$, this means that $z_A \le x_A$ (if $z_v=1$ then $1=y'_v\le y_v$ so that $y_v=x_v=1$), so that $z_A \in \Omega(A)$.
Suppose next that $B \neq \emptyset$. Let $v \in B$ and denote $F:=A \setminus \{v\}$.
By the induction hypothesis, there exists $x' \in \Omega(F) \neq \emptyset$. If $x_v=1$, then $x'' \in \Omega(A)$, where $x''$ is any extension of $x'$ to $\{0,1\}^A$ so that $\P(X'_v=x''_v \mid X'_F=x')>0$. If $x_v=0$, then~\eqref{eq:XY-*dom-def} implies that $\P(X'_v=0 \mid X'_F=x') \ge \P(X_v=0 \mid X_F=x_F)>0$, so that $x'' \in \Omega(A)$, where $x''$ is the extension of $x'$ to $\{0,1\}^A$ having $x''_v=0$. This completes the proof.
\end{proof}

It remains to prove \cref{prop:coding-radius}.
We shall dominate the $\Phi^n_{r,v}$ dynamics by a simpler auxiliary dynamics which depends on the parameter $p:=p_*(X)$, but otherwise does not depend on the law of $X$. This new dynamics can be seen as a model for disease spreading and is motivated by the following properties of $q_v^\pm$. We write $v \xleftrightarrow{y} *$ to indicate that either there is a simple path $(v,v_1,\dots,v_k)$ such that $y_{v_1},\dots,y_{v_{k-1}}=0$ and $y_{v_k}=*$, or there is an infinite simple path $(v,v_1,v_2,\dots)$ such that $y_{v_1},y_{v_2}=\cdots=0$.

\begin{claim}\label{cl:qv}
Let $v \in V$ and $y \in \Omega^*$.
\begin{enumerate} 
\item $q_v^-(y) \ge p_*(X)$.
\item If $q_v^+(y) \neq q_v^-(y)$ then $v \xleftrightarrow{y} *$.
\end{enumerate}
\end{claim}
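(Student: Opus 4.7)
Proof proposal. For part (1), I would apply the tower property together with the definition of $p_*$. Fix any $x \in \Omega$ with $x \precsim y$ (such $x$ exists because $y \in \Omega^*$). For every $r$ the event $\{X_{B^*_r(v)} = x_{B^*_r(v)}\}$ has positive probability, so
\[ \E[X_v \mid X_{B^*_r(v)} = x_{B^*_r(v)}] = \E\bigl[\E[X_v \mid (X_u)_{u \neq v}] \bigm| X_{B^*_r(v)} = x_{B^*_r(v)}\bigr] \ge p_*(X) , \]
where the inequality holds because $\E[X_v \mid (X_u)_{u \neq v}] \ge p_*(X)$ almost surely by definition of $p_*(X)$. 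Taking the liminf in $r$ and then the infimum over admissible $x$ yields $q_v^-(y) \ge p_*(X)$.

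For part (2), I would argue the contrapositive: assuming $v \not\xleftrightarrow{y} *$, I would build a finite ``insulating'' set $A$ around $v$ on whose boundary $y$ is identically $1$, and then use the hypothesis that $X$ is decoupled by ones to conclude the stronger statement that $q_v^-(y)$ and $q_v^+(y)$ share a common value. The natural choice is to let $A$ be the connected component of $v$ in the subgraph of $G$ induced by $\{v\} \cup \{u \in V \setminus \{v\} : y_u \neq 1\}$. The assumption $v \not\xleftrightarrow{y} *$ rules out $A$ being infinite (which would yield an infinite simple path of $0$s from $v$) and rules out any vertex of $A \setminus \{v\}$ having $y$-value $*$ (which would yield a path of $0$s from $v$ ending in a $*$). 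Consequently $A$ is finite with $y_{A \setminus \{v\}} \equiv 0$, and by the maximality of $A$ every vertex of $\partial A$ has $y$-value~$1$.

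Now I would fix any $x \in \Omega$ with $x \precsim y$. Since $y$ has no stars on $(A\setminus\{v\}) \cup \partial A$, we have $x_{A \setminus \{v\}} \equiv 0$ and $x_{\partial A} \equiv 1$, and in particular $\P(X_{\partial A} \equiv 1) > 0$ because $x \in \Omega$. Applying decoupling by ones to $A$, we see that $X_A$ and $X_{A^c}$ are conditionally independent given $X_{\partial A} \equiv 1$. Hence, for every $r$ large enough that $A \cup \partial A \subset B_r(v)$,
\[ \E[X_v \mid X_{B^*_r(v)} = x_{B^*_r(v)}] = \E[X_v \mid X_{A \setminus \{v\}} \equiv 0,\, X_{\partial A} \equiv 1] , \]
and the right-hand side depends only on $A$, not on $x$ or $r$. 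Taking liminf (respectively, limsup) in $r$ and infimum (respectively, supremum) over such $x$ shows that $q_v^-(y) = q_v^+(y)$.

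The one delicate point I anticipate is pinning down the insulating set $A$ correctly: one needs $\partial A \subseteq \{u : y_u = 1\}$ so that the decoupling-by-ones hypothesis applies verbatim, and one needs $A$ finite so that $A \cup \partial A$ eventually lies inside $B_r(v)$. Taking $A$ to be the component of $v$ after deleting the $1$s of $y$ from $V \setminus \{v\}$, together with the assumption $v \not\xleftrightarrow{y} *$, delivers both requirements in one stroke.
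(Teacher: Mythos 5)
Your proposal is correct and follows essentially the same route as the paper: part (1) via the tower property $\E[X_v \mid X_{B^*_r(v)} = x_{B^*_r(v)}] = \E[\E[X_v \mid (X_u)_{u \neq v}] \mid X_{B^*_r(v)} = x_{B^*_r(v)}]$, and part (2) by taking $A$ to be the component of $v$ in $\{v\} \cup \{u : y_u \neq 1\}$, noting that $v \not\xleftrightarrow{y} *$ forces $A$ finite with $y_{A\setminus\{v\}} \equiv 0$ and $y_{\partial A} \equiv 1$, and then invoking decoupling by ones to identify both $q_v^\pm(y)$ with $\E[X_v \mid X_{A \setminus \{v\}} \equiv 0,\, X_{\partial A} \equiv 1]$. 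No gaps.
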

\begin{proof}
Using that $\E[X_v \mid X_{B^*_r(v)}=x_{B^*_r(v)}] = \E[\E[X_v \mid (X_u)_{u \neq v}] \mid X_{B^*_r(v)}=x_{B^*_r(v)}]$ for $x \in \Omega$, the first item follows easily from the definition of $p_*(X)$.

For the second item, let $A$ be the connected component of $v$ in $\{ u \in V : y_u \neq 1\} \cup \{v\}$. Then $v \not\xleftrightarrow{y} *$ if and only if $A$ is finite and $y_{A \setminus \{v\}} \equiv 0$.
Suppose that $v \not\xleftrightarrow{y} *$.
Let $x \in \Omega$ be such that $x \precsim y$. Then $x_{A \setminus \{v\}} \equiv 0$ and $x_{\partial A} \equiv 1$. Since $X$ is decoupled by ones, $\E[X_v \mid X_{B^*_r(v)}=x_{B^*_r(v)}] = \E[X_v \mid X_{A \setminus \{v\}}\equiv 0,~X_{\partial A}\equiv 1]$ whenever $B_r(v)$ contains $A$, and hence $q^\pm_v(y)$ are both equal to the latter.
\end{proof}

Consequently, we have the following two simple observations: first, $\psi_v(y)_v=1$ whenever $u_v \le p$, and second, $\psi_v(y) = *$ only if $v \xleftrightarrow{y} *$.
The disease spreading dynamics is obtained by taking these observations to be the update rule, that is, by replacing $\psi_v$ with $\tilde\psi_v$ defined below. We proceed to discuss this in the next section.

\subsection{A disease spreading model}\label{sec:disease}

We now describe a disease spreading model which is closely related to the above dynamics.
The state space for the dynamics is $\{0,1,*\}^V$ and there is a parameter $p \in (0,1)$. Roughly speaking, we think of the value $*$ as indicating an infection, of $1$ as uninfected, and of 0 as a spreader of the infection. When an update occurs at a site $v$, with probability $p$, independently of everything else, the value at $v$ is set to be $1$; otherwise, it is deterministically set to be $*$ or 0 according to whether $v$ is connected to an infected site by a path passing only through spreaders. The main question of concern here is whether the disease eventually dies out when starting from all sites infected. We show in \cref{prop:coding-radius2} below that it does indeed die out for $p$ close enough to 1.

To define the dynamics formally, we use a similar construction and notation as in \cref{S:Xbar}, replacing the single-site update rule $\psi_v$ with $\tilde\psi_v$ defined by
\[ \tilde\psi_v(y)_w := \begin{cases}
 y_w &\text{if }w \neq v \\
 1 &\text{if }w = v\text{ and }u_v \le p \\
 0 &\text{if }w = v\text{ and }u_v > p\text{ and } v \not\xleftrightarrow{y} * \\
 * &\text{if }w = v\text{ and }u_v > p\text{ and } v \xleftrightarrow{y} *
\end{cases} ,\]
where $v \xleftrightarrow{y} *$ was defined before \cref{cl:qv}.

Crucially, there is a comparison between the original construction of \cref{S:Xbar} and this new one. Let $\le$ be the pointwise partial order on $\{0,1,*\}^V$ induced by the total order on $\{0,1,*\}$ in which $1 \le 0 \le *$ (note that $\le$ extends $\preceq$).
Using the two observations after \cref{cl:qv}, and the additional observation that $v \xleftrightarrow{y} *$ is $\le$-monotone in $y$, it is not hard to see that
\begin{equation}\label{eq:dynamics-comparison}
\psi_v(y) \le \tilde\psi_v(y') \qquad\text{whenever }y \le y' .
\end{equation}
Hence, defining $\tilde\psi_A$ and $\tilde\psi_A^*$ as before,
\[ \psi^*_A(y) \le \tilde\psi^*_A(y') \qquad\text{whenever }y \le y' .\]
In particular, $\Phi^1_{r,v} \circ \cdots \circ \Phi^n_{r,v}(\bar*) \le \tilde\Phi^1_{r,v} \circ \cdots \circ \tilde\Phi^n_{r,v}(\bar*)$, where $\tilde\Phi^i_{r,v} := \tilde\psi^*_{B_r(v)}(\cdot ; U^i,T^i)$.
\cref{prop:coding-radius} is therefore a consequence of the following.

\begin{prop}\label{prop:coding-radius2}
There exists a constant $C=C(\Delta,p)$ such that for all $v\in V$ and $n,r>0$,
\[ \P\left(\tilde\Phi^1_{r,v} \circ \cdots \circ \tilde\Phi^n_{r,v}(\bar*)_v = * \mid T \right) \le C((3\Delta-1)(1-p))^{\min\{r,n/2\}} \qquad\text{almost surely} .\]
\end{prop}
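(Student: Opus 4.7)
The plan is a Peierls-type argument on the space-time cylinder $B_r(v) \times \{1, \ldots, n\}$. Conditional on the times $T$, the indicators of ``bad updates'' $\chi_{u,\tau} := \mathbf{1}\{U^\tau_u > p\}$ are i.i.d.\ Bernoulli$(1-p)$; the event $\{Z_v = *\}$, where $Z := \tilde\Phi^1_{r,v} \circ \cdots \circ \tilde\Phi^n_{r,v}(\bar *)$, will be shown to force a ``witness'' structure of bad updates of size at least $\min\{r, n/2\}$.

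To build the witness, I trace back the cause of $v$ being $*$. Since $v$'s final update (in round $1$, applied last in the composition) produced $*$, we have $U^1_v > p$ and, at the moment just before this update, there exists a path $v = v_0, v_1, \ldots, v_k$ in $G$ with $v_1, \ldots, v_{k-1}$ in state $0$ and $v_k$ in state $*$. Each intermediate $v_i$ in state $0$ has its most recent update---at some round $\tau_i^* \in \{1, 2\}$ determined by whether $T^1_{v_i} < T^1_v$ or $T^1_{v_i} > T^1_v$---equal to a bad update. The terminal vertex $v_k$ either lies outside $B_r(v)$ (so the spatial path has $k \ge r+1$ and the witness terminates), or lies in $B_r(v)$ with a bad update at round $\tau_k^*$, in which case I recurse from $v_k$ at the moment just before $v_k$'s update in round $\tau_k^*$, a moment that is strictly earlier in forward time. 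Iterating produces a walk in the space-time cylinder consisting of spatial hops (along paths of $0$s at fixed moments) interleaved with temporal hops (recursing at path endpoints to an earlier moment).

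For the length bound, if some spatial segment of the witness reaches a vertex outside $B_r(v)$, then that segment alone contributes at least $r$ bad updates. Otherwise, the recursion must trace back through all $n$ rounds to the initial $\bar *$; each temporal hop advances the round index by $0$ or $1$, so reaching round $n$ needs at least $n - 1$ recursions, each contributing a bad update, hence at least $n/2$ bad updates (for $n \ge 2$; the case $n = 1$ forces a spatial exit). Thus the witness has at least $\min\{r, n/2\}$ bad updates in every case.

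For the union bound, the number of possible witness walks of length $L$ starting at $(v, 1)$ is at most $(3\Delta - 1)^L$, reflecting the local combinatorial choices at each step (moving to one of $\Delta$ spatial neighbors in the current round, recursing temporally at the endpoint, or combining a spatial and temporal move). Each witness has probability at most $(1-p)^L$, provided its $L$ bad updates sit at distinct space-time points. Summing the geometric series over $L \ge \min\{r, n/2\}$ gives the stated bound with $C = C(\Delta, p)$. The main technical obstacle is ensuring distinctness of bad updates across different recursion levels, since a revisited vertex can in principle fall on the same round as a previously recorded bad update; either the witness must be built canonically to avoid this, or the counting must absorb the potential multiplicity into the constant $3\Delta - 1$.
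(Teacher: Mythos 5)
Your outline follows the same route as the paper (a Peierls argument over backward space-time ``chains'' of bad updates, with the two cases $k\ge r$ for a spatial exit and $k\gtrsim n/2$ for a temporal exit), but the step you flag at the end is not a technicality to be waved away: it is the main content of the proof, and as written your argument does not close. Without distinctness of the recorded bad updates, the probability of a fixed witness of length $L$ is \emph{not} $(1-p)^L$ --- repeated space-time points reduce the exponent, i.e.\ they damage the probability estimate, so they cannot be ``absorbed into the constant $3\Delta-1$'' of the enumeration. Likewise your count $(3\Delta-1)^L$ of witness walks is asserted, not proved: within a single round a spatial segment of $0$s can be long and can revisit rounds $\tau$ and $\tau+1$ in an uncontrolled way, so one needs a precise reason why the witness is determined by a bounded number of choices \emph{per bad update}.

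The paper resolves both issues with one construction. It works in continuous time (update times $\cT_x=\{T^i_x-i\}\subset(-\infty,0)$), defines a chain as a $G$-path $(x_0,\dots,x_k)$ together with decreasing times $t_0\ge\cdots\ge t_{k+1}$ satisfying $T_{x_i,t_i}\le t_{i+1}$, and introduces the auxiliary graph $H$ on update points in which $(x,t)\sim(x',t')$ (with $t>t'$) iff $x\sim x'$ and $T_{x',t}=t'$. Two facts then do the work: (i) consecutive update points of any chain are adjacent in $H$, and since each site is updated exactly once per unit interval, $H$ has maximum degree at most $3\Delta$ --- this is where $3\Delta-1$ really comes from; (ii) taking a \emph{shortest} chain witnessing the event, if two of its update points coincided one could splice out the intermediate portion and obtain a shorter chain (the deliberately loose chain definition, which allows switching to a spatial step without an update, is what makes the spliced object again a chain), so the update sequence of a shortest chain is a \emph{simple} path in $H$. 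Simplicity gives distinct, hence independent, Bernoulli variables and the factor $(1-p)^k$, and the degree bound gives at most $3\Delta(3\Delta-1)^{k-1}$ simple paths of length $k$ from $(v,T_{v,0})$; the length bound $k\ge\min\{r,n/2\}$ then follows as in your sketch (each backward step moves time by less than $2$). So your proposal has the right skeleton, but the canonical-witness/loop-erasure step that you leave open is precisely the missing ingredient, and without it (or an equivalent device) the union bound does not yield the stated estimate.
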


Note that the upper bound holds even conditionally on the update times $T=(T^i_v)_{v \in V,i \ge 1}$. {Note also that $\tilde\Phi^i_{r,v}$ depends on $(U^i,T^i)$ only through $(\1(U^i_u \le p),T^i_u)_{u \in B_r(v)}$.}

The proof of \Cref{prop:coding-radius2} uses a technique which is ubiquitous in problems involving disease/rumor spreading (e.g., the voter model and the contact process; see~\cite{liggett1985interacting}).
The rough idea is illustrated in \Cref{fig:auxiliary} and we now briefly explain it. In \Cref{fig:auxiliary}, time runs in the vertical direction (downwards) and the graph is represented by the horizontal axis.
The final configuration $\tilde\Phi^1_{r,v} \circ \cdots \circ \tilde\Phi^n_{r,v}(\bar*)$ is at the bottom (this is time $t=0$), while the initial configuration $\bar*$ is at the top (this is time $t=-n$).
Since the dynamics is restricted to the ball of radius $r$ around $v$, there are stars farther out in the horizontal direction.
The $i$-th step of the dynamics (corresponding to $\tilde\Phi^i_{r,v}$) is associated with the time interval $[-i,-i+1]$.
In each such step, there is a single update at each site $u$, which occurs at time $t=T^i_u-i$, and is represented in the figure by either $\updatebad$ or $\updategood$, according to whether $U^i_u>p$ or $U^i_u \le p$, respectively. In other words, a $\updategood$ represents an update that results in the value 1, while a $\updatebad$ represents an update that results in either the value $*$ or 0. Of course, in the latter case, whether the resulting value is $*$ or 0 is determined by whether $v \xleftrightarrow{y} *$ occurs or not for the configuration $y$ just before the update takes place, and this in turn depends on all the updates at previous times.
Being agnostic of this information yields precisely \iid\ updates.

As described above, we think of the times at which a site $u$ is updated as a subset of $(-\infty,0)$ given by $\{ T^i_u-i \}_{i=1}^\infty$.
Typically, the point process governing the update times of a site is taken to be a Poisson point process, but for our purposes the special properties of Poisson point processes are not needed, and we could allow the updates to be governed by any point process\footnote{We only need that different sites have distinct update times and every site has infinitely many updates.}. For us, it is convenient that the gaps between consecutive updates of a vertex are bounded (this allows us to get an upper bound which holds conditionally on the update times), but this is not crucial.

\begin{figure}
\centering
\includegraphics[scale = 1]{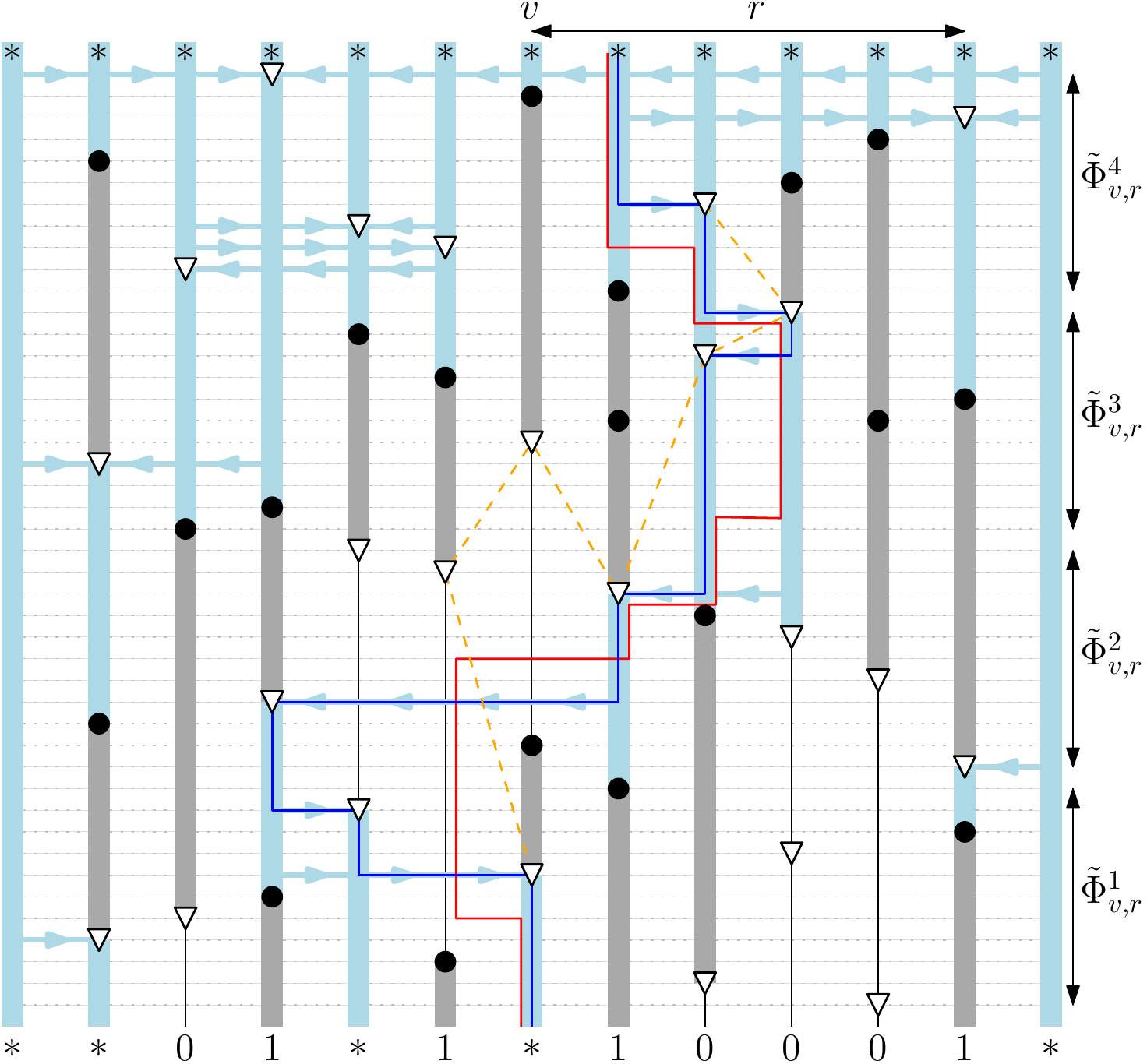}
\caption{An illustration of the dynamics $\tilde\Phi^i_{v,r}$ dynamics with $n=4$ and $r=5$.
The initial configuration at time $t=-n$ consists of all stars (at the top), and stars are also placed outside the ball of radius $r$ around $v$ (at the sides). The final configuration at time $t=0$ is $\tilde\Phi_{v,r}^1 \circ \cdots \circ \tilde\Phi_{v,r}^n(\bar *)$ (at the bottom).
A solid disc $\updategood$ denotes an update that produces a 1, and a triangle $\updatebad$ denotes an update that produces either a 0 or a $*$.
If water sources are placed at the top and sides, and water (shown in blue) is allowed to flow downwards without passing through an update or sideways into a $\updatebad$ without passing through a barrier (shown in gray) created by a $\updategood$, then a star value occurs in the final configuration precisely when water reaches the bottom. 
In the figure, water flows along the blue path from the top to bottom so that $v$ is a star in the final configuration. The blue path describes a chain that witnesses the occurrence of $\cE$, while the red path describes a shortest such chain ${\bf c}$ (note that the red path does not conform to the rules of water flow; this relaxation in the definition of a chain is useful in \cref{claim:U}). The dashed orange path depicts the update sequence $u({\bf c})$ corresponding to the chain $\bf c$. The proof of \Cref{prop:coding-radius2} is based on a Peierls argument over such update sequences.
}
\label{fig:auxiliary}
\end{figure}

\begin{proof}[Proof of \Cref{prop:coding-radius2}]
Fix $v \in V$ and $n,r>0$.
For $x \in V$, let $\cT_x := \{ T^i_x-i \}_{i=1}^\infty \subset (-\infty,0)$ be the set of times at which $x$ is updated.
Throughout the proof, we assume that we are on the almost sure event that no two updates occur simultaneously (i.e., $\cT_x \cap \cT_y = \emptyset$ for all $x \neq y$). 
Let
 \[\cU := \{ (x,t) \in V \times (-\infty,0) : t \in \cT_x \} \]
denote the update locations in space-time. We will only use $x \in B_r(v)$ and $t \ge -n$, but we have not incorporated this in the definition of $\cU$ as it is not important for this definition.
For $t \le 0$, denote the first time prior to $t$ (or at $t$) at which $x$ is updated by
\[ T_{x,t} := \max (\cT_v \cap (-\infty,t]) = \max\{ s \le t : (x,s) \in \cU \}. \]

Roughly speaking, a chain is a path in $G \times (-\infty,0]$, which can move only up and horizontally in \Cref{fig:auxiliary}, and can only pass through an update at a turning point from vertical to horizontal. Formally, we define a \textbf{chain} to be a pair consisting of a path $(v=x_0,x_1,\dots,x_k)$ in $G$ and a decreasing sequence of times $0=t_0 \ge t_1 \ge \dots \ge t_{k+1}$ such that $T_{x_i,t_i} \le t_{i+1}$ for all $0 \le i < k$. 
We think of such a chain as a `path'  in $G \times (-\infty,0)$\footnote{It is not technically a path as $G \times (-\infty,0)$ is not a graph.} consisting of alternating temporal and spatial steps (some of which may be degenerate), with the former being of the form $((x_i,t) : t_i \ge t \ge t_{i+1})$ for $0 \le i \le k$ and the latter being of the form $((x_{i-1},t_i),(x_i,t_i))$ for $1 \le i \le k$. Furthermore, the condition $T_{x_i,t_i} \le t_{i+1}$ for all $0 \le i < k$ ensures that the temporal segments cannot move `through' an update, and must switch to a spatial segment at an update point. However it is allowed to switch to a spatial segment if there is no update.
We call $k$ the \textbf{length} of the chain, and we call $(x_k,t_{k+1})$ the \textbf{endpoint} of the chain.

For $(x,t) \in \cU$, denote the indicator of the event that $x$ is updated to be 1 at time $t$ by
\[ S_{x,t} := \1(U^{\lceil -t \rceil}_x \le p) .\]
For a chain ${\bf c}$ as above, let $E_{\bf c}$ be the event that $S_{x_i,T_{x_i,t_i}} =0$ for all $0 \le i < k$ (all updates along the chain are $\updatebad$, except possibly the last).
Let $\cE$ be the event that $E_{\bf c}$ occurs for some chain $\bf c$ whose endpoint is outside $B_r(v) \times (-n,0]$.

\begin{claim}\label{claim:E}
$\{ \tilde\Phi^1_{r,v} \circ \cdots \circ \tilde\Phi^n_{r,v}(\bar*)_v = *\} \subset \cE $. 
\end{claim}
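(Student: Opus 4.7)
The plan is to trace backward in time from the assumption that $v$ has value $*$ in the final configuration. Let $\tau := T_{v,0}$ denote the time of the most recent update at $v$; noting that $\cT_v$ contains $T^1_v-1\in(-1,0)\subset(-n,0)$, we have $\tau>-n$. Since $y_v$ is constant between updates, the final value $y_v(0)=*$ forces the update at $\tau$ to have produced $*$, which by the definition of $\tilde\psi_v$ requires both $S_{v,\tau}=0$ and $v \xleftrightarrow{y} *$ in the configuration $y$ at time $\tau^-$. From the latter I extract a simple path $(v=x_0,x_1,\dots,x_k)$ in $G$ with $k\ge 1$, $y_{x_i}(\tau^-)=0$ for $1\le i\le k-1$, and $y_{x_k}(\tau^-)=*$. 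The ``infinite simple path of $0$'s'' alternative in the definition of $\xleftrightarrow{y}$ cannot actually arise here, because every vertex outside $B_r(v)$ carries value $*$ throughout the dynamics (it is reset by $S_{B_r(v)}$ at the start of each step) and $B_r(v)$ is finite, so any such simple path of $0$'s must terminate upon first leaving $B_r(v)$.

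The main construction is to collapse all intermediate chain times to $\tau$: set $t_0=0$, $t_1=t_2=\cdots=t_k=\tau$, and $t_{k+1}=-n-1$, which is permissible since $\tau>-n$. The monotonicity $t_0\ge t_1\ge\cdots\ge t_{k+1}$ is then immediate, and the reachability conditions $T_{x_i,t_i}\le t_{i+1}$ for $0\le i<k$ hold trivially: for $i=0$, $T_{v,0}=\tau=t_1$, and for $i\ge 1$, $T_{x_i,\tau}\le\tau=t_{i+1}$ by the definition of $T$. The $S$-conditions $S_{x_i,T_{x_i,t_i}}=0$ for $0\le i<k$ are verified next. The case $i=0$ is exactly $S_{v,\tau}=0$ established above. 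For $1\le i\le k-1$, the value $y_{x_i}(\tau^-)=0$ together with the fact that $y_{x_i}$ is constant between updates forces the previous update of $x_i$ at time $T_{x_i,\tau}$ to have produced the value $0$; the update rule then gives $U^{\lceil-T_{x_i,\tau}\rceil}_{x_i}>p$, i.e.\ $S_{x_i,T_{x_i,\tau}}=0$, as required. (Note also that this argument forces $T_{x_i,\tau}\in(-n,\tau)$, for otherwise $y_{x_i}(\tau^-)$ would still equal its initial value $*$, contradicting $y_{x_i}(\tau^-)=0$.)

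Finally, the endpoint $(x_k,t_{k+1})=(x_k,-n-1)$ lies outside $B_r(v)\times(-n,0]$, so the constructed chain witnesses $\cE$. I do not anticipate a serious obstacle; the argument is a direct unfolding of the update rule $\tilde\psi_v$. The only mildly tricky point is the collapse of all intermediate chain times to the single value $\tau$, which is legitimate because the chain definition imposes its $T$- and $S$-conditions only at the turning indices $i<k$ and does not ask for any coherent evolution of the underlying configuration between those times.
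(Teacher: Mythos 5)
Your one-step trace-back does not establish the claim in the sense the paper needs; it only goes through by exploiting a looseness in the written definition of a chain. You set the final time to $t_{k+1}=-n-1$ and declare the endpoint $(x_k,t_{k+1})$ to be outside $B_r(v)\times(-n,0]$, relying on the fact that the condition $T_{x_i,t_i}\le t_{i+1}$ is stated only for $0\le i<k$ and hence places no constraint on $t_{k+1}$. But under that literal reading the event $\cE$ is trivial: already the length-zero chain $\bigl((v),(0,-n-1)\bigr)$ has its endpoint outside $B_r(v)\times(-n,0]$ and satisfies $E_{\bf c}$ vacuously, so $\cE$ would have probability one, \cref{claim:U} would be false (a shortest witnessing chain would have length $0$), and the bound in \cref{prop:coding-radius2} could not hold. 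The intended reading — the one used both in the paper's construction and in the proof of \cref{claim:U}, where $t_{k+1}>-2k$ is deduced from the update-time constraints — also forbids the final temporal segment from passing through an update, i.e.\ it requires $t_{k+1}\ge T_{x_k,t_k}$; equivalently, a witnessing chain must exit the box either spatially ($x_k\notin B_r(v)$) or by genuinely running back past time $-n$ without crossing an update of $x_k$. Your chain violates this: since $\tau=T_{v,0}\in(-1,0)$ and every site has an update in $(-2,-1)$, one has $T_{x_k,t_k}=T_{x_k,\tau}>-2>-n-1=t_{k+1}$ for every $n\ge1$, so under the intended definition your object is not an admissible witness.

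What is missing is the recursion. Your one-step analysis (the update of $v$ at time $\tau$ is a $\updatebad$, the witnessing path carries $0$'s whose last updates are $\updatebad$, and the intermediate chain times may all be collapsed to $\tau$) is correct and is precisely one step of the paper's argument. But the $*$ you reach at $x_k$ at time $\tau^-$ must itself be explained: if $x_k\in B_r(v)$, its last update before $\tau$ occurred at some time $T_{x_k,\tau}\in(-n,\tau)$ (indeed $T_{x_k,\tau}>-2$), and since $x_k$ still carries $*$ at time $\tau^-$, that update produced a $*$, hence was a $\updatebad$ at a moment when $x_k$ was again connected to a $*$ through $0$'s. One must iterate this — pass to that earlier update time, extract the new witnessing path, and continue — concatenating the successive paths and update times until the time drops below $-n$ or the path leaves $B_r(v)$; only then does the chain's endpoint legitimately exit $B_r(v)\times(-n,0]$. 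This inductive construction is the content of the paper's proof of \cref{claim:E}, and it is the step your proposal omits.
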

\begin{proof} 
Note that $\cU \cap (B_r(v) \times (-n,0))$ is the set of updates associated to $\tilde\Phi^1_{r,v} \circ \cdots \circ \tilde\Phi^n_{r,v}$.
For $0 \ge t \ge -n$, let $y^t$ be the state obtained by starting from $\bar*$ and applying the updates in $\cU \cap (B_r(v) \times (-n,t])$ in order, so that $y^{-n}=\bar*$ and $y^0=\tilde\Phi^1_{r,v} \circ \cdots \circ \tilde\Phi^n_{r,v}(\bar*)$.
The claim is that $ \{y^0_v=*\} \subset E$.

Suppose that $y^0_v=*$. We construct a chain ${\bf c}$ witnessing $\cE$. Set $v_0:=v$ and $s_0:=0$. For $i \ge 1$, we proceed inductively as follows. Suppose we have defined $v_{i-1}$ and $s_{i-1}$ so that $v_{i-1} \in B_r(v)$, $s_{i-1}>-n$ and $y^{s_{i-1}}_{v_{i-1}} = *$.
Set $s_i := T_{v_{i-1},s_{i-1}}$.
If $s_i \le -n$, then we stop the inductive procedure.
Otherwise, note that $y^{s_i}_{v_{i-1}} = *$, since $y^{s_{i-1}}_{v_{i-1}} = *$ and the only update of $v_{i-1}$ in $[s_{i-1}, s_{i}]$ is at time $s_i$.
Thus, $v_{i-1} \xleftrightarrow{y^{s_i}} *$.  
Let $p_i$ be a path in $G$ witnessing this connection, i.e., a path starting at $v_{i-1}$, ending at a site $v_i$ such that $y^{s_i}_{v_i}=*$, and otherwise passing only through sites $w$ with $y^{s_i}_w=0$.
If $v_i \notin B_r(v)$, then we set $s_{i+1}:=s_i$ and stop the inductive procedure.

At the end of this procedure, we have sequences $(v_0,\dots,v_m)$, $(s_0,\dots,s_{m+1})$ and $(p_1,\dots,p_m)$.
Note that $p_i$ starts at $v_{i-1}$ and ends at $v_i$.
Let $p'_i$ be $p_i$ without its first element.
Set $x_0:=v_0=v$ and let $(x_1,\dots,x_k)$ be the concatenation of $p'_1,p'_2,\dots,p'_m$.
Set $t_0:=s_0=0$ and $t_{k+1}:=s_{m+1}$ and let $(t_1,\dots,t_k)$ be the sequence of times $s_1,\dots,s_m$, where each $s_i$ is repeated consecutively $|p'_i|$ times.
It is not hard to see that $((x_0,\dots,x_k),(t_0,\dots,t_{k+1}))$ describes a chain ${\bf c}$ for which $E_{\bf c}$ occurs.
 \end{proof}

Our goal now is to bound the probability of $\cE$ via a Peierl's type argument.
Given a chain ${\bf c}$, define its \textbf{update sequence} to be $u({\bf c}) := ((x_i,T_{x_i,t_i}) : 0 \le i < k) \in \cU^k$. The elements of $u({\bf c})$ are not necessarily distinct. When they are distinct, $\{ S_{x_i,T_{x_i,t_i}} : 0 \le i < k\}$ is a collection of (distinct) independent random variables, so that
\[ \P(E_{\bf c} \mid T) = (1-p)^k .\]
Let $H$ be the graph on $\cU$ in which $(x,t)$ and $(x',t')$, with $t>t'$, are adjacent if $x \sim x'$ and $T_{x',t}=t'$ (i.e., if $x'$ is not updated in $(t',t]$).

\begin{claim}\label{claim:U}
If $\cE$ occurs and ${\bf c}$ is a shortest chain witnessing its occurrence, then $u({\bf c})$ is a simple path in $H$ of length at least $\min\{r,n/2\}$.
\end{claim}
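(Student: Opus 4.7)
My plan is to verify three properties of the shortest chain ${\bf c}$: that $u({\bf c})$ has all distinct entries, that consecutive entries are adjacent in $H$, and that $u({\bf c})$ contains at least $\min\{r, n/2\}$ entries.

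Distinctness would follow from a splicing argument using the minimality of ${\bf c}$. If $(x_i, T_{x_i, t_i}) = (x_j, T_{x_j, t_j})$ for some $0 \le i < j < k$, then in particular $x_i = x_j$, so I would excise indices $i+1, \ldots, j$ to obtain a new chain with path $(x_0, \ldots, x_i, x_{j+1}, \ldots, x_k)$ and times $(t_0, \ldots, t_i, t_{j+1}, \ldots, t_{k+1})$. This is a valid chain: the splice keeps the times non-increasing, and the only new constraint $T_{x_i, t_i} \le t_{j+1}$ follows from the original $T_{x_j, t_j} \le t_{j+1}$ together with the hypothesized equality. Since the endpoint is preserved and the associated event is a sub-event of $E_{\bf c}$, the shorter chain still witnesses $\cE$, contradicting minimality. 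For adjacency of consecutive entries $(x_i, \tau_i)$ and $(x_{i+1}, \tau_{i+1})$, where $\tau_\ell := T_{x_\ell, t_\ell}$: distinctness forces $\tau_i \neq \tau_{i+1}$, and the chain constraints give $\tau_i, \tau_{i+1} \le t_{i+1}$. Assuming WLOG $\tau_i > \tau_{i+1}$, adjacency in $H$ amounts to verifying that $x_{i+1}$ has no update in $(\tau_{i+1}, \tau_i]$; this holds since this interval is contained in $(\tau_{i+1}, t_{i+1}]$, which contains no update of $x_{i+1}$ by the very definition of $\tau_{i+1}$.

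The length bound splits on the endpoint of ${\bf c}$. If $x_k \notin B_r(v)$ then $\dist_G(v, x_k) \ge r + 1$, and since $(x_0, \ldots, x_k)$ is a path in $G$ of edge-length $k$, we get $k \ge r + 1$. If instead $t_{k+1} \le -n$, I would bound each of the $k + 1$ temporal drops $t_i - t_{i+1}$ by $2$. For $0 \le i < k$ this follows from $t_{i+1} \ge T_{x_i, t_i} > t_i - 2$, since every unit interval of time contains exactly one update of $x_i$, so any length-$2$ interval contains at least one. For the final drop I would invoke minimality: WLOG $t_{k+1} = \min(t_k, -n)$; if the resulting drop $\max(0, t_k + n)$ exceeded $2$, then $t_{k-1} \ge t_k \ge -n + 2 > -n$, and one could truncate to the chain of length $k - 1$ on path $(x_0, \ldots, x_{k-1})$ with final time $-n$, yielding a shorter witness of $\cE$ and contradicting minimality. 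Summing gives $n \le t_0 - t_{k+1} < 2(k+1)$, hence $k \ge n/2$. The hardest part of the argument is handling this final temporal drop, since the chain's formal constraints only control the drops $t_i - t_{i+1}$ for $i < k$; the truncation via minimality is what circumvents this gap.
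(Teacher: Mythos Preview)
Your proof follows the paper's approach, and your arguments for distinctness (via splicing) and adjacency in $H$ are correct and essentially identical to the paper's. Two small remarks: in your adjacency argument, the fact that $\tau_i \neq \tau_{i+1}$ is not a consequence of simplicity of $u({\bf c})$ (which only gives $(x_i,\tau_i)\neq(x_{i+1},\tau_{i+1})$, automatic since $x_i\sim x_{i+1}$), but rather of the standing assumption that no two updates occur at the same time; and your ``WLOG'' covers only one of the two orderings, though the other is handled by the symmetric computation $T_{x_i,\tau_{i+1}}=\tau_i$ using $\tau_{i+1}\le t_{i+1}\le t_i$.

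Your treatment of the length bound, however, exposes a genuine glitch in the paper's definition of a chain. As literally stated, the constraint $T_{x_i,t_i}\le t_{i+1}$ is imposed only for $0\le i<k$, leaving the final temporal segment unconstrained. You correctly notice this and try to bound the final drop via a truncation-by-minimality argument. The problem is that, under this literal definition, your truncation to $(x_0,\dots,x_{k-1})$ with final time $-n$ is valid whenever $t_{k-1}\ge -n$ --- not only when the final drop exceeds $2$ --- and if $t_{k-1}<-n$ one can instead truncate with final time $t_{k-1}$. Either way one can always shorten, so the ``shortest'' chain collapses to the trivial length-$0$ chain $((v),(0,-n))$, which vacuously satisfies $E_{\bf c}$. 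In other words, with the definition as written, the claim is simply false, and your patch cannot save it.

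The resolution is that the chain constraint is evidently meant to hold for $0\le i\le k$ (the construction in the proof of Claim~\ref{claim:E} produces chains with $T_{x_k,t_k}=t_{k+1}$). With that correction, the final drop satisfies $t_k-t_{k+1}<2$ for the same reason as the others, and summing all $k+1$ drops gives $n\le -t_{k+1}<2(k+1)$, hence $k>n/2-1$. (The paper's own line ``$t_{k+1}>-2k$'' reflects the same slip; one gets $t_{k+1}>-2(k+1)$.) Note also that $n<2(k+1)$ gives $k\ge\lceil n/2\rceil$ only for even $n$; for odd $n$ you get $k\ge(n-1)/2$, which is harmless for the application since Proposition~\ref{prop:coding-radius} takes $n=2r$. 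Under the corrected definition your truncation argument is unnecessary; and in fact it would no longer go through as stated, since the truncated chain would require the unverified constraint $T_{x_{k-1},t_{k-1}}\le -n$.
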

\begin{proof}
Suppose $\cE$ occurs and let ${\bf c}$ be a shortest chain witnessing its occurrence.
Let us first show that $u({\bf c})$ is a path in $H$.
Fix $0<i<k$ and consider the consecutive updates points $(x,t):=(x_{i-1},T_{x_{i-1},t_{i-1}})$ and $(x',t'):=(x_i,T_{x_i,t_i})$. We have that $x \sim x'$ by the definition of a chain.
Also by the definition of a chain, we have that $t \le t_i \le t_{i-1}$.
Thus, we see that $T_{x,t_i}=t$ and $T_{x',t_i}=t'$.
It follows that $T_{x,t \vee t'}=t$ and $T_{x',t \vee t'}=t'$.
This shows that $(x,t)$ and $(x',t')$ are adjacent in $H$. Thus, $u({\bf c})$ is a path in $H$.

Let us now show that $u({\bf c})$ is a simple path, i.e., that its elements are distinct.
Write $u({\bf c})=(u_0,\dots,u_{k-1})$. Suppose towards a contradiction that the $u_a=u_b$ for some $0 \le a < b \le k-1$. Then we claim that
$((x_0,\dots,x_a,x_{b+1},\dots,x_k),(t_0,\dots,t_a,t_{b+1},\dots,t_{k+1}))$ describes a shorter chain ${\bf c}'$. Indeed, in $\bf c'$, the $t_i$'s are decreasing and the $x_i$'s are adjacent in $G$ since $x_a=x_b$. Furthermore, $T_{x_a, t_a} = T_{x_b,t_b} \le t_{b+1}$, since $u_a=u_b$ and $\bf c$ is a chain. Thus, $\bf c'$ is a chain. Since $E_{\bf c}$ occurs, it is also clear that $E_{{\bf c}'}$ occurs, and since ${\bf c}'$ has the same endpoint as ${\bf c}$, it also witnesses $\cE$. This contradicts the choice of ${\bf c}$ as the shortest such chain.

Let us show that $k \ge \min\{r,n/2\}$.
By definition, the endpoint $(x_k,t_{k+1})$ of ${\bf c}$ is outside $B_r(v) \times (-n,0]$. If $x_k \notin B_r(v)$, then we clearly have that $k \ge r$. Otherwise, $t_{k+1} \le -n$. Since $t_i-2 < T_{x_i,t_i} \le t_{i+1}$ for $0 \le i < k$ (the former inequality is due to the fact that every site is updated once in every unit interval, and the latter is by the definition of a chain), $t_{k+1}>-2k$, so that $n<2k$.  
\end{proof}

We can now bound the probability of $\cE$ via a union bound.
The maximum degree of $H$ is at most $3\Delta$.
The number of simple paths of length $k$ in $H$ starting from $(v,T_{v,0})$ is thus at most $3\Delta(3\Delta-1)^{k-1}$. Therefore,
\[ \P(\cE \mid T) \le \sum_{k=\min\{r,n/2\}}^{\infty} 3\Delta(3\Delta-1)^{k-1}(1-p)^k =  C((3\Delta-1)(1-p))^{\min\{r,n/2\}}, \]
where $C:=3\Delta/((3\Delta-1)(1-(3\Delta-1)(1-p)))$.
\end{proof}

\subsection{An extension to processes taking values in a finite set}\label{sec:ffiid-finite-alphabet}

\cref{thm:ffiid-general} and its proof can be extended to a version allowing processes taking values in a finite set other than $\{0,1\}$.
Let $\cS$ be a finite set.
Let $X$ be a $\cS$-valued process.
For $s \in \cS$, define
\[ p_{*;s}(X) := \inf_{v \in V} \essinf \P(X_v = s \mid (X_u)_{u \neq v}) .\]
Denote $p_{*;S}(X) := \sum_{s \in S} p_{*;s}(X)$ for $S \subset \cS$.
Given $S \subset \cS$, we say that $X$ is \textbf{decoupled by $S$} if for any finite set $A \subset V$ and $\tau \in S^{\partial A}$ such that $\P(X_{\partial A}=\tau)>0$, we have that $X_A$ and $X_{A^c}$ are conditionally independent given that $X_{\partial A}=\tau$. 

Recall the definition of $X \succeq_* Y$ for $\{0,1\}$-valued processes from~\eqref{eq:XY-*dom-def}. When $\cS$ is totally ordered, this extends to $\cS$-valued processes by requiring that $\P(X_v \in \cdot \mid X_F=x) \ge_{st} \P(Y_v \in \cdot \mid Y_F=y)$ for all $v\in V$, finite $F \subset V\setminus \{v\}$, $x,y \in \cS^F$ such that $x \ge y$ and $\P(X_F=x),\P(Y_F=y)>0$.
As before, it is not hard to see that $X \succeq_* Y$ implies that $X \ge_{st} Y$.

\begin{thm}\label{thm:ffiid-general2}
Let $G$ be a connected bounded-degree graph and let $X$ be an invariant $\cS$-valued process. Suppose that there exists $S \subset \cS$ such that $X$ is decoupled by $S$ and $p_{*;S}(X)>1-\frac1{3\Delta-1}$. Then $X$ is a uniformly finitary factor of an i.i.d.\ process, and the coding radii satisfy~\eqref{eq:coding-radius-bound}.

Moreover, if $\cS$ is totally ordered, $X$ and $Y$ are invariant $\cS$-valued processes such that $X \succeq_* Y$, and there exists $S \subset \cS$ such that both $X$ and $Y$ are decoupled by $S$ and $\sum_{s \in S} \min\{p_{*;s}(X),p_{*;s}(Y)\} > 1-\frac1{3\Delta-1}$, then $X$ and $Y$ can be expressed as uniformly finitary factors of a common i.i.d.\ process in such a way that $X \ge Y$ almost surely. In particular, there is an invariant monotone coupling of $X$ and $Y$. In the special case when $Y$ is an \iid\ process, we can choose $Y$ as a 0-block factor of the i.i.d.\ process.
\end{thm}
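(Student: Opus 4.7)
The plan is to adapt the bounding-chain construction of \Cref{S:Xbar} to the $\cS$-valued setting, with $S \subseteq \cS$ playing the role of $\{1\}$ and $\cS \setminus S$ playing the role of $\{0\}$ from the binary case. Enumerate $\cS = \{s_1,\dots,s_N\}$ so that $S = \{s_1,\dots,s_m\}$ appears first; in the moreover part also arrange the enumeration to respect the given total order on $\cS$. The bounding-chain state space is $(\cS \cup \{*\})^V$ with $y \precsim y'$ meaning $y_v = y'_v$ whenever $y'_v \ne *$. For $v \in V$ and $y \in \Omega^*$, replace $q_v^\pm(y)$ by cumulative quantities
\[
Q_v^-(y,k) \,:=\, \inf_{\substack{x \in \Omega\\ x \precsim y}}\, \liminf_{r\to\infty}\, \P\!\left(X_v \ge s_k \,\big|\, X_{B^*_r(v)}=x_{B^*_r(v)}\right),
\]
and $Q_v^+(y,k)$ analogously with $\sup$ and $\limsup$, with $Q_v^\pm(y,N{+}1) := 0$. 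The single-site update $\psi_v$ uses a uniform $u_v \in [0,1]$ and the quantile method: assign $s_k$ when $u_v \in [1-Q_v^-(y,k),\,1-Q_v^+(y,k{+}1))$ (an interval which may be empty), and assign $*$ otherwise. The remaining ingredients -- the map $\varphi$, the \iid\ driver $(U,T)$, and the sequences $\Phi^1\circ\cdots\circ\Phi^n(\bar*)$ and $\Phi^i_{r,v}$ -- are defined exactly as before.

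Three facts take the place of those used in the binary case. First, on a sample from $X$ the update preserves the law of $X$ because the quantile intervals along $\Omega$ collapse to the true single-site distribution. Second, the estimate
\[
1 - Q_v^+(y,m{+}1) \,=\, \inf_{\substack{x \in \Omega\\ x \precsim y}}\liminf_r \P\!\left(X_v \in S \,\big|\, X_{B^*_r(v)}=x_{B^*_r(v)}\right) \,\ge\, p_{*;S}(X)
\]
holds for every $y \in \Omega^*$ and replaces $q_v^-(y) \ge p_*(X)$: every update lands in $S$ with probability at least $p_{*;S}(X)$ regardless of $y$. Third, if the connected component $A$ of $v$ in $\{u : y_u \notin S\} \cup \{v\}$ is finite and $y_{A \setminus \{v\}} \in (\cS \setminus S)^{A \setminus \{v\}}$, then every completion $x \in \Omega$ of $y$ has $x_{\partial A} = y_{\partial A} \in S^{\partial A}$ and $x_{A \setminus \{v\}} = y_{A \setminus \{v\}}$, so the decoupled-by-$S$ hypothesis forces $Q_v^-(y,k) = Q_v^+(y,k)$ for all $k$ and makes the update deterministic and in $\cS$. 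Given these three facts, the disease-spreading argument of \Cref{sec:disease} applies verbatim after projecting $(\cS \cup \{*\})^V \to \{0,1,*\}^V$ via $S \mapsto 1$, $\cS \setminus S \mapsto 0$, $* \mapsto *$: the projected update outputs $1$ with probability at least $p := p_{*;S}(X)$ uniformly in the state, and the disease spreads only through the $0/*$-cluster of $v$. The Peierls estimate of \Cref{prop:coding-radius2} then yields~\eqref{eq:coding-radius-bound}, and the analog of \Cref{prop:X-bar-distrib} shows $\bar X$ has the same law as $X$, proving the first assertion.

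For the moreover part I would drive the bounding chains for $X$ and $Y$ by the same $(U,T)$ and establish $\bar X \ge \bar Y$ almost surely by adapting the final part of the proof of \Cref{thm:ffiid-general0}. The key step is the cross-monotonicity $Q^\pm_{v;X}(y_X,k) \ge Q^\pm_{v;Y}(y_Y,k)$ whenever $y_X \ge y_Y$ in an appropriate extended pointwise order, which combined with the quantile method and the common $u_v$ rules out $\psi_{v;X}(y_X) = s_\ell < s_k = \psi_{v;Y}(y_Y)$: if $u_v$ lies in the $Y$-slot for $s_k$ then $u_v \ge 1 - Q_{v;Y}^-(y_Y,k) \ge 1 - Q_{v;X}^-(y_X,k)$, forcing $\psi_{v;X}(y_X) \ge s_k$. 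Cross-monotonicity itself reduces to a completion-lifting statement: given $x \in \Omega_X$ with $x \precsim y_X$, produce $x' \in \Omega_Y$ with $x \ge x'$ and $x' \precsim y_Y$. This will be proved by induction on finite windows using the multi-valued form of $X \succeq_* Y$, namely $\P(X_v \ge s_k \mid X_F=x) \ge \P(Y_v \ge s_k \mid Y_F=y)$ for $x \ge y$ and every $k$, exactly as in the binary case. The joint assumption on $\sum_{s \in S}\min\{p_{*;s}(X),p_{*;s}(Y)\}$ ensures the disease-spreading bound applies to both chains simultaneously, and the \iid\ case for $Y$ yields a $0$-block factor because $Q^\pm_{v;Y}(y,k)$ does not depend on $y$.

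The hard part will be fitting the star symbol into the extended pointwise order so that monotonicity propagates cleanly across all cases: the pairs $(\psi_{v;X},\psi_{v;Y}) = (*,s_k)$ and $(s_\ell,*)$ with interior $k,\ell$ can both arise with positive probability, and no single placement of $*$ inside the total order on $\cS$ respects both of the required inequalities. The cleanest remedy I see is to replace $*$ with an interval state $[s_i,s_j] \subseteq \cS$ tracking the currently admissible values at each vertex, with the trivial interval $[s_1,s_N]$ playing the role of $\bar*$ and the quantile method refining the interval by eliminating slots ruled out by~$u_v$; the pointwise order on intervals (by comparing both endpoints) is then preserved by the updates, the disease-spreading argument yields exponential shrinkage to singletons with the same constants, and the completion-lifting and cross-monotonicity steps carry over without essential change.
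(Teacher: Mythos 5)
Your overall architecture is the same as the paper's: a multi-valued bounding chain, a projection onto the $\{0,1,*\}$ disease model of \cref{sec:disease} with $S$ playing the role of the ones, and, for the moreover part, interval states ordered by comparing both endpoints --- your final remedy is exactly the space $\cS^*=\{(a,b):a\le b\}$ that the paper uses from the outset. The genuine gap is in your second fact. With the plain quantile update, the set of $u_v$-values producing an element of $S$ is the union of the slots $[\,1-Q_v^-(y,k),\,1-Q_v^+(y,k+1)\,)$ over $s_k\in S$, and when $|S|\ge 2$ these slots are separated by state-dependent gaps (where the update outputs $*$) which can intrude into $[0,p_{*;S}(X))$: the endpoints of adjacent slots involve an infimum and a supremum over \emph{different} completions $x\precsim y$, so $u_v\le p_{*;S}(X)$ does not force the update to land in $S$, and even the total probability of landing in $S$ can fall strictly below $p_{*;S}(X)$ (only when $|S|=1$, as in the binary theorem, does your argument go through unchanged). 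This breaks the pointwise comparison $\psi_v(y)\le\tilde\psi_v(y')$ with the disease dynamics at parameter $p=p_{*;S}(X)$, and with it the ``verbatim'' application of \cref{prop:coding-radius2}: the Peierls argument needs the event ``this update is not in $S$'' to be contained in a state-independent event of probability $1-p$ depending only on that update's own randomness (in the binary case, $\{u_v>p_*\}$), and your moving slots supply no such event, so \eqref{eq:coding-radius-bound} does not follow under the stated hypothesis.

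The fix, which is what the paper does, is to reserve the bottom of the uniform variable for state-independent assignments: declare the update to be $s$ whenever $u_v$ falls in a fixed sub-interval of length $p_{*;s}$ (a legitimate uniform lower bound on the single-site conditional probability of $s$), and only for $u_v>p_{*;S}$ apply the quantile rule to the conditional probabilities shifted down by the reserved cumulative masses. Then $u_v\le p_{*;S}$ always yields a known value in $S$, your third fact still gives star-freeness in the absence of a star-connection through non-$S$ sites (note the small slip there: no gaps makes the update star-free, not deterministic), and the comparison and the coding-radius bound go through with $p=p_{*;S}$. For the moreover part the reserved lengths must be taken equal to $\min\{p_{*;s}(X),p_{*;s}(Y)\}$, common to both chains driven by the same $(U,T)$; otherwise the shared-$u_v$ cross-monotonicity you invoke fails on the reserved region. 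This is precisely where the hypothesis $\sum_{s\in S}\min\{p_{*;s}(X),p_{*;s}(Y)\}>1-\frac1{3\Delta-1}$ enters, which your sketch uses only loosely. Your completion-lifting step and the interval-state bookkeeping are otherwise in line with the paper's proof.
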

\begin{figure}[!ht]
\centering
\includegraphics[width=\textwidth]{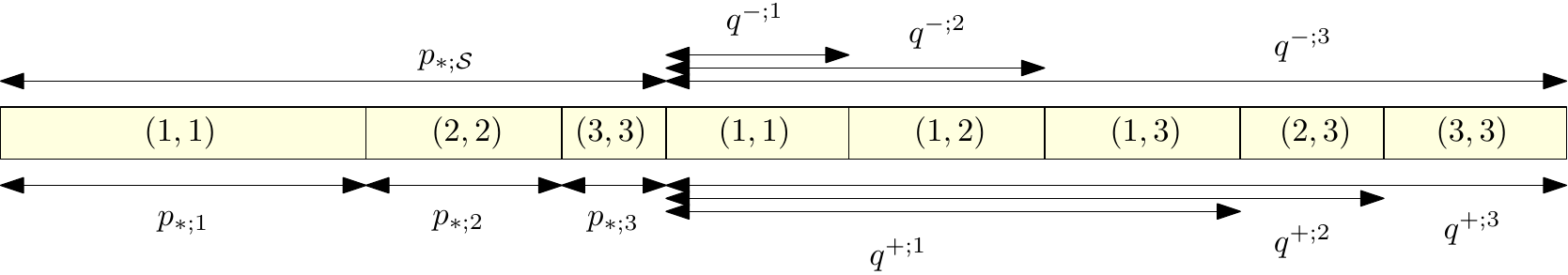}
\caption{An illustration of the definition of $\psi_v$ in the proof of \Cref{thm:ffiid-general2}.}\label{fig:auxilary_gen}
\end{figure}

\begin{proof}[Proof sketch]
Let us fix $\cS=\{1,\dots,q\}$.
If we would only want to show the first part of the theorem, it would suffice to work with the state space $\{1,\dots,q,*\}$ in place of $\{0,1,*\}$. However, in order to prove the second part, we need to work with the space $\cS^* := \{(a,b) \in \{1,\dots,q\}^2 : a \le b\}$. We think of $y_v=(a,b)$ as specifying that $a \le x_v \le b$ (note that $(a,b)$ is a pair of integers, not an interval). In particular, $(a,a)$ means the value is known to be $a$, while $(1,q)$ means the value is completely unknown (this was previously the role of the star). Other values give partial information. The partial order $\precsim$ on $\cS^*$ is given by $(a,b) \precsim (a',b')$ if and only if $a \ge a'$ and $b \le b'$, and this induces the pointwise partial order $\precsim$ on $(\cS^*)^V$.

Define $p_{*;s} := \min\{p_{*;s}(X),p_{*;s}(Y)\}$ for $s \in S$ (the distinguished subset of $\cS$ as assumed in the theorem) and set $p_{*;s} := 0$ for $s \in \cS \setminus S$. Denote $p_{*;S'} := \sum_{s \in S'} p_{*;s}$ for $S' \subset \cS$.
Set $q_v^{\pm;i}(y):=0$ and for $i \in \{0,1,\dots,q\}$, define
\begin{align*}
q_v^{-;i}(y) &:= \inf_{x \in \Omega, x \precsim y} \liminf_{r \to \infty} \P(X_v \le i \mid X_{B^*_r(v)}=x_{B^*_r(v)}) - p_{*;\{1,\dots,i\}} ,\\
q_v^{+;i}(y) &:= \sup_{x \in \Omega, x \precsim y} \limsup_{r \to \infty} \P(X_v \le i \mid X_{B^*_r(v)}=x_{B^*_r(v)})- p_{*;\{1,\dots,i\}} .
\end{align*}
We then define
\[ \psi_v(y)_v := \begin{cases}
(i,i) &\text{if } p_{*;\{1,\dots,i-1\}} < u_v \le p_{*;\{1,\dots,i\}} \text{ for some }i \in \cS \\
(a,b) &\text{if }\,\substack{q_v^{+;a-1}(y) < u_v-p_{*;\cS} \le q_v^{+;a}(y)\\q_v^{-;b-1}(y) < u_v-p_{*;\cS} \le q_v^{-;b}(y)}\, \text{ for some }(a,b) \in \cS^*
\end{cases} .\]
It is straightforward to check that $a\le b$ always in the above definition using the facts that $q^{\pm;i}_v(y)$ is non-decreasing in $i$ and   that  $q^{-;i}_v(y) \le q^{+;i}_v(y)$  (see \Cref{fig:auxilary_gen}).
One checks that~\eqref{eq:psi*-monotone} still holds.
For the auxiliary dynamics, the idea is to clump together all $(a,b) \in \cS^*$ with $a<b$ into a single state $*$. To that end, we can define $p:=p_{*;\cS}$ and continue to work with the state space $\{0,1,*\}^V$ and the same definition of $\tilde\psi_v$, with the interpretation that 1 indicates a known value in $S$, 0 indicates any known value, and $*$ indicates an unknown value. The comparison~\eqref{eq:dynamics-comparison} between the dynamics can then be verified to hold upon reinterpreting $\le$ as a relation between $(\cS^*)^V$ and $\{0,1,*\}^V$ induced by the relation between $\cS^*$ and $\{0,1,*\}$ in which $(a,b) \le c$ if and only if either $c=*$, or $a=b \in S$, or $c=0$ and $a=b$. The rest of the proof that $\bar X$ is a uniformly finitary factor satisfying~\eqref{eq:coding-radius-bound} remains unchanged. The fact that $\bar X$ has the same distribution as $X$ is shown as before, noting that $q^{-;i}_v(X)=q^{+;i}_v(X)=\P(X_v \le i \mid X_{B^*_r(v)}) - p_{*;\{1,\dots,i\}}$ almost surely.

The reason for having $Y$ involved in the definitions for $X$ (via the definition of $p_{*;s}$) is for the moreover part of the theorem. This too proceeds as before, with $\ge$ being the pointwise partial order on $(\cS^*)^V$ induced by the partial order on $\cS^*$ in which $(a,b) \ge (a',b')$ if and only if $a \ge a'$ and $b \ge b'$.
The only thing that needs to be checked is that $q^{\pm;i}_{v;X}(y) \ge q^{\pm;i}_{v;Y}(y')$ whenever $y \ge y'$, as it readily follows from this that $\psi_{v;X}(y) \ge \psi_{v;Y}(y')$ whenever $y \ge y'$ (here it is important that we used the same $p_{*;s}$ in the definitions for both $X$ and $Y$). This can be verified in a similar manner as before.
\end{proof}

\subsection{Plus state of Ising model -- Proof of \cref{thm:ising-ffiid}}\label{sec:ising-ffiid}

We begin with a simple lemma regarding decoupling by ones.
When $X$ is a $\{0,1\}^2$-valued process, we think of $(1,1)$ as the ``one'' state. In particular, if $X$ and $Y$ are $\{0,1\}$-valued processes and $(X,Y)$ is decoupled by ones, then $XY$ is also decoupled by ones.
 
\begin{lemma}\label{lem:decoupled-by-ones-product}
Let $X$ and $Y$ be independent processes each of which is decoupled by ones. Then $(X,Y)$ is also decoupled by ones.
\end{lemma}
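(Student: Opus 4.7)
The plan is to verify the decoupling condition for $(X,Y)$ directly from the definitions, using the independence of $X$ and $Y$ to reduce to the individual decoupling hypotheses. Fix a finite set $A \subset V$. The key observation is that the conditioning event splits:
\[ \{(X,Y)_{\partial A} \equiv (1,1)\} = E_X \cap E_Y, \qquad E_X := \{X_{\partial A} \equiv 1\},\ E_Y := \{Y_{\partial A} \equiv 1\}. \]
By independence of $X$ and $Y$, we have $\P(E_X \cap E_Y) = \P(E_X)\P(E_Y)$, so the hypothesis $\P((X,Y)_{\partial A} \equiv (1,1)) > 0$ is equivalent to $\P(E_X) > 0$ and $\P(E_Y) > 0$, which is exactly what is needed to invoke the decoupling-by-ones property for $X$ and for $Y$ individually.

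Next I would argue that conditionally on $E_X \cap E_Y$, the four random objects $X_A$, $X_{A^c}$, $Y_A$, $Y_{A^c}$ are mutually independent. Indeed, since $X$ and $Y$ are independent and $E_X$ (resp.\ $E_Y$) is measurable with respect to $X$ (resp.\ $Y$), the conditional law of $(X,Y)$ given $E_X \cap E_Y$ is the product of the conditional law of $X$ given $E_X$ and the conditional law of $Y$ given $E_Y$. Applied to this product measure, the hypothesis that $X$ is decoupled by ones gives that $X_A \perp X_{A^c}$ conditionally on $E_X$, and similarly the hypothesis on $Y$ gives $Y_A \perp Y_{A^c}$ conditionally on $E_Y$. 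Combining these two conditional independences with the product structure yields that $X_A, X_{A^c}, Y_A, Y_{A^c}$ are conditionally mutually independent given $E_X \cap E_Y$.

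Finally, since $(X_A, Y_A)$ is a function of $(X_A, Y_A)$ alone and $(X_{A^c}, Y_{A^c})$ is a function of $(X_{A^c}, Y_{A^c})$ alone, the mutual independence above immediately gives
\[ (X_A, Y_A) \perp (X_{A^c}, Y_{A^c}) \quad \text{conditionally on } \{(X,Y)_{\partial A} \equiv (1,1)\}, \]
which is precisely the statement that $(X,Y)$ is decoupled by ones. There is no substantive obstacle here; the lemma is essentially a bookkeeping exercise, and the only thing to be careful about is verifying that the event $\{(X,Y)_{\partial A} \equiv (1,1)\}$ has positive probability if and only if both $E_X$ and $E_Y$ do, which is immediate from independence.
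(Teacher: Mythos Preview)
Your proof is correct and follows essentially the same route as the paper's: both split the conditioning event as $E_X \cap E_Y$, use the independence of $X$ and $Y$ to preserve the product structure after conditioning, and then apply the individual decoupling hypotheses. The only cosmetic difference is that the paper phrases the conclusion as ``the conditional law of $(X,Y)_A$ given $(X,Y)_{A^c}$ is almost surely constant on $E_{X,Y}$'' rather than directly as conditional independence given the event, but since $E_{X,Y}$ is $(X,Y)_{A^c}$-measurable these are equivalent formulations.
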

\begin{proof}
Let $A \subset V$ be a finite set. Let $E_X$ be the event that $X_{\partial A} \equiv 1$. Similarly define $E_Y$ and $E_{X,Y}$. Note that that $E_{X,Y} \subset E_X \cap E_Y$.
Suppose that $E_{X,Y}$ has positive probability.
We need to show that $(X,Y)_A$ and $(X,Y)_{A^c}$ are conditionally independent given $E_{X,Y}$. Since $E_{X,Y}$ is measurable with respect to $(X,Y)_{A^c}$, this is the same as saying that the conditional law of $(X,Y)_A$ given $(X,Y)_{A^c}$ is almost surely equal to some law $\mu_A$ on the event $E_{X,Y}$.
Indeed, given $(X,Y)_{A^c}$, since $X$ and $Y$ are independent and each is decoupled by ones, the conditional law of $X_A$ is almost surely equal to some law $\nu_A$, the conditional law of $Y_A$ is almost surely equal to some law $\pi_A$, and $X_A$ and $Y_A$ are conditionally independent, so that $\mu_A = \nu_A \times \pi_A$.
\end{proof}

Let $\sigma$ be sampled from $\mu^+_\beta$ and define $X \in \{0,1\}^V$ by $X_v := \frac12(\sigma_v+1)$. Set $p_0 := 1-\frac1{3\Delta-1}$, let $p \in (p_0,1)$ and let $Y \sim \nu_p$ be independent of $X$.
We have shown in the proof of \cref{thm:stochastic-dom-Ising2} that $p_*(XY) \to p$ as $\beta\to\infty$. In particular, $p_*(XY)>p_0$ for all $\beta$ sufficiently large.

Suppose that $\beta$ is such that $p_*(XY)>p_0$.
Then \cref{thm:ffiid-general} implies that $XY$ is a finitary factor of an \iid\ process. Since $(X,Y)$ is decoupled by ones, the law of $X$ given $XY$ is independent on the clusters of $\{ v \in V : (XY)_v=0 \} = \{ v\in V : (X,Y)_v \neq (1,1) \}$, and the law on each cluster depends only on the shape of the cluster (in fact, we have seen that $X$ given $XY$ is an Ising model with a magnetic field). Thus, one may obtain a sample of $X$ by first sampling $XY$ as a finitary factor of an \iid\ process, setting $X$ to equal 1 wherever $XY$ is such, and then independently sampling $X$ on each 0-cluster of $XY$ (all of which are finite almost surely) according to its conditional law, which depends only on the shape of the 0-cluster.
This translates to a description of $X$ as a finitary factor of $XY$ and an additional independent \iid\ source, or simply, as a finitary factor of a single (larger) \iid\ process. While such things are fairly standard, we still include a proof for completeness. 

Let us make the details of this description more explicit (there are many ways to do this). Let $\pi$ be the coupling between $X$ and $Z:=XY$. First, we need to talk about finite connected labeled subsets of $V$, up to automorphisms. Let $\cA$ be the set of all pairs $(U,\tau)$, where $U$ is a finite connected subset of $V$ and $\tau \colon U \to \{1,\dots,|U|\}$ is a bijection. Two such pairs are equivalent if there is a label-preserving automorphism of $G$ between them. Write $[U,\tau]$ for the equivalence class of $(U,\tau)$ and let $\bar\cA$ be the set of all equivalence classes. For each $[U,\tau] \in \bar\cA$, fix a function $f_{[U,\tau]} \colon [0,1] \to \{0,1\}^{|U|}$ such that $f_{U,\tau}(\Xi)$ has the same law as $\pi_U \circ \tau^{-1}$, where $\Xi$ is a uniform random variable on $[0,1]$ and $\pi_U$ is the conditional law of $X_U$ under $\pi$ given that $Z_U \equiv 0$ and $Z_{\partial U} \equiv 1$. This is possible since $\pi$ is an invariant coupling.
Now let $\xi=(\xi_v)_{v \in V}$ and $\eta=(\eta_v)_{v \in V}$ be i.i.d.\ processes, with $Z,\xi,\eta$ independent, consisting of Uniform$(0,1)$ random variables. Let $U_v$ denote the 0-cluster of $Z$ containing $v$ (set $U_v$ to be empty if $Z_v=1$). Endow $U_v$ with the total order induced by $(\eta_u)_{u \in U_v}$, let $\tau_v \colon U_v \to \{1,\dots,|U_v|\}$ be the unique order-preserving bijection and let $u_v \in U_v$ denote the minimal element in $U_v$.
Define
\[ \phi(Z,\xi,\eta)_v := \begin{cases}
 1 &\text{if }Z_v=1\\
 f_{[U_v,\tau_v]}(\xi_{u_v})_{\tau_v(v)} &\text{if }Z_v=0
\end{cases} .\]
It is straightforward to check that $\phi$ is a finitary map that commutes with automorphisms of $G$.
Using that $X$ is decoupled by ones of $Z$, one also checks that $\phi(Z,\xi,\eta)$ has the same law as $X$.
Thus, $X$, or equivalently, $\sigma$, is a finitary factor of an \iid\ process.

\subsection{Infinite clusters of Bernoulli percolation -- Proof of \cref{thm:perc-ffiid}}\label{sec:perc-ffiid}

Let $\omega$ denote Bernoulli percolation on $G$ of parameter $p$ and let $\omega^\infty$ consist of those vertices which are in infinite clusters in $\omega$. We need to show that $\omega^\infty$ is a finitary factor of an i.i.d.\ process whenever $p$ is sufficiently close to 1.

Let us point out that unlike in the Ising model, $\omega^\infty$ is not a Markov random field, and in fact, is not even decoupled by ones. Consider, for example, a regular tree with all neighbors of a vertex $v$ having $\omega^\infty$-value 1, and one such neighbor $u$ having $\omega^\infty$-value 0 for all its  neighbors other than $v$. In this case, $\omega^\infty_v$ is deterministically 1 as the infinite cluster containing $u$ must also contain $v$. It is also clear that this conditional probability can be made strictly less than 1 by other appropriate conditionings (noting that there is positive probability that all neighbors of $v$ are connected to infinity without passing through $v$). It is easy to extend this idea to conclude that one cannot determine the conditional distribution of $\omega^\infty_v$ even if the conditioning is known on an arbitrarily large ball around $v$.

The proof of \cref{thm:perc-ffiid} broadly follows similar lines as for the Ising model, though the lack of the domain Markov property requires different approaches for the one-ended and infinitely ended cases.

\subsubsection{The one-ended case}

We say that \textbf{$X$ is decoupled by connected ones} if for any finite set $A \subset V$, we have that $X_A$ and $X_{A^c}$ are conditionally independent given that $X_{\partial A} \equiv 1$ and $\partial A$ is contained in a single cluster of $\{ v \in A^c : X_v=1\}$ (assuming this occurs with positive probability). When $X$ is a $\{0,1\}^2$-valued process, we think of $(1,1)$ as the ``one'' state. In particular, if $X$ and $Y$ are $\{0,1\}$-valued processes and $(X,Y)$ is decoupled by connected ones, then $XY$ is also decoupled by connected ones.

While $\omega^\infty$ is not decoupled by ones, it is easy to see that it is decoupled by connected ones. In order for this to be useful, we need to know that every finite subset of $V$ is almost surely contained in a set $A$ for which the above event holds for $\omega^\infty$. This is the reason for the assumption that $p_u<1$. Indeed, when $p>p_u$, there is almost surely a unique infinite cluster~\cite{schonmann1999stability}, i.e., $\omega^\infty$ is connected (and non-empty). Moreover, when $p$ is close to 1, the complement of the unique infinite cluster $\omega^\infty$ almost surely contains only finite connected components (this is easily seen to hold by a Peierls argument; see \cref{lem:inf-cluster-finite-holes}). From this it is clear that such a set $A$ exists almost surely. Note that these two properties are jointly monotone, so that any $\{0,1\}$-valued process which stochastically dominates a high-density Bernoulli percolation also has the same two properties almost surely. 
These observations allow us to proceed in a similar fashion as for the Ising model, via the following modification of \cref{thm:ffiid-general}.

\begin{thm}\label{thm:ffiid-general-conn}
Let $G$ be a connected quasi-transitive nonamenable graph with $p_u<1$. There exists $p_0<1$ such that the following holds. Let $X$ be an invariant $\{0,1\}$-valued process which is decoupled by connected ones and has $p_*(X) \ge p_0$. Then $X$ is a finitary factor of an i.i.d.\ process.
\end{thm}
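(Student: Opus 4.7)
The plan is to follow the blueprint of \cref{thm:ffiid-general0}, adapting both the bounding-chain construction of \cref{S:Xbar} and the auxiliary disease spreading model of \cref{sec:disease} to accommodate the weaker ``decoupled by connected ones'' property. Define a modified connectivity relation: say that $v$ is \emph{separated} in $y \in \{0,1,*\}^V$ if there exists a finite set $A \ni v$ with $y_{A \setminus \{v\}} \equiv 0$, $y_{\partial A} \equiv 1$, and such that $\partial A$ lies in a single connected component of $\{u \in A^c : y_u = 1\}$. By the hypothesis that $X$ is decoupled by connected ones, the analogue of \cref{cl:qv} goes through: $q_v^-(y) \ge p_*(X)$, and $q_v^+(y) = q_v^-(y)$ whenever $v$ is separated in $y$. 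Using this modified relation in place of the original one to define the single-site update $\psi_v$ and the bounding chain $\bar X$, the crucial monotonicity $\psi_v(y) \precsim \psi_v(y')$ for $y \precsim y'$ persists, since both conditions defining a good $A$ become easier to satisfy when $y$ contains more $1$s.

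With this modification in place, the proof that $\bar X$ is well-defined and has the same distribution as $X$ (the analogues of \cref{prop:X-bar-distrib}) goes through unchanged, as it relies only on $\precsim$-monotonicity of $\psi_v$, independence of the driving i.i.d.\ variables, and Levy's zero-one law. The only substantive step remaining is to establish that the coding radius has exponential tails, which is the main obstacle and is precisely where the hypothesis $p_u<1$ enters.

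To control the coding radius, I would define an auxiliary disease spreading dynamics $\tilde\psi_v$ analogously to \cref{sec:disease}, but with the modified separated relation: an update at $v$ with $u_v > p:=p_*(X)$ outputs $*$ when $v$ is not separated in the current configuration, and $0$ otherwise. Non-separation being $\le$-monotone ensures that the comparison \eqref{eq:dynamics-comparison} between $\psi_v$ and $\tilde\psi_v$ still holds. I would then upper bound the probability that $v$ remains a star under the auxiliary dynamics by combining two Peierls estimates. The first is the chain-based argument of \cref{prop:coding-radius2}, which controls propagation of stars through paths of zeros. The second controls the new failure mode, in which no small zero-region around $v$ has its $1$-boundary lying in a single $1$-cluster of the complement; for $p$ close to $1$, such failures correspond to large finite holes in the (unique) infinite $1$-cluster, and the assumption that $G$ is quasi-transitive nonamenable with $p_u<1$ ensures, via the Peierls argument underlying \cref{lem:inf-cluster-finite-holes}, that these have exponentially decaying tails. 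A union bound then yields exponential decay of this failure mode in the radius, and combining the two estimates gives the required control on the coding radius; the theorem then follows exactly as in \cref{thm:ffiid-general0}.
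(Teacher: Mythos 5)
Your setup is essentially the paper's: the modified separation relation, the analogue of \cref{cl:qv} obtained from the decoupled-by-connected-ones property, the unchanged bounding-chain construction, and the identification of the law of $\bar X$ all go through as you say. The gap is in the one step that carries all the weight, namely showing that the stars die out. Under the new rule an update can produce a $*$ not because the site is joined to another star by a path of zeros, but because the surrounding ones fail the single-cluster condition; the per-step ``bad'' events along any witnessing chain are then of the form ``this site is not in the infinite $1$-cluster of the current configuration at this time''. These events are non-local, strongly correlated across the chain (nearby space-time points see essentially the same configuration), and are functions of the very same update variables that drive the chain. Your plan --- the chain Peierls argument of \cref{prop:coding-radius2} for the ``propagation through zeros'' mode plus a separate hole-size estimate in the spirit of \cref{lem:inf-cluster-finite-holes} and a union bound --- implicitly multiplies marginal probabilities of these correlated events along the chain, and you supply no independence or domination structure to justify that. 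This is precisely the obstacle the paper flags, and it is resolved there by a different mechanism: redefine $E_{\bf c}$ via the events $y^{t_i,\infty}_{x_i}=0$, bound $y^t$ from below by the integer-time slices $\omega^i_v=\1(U^i_v,U^{i+1}_v\le p)$, use Liggett--Schonmann--Stacey to dominate these $1$-dependent layers by independent Bernoulli($q$) layers $\tau^i$, and then --- this is where nonamenability enters, through \cref{thm:stochastic-dom-perc} --- dominate the complement of the infinite-cluster process, $1-\tau^{i,\infty}$, by a subcritical Bernoulli site percolation on the space-time graph $H$ on $V\times\N$, so that the bad sites do not percolate and $\P(\cE)=0$. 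Without this reduction (or some renormalization restoring independence), your ``two Peierls estimates plus union bound'' is not a proof.

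Two further points. First, with the new rule a star need not be connected to any other star by zeros, so even the construction of the witnessing chain must be redone: the paper builds it from an infinite path avoiding the infinite $1$-cluster, and this requires that cluster to be unique (connected); that uniqueness for configurations dominating high-density Bernoulli percolation is where $p_u<1$ actually enters, not \cref{lem:inf-cluster-finite-holes}, which needs only nonamenability. Second, you claim exponential tails for the coding radius, which is stronger than what \cref{thm:ffiid-general-conn} asserts; the paper deliberately gives up the quantitative bound in this setting precisely because of the correlated, non-local failure events, and nothing in your sketch recovers it.
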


\begin{proof}
The theorem is proven in a similar way as \cref{thm:ffiid-general} and we only detail the required changes. The construction of $\bar X$ is the same as in \Cref{S:Xbar}, and once we know $\bar X \in \{0,1\}^V$ almost surely, the remainder of the proof is remain unchanged. Let us now explain the modifications in \Cref{sec:disease} needed to prove $\bar X \in \{0,1\}^V$ almost surely.

We will use the notations and the setup of \Cref{sec:disease}.
We modify the definition of $v \xleftrightarrow{y} *$ by declaring that $v \not\xleftrightarrow{y} *$ when there is a finite set $A \subset V$ such that $v \in A$, $y_{\partial A} \equiv 1$, $\partial A$ is contained in a single cluster of $\{ w \in A^c : y_w=1\}$, and $y_v \neq *$ for all $v \in {A \setminus \{v\}}$. Observe that since $X$ is decoupled by connected ones, $q^-_v(y) \neq q^+_v(y)$ only if $v \xleftrightarrow{y} *$, and hence, $\psi_v(y)_v = *$ only if $v \xleftrightarrow{y} *$. Using this modified definition of $v \not\xleftrightarrow{y} *$, we define $\tilde\psi_v$ as before, and get that $\psi^*_A(y) \le \tilde\psi^*_A(y')$ whenever $y \le y'$. Denote $\tilde X := \lim_{n \to \infty} \tilde\Phi^1\circ\cdots\circ\tilde\Phi^n(\bar*)$.

Since $\bar X \le \tilde X$ almost surely, it only remains to explain why $\tilde X \in \{0,1\}^V$ almost surely.
 Since we are not attempting to bound the coding radius here, we can simplify the approach taken in the proof of \cref{prop:coding-radius2} (which is helpful since the new definition of $v \xleftrightarrow{y} *$ adds a different complexity to the proof). For $t \le 0$, define $y^t$ to be the configuration after applying the updates that occurred up to time t (i.e. updates in $\cU \cap (V\times  (-\infty,t]))$. (Although this set of update times is a countable dense set, $y^t$ is still well defined using the same monotonicity properties as in \Cref{sec:disease}). In particular, $y^0 = \tilde X$.
 Let $y^{t,\infty}_u$ be the indicator that $u$ is in an infinite cluster of 1s in $y^t$.
This is relevant since if $y^{t,\infty}_v=0$ and $v$ is blocked from infinity by the open sites of $y^{t,\infty}$, and the open sites of $y^{t,\infty}$ are connected,  then there exists a finite set $A \ni v$ such that $\partial A$ is contained in a single cluster of $\{ u \in A^c : y^t_u=1\}$. 
 For an infinite chain ${\bf c} = (v_i,t_i)_{i=0}^\infty$, we redefine $E_{\bf c}$ to be the event that $y^{t_i,\infty}_{x_i}=0$ for all $i$.
Let $\cE$ be the event that $E_{\bf c}$ occurs for some infinite chain (starting from $(v_0,t_0)=(v,0)$). Using the same argument as in \Cref{claim:E}, we obtain that $\{\tilde X_v = *\} \subseteq \cE$. 

We now need to show that $\P(\cE) = 0$. Observe that if $y^{t,\infty}_{x}=0$ then there is no infinite path $(x=x_0,x_1,\ldots)$ in  $G$ so that $S_{x_i, T_{x_i,t}} = 1$.  One can try to translate this event into an event about site percolation in $H$ as we did in the proof of \Cref{claim:U}. But the events $y^{t_i,\infty}_{x_i}=0$ are non-local and not independent of each other (even if one considers a `shortest' path in some sense). Furthermore, we need to work with times $t$ which are close to each other where the $y^t$s are strongly correlated.
 To avoid these difficulties, we exploit \Cref{thm:stochastic-dom-perc} and the fact that $p$ is close to 1 by essentially projecting to integer times as follows.

Let $\omega^i$ denote the percolation configuration $(\1(U^i_v,U^{i+1}_v \le p))_{v \in V}$ for all $i \ge 1$. Observe that $y^t_{v} \ge \omega^{\lfloor -t \rfloor +1}_v$ for all $v \in V$ and $t \le 0$. In other words, if $\omega_v^i=1$, then there is a `barrier' spanning the entire segment $\{v\} \times [-i,-i+1]$ in \cref{fig:auxiliary}.
Since $(\omega^i_v)_{i=1}^\infty$ is a 1-dependent percolation process with marginals at least $p^2$, when $p$ is sufficiently close to 1, it stochastically dominates independent Bernoulli($q$) random variables $(\tau^i_v)_{i=1}^\infty$, where $q$ is a prescribed number (also close to 1)~\cite{liggett1997domination}.
We may further take $(\tau^i_v)_{v \in V,i \ge 1}$ to be independent and assume that $\omega_v^i \ge \tau^i_v$
for all $i,v$ almost surely. Let $\tau^{i,\infty}$ denote the set of sites which are infinite clusters of $\tau^i$. Observe that $\tau^{i,\infty}_x =1$ implies that $y^{t, \infty}_x =1$ for all $t \in [-i,-i+1]$.

Now we think of $\tau' := (1-\tau^{i,\infty}_v)_{i \ge 1, v \in V}$ as a site percolation $\tau$ on a graph $H$ with vertex set $V \times \N$ and two vertices $(x,i)$ and $(y,j)$ adjacent if $x \sim y$ and $|i - j| \le 1$ . Observe that if there is a chain satisfying $E_{\bf c}$, then there is an infinite path $(x_i,j_i)$ in $H$ with $\tau^{j_i,\infty}_{x_i} =0$ for all $i$. That is, $\tau'$ has an infinite open cluster.
Since $G$ is nonamenable, we can use \Cref{thm:stochastic-dom-perc} to stochastically dominate $\tau'$ by a Bernoulli site percolation $\eta$ with parameter $p'$ on $H$ with $p'$ close to 0 (when $q$ is close to 1). Since $H$ has bounded degree, $p_c(H)>0$ and we conclude that $\eta$ (and hence also $\tau'$) does not percolate almost surely (when $p'$ is close to 0). Thus, $\P(\cE)=0$ as required.
\end{proof}

\begin{remark}
    \cref{thm:ffiid-general-conn} extends to quasi-transitive amenable graphs for which, in Bernoulli site percolation with $p$ close to 1, there is a unique infinite cluster and the connected component $C_v$ of any vertex $v$ in its complement has expected size less than $1/(\Delta+1)$. For example, it is not hard to see that this holds on $\Z^d$ when $d\ge2$.

    The only modification in the proof is in the final argument that $\tau'$ does not percolate. Here it is helpful to orient the vertical edges in $H$ from $(v,i)$ to $(v,i+1)$. It suffices to show that there is no oriented percolation of $\tau'$. Let $S_i$ be the vertices in $V \times \{i\}$ that can be reached by an open oriented path starting from $(v,1)$. Then $S_1 = C_v(\tau^1)$ and $S_{i+1} = \bigcup_{u \in S_i \cup \partial S_i} C_u(\tau^i)$. This readily implies a comparison to a subcritical branching process.
\end{remark}

\begin{lemma}
Let $X$ and $Y$ be independent processes each of which is decoupled by connected ones. Then $(X,Y)$ is also decoupled by connected ones.
\end{lemma}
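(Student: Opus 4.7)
The plan is to follow the strategy of the proof of \cref{lem:decoupled-by-ones-product} verbatim, with one extra topological input. Fix a finite set $A \subset V$ and let $E$ denote the event that $(X,Y)_{\partial A} \equiv (1,1)$ and $\partial A$ is contained in a single cluster of $\{v \in A^c : (X,Y)_v = (1,1)\}$. Analogously define $E_X$ (resp.\ $E_Y$) as the event that $X_{\partial A} \equiv 1$ (resp.\ $Y_{\partial A} \equiv 1$) and $\partial A$ is contained in a single cluster of the 1-sites of $X$ (resp.\ $Y$) in $A^c$. Assume $\P(E)>0$.

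The only new ingredient, beyond what appeared in \cref{lem:decoupled-by-ones-product}, is the simple containment $E \subseteq E_X \cap E_Y$. This holds because any path through $(1,1)$-sites is simultaneously a path through $X$-$1$-sites and a path through $Y$-$1$-sites, so the single-cluster condition for the product process transfers to each of the two marginals. In particular, on $E$ both $E_X$ and $E_Y$ occur.

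With this in hand, I would argue exactly as before: $E$ is $(X,Y)_{A^c}$-measurable, so it suffices to show that on $E$ the conditional law of $(X,Y)_A$ given $(X,Y)_{A^c}$ is almost surely equal to some fixed measure $\mu_A$. By independence of $X$ and $Y$, this conditional law factors as the product of the conditional laws of $X_A$ given $X_{A^c}$ and of $Y_A$ given $Y_{A^c}$. On $E$, by $E \subseteq E_X$ and the decoupling-by-connected-ones assumption for $X$, the first factor is almost surely a fixed law $\nu_A$; similarly, the second factor is almost surely a fixed law $\pi_A$. Hence $\mu_A = \nu_A \times \pi_A$, which is constant in $(X,Y)_{A^c}$, giving the required conditional independence.

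I do not expect any real obstacle: once the inclusion $E \subseteq E_X \cap E_Y$ is observed, the argument is a direct transcription of the proof of \cref{lem:decoupled-by-ones-product}, with the unstructured event ``$X_{\partial A} \equiv 1$'' replaced throughout by the connected-ones event.
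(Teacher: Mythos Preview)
Your proposal is correct and matches the paper's own proof essentially verbatim: the paper simply states that the argument of \cref{lem:decoupled-by-ones-product} goes through upon redefining $E_X$ to be the event that $X_{\partial A}\equiv 1$ and $\partial A$ lies in a single cluster of $\{v\in A^c:X_v=1\}$, which is exactly what you do. The one ingredient you make explicit, namely the inclusion $E\subseteq E_X\cap E_Y$, is the only thing that needs to be checked beyond the original lemma, and your justification for it is correct.
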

\begin{proof}
The proof follows verbatim to \cref{lem:decoupled-by-ones-product} upon defining $E_X$ to be the event that $X_{\partial A} \equiv 1$ and $\partial A$ is in a single cluster of $\{ v \in A^c : X_v=1\}$.
\end{proof}

Let us now return to the proof of \cref{thm:perc-ffiid} and show that $X=\omega^\infty$ is a finitary factor of an i.i.d.\ process.
Let $Y \sim \nu_q$ be independent of $X$. Then $(X,Y)$ is decoupled by connected ones by the lemma above. In particular, $XY$ is also decoupled by connected ones.
Recall from~\eqref{eq:perc-dom*} that we have shown that $p_*(XY) \to q$ as $p\to1$. In particular, $p_*(XY) \ge q-\eps$ for $p$ sufficiently close to 1. When this occurs, with $q-\eps$ chosen in advance to be sufficiently close to 1, \cref{thm:ffiid-general-conn} tells us that $XY$ is a finitary factor of an i.i.d.\ process.
To show that $X$ is a finitary factor of an i.i.d.\ process, we shall use that $(X,Y)$ is decoupled by connected ones, and that these ones (which are the same as the ones of $XY$) are very likely.

Since $Z:=XY$ stochastically dominates $\nu_{q-\eps}$, by the earlier observations, $Z$ almost surely has a unique infinite open cluster $C_\infty$, whose complement has only finite connected components.
Using these two properties we can create the following hierarchical structure:
for $v \notin C_\infty$, let $A_v$ be the minimal finite subset $A$ of $V$ (with respect to inclusion) such that $v \in A$, $Z_{\partial A} \equiv 1$, $Z_{\partial(A^c)} \equiv 0$, and $\partial A$ is in a single cluster of $\{ u \in V \setminus A : Z_u=1\}$. Such a set exists since we can take the connected component of $V \setminus C_\infty$ containing $v$. The fact that a minimal such set exists follows from the fact that the intersection of two sets with these properties again has these properties. It is therefore also easy to see that $A_v$ is  necessarily connected.
Let $B_v$ denote the set $A_v$ minus all those $A_u$ which are proper subsets of $A_v$. Note that $B_v$ and $B_u$ are either disjoint or equal, and that each of $A_v$ and $B_v$ depends on $Z$ in a finitary manner.

Since $(X,Y)$ is decoupled by connected ones, and the coupling between $X$ and $Z$ is invariant, the conditional law of $X_{A_v}$ given $(X_{A_v^c},Z)$ depends only on the shape of $A_v$. Thus, the conditional law of $X_{B_v}$ given $(X_{B_v^c},Z)$ depends only on the shapes of $A_v$ and $B_v$. In particular, given $Z$, if $\{v_i\}_i$ are such that $\{B_{v_i}\}$ are distinct, then $\{X_{B_{v_i}}\}_i$ are conditionally independent and the conditional law of each $X_{B_{v_i}}$ depends only on the shapes of $A_{v_i}$ and $B_{v_i}$.

Hence, one may obtain a sample of $X$ by first sampling $Z$, setting $X$ to equal 1 wherever $Z$ is such, and then independently sampling $X$ on each distinct $B_v$ according to its conditional law, which depends only on $(A_v,B_v)$.
This translates to a description of $X$ as a finitary factor of $Z$ and an additional independent \iid\ source (this is done in a manner similar to that in the proof of \cref{thm:ising-ffiid}; we leave the details to the reader). Thus, $X$ is a finitary factor of an \iid\ process.

\subsubsection{The infinitely ended case}

We cannot proceed as in the previous case by utilizing the fact that $\omega^\infty$ is decoupled by connected ones, since there will almost surely be closed clusters $A$ in $\omega$ such that $V \setminus A$ contains more than one infinite connected component (so that $A$ is not contained in any finite set $A'$ for which $V \setminus A'$ is connected). Instead, we take the following approach.

Let $r \ge 0$ be such that $V \setminus B_r(v)$ has at least two infinite connected components for all $v$ (the significance of this choice is illuminated in~\eqref{eq:inf-conn} and~\eqref{eq:inf-conn2} below).
Let $G'$ be the $(6r+1)$-power graph of $G$, i.e., the graph on $V$ in which two vertices are adjacent if they are at distance at most $6r+1$ in $G$. All graph notions below (e.g., distance, balls, clusters, etc.) are taken with respect to the base graph $G$, unless explicitly indicated otherwise.
Let us already point out that since the graphs $G$ and $G'$ have the same vertex set and same automorphism group, the notion of finitary factor on them coincide.

Let $\tilde X$ be as in \cref{thm:stochastic-dom-perc-inf-ends}.
The advantage of $\tilde X$ over $\omega^\infty$ is that the former is decoupled by ones when viewed as a process on $G'$.
In fact, we claim that $(\omega,\tilde X)$ is decoupled by ones in $G'$ (this is a stronger claim since $\omega \ge \tilde X$ almost surely). We postpone the proof of this claim to below.

Set $p_0 := 1-\frac1{3\Delta(G')-1}$.
Fix $q \in (p_0,1)$ and let $Y \sim \nu_q$ be independent of $\omega$.
We have shown in the proof of \cref{thm:stochastic-dom-perc-inf-ends} that $p_*(\tilde XY)\to q$ as $p \to 1$. In particular, $p_*(\tilde XY)>p_0$ for all $p$ sufficiently close to 1.
Since $\tilde X$ is decoupled by ones in $G'$, so is $\tilde XY$ (see \cref{lem:decoupled-by-ones-product}).
Thus, by \cref{thm:ffiid-general}, $\tilde XY$ is a finitary factor of an i.i.d.\ process.

Denote $Z := \tilde XY$. Note that $Z$ almost surely has no infinite closed $G'$-clusters.
Also, since $(\tilde X,Y)$ is decoupled by ones in $G'$, given $Z$, the conditional law of $\tilde X$ is independent on each closed $G'$-cluster and the conditional law on each such cluster depends only on its shape. In particular, this can be sampled finitarily from an additional i.i.d.\ source independent of $Z$. This yields a sample of $\tilde X$ as a finitary factor of an i.i.d.\ process.

To get $\omega^\infty$ as a finitary factor of an i.i.d.\ process, we use that $(\omega,\tilde X)$ is decoupled by ones in $G'$ (and the ones of this process coincide with the ones of $\tilde X$ since $\omega \ge \tilde X$), so that given $\tilde X$, we may sample $\omega$ using an additional independent i.i.d.\ source. Finally, $\omega^\infty$ is a simple finitary function of $(\omega,\tilde X)$, as $\omega^\infty_v=1$ if and only if $v$ is connected in $\omega$ to $\{v \in V : \tilde X_v=1\}$. This shows that $\omega^\infty$ is a finitary factor of an i.i.d.\ process as desired.

\begin{claim}
$(\omega,\tilde X)$ is decoupled by ones when viewed as a process on $G'$.
\end{claim}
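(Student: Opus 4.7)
My plan is to unpack the conditioning event $E := \{\tilde X_{\partial_{G'} A} \equiv 1\}$ and show it factorizes into pieces depending only on $\omega_A$ and $\omega_{A^c}$ respectively. The starting observation is that $\tilde X_u = 1$ forces $\omega_{B_r(u)} \equiv 1$, so $E$ implies $\omega_B \equiv 1$ for the open collar $B := \bigcup_{u \in \partial_{G'} A} B_r(u)$. The choice of $G'$ as the $(6r+1)$-power of $G$ makes this collar thick enough to serve as a buffer: one computes that $B = \{v \in A : \dist(v, A^c) \le r\} \cup \{v \in A^c : \dist(v, A) \le 7r+1\}$, so that for every $v \in A$ one has $B_r(v) \subset A \cup B$, and the far exterior $A^{\mathrm{far}} := A^c \setminus B = \{v : \dist(v, A) > 7r+1\}$ is separated from $A$ by $B$, with $V \setminus A^{\mathrm{far}}$ finite.

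The heart of the proof is to show that, given $\omega_B \equiv 1$, the infinite-connection requirements remaining in $E$---namely, that for each $u \in \partial_{G'} A$ every $w \in \partial B_r(u)$ in an infinite component of $V \setminus B_r(u)$ is connected to $\infty$ in $\omega \cap (V \setminus B_r(u))$---can be rerouted through the open buffer. Any infinite open path witnessing such a requirement must eventually enter $A^{\mathrm{far}}$ (since $V \setminus A^{\mathrm{far}}$ is finite), and since $B \setminus B_r(u)$ is already open, the initial portion of the path can be rerouted to traverse only the buffer before crossing into $A^{\mathrm{far}}$. Consequently, each such requirement decouples as a conjunction of (i) a path-connection condition inside $(A \cup B) \setminus B_r(u)$, measurable with respect to $\omega_A$, and (ii) the existence of an infinite open cluster of $\omega|_{A^{\mathrm{far}}}$ touching $B$, measurable with respect to $\omega_{A^c}$. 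Applying the same rerouting to the analogous requirements defining $\tilde X_v$ for $v \in A$ shows that, under $E$, $(\omega, \tilde X)_A$ is a deterministic function of $\omega_A$ (with $\omega_B$ fixed), and by symmetry $(\omega, \tilde X)_{A^c}$ is a function of $\omega_{A^c}$.

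Given the factorization $E = E_A \cap E_{A^c}$ with $E_A$ measurable in $\omega_A$ and $E_{A^c}$ measurable in $\omega_{A^c}$, and the independence of $\omega_A$ and $\omega_{A^c}$ under the Bernoulli product measure, the conditional independence of $(\omega, \tilde X)_A$ and $(\omega, \tilde X)_{A^c}$ given $E$ follows immediately. The main obstacle I expect is executing the rerouting argument cleanly: verifying that every infinite open path required by $E$ (or by $\tilde X_v = 1$ for $v \in A$) can be decomposed, using the openness of $B \setminus B_r(u)$, into an initial segment inside $A \cup B$ and an infinite tail in $A^{\mathrm{far}}$ that meet at the outer edge of the buffer. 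This relies on the precise thickness given by the power $6r+1$ (ensuring $B$ has enough room around each $B_r(u)$ to host bypass paths), and on the choice of $r$ so that $V \setminus B_r(v)$ has at least two infinite components, which guarantees that infinite components of $V \setminus B_r(u)$ reaching into $A^{\mathrm{far}}$ genuinely exist to support the rerouted paths.
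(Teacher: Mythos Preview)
Your buffer strategy is the right idea and matches the paper's approach, but your factorization $E = E_A \cap E_{A^c}$ with (ii) being merely ``the existence of an infinite open cluster of $\omega|_{A^{\mathrm{far}}}$ touching $B$'' is too weak, and this is a genuine gap. The graph has infinitely many ends (this is precisely the case under consideration), so $A^{\mathrm{far}}$ typically splits into several infinite pieces, and the requirement $\tilde X_u = 1$ for a particular $u \in \partial_{G'} A$ demands that \emph{each} $w \in \partial B_r(u)$ reach infinity off $B_r(u)$ --- in particular, those $w$ lying in a specific infinite piece of $A^{\mathrm{far}}$ need an infinite open path there (or a rerouting through the buffer that avoids $B_r(u)$, which depends on the connectivity of $B \setminus B_r(u)$ and is not guaranteed in general). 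A single global condition (ii) cannot capture all of these per-$(u,w)$ requirements simultaneously. For a concrete failure, take the $3$-regular tree ($r=0$, $G'=G$) with $A = \{a\}$: each boundary vertex $u_i$ has two ``far'' neighbors whose subtrees are disjoint infinite components of $A^{\mathrm{far}}$, and an infinite open cluster in one such subtree does nothing for the requirement at a far neighbor of a \emph{different} $u_j$.

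The paper avoids trying to factorize $E$ directly. Instead it shows that the conditional law of $\omega_A$ given $(\omega, \tilde X)_{A^c}$ is, on the event $E$, Bernoulli percolation on $A$ restricted to a \emph{fixed} set $E_A$ (not depending on the realization of $(\omega, \tilde X)_{A^c}$), and separately that $\tilde X_A$ is determined by $\omega_A$ on $E$. The key technical ingredient --- which you allude to but do not isolate --- is that the choice of $r$ guarantees: whenever $S$ is connected, $\dist(S,w) > r$, and $\tilde X_w = 1$, then $w \leftrightarrow \infty$ off $S$ in $\omega$. This single observation (since removing a connected $S$ at distance $>r$ from $w$ still leaves an infinite component of $V\setminus B_r(w)$ untouched) is what makes every rerouting step go through cleanly: one can always escape to infinity via a boundary vertex with $\tilde X = 1$ once sufficiently far from the obstacle. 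It replaces your appeal to ``enough room in the buffer''. If you want to salvage your factorization route, you would need to replace (ii) by a conjunction of per-$(u,w)$ connectivity conditions on $\omega_{A^c}$, and the verification that these suffice then reduces to exactly this rerouting lemma.
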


The proof of the claim is somewhat technical.
It may be instructive to consider the case when $G$ is a regular tree so that we may take $r=0$ and $G'=G$. In this case, removing a vertex $v$ splits the tree into $\Delta$ branches, and if $\tilde X_v=1$ then $v$ is open and connected to infinity in each of these branches. It is not too hard to convince oneself that if $A$ is a finite connected set and $\tilde X_{\partial A} \equiv 1$, then the conditional law of $\omega_A$ given $(\omega,\tilde X)_{A^c}$ is simply Bernoulli percolation conditioned that each boundary vertex connects to at least one other boundary vertex. Moreover, for $v\in A$, one has $\tilde X_v=1$ if and only if $v$ connects to the boundary through each of its $\Delta$ neighbors. In particular, the conditional law of $(\omega,\tilde X)_A$ given $(\omega,\tilde X)_{A^c}$ is deterministic on the event that $\tilde X_{\partial A}\equiv 1$, showing that $(\omega,\tilde X)$ is decoupled by ones.
The proof below is an extension of these ideas.

\begin{proof}
Let $A^{+k} := \{ v \in V : \dist(v,A) \le k \}$ denote the $k$-neighborhood of $A$ in $G$.
Let $\partial' A := A^{+6r+1} \setminus A$ denote the boundary of a set $A \subset V$ in the graph $G'$.
We need to show that for any finite set $A \subset V$, $(\omega,\tilde X)_A$ and $(\omega,\tilde X)_{A^c}$ are conditionally independent given $(\omega,\tilde X)_{\partial' A} \equiv (1,1)$. It suffices to show this when $A$ is $G'$-connected. Fix such a set $A$. Note that $A^{+3r}$ is connected.

It suffices to show that $\omega_A$ and $(\omega,\tilde X)_{A^c}$ are conditionally independent given $\tilde X_{\partial' A} \equiv 1$, and that $\tilde X_A$ is a function of $\omega_A$ on this event.
It is straightforward that the conditional law of $\omega_A$ given $(\omega,\tilde X)_{A^c}$ is the law of a Bernoulli percolation $\tau$ of parameter $p$ on $A$ restricted to some set $E_A(\omega,\tilde X) \subset \{0,1\}^A$ which depends on $(\omega,\tilde X)$ only through its restriction to $A^c$. Specifically, $E_A(\omega,\tilde X)$ consists of those $\tau$ such that $\tilde X(\tau^\omega)_{A^c}=\tilde X_{A^c}$, where $\tau^\omega \in \{0,1\}^V$ is the configuration which agrees with $\tau$ on $A$ and with $\omega$ on $A^c$. We must show that $E_A(\omega,\tilde X)$ is equal to one particular set $E_A$ whenever $\tilde X_{\partial' A} \equiv 1$. Indeed, the set $E_A$ can be described explicitly as follows.

Let $\bar B_r(v)$ be the union of $B_r(v)$ with the finite connected components of $V \setminus B_r(v)$. Then $V \setminus \bar B_r(v)$ has at least two connected components and they are all infinite, and $\{\tilde X_v=1\}$ is the event that $B_r(v)$ is open and every $u \in \partial \bar B_r(v)$ is connected to infinity by an open path disjoint from $\bar B_r(v)$ (we indicate this by $u \leftrightarrow \infty$ off $B_r(v)$).

Denote $\partial^* A := \partial A^{+6r+1}  = \{ v \in V : \dist(v,A)=6r+1 \}$. For $\tau \in \{0,1\}^A$, we write $\tau^\1 \in \{0,1\}^V$ for the configuration which agrees with $\tau$ on $A$ and is all ones on $A^c$.
Let $E_A$ be the set of those $\tau \in \{0,1\}^A$ for which every vertex in $A \cap (\partial A)^{+r}$ is open, and for every $v \in A^{+4r} \setminus A$ and $u \in \partial \bar B_r(v)$ there is an open path in $\tau^\1$ from $u$ to $\partial^* A$ disjoint from $B_r(v)$ (we indicate this by $u \leftrightarrow \partial^* A$ off $B_r(v)$ in $\tau^\1$).
Note that $E_A$ is deterministic (it does not depend on $\omega$ or $\tilde X$).

Recall that $r$ was chosen so that $V \setminus B_r(v)$ has at least two infinite connected components for all $v$. The significance of this choice of $r$ is that it ensures that for any $v,w \in V$,
\begin{equation}\label{eq:inf-conn}
\dist(v,w)>2r\quad\text{and}\quad \tilde X_w=1 \qquad\implies\qquad w \leftrightarrow \infty\text{ off }B_r(v) .
\end{equation}
More generally, for any $S \subset V$ and $w \in V$,
\begin{equation}\label{eq:inf-conn2}
S \text{ connected},\quad \dist(S,w)>r, \quad \tilde X_w=1 \qquad\implies\qquad w \leftrightarrow \infty\text{ off }S .
\end{equation}
Indeed, since $S$ is a connected set disjoint from $B_r(w)$, it is contained in a single connected component of $V \setminus B_r(w)$. In particular, there is an infinite connected component $C$ of $V \setminus B_r(w)$ which is disjoint from $S$. If $\tilde X_w=1$, then taking any $u \in C \cap \partial B_r(w)$, we have that $u \leftrightarrow \infty$ in $C$, and in particular, $u \leftrightarrow \infty$ off $S$.
Since $w \leftrightarrow u$ in $B_r(w)$, we conclude that $w \leftrightarrow \infty$ off $S$.

Suppose now that $\tilde X_{\partial' A}\equiv 1$ and let us show that $E_A(\omega,\tilde X)=E_A$.
Let us first show that $E_A(\omega,\tilde X) \subset E_A$.
For this, it suffices to show that $\omega_A \in E_A$ (since $\omega_A$ can be any element in $E_A(\omega,\tilde X)$).
Clearly, every vertex in $A \cap (\partial A)^{+r}$ is open, since $\tilde X_{\partial A}\equiv 1$. Let $v \in A^{+4r} \setminus A$ and $u \in \partial\bar B_r(v)$. We need to show that $u \leftrightarrow \partial^* A$ off $B_r(v)$ in $\omega_A^\1$. This is clearly the case, as $\tilde X_v=1$ implies that $u \leftrightarrow \infty$ off $B_r(v)$ in $\omega$.

Let us now show that $E_A \subset E_A(\omega,\tilde X)$.
We need to show that $\tilde X(\tau^\omega)_{A^c}=\tilde X_{A^c}$ for $\tau \in E_A$. That is, we need to show that modifying $\omega$ to equal some $\tau \in E_A$ on $A$ does not change the values of $\tilde X$ outside $A$. Fix $\tau \in E_A$ and denote $\omega' := \tau^\omega$ and $\tilde X' := \tilde X(\omega')$.
Let us first show that $\tilde X'_{A^c} \ge \tilde X_{A^c}$.
Fix $v \in A^c$ such that $\tilde X_v=1$.
We need to show that $\tilde X'_v=1$.
Clearly, $B_r(v)$ is open in $\omega'$.
Let $u \in \partial\bar B_r(v)$. We need to show that $u \leftrightarrow \infty$ off $B_r(v)$ in $\omega'$.

Suppose first that $v \in A^{+4r}$.
Then $u \leftrightarrow \partial^*A$ off $B_r(v)$ in $\tau^\1$ by the definition of $E_A$. Consider a path witnessing this connection, i.e., an open path in $\tau^\1$ from $u$ to $\partial^*A$ disjoint from $B_r(v)$. Let $w$ be the first vertex this path reaches in $\partial^*A$, and note that $u \leftrightarrow w$ off $B_r(v)$ in $\omega'$.
Since $\tilde X_w=1$ and $\dist(w,A)=6r+1>6r$, \eqref{eq:inf-conn2} now implies that $w \leftrightarrow \infty$ off $A^{+5r}$ in $\omega$. Since $\omega$ and $\omega'$ agree outside of $A$, we have that $w \leftrightarrow \infty$ off $A^{+5r}$ also in $\omega'$. In particular, $u \leftrightarrow w \leftrightarrow \infty$ off $B_r(v)$ in $\omega'$. This shows that $\tilde X'_v=1$ when $v \in A^{+4r}$.

Suppose now that $v \notin A^{+4r}$.
Since $\tilde X_v=1$, we have that $u \leftrightarrow \infty$ off $B_r(v)$ in $\omega$.
Consider a path witnessing this connection, i.e., an open path in $\omega$ from $u$ to infinity disjoint from $B_r(v)$. If this path does not pass through $A$, then it also witnesses that $u \leftrightarrow \infty$ off $B_r(v)$ in $\omega'$. If it does pass through $A$, then let $w$ be the first vertex it reaches in $\partial A$, and note that $u \leftrightarrow w$ off $B_r(v)$ in $\omega'$. Since $\dist(v,w) \ge \max\{4r,1\} > 2r$ and since we have already seen that $\tilde X'_w=1$, \eqref{eq:inf-conn} now implies that $w \leftrightarrow \infty$ off $B_r(v)$ in $\omega'$. Hence, $u \leftrightarrow w \leftrightarrow \infty$ off $B_r(v)$ in $\omega'$. This shows that $\tilde X'_v=1$ when $v \notin A^{+4r}$. 

This completes the proof that $\tilde X'_{A^c} \ge \tilde X_{A^c}$. The converse $\tilde X'_{A^c} \le \tilde X_{A^c}$ follows similarly, noting that $\tilde X_{\partial' A} \equiv 1$ so that the case when $v \in A^{+4r}$ does not require any verification. The argument for $v \notin A^{+4r}$ follows the exact same argument as in the preceding paragraph.
Thus, $\tilde X'_{A^c} = \tilde X_{A^c}$, showing that $E_A \subset E_A(\omega,\tilde X)$.
This completes the proof that $E_A=E_A(\omega,\tilde X)$.

It remains to show that $\tilde X_A$ is a function of $\omega_A$ on the event that $\tilde X_{\partial'A}\equiv1$. Suppose that $\tilde X_{\partial'A}\equiv1$, and note that this implies that $\partial'A$ is open. Let $v \in A$.
The question of whether $B_r(v)$ is open is clearly a function of $\omega_A$.
Suppose that $B_r(v)$ is open. We claim that $\tilde X_v=1$ if and only if $u \leftrightarrow \partial^*A$ off $B_r(v)$ for every $u \in \partial\bar B_r(v)$, which is clearly determined by $\omega_A$. The `only if' direction is immediate from the definition of $\tilde X$. To see the `if' direction, let $u \in \partial\bar B_r(v)$ and suppose that $u \leftrightarrow w \in \partial^*A$ off $B_r(v)$.
Since $\tilde X_w=1$ and $\dist(w,A) = 6r+1>4r$, \eqref{eq:inf-conn2} now implies that $w \leftrightarrow \infty$ off $A^{+3r}$. In particular, $u \leftrightarrow w \leftrightarrow \infty$ off $B_r(v)$.
This shows that $\tilde X_v$ is a function of $\omega_A$ on the event that $\tilde X_{\partial'A}\equiv1$.
\end{proof}

\section{Open problems} \label{sec:open}

We discuss some open problems below. For simplicity, we consider only transitive graphs, but many of the questions make sense for quasi-transitive graphs or even bounded-degree graphs.

\subsection{Infinite clusters of Bernoulli percolation}

For Bernoulli percolation (with parameter $p$) on a nonamenable bounded-degree graph, we have shown that the infinite clusters invariantly dominate a high-density \iid\ process when $p$ is close to 1, that is, $p_\inv(\omega^\infty)\to1$ as $p\to1$. One may ask what happens just above $p_c$:

\begin{question}
On a nonamenable transitive graph,
\begin{itemize}
 \item Is $p(\omega^\infty)$ positive for all $p>p_c$? 
 \item Is $p_\inv(\omega^\infty)$ positive for all $p>p_c$?
\end{itemize}
\end{question}

We are able to answer the first part of the question in the special case of regular trees.

\begin{thm}
On a regular tree $\T_d$ of degree $d \ge 3$, we have 
$p(\omega^\infty)>0$ for all $p>p_c(\T_d)=\frac1{d-1}$.
\end{thm}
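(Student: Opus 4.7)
The plan is to apply the dilution strategy of \cref{sec:st-dom}. Given $p > p_c = 1/(d-1)$, pick $q \in (0,1)$ to be tuned, let $Y \sim \nu_q$ be independent of $\omega$, and set $Z := \omega^\infty Y$. Since $p(\omega^\infty) \ge p(Z) \ge p_*(Z) \ge q \cdot p_*(\omega^\infty \mid Z)$, it suffices to show that $p_*(\omega^\infty \mid Z) > 0$ for an appropriate $q$. By \cref{lem:domination*}, this reduces to establishing a uniform lower bound
\[
\P_{A,B,q}(v \in \omega^\infty) \ge c > 0 \qquad \text{for all } v \in V \text{ and all finite } A,B\subset V,
\]
where $\P_{A,B,q}$ denotes the percolation measure tilted by $\1[A\subseteq \omega^\infty]\,(1-q)^{|B\cap \omega^\infty|}$.

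To obtain this bound I would exploit the recursive structure of $\T_d$. Rooting at $v$ decomposes $\T_d$ into $d$ subtrees $T_1,\dots,T_d$ attached at the neighbors $u_1,\dots,u_d$ of $v$. Under the unperturbed $\P$ the subtrees carry independent percolations; if $\rho > 0$ is the positive solution of $\rho = 1 - (1-p\rho)^{d-1}$, which exists precisely for $p > p_c$, then $\P(v \in \omega^\infty) = 1 - (1-p\rho)^d > 0$. Under $\P_{A,B,q}$ the subtrees become coupled: for $u \in T_i \cap (A \cup B)$ the event $\{u \in \omega^\infty\}$ can hold via a path through $v$ into another $T_j$. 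The key step is to condition on the pair $(I,J)$, where $I := \{i : (v,u_i) \text{ is open}\}$ and $J := \{i : u_i \leftrightarrow \infty \text{ in } T_i\}$. Given $(I,J)$ one has $\{v \in \omega^\infty\} = \{I \cap J \neq \emptyset\}$; for every $u \in T_i$ the event $\{u \in \omega^\infty\}$ becomes a deterministic function of $\omega|_{T_i}$ (parameterized by $(I,J)$); and both the conditioning and the tilting factorize as products over $i$. Consequently $\P_{A,B,q}(\cdot \mid I,J)$ makes $\omega|_{T_1},\dots,\omega|_{T_d}$ conditionally independent, and the global bound reduces to per-subtree estimates.

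The crux is then to show that on each subtree $T_i$ the tilted probability $\P_{A_i,B_i,q}^{T_i}(u_i \leftrightarrow \infty) \ge c'$ uniformly, where $A_i := A \cap T_i$, $B_i := B \cap T_i$, and $\P_{A_i,B_i,q}^{T_i}$ is the analogous tilted measure on $T_i$, itself a rooted tree in which every vertex has $d-1$ children. Self-similarity converts this into a fixed-point problem: the conditional probability of survival obeys a self-referential recursion whose iterates, in the unperturbed case $q=0$, converge to $\rho > 0$ for $p > p_c$. I would choose $q$ small enough (depending on $p$ and $d$) so that the tilting acts as a contracting perturbation of this fixed point, using the exponential decay of finite-cluster sizes in supercritical tree percolation (equivalently, the subcriticality of the dual Galton--Watson tree) to bound the cumulative influence of nested observations at increasing depths. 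The hardest part will be verifying that this recursion contracts uniformly over all configurations of observations; once that is done, passing from $\P_{A,B,q}(\cdot \mid I,J)$ back to $\P_{A,B,q}(\cdot)$ is a Bayes'-rule calculation that itself reduces, through a tilted analysis of the joint law of $(I,J)$, to the same per-subtree control.
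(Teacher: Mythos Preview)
Your approach via dilution and $p_*(\omega^\infty \mid Z)$ is the general machinery of \cref{sec:st-dom}, but the paper's proof sidesteps it entirely with a tree-specific shortcut. The key observation is that for any invariant process $X$ on $\T_d$,
\[
p(X) \ge \essinf \E\big[X_v \mid (X_u)_{u \in P_v}\big],
\]
where $P_v$ is a \emph{single} connected component of $\T_d \setminus \{v\}$ (one branch, not all of $V \setminus \{v\}$). This holds because a monotone coupling with $\nu_{p'}$ can be built sequentially, revealing vertices in an order where each new vertex has exactly one already-revealed branch. With $X = \omega^\infty$, a short case analysis on whether the unique neighbor $w \in P_v$ has $X_w = 1$ or $0$ (further conditioning on $\omega_{P_v}$ in the former case) gives the lower bound directly: no dilution, no recursion, no perturbative fixed-point argument.

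Your route, by contrast, requires a uniform lower bound on $\P_{A,B,q}(v \in \omega^\infty)$ over \emph{all} finite $A,B$, which is genuinely harder: the worst-case $B$ can be arbitrarily large, and the tilting then strongly penalizes configurations with many sites in $\omega^\infty$. Your factorization via $(I,J)$ is correct, but the resulting per-subtree problem is not self-similar in the naive sense you suggest---for $u \in T_i$, the event $\{u \in \omega^\infty\}$ depends on whether the root connects outward through some other $T_j$, so the recursion carries a boundary parameter, and the tilted measure on $T_i$ is not simply $\P^{T_i}_{A_i,B_i,q}$ as you write. The ``contracting perturbation of the fixed point'' step is where all the work lies, and you flag this yourself without carrying it out. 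It may well go through for small $q$, but it is substantially more delicate than the paper's two-paragraph argument. The one-sided conditioning trick is the idea you are missing.
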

\begin{proof}
Given any invariant process $X$ on $\T_d$, it is not hard to see that
\[ p(X) \ge \essinf \E\left[X_v \mid (X_u)_{u \in P_v}\right] ,\]
where $v$ is any vertex and $P_v$ is any connected component of $\T_d \setminus v$. The reason for this inequality is that a monotone coupling can be constructed sequentially (note however that this does not produce an invariant coupling).

Denote $X:=\omega^\infty$. We use the above in order to show that $p(X)>0$.
Thus, we condition on $(X_u)_{u \in P_v}$.
Let $w$ be the unique vertex in $P_v$ which is adjacent to $v$.
Suppose first that $X_w=1$. Let us further condition on $(\omega_u)_{u \in P_v}$.
If $w$ is not in an infinite cluster of $\omega$ in $P_v$, then it must be the case that $X_v=1$.
If $w$ is in an infinite cluster of $\omega$ in $P_v$, then the conditional law of $\omega_{\T_d \setminus P_v}$ is the same as its unconditional law (Bernoulli percolation with parameter $p$), and in particular, the conditional probability that $X_v=1$ is simply $p$.
Suppose now that $X_w=0$. Then the conditional probability that $\omega_w=0$ is at least $1-p$ (this is true even when conditioning on $X$ and $\omega_{\T_d \setminus \{w\}}$). Given that $\omega_w=0$, the conditional law of $\omega_{\T_d \setminus P_v}$ is again the same as its unconditional law.
Since the $d$-regular tree and the $d$-ary tree have the same critical value $p_c=\frac1{d-1}$, the conditional probability that $X_v=1$ is lower bounded. This shows that $p(X)>0$ as required.
\end{proof}

Regarding finitary factors, we have shown that $X$ is a finitary factor of an i.i.d.\ process on most nonamenable quasi-transitive graphs when $p$ is close to 1. Is this the case for any $p>p_c$? We do not know the answer even on a regular tree.

\begin{question}
On a regular tree, is $\omega^\infty$ a finitary factor of an \iid\ process for all $p>p_c$?
\end{question}

\subsection{Ising}

For an infinite connected bounded-degree graph, we have shown that $p(\mu^+_\beta)$ tends to 0 or 1 as $\beta\to1$ according whether the graph is amenable or not. In particular, on a nonamenable transitive graph, $p(\mu^+_\beta)>p(\mu^-_\beta)$ for large $\beta$. One may ask whether this holds all the way down to criticality, and to what extent this fails for amenable graphs:

\begin{question}\,
\begin{itemize}
 \item On a nonamenable transitive graph, is $p(\mu^+_\beta)>p(\mu^-_\beta)$ for all $\beta>\beta_c$?
 \item On an amenable transitive graph, is $p(\mu^+_\beta)=p(\mu^-_\beta)$ for large $\beta$ (or perhaps even all $\beta$)?
\end{itemize}
\end{question}

For $\Z^d$ and regular trees, these questions were answered by Liggett and Steif~\cite{liggett2006stochastic}.
For regular trees this is a consequence of an explicit expression for $p(\mu^\pm_\beta)$. For $\Z^d$ this is the consequence of a formula for $p(\mu^\pm_\beta)$ in terms of the probabilities of boxes being all minus (this formula holds for any downward FKG measure).
In \cite[Question~7]{liggett2006stochastic}, it is asked whether amenability for transitive graphs is characterized by the property that $p(\mu^+_\beta)=p(\mu^-_\beta)$ or alternatively by the property that the plus states at different temperatures are not stochastically comparable. While we have fully established the second characterization (in the larger class of bounded-degree graphs), the first characterization is only partial at the moment.

On $\Z^d$, it is known that $p(\mu^+_\beta)$ is strictly decreasing for all $\beta$, whereas on a regular tree, it is increasing for all $\beta>\beta_c$~\cite{liggett2006stochastic}. As before, these results rely on special properties of the graphs, and one may ask whether this holds more generally.

\begin{question}\,
\begin{itemize}
 \item On a nonamenable transitive graph, is $p(\mu^+_\beta)$ non-decreasing for all $\beta>\beta_c$?
 \item On an amenable transitive graph, is $p(\mu^+_\beta)$ non-increasing for all $\beta$?
\end{itemize}
\end{question}

For sufficiently low temperatures, we have shown that the plus states are not only stochastically ordered, but that there is invariant domination as well. We do not know whether this holds throughout the entire low temperature regime, even in the special case of regular trees.

\begin{question}
On a regular tree, does $\mu_{\beta_1}^+$ invariantly dominate $\mu_{\beta_2}^+$ for all $\beta_1 >\beta_2 > \beta_c$? What about on other nonamenable graphs?
\end{question}

Regarding finitary factors, we have shown that $\mu^+_\beta$ is a finitary factor of an i.i.d.\ process on any quasi-transitive nonamenable graph when the temperature is sufficiently low. A natural question is whether this extends all the way down to the critical temperature, and it is reasonable to first attempt to answer this for regular trees.

\begin{question}\label{thm:main-tree}
On a regular tree, is $\mu^+_\beta$ a finitary factor of an \iid\ process for all $\beta>\beta_c$?
\end{question}

\subsection{More on invariant domination}

Suppose $G$ is transitive and $X$ is an invariant $\{0,1\}$-valued process.
When $G$ is amenable, we always have $p_\inv(X)=p(X)$.
When $G$ is nonamenable, Mester~\cite{mester2013invariant} showed that it is possible (for a certain $G$) that $X$ stochastically dominates an invariant process $Y$, but does not invariantly dominate it. In Mester's counterexample, both processes $X$ and $Y$ are somewhat artificial (though they have nice properties such as finite dependence and uniform finite energy). In particular, neither $X$ nor $Y$ is an \iid\ process, and it is still undetermined whether $p_\inv(X)$ can be strictly less than $p(X)$. This raises the question of whether $\omega^\infty$ and $\mu^+_\beta$ can serve as counterexamples or not:

\begin{question}
On a nonamenable transitive graph (perhaps even a regular tree),
\begin{itemize}
 \item Is $p_\inv(\omega^\infty)=p(\omega^\infty)$ for all $p$?
 \item Is $p_\inv(\mu^+_\beta)=p(\mu^+_\beta)$ for all $\beta$?
\end{itemize}
\end{question}

Recall that $p(X) \ge p_*(X)$ for any invariant process $X$.
In \cref{lem:invariant-monotone-coupling} we showed that $p_\inv(X) \ge p_*(X)$ when $X$ satisfies a certain decoupling condition. One may ask whether the latter assumption can be dropped. More generally, $X$ stochastically dominates $Y$ whenever $X \succeq_* Y$, and one may ask whether $X$ invariantly dominates $Y$ as well, with no further assumptions.

\begin{question}
On a nonamenable transitive graph,
\begin{itemize}
 \item Is $p_\inv(X) \ge p_*(X)$ whenever $X$ is an invariant process?
 \item Does $X$ invariantly dominate $Y$ whenever $X$ and $Y$ are invariant processes such that $X \succeq_* Y$?
\end{itemize}
\end{question}

\bibliographystyle{abbrv}
\bibliography{library}
\end{document}